\newcommand{\vect}[1]{\boldsymbol{#1}}
\newtheorem{theorem}{Theorem}
\newtheorem{corollary}{Corollary}
\newtheorem{prop}[theorem]{Proposition}
\newtheorem{lemma}{Lemma}
\newtheorem{rem}{Remark}
\newcommand{\response}[1]{#1}
\newcommand{\Lo}{\mbox{\tiny L}}
\newcommand{\Hi}{\mbox{\tiny H}}
\newcommand{\MaxEnt}{\mbox{\tiny ME}}
\newcommand{\bcC}{\boldsymbol{\cC}}
\newcommand{\Equilibrium}{\mbox{\tiny Eq}}
\begin{document}

\preprint{APS/123-QED}

\title{Realizability-Preserving Discontinuous Galerkin Method for \\
Spectral Two-Moment Radiation Transport in Special Relativity}

\author{Joseph Hunter\,\orcidlink{0009-0004-7188-1172}}
\email{hunter.926@buckeyemail.osu.edu}
\affiliation{Department of Mathematics, Ohio State University, Columbus, OH, 43210, USA}

\author{Eirik Endeve\,\orcidlink{0000-0003-1251-9507}}
\email{endevee@ornl.gov}
\affiliation{Computer Science and Mathematics Division, Oak Ridge National Laboratory, Oak Ridge, TN 37831, USA}
\thanks{Notice:  This manuscript has been authored, in part, by UT-Battelle, LLC, under contract DE-AC05-00OR22725 with the US Department of Energy (DOE). The US government retains and the publisher, by accepting the article for publication, acknowledges that the US government retains a nonexclusive, paid-up, irrevocable, worldwide license to publish or reproduce the published form of this manuscript, or allow others to do so, for US government purposes. DOE will provide public access to these results of federally sponsored research in accordance with the DOE Public Access Plan (http://energy.gov/downloads/doe-public-access-plan).}
\affiliation{Department of Physics \& Astronomy, University of Tennessee, Knoxville, TN 37996-1200, USA}

\author{M.~Paul Laiu\,\orcidlink{0000-0002-2215-6968}}
\email{laiump@ornl.gov}
\affiliation{Computer Science and Mathematics Division, Oak Ridge National Laboratory, Oak Ridge, TN 37831, USA}

\author{Yulong Xing\,\orcidlink{0000-0002-3504-6194}}
\email{xing.205@osu.edu}
\affiliation{Department of Mathematics, Ohio State University, Columbus, OH, 43210, USA}

\date{\today}

\begin{abstract}
    We present a realizability-preserving numerical method for solving a spectral two-moment model to simulate the transport of massless, neutral particles interacting with a \response{steady} background material moving with relativistic velocities.
    The model is obtained as the special relativistic limit of a four-momentum-conservative general relativistic two-moment model.
    Using a maximum-entropy closure, we solve for the Eulerian-frame energy and momentum.
    The proposed numerical method is designed to preserve moment realizability, which corresponds to moments defined by a nonnegative phase-space density.
    The realizability-preserving method is achieved with the following key components:
    (i) a discontinuous Galerkin (DG) phase-space discretization with specially constructed numerical fluxes in the spatial and energy dimensions;
    (ii) a strong stability-preserving implicit-explicit (IMEX) time-integration method;
    (iii) a realizability-preserving conserved to primitive moment solver;
    (iv) a realizability-preserving implicit collision solver;
    and (v) a realizability-enforcing limiter.
    Component (iii) is necessitated by the closure procedure, which closes higher order moments nonlinearly in terms of primitive moments.
    The nonlinear conserved to primitive and the implicit collision solves are formulated as fixed-point problems, which are solved with custom iterative solvers designed to preserve the realizability of each iterate.  
    With a series of numerical tests, we demonstrate the accuracy and robustness of this DG-IMEX method.  
\end{abstract}

\maketitle

\section{Introduction}

In this paper, we design and analyze a numerical method for solving a spectral two-moment model to simulate the transport of massless, neutral particles (e.g., photons or classical neutrinos) interacting with a \response{steady (i.e., time independent)} background material moving with relativistic velocities.
The proposed method uses the discontinuous Galerkin (DG) method for phase-space discretization and implicit-explicit (IMEX) time-stepping.  
In particular, the fully discrete scheme is designed to maintain certain physical bounds for the evolved angular moments, generated from a nonnegative distribution function.  
These bounds are respected numerically, through careful consideration of the phase-space and temporal discretizations, and the formulation of iterative nonlinear solvers, which are part of the solution process.

Neutral particle transport is an important aspect of modelling many relativistic astrophysical systems, including core-collapse supernovae (CCSNe) \cite{mezzacappa_etal_2020}, neutron star-neutron star or neutron star-black hole mergers \cite{metzger_2019,foucart_2023}, and the dynamics of accretion flows onto black holes \cite{abramowiczFragile_2013}.  
The study of transport processes in these systems is, in part, complicated by the need for kinetic models, seeking the distribution function $f(\vect{p},\vect{x},t)$ --- a phase-space density, which at time $t$ gives the number of particles in an infinitesimal phase-space volume element centered about the phase-space coordinates $\{\vect{p},\vect{x}\}$.
Here, $\vect{p}$ and $\vect{x}$ are momentum-space and position-space coordinates, respectively.  
The evolution of $f$ is governed by a kinetic equation, specifically the Boltzmann equation, which expresses a balance between phase-space advection and collisions.

Solving for $f$ numerically, to study the aforementioned astrophysical systems in full dimensionality (six phase-space dimensions plus time), with high phase-space resolution, is computationally expensive, if not infeasible with present-day computational resources, although full-dimensional solvers, e.g., using Monte Carlo methods \cite{miller_etal_2019,Foucart_2021}, or methods based on spherical harmonics \cite{radice_etal_2013}, discrete ordinates \cite{sumiyoshiYamada_2012,nagakura_etal_2014}, finite-elements \cite{maitrayaRadice_2023}, and finite-volumes \cite{white_etal_2023} to discretize the angular dimensions of momentum space, have been proposed.  
To reduce cost, a common approach is to solve for a few angular moments to capture the lowest-order directional features of the distribution.  
To this end, spherical-polar momentum-space coordinates $\{\varepsilon,\vartheta,\varphi\}$ are introduced, and $f$ is integrated against angular basis functions (depending on momentum-space angles $\omega=\{\vartheta,\varphi\})$ to obtain angular moments, which depend only on $\vect{x}$, $t$, and $\varepsilon$, where $\varepsilon$ is the particle energy.  
In this paper, we consider a nonlinear two-moment model obtained as the special relativistic limit of the 3+1 general relativistic framework in \cite{shibata_etal_2011,cardall_etal_2013a}, where only the zeroth and first moments, representing particle energy and momentum densities, respectively, are evolved.  

Due to the appearance of higher-order moments in the truncated equation hierarchy, obtained after taking moments of the Boltzmann equation, the model is not closed, and a closure procedure is required to express higher-order moments in terms of the lower-order evolved moments.
To close the model, we consider the maximum-entropy closure proposed by Minerbo \cite{minerbo_1978} for Maxwell--Boltzmann statistics --- the low-occupation number approximation to maximum-entropy closures of particle systems obeying Bose--Einstein or Fermi--Dirac statistics \cite{cernohorskyBludman_1994}.  
Specifically, for our numerical experiments, we use an algebraic approximation to the Minerbo closure (e.g., \cite{just_etal_2015}).  

The design of a numerical method to model the transport of particles interacting with a moving fluid is complicated by the need to choose appropriate coordinates for the momentum-space discretization.  
Two obvious choices are Eulerian-frame and comoving-frame momentum coordinates (see, e.g., \cite{munierWeaver_1986a,munierWeaver_1986b}, for detailed discussions).  
In this paper, as in, e.g., \cite{shibata_etal_2011,cardall_etal_2013b}, we use comoving-frame momentum-space coordinates; that is, momentum coordinates with respect to an inertial frame instantaneously comoving with the fluid \cite{mihalasMihalas_1999}.  
The angular moments are thus functions of Eulerian-frame spacetime coordinates, $\{\vect{x},t\}$ and the particle energy measured by a comoving observer, $\varepsilon$.  
On the one hand, this choice simplifies the treatment of particle-fluid interactions (because material properties are more isotropic) and the moment closure procedure (because the distribution function is isotropic when particles are in equilibrium with matter).  
On the other hand, the left-hand side of the moment equations, modeling phase-space advection, is made more complicated by the appearance of velocity-dependent terms (so-called observer corrections).  

While the \emph{primitive} variables of the two-moment model are components of a Lagrangian decomposition of the particle stress-energy tensor, we employ the four-momentum-conservative formulation of \cite{shibata_etal_2011,cardall_etal_2013a}, where the evolution equations evolve components of an Eulerian decomposition of the stress-energy tensor, and, when integrated over the particle energy dimension, express conservation of Eulerian-frame energy and three-momentum.  
The Eulerian-frame (conserved) quantities can be expressed in terms of their comoving-frame (primitive) counterparts \cite{cardall_etal_2013a,mezzacappa_etal_2020} to, e.g., evaluate closure relations and fluxes.  
The mixture of Eulerian- and comoving-frame quantities necessitates a moment conversion process (similar to what is needed in relativistic magnetohydrodynamics \cite{kastaun_etal_2021}).  
To this end, extending prior work on a similar model in the $\cO{(v/c)}$ limit \cite{laiu_etal_2025}, we propose an iterative scheme to solve for comoving-frame quantities from the evolved Eulerian-frame quantities.  

Several numerical methods for general relativistic two-moment (neutrino or photon) transport have been proposed to date.  
These evolve either grey moments, where the energy dependence has been integrated out, or spectral moments, as we consider here.  
For example, grey moment solvers have been developed for application to neutron star merger simulations \cite{foucart_2016,radice_etal_2022}; for CCSN simulations \cite{Kuroda_2012}; and for simulation of black hole accretion disks \cite{Fragile_2012,Fragile_2014}.
For application to CCSN, a general relativistic spectral two-moment solver was proposed by \cite{Kuroda_2016}, and in the context of conformally flat spacetimes by \cite{M_ller_2010,cheong_etal_2023}, and for spherically symmetric spacetimes in \cite{O_Connor_2010}.  
We also mention the general relativistic spectral two-moment solver proposed by \cite{Anninos_2020}.  
These schemes --- largely based on finite-volume or finite-difference methods, in combination with explicit and implicit time stepping --- have been coupled to hydrodynamics solvers and used extensively to model astrophysical phenomena.  

\response{
Here, we apply the DG method (e.g., \cite{cockburnShu_2001}) to discretize the moment equations because of their ability to capture the asymptotic diffusion limit \cite{larsenMorel_1989,adams_2001,lowrieMorel_2002,guermondKanschat_2010}, which is characterized by frequent scattering off the background \cite{larsen_etal_1987}.  
In contrast, finite-volume and finite-difference methods may have difficulties capturing this limit, unless, e.g., the particle mean free path is resolved by the spatial mesh, the numerical flux is modified in the diffusive regime \cite{audit_etal_2002}, or additional degrees of freedom are evolved \cite{ren_etal_2022}.%
\footnote{
\response{When the \emph{equilibrium diffusion limit} \cite{ferguson_etal_2017} is considered (in which scattering is a subdominant opacity, distinct from the scattering dominated diffusion limit referred to in this work), finite-volume spatial discretization with IMEX time-stepping may be asymptotic-preserving \cite{he_etal_2024}, with the caveat of potential numerical instability in this limit due to odd-even decoupling \cite{radice_etal_2022,he_etal_2024}. 
}}
}  
Resolving the mean free path is computationally inefficient for the applications we target.  
Furthermore, with the modified flux, it is difficult to design a provably realizability-preserving method for the two-moment model, for which we take advantage of the flexibility offered by the DG method to more freely specify the numerical fluxes.  
Similar to the finite-volume method, the DG method is locally conservative and is able to capture discontinuities.  
There have been extensive studies of structure-preserving DG methods in recent years. These methods preserve exactly, at the discrete level, the continuum properties of the underlying physical models, such as positivity, maximum principle, and asymptotic limits. Some recent studies on structure-preserving DG methods can be found in \cite{zhangShu_2010a,hu_etal_2018,ZhangXingEndeve,chu_etal_2019} and the survey paper \cite{shu2018}.

To evolve the semi-discrete DG scheme, we use IMEX time-stepping methods \cite{ascher_etal_1997,pareschiRusso_2005}, where we treat the phase-space advection terms explicitly and the collision term implicitly.  
We opt for IMEX methods because treating collisions explicitly results in a highly restrictive time-step condition for stability, while the explicit phase-space advection update is governed by a Courant--Friedrichs--Lewy (CFL) time-step restriction depending on the speed of light, which is not too different from both the sound speed and flow velocity in relativistic systems.  
The added cost of the implicit treatment of collisions is that a nonlinear system of equations (akin to a backward Euler step) must be solved multiple times per time step, depending on the number of implicit stages of the IMEX scheme.  
We point out that collisions are local to each spatial point, which makes the implicit part embarrassingly parallel.  

Under Maxwell--Boltzmann statistics, the particle distribution function $f$ must be nonnegative, which leads to constraints that the moments must satisfy.  
Specifically, the energy density must be positive, and the norm of the momentum density vector must be bounded by the energy density.  
Moments satisfying these constraints are said to be realizable.  
Solving the two-moment model numerically can result in nonrealizable moments, which is unphysical, makes the closure procedure ill-posed, and can result in code crashes.  
Therefore it is desirable to design numerical methods which maintain moment realizability.  
Some realizability-preserving two-moment schemes have been developed in \cite{olbrant_etal_2012,chu_etal_2019,laiu_etal_2025}.

Our proposed method for the special relativistic two-moment model is designed to preserve moment realizability.  
We use strong stability-preserving (SSP) IMEX schemes, which allow us to write the explicit update as a convex combination of forward Euler steps.  
For the DG discretization, we propose numerical phase-space fluxes which, together with the SSP IMEX scheme, maintain moment realizability of the cell-averaged moments in the explicit update under a CFL-type time-step restriction.  
After each stage of the IMEX scheme, the realizability-enforcing limiter proposed in \cite{chu_etal_2019} is applied to ensure realizability pointwise in each element. 
As the moment closure procedure requires a conversion process from conserved to primitive moments, we formulate the conversion problem as a fixed-point problem analogous to the modified Richardson iteration, where each iteration preserves realizability.  
Finally, each implicit update of the IMEX scheme can be formulated as a backward Euler step.
The resulting nonlinear system is an extension of the nonlinear system for the moment conversion process, and, with minor modifications, the iterative scheme for the implicit update is formulated to preserve realizability.
With each of these components, we prove that our proposed DG-IMEX scheme for the special relativistic two-moment model is realizability-preserving.  

The method proposed here can be considered as an extension (to special relativity) of the realizability-preserving methods in \cite{chu_etal_2019} and \cite{laiu_etal_2025}, which are, respectively, non-relativistic with focus on aspects of Fermi--Dirac statistics, or include special relativistic corrections to $\cO(v/c)$.  
An important distinction from \cite{laiu_etal_2025} is that we here consider the special relativistic limit of the four-momentum-conservative general relativistic model from \cite{cardall_etal_2013a}, while \cite{laiu_etal_2025} considered the $\cO(v/c)$ limit of the \emph{number-conservative} general relativistic two-moment model (see, e.g., Section~4.7.3 in \cite{mezzacappa_etal_2020}).  
While the relativistic four-momentum-conservative and number-conservative two-moment models are analytically equivalent at the continuum level, this is not the case for their respective $\cO(v/c)$ limits, which differ to $\cO(v^{2}/c^{2})$ \cite{laiu_etal_2025}.  
Moreover, the two formulations may possess different characteristics of relevance to their respective discrete representations.  
One benefit of the four-momentum-conservative model is that, in the absence of collisions, both the zeroth and first moment equations are in conservative form, while this is true only for the zeroth moment equation of the number-conservative model.  
In \cite{laiu_etal_2025}, the appearance of non-collisional source terms in the first moment equation introduced obstacles to proving the realizability-preserving property of the fully discrete multidimensional scheme (see \cite[Section~5.1.3]{laiu_etal_2025}).  
The extension of the scheme proposed in \cite{laiu_etal_2025} for the $\cO(v/c)$ number-conservative two-moment model to special relativity is also faced with these obstacles.  
Since the realizability-preserving property is closely linked to numerical stability and conservation properties of two-moment solvers, one of the main objectives of this study is to consider this important issue in the context of the four-momentum-conservative two-moment model in special relativity.  
The successful outcome in this context presents an important step towards robust methods for general relativistic two-moment models.  
We also note that the numerical study of CCSNe in \cite{roberts_etal_2016} was performed \emph{without} velocity-dependent terms due to observed numerical instabilities when velocity dependence was included.  
Although the origin of these instabilities is unknown to us, their link to velocity-dependent terms reveals additional challenges associated with their inclusion.  

The paper is organized as follows.  
The special relativistic, four-momentum-conservative two-moment model is presented in Section~\ref{sec:model}.  
The DG-IMEX scheme is presented in Section~\ref{sec:method}.  
Section~\ref{sec:iterativeSolvers} presents the iterative solvers used for the moment conversion process and the implicit collision solver.  
Section~\ref{sec:RealizabilityProof} is devoted to the analysis of the DG-IMEX scheme and proving its realizability-preserving property.  
Results from numerical experiments demonstrating the performance of the proposed method are presented in Section~\ref{sec:numericalResults}. 
Summary and conclusions are given in Section~\ref{sec:summaryConclusions}.  
We include some additional technical results in Appendices:  Appendix~\ref{appendix:q_bound} provides an estimate used in the dissipation term of the numerical flux in the energy dimension, while in  Appendix~\ref{appendix:convergence} we prove conditional convergence of the conserved to primitive solver.  

For the remainder of the paper, we adopt units where the speed of light is unity ($c=1$).  
We also use Einstein's summation convention, where repeated Greek indices imply summation from $0$ to $3$, and repeated Latin indices imply summation from $1$ to $3$.  

\section{Mathematical Model}
\label{sec:model}

We consider a special relativistic two-moment model in Cartesian spatial coordinates, where the evolution of the spectral radiation four-momentum is governed by \cite{shibata_etal_2011,cardall_etal_2013a}
\begin{equation}
  \p_{\nu}\cT^{\mu\nu}
  -\frac{1}{\varepsilon^{2}}\p_{\varepsilon}\big(\,\varepsilon^{2}\,\cQ^{\mu\nu\rho}\,\p_{\nu}u_{\rho}\,\big)
  =\frac{1}{4\pi}\int_{\mathbb{S}^{2}}\cC[f]\,p^{\mu}\,\frac{d\omega}{\varepsilon}.
  \label{eq:fourMomentumConservation}
\end{equation}
The angular moments corresponding to the energy-momentum and heat flux tensors are
\begin{subequations}
    \label{eq:angularMoments}
    \begin{align}
      \cT^{\mu\nu} 
      &= \frac{1}{4\pi}\int_{\bbS^{2}}f\,p^{\mu}p^{\nu}\,\frac{d\omega}{\varepsilon},\\
      \cQ^{\mu\nu\rho}
      &= \frac{1}{4\pi}\int_{\bbS^{2}}f\,p^{\mu}p^{\nu}p^{\rho}\,\frac{d\omega}{\varepsilon},
    \end{align}
\end{subequations}
respectively.  
In Eq.~\eqref{eq:angularMoments}, $f\colon (\omega,\varepsilon,\vect{x},t)\in\bbS^{2}\times\bbR^{+}\times\bbR^{3}\times\bbR^{+}\to\bbR^{+}$ is the particle distribution function, which gives the number of particles in an infinitesimal phase-space volume element propagating in the direction $\omega\in\bbS^{2}$, with energy $\varepsilon\in\bbR^{+}$, at position $\vect{x}\in\bbR^{3}$ and time $t\in\bbR^{+}$, $p^{\mu}$ is the particle four-momentum, $d\omega=\sin\vartheta d\vartheta d\varphi$, and the integrals extend over the sphere
\begin{equation}
  \mathbb{S}^{2} = \big\{\,\omega\in(\vartheta,\varphi)~|~\vartheta\in[0,\pi],\,\varphi\in[0,2\pi)\,\big\},
  \label{eq:twoSphere}
\end{equation}
where $\vartheta$ and $\varphi$ are momentum-space angular coordinates.  
Through the collision term $\mathcal{C}[f]$ on the right-hand side of Eq.~\eqref{eq:fourMomentumConservation}, the particles interact with a fluid whose four-velocity is $u^{\mu}$, and $\{\varepsilon,\vartheta,\varphi\}$ are spherical-polar momentum space coordinates with respect to an orthonormal reference frame comoving with the fluid.  
Thus, the angular moments in Eq.~\eqref{eq:angularMoments} are functions of Eulerian-frame spacetime coordinates, $\{\vect{x},t\}$, and comoving-frame particle energy, $\varepsilon$. 
Eq.~\eqref{eq:fourMomentumConservation} corresponds to the special relativistic (i.e., flat spacetime) limit of Eq.~(3.18) in \cite{shibata_etal_2011} and Eq.~(40) in \cite{cardall_etal_2013a}, assuming Cartesian spatial coordinates, where covariant spacetime derivatives are replaced with partial derivatives (i.e., $\nabla_{\nu}\to\partial_{\nu}$).  

For simplicity, in this paper we write the collision term on the right-hand side of Eq.~\eqref{eq:fourMomentumConservation} as
\begin{equation}
    \frac{1}{\varepsilon}\,\cC[f]
    =\eta-\chi\,f + \sigma\,\big(\,\frac{1}{4\pi}\int_{\bbS^{2}}fd\omega-f\,\big),
    \label{eq:collisionTerm}
\end{equation}
where $\eta$ and $\chi$ are the emissivity and absorption opacity, respectively, while $\sigma$ is the scattering rate for isotropic and elastic scattering.  
Here, $\eta$, $\chi$, and $\sigma$ may depend on $\varepsilon$, but are assumed to be independent of $\omega$.  
We also define the total opacity $\kappa=\chi+\sigma$ and (for $\chi>0$) the equilibrium density $\cJ_{\Equilibrium}=\varepsilon\,(\eta/\chi)$.

The fluid four-velocity, the four-velocity of a Lagrangian/comoving observer, has the \emph{Eulerian decomposition}
\begin{equation}
  u^{\mu} = W\,\big(\,n^{\mu}+v^{\mu}\big)=W\big(1,v^{i}\,\big),
  \label{eq:fluidFourVelocity}
\end{equation}
where 
\begin{equation}
    n^{\mu}=(1,0,0,0)
    \label{eq:eulerianFourVelocity}
\end{equation}
is the four-velocity of an Eulerian observer, normalized so that $n_{\mu}n^{\mu}=-1$, and $v^{\mu}=(0,v^{i})$ are the Eulerian components of the fluid three-velocity, orthogonal to $n^{\mu}$, so that $n_{\mu}v^{\mu}=\eta_{\mu\nu}n^{\nu}v^{\mu}=0$.  
Here, 
\begin{equation}
    \eta_{\mu\nu}=\mbox{diag}[-1,1,1,1]
    \label{eq:minkowskiMetric}
\end{equation}
is the Minkowski metric.  
With the normalization $u_{\mu}u^{\mu}=-1$, it follows that $W^{2}=1/(1-v^{2})$ is the squared Lorentz factor, where $v^{2}=v_{\mu}v^{\mu}=v_{i}v^{i}$.  
(In this paper we always assume $v<1$.)
Similarly, the particle four-momentum has the \emph{Lagrangian decomposition}
\begin{equation}
  p^{\mu} = \varepsilon\big(u^{\mu}+\ell^{\mu}\big),
  \label{eq:particleFourMomentum}
\end{equation}
where $\ell^{\mu}$ is a unit spacelike four-vector, $\ell_{\mu}\ell^{\mu}=1$, orthogonal to $u^{\mu}$ so that $u_{\mu}\ell^{\mu}=0$.  
Then, the component of $p^{\mu}$ along $u^{\mu}$, $\varepsilon=-u_{\mu}p^{\mu}$, is the particle energy measured by a Lagrangian observer.  
Eqs.~\eqref{eq:fluidFourVelocity} and \eqref{eq:particleFourMomentum} are decompositions relative to four-velocities $n^{\mu}$ (Eulerian observer) and $u^{\mu}$ (Lagrangian observer), respectively.  
We let
\begin{align}
  \gamma^{\mu}_{\hspace{4pt}\nu} 
  &= \delta^{\mu}_{\hspace{4pt}\nu} + n^{\mu}n_{\nu}, \label{eq:eulerianProjector} \\
  h^{\mu}_{\hspace{4pt}\nu} 
  &= \delta^{\mu}_{\hspace{4pt}\nu} + u^{\mu}u_{\nu}, \label{eq:lagrangianProjector}
\end{align}
denote the projectors orthogonal to $n^{\mu}$ and $u^{\mu}$, respectively.  
In particular, 
\begin{alignat}{2}
  \gamma^{\mu}_{\hspace{4pt}\nu}n^{\nu}&=0,
  \quad
  \gamma^{\mu}_{\hspace{4pt}\nu}u^{\nu}&&=Wv^{\mu},
  \\
  h^{\mu}_{\hspace{4pt}\nu}u^{\nu}&=0,
  \quad
  h^{\mu}_{\hspace{4pt}\nu}p^{\nu}&&=\varepsilon\ell^{\mu}.  
\end{alignat}

We also introduce the Eulerian decomposition of the particle four-momentum 
\begin{equation}
    p^{\mu} = E\,\big(\,n^{\mu}+L^{\mu}\,\big),
    \label{eq:particleFourMomentum_Eulerian}
\end{equation}
where $n_{\mu}L^{\mu}=0$.  
Using the Eulerian four-velocity, the projector in Eq.~\eqref{eq:eulerianProjector}, and the Lagrangian decomposition in Eq.~\eqref{eq:particleFourMomentum}, the Eulerian components can be expressed in terms of components of the Lagrangian decomposition as
\begin{align}
  E=-n_{\mu}p^{\mu}
  &= \varepsilon\,\big(\,W+v_{\mu}\,\ell^{\mu}\,\big), \label{eq:EulerianParticleEnergy} \\
  L^{\mu}=\gamma^{\mu}_{\hspace{4pt}\nu}p^{\nu}/E
  &= \frac{\big(\,\ell^{\mu}+Wv^{\mu}-n^{\mu}\,v^{\nu}\,\ell_{\nu}\,\big)}{\big(\,W+v^{\nu}\,\ell_{\nu}\,\big)}.
  \label{eq:EulerianUnitDirection}
\end{align}
Here, $E$ is the particle energy measured by an Eulerian observer.  
A straightforward calculation shows that $L_{\mu}L^{\mu}=1$.  
Moreover, since $n_{\mu}L^{\mu}=0$, it follows that $L^{0}=0$.  
This can also be verified by direct evaluation using Eq.~\eqref{eq:EulerianUnitDirection} (noting that $\ell^{0}=v^{\nu}\ell_{\nu}$).
Since $\varepsilon>0$ and $v<1$, it is straightforward to verify that $E>0$.  
Here, $E$ is to be considered a function of the comoving momentum space coordinates $\varepsilon$ and $\omega$, while $L^{\mu}$ is a function of the momentum space angular coordinates $\omega$ (similar to $\ell^{\mu}$).  

\subsection{Evolution Equations}
\label{sec:model.EvolutionEquations}

Following \cite{cardall_etal_2013a}, we define Eulerian and Lagrangian decompositions of the energy-momentum tensor
\begin{align}
  \mathcal{T}^{\mu\nu}
  &= \mathcal{E}\,n^{\mu}\,n^{\nu} + \mathcal{F}^{\mu}\,n^{\nu} + n^{\mu}\,\mathcal{F}^{\nu} + \mathcal{S}^{\mu\nu}, \label{eq:energyMomentumEulerian} \\
  &= \cJ\,u^{\mu}\,u^{\nu} + \mathcal{H}^{\mu}\,u^{\nu} + u^{\mu}\,\mathcal{H}^{\nu} + \mathcal{K}^{\mu\nu}, \label{eq:energyMomentumLagrangian}
\end{align}
where, after inserting Eq.~\eqref{eq:particleFourMomentum} into Eq.~\eqref{eq:angularMoments}, the components of the Lagrangian decomposition are found to be given by
\begin{subequations}
    \label{eq:lagrangianMoments}
    \begin{align}
        \cJ&=\frac{\varepsilon}{4\pi}\int_{\bbS^{2}}fd\omega,\\
        \cH^{\mu}&=\frac{\varepsilon}{4\pi}\int_{\bbS^{2}}f\ell^{\mu}d\omega,\\
        \cK^{\mu\nu}
        &=\frac{\varepsilon}{4\pi}\int_{\bbS^{2}}f\ell^{\mu}\ell^{\nu}d\omega,
    \end{align}
\end{subequations}
and $u_{\mu}\mathcal{H}^{\mu}=0$ and $u_{\mu}\mathcal{K}^{\mu\nu}=u_{\nu}\mathcal{K}^{\mu\nu}=0$.  
The components of the Eulerian decomposition in Eq.~\eqref{eq:energyMomentumEulerian} satisfy $n_{\mu}\mathcal{F}^{\mu}=0$ and $n_{\mu}\mathcal{S}^{\mu\nu}=n_{\nu}\mathcal{S}^{\mu\nu}=0$, and can be expressed in terms of the Lagrangian moments in Eq.~\eqref{eq:lagrangianMoments} as \cite{cardall_etal_2013a}
\begin{alignat}{2}
    \cE
    &=n_{\mu}n_{\nu}\cT^{\mu\nu}
    &&=W^{2}\cJ + 2Wv^{\mu}\cH_{\mu}+v^{\mu}v^{\nu}\cK_{\mu\nu}, \label{eq:eulerianEnergy} \\
    \cF^{i}
    &=-n_{\mu}\gamma^{i}_{\hspace{4pt}\nu}\cT^{\mu\nu}
    &&=W\cH^{i} + Wv^{i}\big(\,W\cJ + v^{j}\cH_{j}\,\big) + v^{j}\cK^{i}_{\hspace{4pt}j}, \label{eq:eulerianMomentum} \\
    \cS^{ij}
    &=\gamma^{i}_{\hspace{4pt}\mu}\gamma^{j}_{\hspace{4pt}\nu}\cT^{\mu\nu}
    &&=\cK^{ij} + W\big(\,\cH^{i}v^{j}
    +v^{i}\cH^{j}\,\big) + W^{2}v^{i}v^{j}\cJ.\label{eq:eulerianStress}
\end{alignat}
Because $n_{\mu}=\eta_{\mu\nu}n^{\nu}=(-1,0,0,0)$, the orthogonality conditions on the Eulerian components imply that $\cF^{0}=0$, $\cS^{0\nu}=0$, and $\cS^{\mu0}=0$.  
Moreover, in Minkowski spacetime, where the metric is given by Eq.~\eqref{eq:minkowskiMetric}, $\cF_{0}=\eta_{0\nu}\cF^{\nu}=\eta_{00}\cF^{0}=0$.  
For the same reason, $\cS_{0\nu}=0$ and $\cS_{\mu0}=0$.  

We obtain evolution equations for the Eulerian energy and momentum by, respectively, projecting Eq.~\eqref{eq:fourMomentumConservation} along $n^{\mu}$ and tangential to the slice with normal $n^{\mu}$ (using $\gamma_{\mu\nu}$).  
The result is (cf. \cite{shibata_etal_2011,cardall_etal_2013a})
\begin{subequations}
    \label{eq:momentEquations}
    \begin{align}
        \p_{t}\cE +& \p_{i}\cF^{i} 
        - \frac{1}{\varepsilon^{2}}\p_{\varepsilon}\big(\,\varepsilon^{2}\,(-n_{\mu})\,\cQ^{\mu\nu\rho}\,\p_{\nu}u_{\rho}\,\big)\nonumber\\
        =&W\chi(\cJ_{\Equilibrium}-\cJ)-\kappa\, v^{i}\cH_{i}, \label{eq:momentEquation_Energy} \\
        \p_{t}\cF_{j} +& \p_{i}\cS^{i}_{\hspace{4pt}j} 
        - \frac{1}{\varepsilon^{2}}\p_{\varepsilon}\big(\,\varepsilon^{2}\,\gamma_{j\mu}\,\cQ^{\mu\nu\rho}\,\p_{\nu}u_{\rho}\,\big) \nonumber\\
        =& -\kappa\,\cH_{j} + Wv_{j}\,\chi (\cJ_{\Equilibrium}-\cJ). \label{eq:momentEquation_Momentum}
    \end{align}
\end{subequations}
Note that indices on vectors and tensors are lowered and raised with the metric, $\eta_{\mu\nu}$, and its inverse, $\eta^{\mu\nu}$, respectively; e.g., $\cF_{j}=\eta_{j\mu}\cF^{\mu}=\delta_{ij}\cF^{i}$.

Eqs.~\eqref{eq:momentEquation_Energy}-\eqref{eq:momentEquation_Momentum} comprise the system for which we develop the DG-IMEX method, beginning in Section~\ref{sec:method}.  
In the absence of collisional sources on the right-hand side, they reduce to phase-space conservation laws for the spectral energy and momentum momentum densities.  
By further integrating over particle energy, with weight $\varepsilon^{2}$, and assuming that the distribution vanishes sufficiently fast for large $\varepsilon$, we obtain conservation laws for the Eulerian-frame energy and momentum,
\begin{equation}
    \p_{t}\mathsf{E}+\p_{i}\mathsf{F}^{i} = 0
    \quad\text{and}\quad
    \p_{t}\mathsf{F}_{j}+\p_{i}\mathsf{S}^{i}_{\hspace{4pt}j} = 0,
\end{equation}
respectively, where the Eulerian-frame grey moments are defined as
\begin{equation}
    \{\,\mathsf{E},\mathsf{F}^{i},\mathsf{S}^{ij}\,\}=4\pi\int_{\bbR^{+}}\{\,\cE,\cF^{i},\cS^{ij}\,\}\,\varepsilon^{2}\,d\varepsilon.
    \label{eq:eulerianGreyMoments}
\end{equation}

\response
{
We point out that the system in Eq.~\eqref{eq:momentEquations} is consistent with the following evolution equation for the Eulerian number density \cite{cardall_etal_2013a}
\begin{equation}
    \p_{t}\cN + \p_{i}\cF_{\cN}^{i} 
    - \frac{1}{\varepsilon^{2}}\p_{\varepsilon}\big(\,\varepsilon^{2}\,\cT^{\nu\rho}\,\p_{\nu}u_{\rho}\,\big)
    =\chi\,\big(\cJ_{\Equilibrium}-\cJ\big)/\varepsilon,
    \label{eq:eulerianNumberEquation}
\end{equation}
where the Eulerian number density is given by
\begin{equation}
    \cN=W(\cE-v^{i}\cF_{i})/\varepsilon=(W\cJ+v^{i}\cH_{i})/\varepsilon
    \label{eq:eulerianNumberDensity}
\end{equation}
and $\cF_{\cN}^{i}=(\cH^{i}+W\cJ v^{i})/\varepsilon$.  
In the absence of collisions, Eq.~\eqref{eq:eulerianNumberEquation} is a phase-space conservation law for the spectral number density.  
Then, by further integrating Eq.~\eqref{eq:eulerianNumberEquation} over particle energy, we obtain a conservation law for the Eulerian-frame number
\begin{equation}
    \p_{t}\mathsf{N}+\p_{i}\mathsf{F}_{\mathsf{N}}^{i}=0,
\end{equation}
where the Eulerian-frame gray number density and number flux are defined as
\begin{equation}
    \{\mathsf{N},\mathsf{F}_{\mathsf{N}}^{i}\}
    =4\pi\int_{\bbR^{+}}\{\cN,\cF_{\cN}^{i}\}\,\varepsilon^{2}d\varepsilon.  
\end{equation}
As elaborated in detail in \cite{cardall_etal_2013a} (see also Section~6.5.4 in \cite{mezzacappa_etal_2020} for the special relativistic case considered here), the analytical relationship between the system in Eq.~\eqref{eq:momentEquations} and Eq.~\eqref{eq:eulerianNumberEquation}, as can be inferred from Eq.~\eqref{eq:eulerianNumberDensity}, is nontrivial.  
It involves exact cancellation of terms remaining after bringing $W/\varepsilon$ and $Wv^{i}/\varepsilon$ inside the phase-space divergences in Eq.~\eqref{eq:momentEquations}.  
When solving the system in Eq.~\eqref{eq:momentEquations} numerically, maintaining consistency with Eq.~\eqref{eq:eulerianNumberEquation} at the discrete level for \emph{simultaneous} four-momentum and number conservation is challenging, and, to our knowledge, remains an open problem.  
Capturing this consistency is not a focus of this paper, but we monitor the number density in some of the test cases considered in Section~\ref{sec:numericalResults}.  
}

\response{
\begin{rem}
    As a simplification to make our analysis more tractable, we assume that the material background is steady; i.e., the fluid four-velocity ($u^{\mu}$), the emissivity ($\eta$), the absorption and scattering opacities ($\chi$ and $\sigma$), and the equilibrium density ($\cJ_{\Equilibrium}$) are assumed to be independent of time.  
    Without these simplifying assumptions, the spectral two-moment model should be coupled to equations for relativistic hydrodynamics (e.g., \cite{rezzollaZanotti_2013}), which is beyond the scope of this paper.  
    We discuss the relevance of our work to this more general case in Section~\ref{sec:summaryConclusions}.  
\end{rem}
}

\subsection{Moment Closure}
\label{sec:closure}

The two-moment model given by Eqs.~\eqref{eq:momentEquation_Energy}-\eqref{eq:momentEquation_Momentum} contains higher order moments, and is not closed.  
To close the system, the higher-order moments are determined through a closure procedure.  
Here, the pressure tensor $\mathcal{K}^{\mu\nu}$ is obtained from the lower-order moments as (e.g., \cite{levermore_1984,Anile_etal_1992})
\begin{equation}
  \mathcal{K}^{\mu\nu}
  = \frac{1}{2}\,\Big[\,(1-\mathsf{k})\,h^{\mu\nu}+(3\mathsf{k}-1)\,\widehat{\mathsf{h}}^{\mu}\widehat{\mathsf{h}}^{\nu}\,\Big]\,\cJ,
  \label{eq:pressureTensor}
\end{equation}
where $\mathsf{k}$ is the Eddington factor (in general a function of the flux factor $\mathsf{h}=\mathcal{H}/\cJ$ and $\cJ$, where $\mathcal{H}=\sqrt{\mathcal{H}_{\mu}\mathcal{H}^{\mu}}$) and $\widehat{\mathsf{h}}^{\mu}=\mathcal{H}^{\mu}/\mathcal{H}$.  
The pressure tensor in Eq.~\eqref{eq:pressureTensor} satisfies the trace condition
\begin{equation}
    \cK^{\mu}_{\hspace{4pt}\mu}
    =\eta_{\mu\nu}\,\cK^{\mu\nu}=\cJ.
\end{equation}
Moreover, since $\widehat{\mathsf{h}}_{\mu}\,\widehat{\mathsf{h}}_{\nu}\,\cK^{\mu\nu}=\mathsf{k}\,\cJ$, the definition of $\cK^{\mu\nu}$ in Eq.~\eqref{eq:lagrangianMoments} implies that the Eddington factor is given by
\begin{equation}
    \mathsf{k} 
    = \frac{\int_{\bbS^{2}}f\, (\widehat{\mathsf{h}}_{\mu}\ell^{\mu})^{2}\, d\omega}{\int_{\bbS^{2}}f d\omega}
    =
    \frac{\frac{1}{2}\int_{-1}^1\mathfrak{f}(\mu)\mu^2\,d\mu}{\frac{1}{2}\int_{-1}^1\mathfrak{f}(\mu)\,d\mu},
\end{equation}
where we have defined
\begin{equation}
    \mathfrak{f}(\mu) = \frac{1}{2\pi}\int_{0}^{2\pi}f(\mu,\varphi)\,d\varphi.
\end{equation}
With this definition, the momentum-space coordinates in the co-moving frame have been aligned with $\widehat{\mathsf{h}}_{\mu}$ so that $\widehat{\mathsf{h}}_{\mu}\ell^{\mu} = \cos\vartheta = \mu$.  
Below, a functional form is imposed on $\mathfrak{f}(\mu)$, which allows the Eddington factor to be computed.  

The energy derivatives in Eqs.~\eqref{eq:momentEquation_Energy}-\eqref{eq:momentEquation_Momentum} contain projections of the rank-three tensor $\mathcal{Q}^{\mu\nu\rho}$, which has the Lagrangian decomposition
\begin{align}
  \mathcal{Q}^{\mu\nu\rho}/\varepsilon
  &= \cJ\,u^{\mu}\,u^{\nu}\,u^{\rho} + \mathcal{H}^{\mu}\,u^{\nu}\,u^{\rho} + \mathcal{H}^{\nu}\,u^{\mu}\,u^{\rho} + \mathcal{H}^{\rho}\,u^{\mu}\,u^{\nu} \nonumber \\
  &\hspace{12pt} + \mathcal{K}^{\mu\nu}\,u^{\rho} + \mathcal{K}^{\mu\rho}\,u^{\nu} + \mathcal{K}^{\nu\rho}\,u^{\mu} + \mathcal{L}^{\mu\nu\rho}, \label{eq:heatFluxLagrangian}
\end{align}
where
\begin{equation}
    \mathcal{L}^{\mu\nu\rho}=\frac{\varepsilon}{4\pi}\int_{\bbS^{2}}f\ell^{\mu}\ell^{\nu}\ell^{\rho}d\omega.  
    \label{eq:lagrangianRankThreeMoment}
\end{equation}
In analogy with Eq.~\eqref{eq:pressureTensor}, the rank-three (heat flux) tensor is written as (e.g., \cite{Pennisi_1992,just_etal_2015})
\begin{align}
  \mathcal{L}^{\mu\nu\rho}
  =\frac{1}{2}\,
  \Big[\,&
    (\mathsf{h}-\mathsf{q})\,\big(\,\widehat{\mathsf{h}}^{\mu}\,h^{\nu\rho}+\widehat{\mathsf{h}}^{\nu}\,h^{\mu\rho}+\widehat{\mathsf{h}}^{\rho}\,h^{\mu\nu}\,\big) \nonumber\\
    &+(5\mathsf{q}-3\mathsf{h})\,\widehat{\mathsf{h}}^{\mu}\,\widehat{\mathsf{h}}^{\nu}\,\widehat{\mathsf{h}}^{\rho}
  \,\Big]\,\cJ,
  \label{eq:heatFluxTensor}
\end{align}
where $\mathsf{q}$ is the heat flux factor (also a function of $\mathsf{h}$ and $\cJ$).  
The heat flux tensor in Eq.~\eqref{eq:heatFluxTensor} satisfies the trace conditions
\begin{equation}
    \cL^{\mu\nu}_{\hspace{8pt}\nu}
    =\eta_{\nu\rho}\,\cL^{\mu\nu\rho}
    =\cH^{\mu}.  
\end{equation}
Then, since $\widehat{\mathsf{h}}_{\mu}\,\widehat{\mathsf{h}}_{\nu}\,\widehat{\mathsf{h}}_{\rho}\,\cL^{\mu\nu\rho}=\mathsf{q}\,\cJ$, the definition in Eq.~\eqref{eq:lagrangianRankThreeMoment} gives the heat flux factor
\begin{equation}
    \mathsf{q}
    = \frac{\int_{\bbS^{2}}f\,(\widehat{\mathsf{h}}_{\mu}\ell^{\mu})^{3}\,d\omega}{\int_{\bbS^{2}}f d\omega}
    =
    \frac{\frac{1}{2}\int_{-1}^1\mathfrak{f}(\mu)\mu^3\,d\mu}{\frac{1}{2}\int_{-1}^1\mathfrak{f}(\mu)\,d\mu}.
\end{equation}

The moment model in Eq.~\eqref{eq:momentEquations} is closed when $\mathsf{k}$ and $\mathsf{q}$ are specified in terms of $\mathsf{h}$ and $\cJ$.  
We determine $\mathsf{k}$ and $\mathsf{q}$ using the maximum entropy closure (see, e.g., \cite{minerbo_1978,cernohorskyBludman_1994,lareckiBanach_2011,richers_2020}).  
In this approach, $\mathsf{k}$ and $\mathsf{q}$ are determined by finding a distribution function $\mathfrak{f}_{\MaxEnt}$ that maximizes the entropy and recovers the lower order moments $\cJ$ and $\cH=\widehat{\mathsf{h}}_{\mu}\cH^{\mu}$.  
In the simple case of Maxwell--Boltzmann statistics, the maximum entropy distribution has the general form \cite{minerbo_1978}
\begin{equation}
    \mathfrak{f}_{\MaxEnt}(\mu) = \exp(\alpha_0+\alpha_1 \mu).
\end{equation}
Given $\cJ$ and $\cH$, direct integration of $\mathfrak{f}_{\MaxEnt}$  yields
\begin{align}
    \cJ 
    = \frac{1}{2}\int_{-1}^1\mathfrak{f}_{\MaxEnt}(\mu)\,d\mu 
    =& e^{\alpha_0}\sinh(\alpha_1)/\alpha_1, \\
    \cH 
    = \frac{1}{2}\int_{-1}^1\mathfrak{f}_{\MaxEnt}(\mu)\mu\,d\mu 
    =&
    e^{\alpha_0}(\alpha_1\cosh(\alpha_1)-\sinh(\alpha_1))/\alpha_1^2,
\end{align}
which can be solved for $\alpha_0$ and $\alpha_1$.  
In particular, the known flux factor, $\mathsf{h}$, can be expressed in terms of the Langevin function on $\alpha_1$
\begin{equation}
    \mathsf{h} = \coth(\alpha_1)-1/\alpha_1 =\vcentcolon L(\alpha_1).
\end{equation}
The coefficient $\alpha_1(\mathsf{h})=L^{-1}(\mathsf{h})$ is then obtained by inverting the Langevin function.  
Once $\alpha_{1}$ is obtained, $\alpha_{0}$ and $\mathfrak{f}_{\MaxEnt}$ are easily obtained.  
Then, by directly integrating $\mathfrak{f}_{\MaxEnt}$, we have
\begin{equation}
    \mathsf{k}(\mathsf{h}) = 1-\frac{2\mathsf{h}}{\alpha_1(\mathsf{h})},
    \quad
    \mathsf{q}(\mathsf{h}) = \coth(\alpha_1(\mathsf{h})) - \frac{3\mathsf{k}(\mathsf{h})}{\alpha_1(\mathsf{h})}.
\end{equation}
This is the closure given by Minerbo.

Inverting the Langevin function requires numerical root finding, which can be costly.  
In practice we use polynomial approximations to $\mathsf{k}$ and $\mathsf{q}$ in terms of $\mathsf{h}$.  
This leads to the computationally more efficient algebraic expressions \cite{cernohorskyBludman_1994,just_etal_2015}
\begin{align}
    \mathsf{k}(\mathsf{h}) 
    &=
    \frac{1}{3}
    +\frac{2}{15}(3\mathsf{h}^2-\mathsf{h}^3+3\mathsf{h}^4), 
    \label{eq:eddingtonFactorAlgebraic} \\
    \mathsf{q}(\mathsf{h})
    &=\frac{\mathsf{h}}{75}(45+10\mathsf{h}-12\mathsf{h}^2-12\mathsf{h}^3+38\mathsf{h}^4-12\mathsf{h}^5+18\mathsf{h}^6).
    \label{eq:heatFluxFactorAlgebraic}
\end{align}
The algebraic expression for $\mathsf{k}$ is accurate to within one percent, while the expression for $\mathsf{q}$ is accurate to within three percent (e.g., \cite{laiu_etal_2025}).  

\subsection{Tetrad Formalism}
\label{sec:tetradFormalism}

In the following, it will sometimes be useful to appeal to the tetrad formalism (e.g., \cite{lindquist_1966,cardallMezzacappa_2003}), which relates components of coordinate basis four-vectors and tensors (e.g., $p^{\mu}$) to corresponding components in a local orthonormal frame comoving with the fluid (indices adorned with a `hat'; e.g., $p^{\hat{\mu}}$)
\begin{equation}
  p^{\mu} = \cL^{\mu}_{\hspace{4pt}\hat{\mu}}\,p^{\hat{\mu}},
  \quad\text{where}\quad
  p^{\hat{\mu}} = \varepsilon\,\big(1,\ell^{\hat{\imath}}\big),
  \label{eq:fourMomentumTetradBasis}
\end{equation}
and where
\begin{equation}
  \ell^{\hat{\mu}} = \big(\,0,\,\ell^{\hat{\imath}}\,\big) = \big(\,0,\,\cos\vartheta,\,\sin\vartheta\cos\varphi,\,\sin\vartheta\sin\varphi\,\big)
  \label{eq:orthonormalComovingUnit}
\end{equation}
is a unit spatial four-vector with spatial components parallel to the particle three-momentum in the orthonormal comoving frame.  

In the special relativistic case with Cartesian spatial coordinates considered here, the transformation $\cL^{\mu}_{\hspace{4pt}\hat{\mu}}$ from the orthonormal comoving basis to the coordinate basis is simply the Lorentz transformation
\begin{align}
  \cL^{\mu}_{\hspace{4pt}\hat{\mu}}
  =&\left(\begin{array}{cc}
  \cL^{0}_{\hspace{4pt}\hat{0}} & \cL^{0}_{\hspace{4pt}\hat{\imath}} \\
  \cL^{i}_{\hspace{4pt}\hat{0}}  & \cL^{i}_{\hspace{4pt}\hat{\imath}}
  \end{array}\right) \nonumber\\
  =&\left(\begin{array}{cc}
  W & W V_{\hat{\imath}} \\
  W V^{i} & \delta^{i}_{\hspace{4pt}\hat{\imath}}+(W-1)(V^{i}/V)(V_{\hat{\imath}}/V)
  \end{array}\right),
  \label{eq:lorentzTransformation}
\end{align}
We follow \cite[Appendix~B]{cardall_etal_2013a}, where $V=\sqrt{v^{\mu}v_{\mu}}=\sqrt{v^{i}v_{i}}$ and
\begin{equation}
  V^{1} = v^{1} = V_{\hat{1}}, \quad
  V^{2} = v^{2} = V_{\hat{2}}, \quad
  V^{3} = v^{3} = V_{\hat{3}}
\end{equation}
are three-velocity parameters in the Lorentz boost (not to be viewed here as components of four-vectors). This is expressed to be consistent with our general index convention, where unadorned indices are used to denote components of coordinate basis four-vectors while indices with a hat denote components of four-vectors expressed in an orthonormal basis comoving with the fluid.  
The inverse transformation (obtained by replacing $V^{i}\to-V^{\hat{\imath}}$, $V_{\hat{\imath}}\to-V_{i}$, and $\delta^{i}_{\hspace{4pt}\hat{\imath}}\to\delta^{\hat{\imath}}_{\hspace{4pt}i}$ in Eq.~\eqref{eq:lorentzTransformation}) is denoted $\cL^{\hat{\mu}}_{\hspace{4pt}\mu}$, and satisfies $\cL^{\mu}_{\hspace{4pt}\hat{\mu}}\cL^{\hat{\mu}}_{\hspace{4pt}\nu}=\delta^{\mu}_{\hspace{4pt}\nu}$.  

Using Eq.~\eqref{eq:fourMomentumTetradBasis} to write
\begin{equation}
    p^{\mu}=\varepsilon\,(\cL^{\mu}_{\hspace{4pt}\hat{0}}+\cL^{\mu}_{\hspace{4pt}\hat{\imath}}\ell^{\hat{\imath}}),
\end{equation}
a comparison with Eq.~\eqref{eq:particleFourMomentum} shows that
\begin{equation}
    u^{\mu}=\cL^{\mu}_{\hspace{4pt}\hat{0}}=\cL^{\mu}_{\hspace{4pt}\hat{\mu}}u^{\hat{\mu}}
    \quad\text{and}\quad
    \ell^{\mu}=\cL^{\mu}_{\hspace{4pt}\hat{\imath}}\ell^{\hat{\imath}}=\cL^{\mu}_{\hspace{4pt}\hat{\mu}}\ell^{\hat{\mu}}.
    \label{eq:tetradUandL}
\end{equation}
(Note that $u^{\hat{\mu}}=(1,0,0,0)$.)  
Using Eq.~\eqref{eq:tetradUandL}, we can, as an example, write $\cH^{\mu}$ in Eq.~\eqref{eq:lagrangianMoments} as
\begin{equation}
    \cH^{\mu}
    =\cL^{\mu}_{\hspace{4pt}\hat{\mu}}\,\cH^{\hat{\mu}},
    \quad\text{where}\quad
    \cH^{\hat{\mu}}
    =\frac{\varepsilon}{4\pi}\int_{\bbS^{2}}f\,\ell^{\hat{\mu}}\,d\omega.
    \label{eq:lagrangianMomentsHat}
\end{equation}

\subsection{Moment Realizability}

In this paper, we aim to design a realizability-preserving numerical method for the two-moment model in Eq.~\eqref{eq:momentEquations}.  
In the relativistic setting, multiple moment pairs are encountered; e.g., the Lagrangian moments $(\cJ,\cH_{\mu})^{\intercal}$ and the Eulerian moments $(\cE,\cF_{\mu})^{\intercal}$.  
To introduce the concept of moment realizability, we first define the set of non-negative distribution functions 
\begin{equation}
  \mathfrak{R} = \Big\{\,f~\big|~f\ge0 \quad\text{and}\quad \frac{1}{4\pi}\int_{\mathbb{S}^{2}}f\,d\omega > 0\,\Big\}.  
  \label{eq:realizableDistributions}
\end{equation}
(I.e., we avoid the trivial case where $f$ is zero everywhere.)
The moments $\boldsymbol{\mathscr{M}}=(\mathscr{J},\mathscr{H}_{\mu})^{\intercal}$, defined by
\begin{equation}
    \mathscr{J}=\frac{1}{4\pi}\int_{\bbS^{2}}g\,d\omega
    \quad\text{and}\quad
    \mathscr{H}_{\mu}=\frac{1}{4\pi}\int_{\bbS^{2}}g\,\mathscr{L}_{\mu}\,d\omega,
    \label{eq:momentsGeneric}
\end{equation}
where $\mathscr{L}_{\mu}(\omega)$ is a spacelike unit four-vector ($\mathscr{L}_{\mu}\mathscr{L}^{\mu}=1$), are said to be realizable if they arise from a distribution function $g\in\mathfrak{R}$, and we define the set of realizable moments $\cR$ as 
\begin{equation}
    \cR = \big\{\, \boldsymbol{\mathscr{M}} = (\mathscr{J}, \mathscr{H}_{\mu})^{\intercal} \,\, \rvert \,\, \mathscr{J} > 0 \text{ and } \gamma(\boldsymbol{\mathscr{M}}) \geq 0 \,\big\},
    \label{eq:realizableSet}
\end{equation}
where $\gamma(\boldsymbol{\mathscr{M}}) = \mathscr{J} - \sqrt{\mathscr{H}_{\mu}\mathscr{H}^{\mu}}$.
It is straightforward to verify that $\boldsymbol{\mathscr{M}}\in\cR$ if and only if there exists an underlying distribution $g$ satisfying Eq.~\eqref{eq:momentsGeneric} that is in $\mathfrak{R}$. 

From the definition of the realizable set in Eq.~\eqref{eq:realizableSet}, the set of all realizable moments forms a convex cone.  
As a consequence, we have the following lemma
\begin{lemma}
\label{lem:convexcone}
    Let $\mathscr{M}^{(a)}=(\mathscr{J}^{(a)},\mathscr{H}^{(a)}_{\mu})^{\intercal}$ and $\mathscr{M}^{(b)}=(\mathscr{J}^{(b)},\mathscr{H}^{(b)}_{\mu})^{\intercal}$ be realizable moments.
    For $\theta_1,\theta_2>0$, define $\mathscr{M}^{(c)}\vcentcolon=\theta_1\mathscr{M}^{(a)}+\theta_2\mathscr{M}^{(b)}$.
    Then $\mathscr{M}^{(c)}\in\cR$. 
\end{lemma}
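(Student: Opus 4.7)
The plan is to exploit the equivalence stated immediately before the lemma: $\boldsymbol{\mathscr{M}}\in\cR$ if and only if there exists a distribution $g\in\mathfrak{R}$ generating the moments via Eq.~\eqref{eq:momentsGeneric}. Using the ``only if'' direction on the hypotheses gives distributions $g^{(a)},g^{(b)}\in\mathfrak{R}$ whose moments, computed from Eq.~\eqref{eq:momentsGeneric}, are $\mathscr{M}^{(a)}$ and $\mathscr{M}^{(b)}$ respectively. Form the convex combination $g^{(c)}\vcentcolon=\theta_1 g^{(a)}+\theta_2 g^{(b)}$.

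Next I would verify that $g^{(c)}\in\mathfrak{R}$. Nonnegativity is immediate, since $\theta_1,\theta_2>0$ and the summands are nonnegative. For the nontriviality condition, linearity of the angular integral gives $\tfrac{1}{4\pi}\int_{\mathbb{S}^{2}} g^{(c)}\,d\omega = \theta_1\mathscr{J}^{(a)}+\theta_2\mathscr{J}^{(b)}>0$, because both $\mathscr{J}^{(a)}$ and $\mathscr{J}^{(b)}$ are strictly positive by realizability. Hence $g^{(c)}\in\mathfrak{R}$. Applying the moment definitions in Eq.~\eqref{eq:momentsGeneric} to $g^{(c)}$ and using linearity once more shows that its moments are precisely $\theta_1\mathscr{M}^{(a)}+\theta_2\mathscr{M}^{(b)}=\mathscr{M}^{(c)}$. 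The ``if'' direction of the equivalence then delivers $\mathscr{M}^{(c)}\in\cR$.

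The argument is essentially a restatement of the fact that $\cR$ is a convex cone, which follows structurally from the linearity of the moment map together with the closure of $\mathfrak{R}$ under nonnegative linear combinations, so there is no real obstacle. If one instead wished to work directly from the algebraic characterization in Eq.~\eqref{eq:realizableSet}, the only nontrivial step would be to check $\gamma(\mathscr{M}^{(c)})\geq 0$, i.e., that $\sqrt{\mathscr{H}^{(c)}_{\mu}\mathscr{H}^{(c),\mu}}\leq\theta_1\sqrt{\mathscr{H}^{(a)}_{\mu}\mathscr{H}^{(a),\mu}}+\theta_2\sqrt{\mathscr{H}^{(b)}_{\mu}\mathscr{H}^{(b),\mu}}$; this amounts to the triangle inequality for the Euclidean norm, which is valid here because $\mathscr{H}^{\mu}$ lies in the spacelike subspace orthogonal to $n^{\mu}$ (equivalently, the spatial components form an $\mathbb{R}^{3}$ vector with the standard norm). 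The distribution-based route above avoids even that subtlety and is therefore the cleanest path.
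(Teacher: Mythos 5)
Your proof is correct and follows essentially the same route as the paper's: lift both moment vectors to distributions in $\mathfrak{R}$, take the conical combination $\theta_1 g^{(a)}+\theta_2 g^{(b)}$, and observe it remains in $\mathfrak{R}$ so that linearity of the moment map gives $\mathscr{M}^{(c)}\in\cR$. You are slightly more explicit than the paper in verifying the nontriviality condition $\tfrac{1}{4\pi}\int_{\mathbb{S}^2}g^{(c)}\,d\omega>0$, which is a harmless but welcome bit of extra rigor.
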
\label{lem:case2_addition_of_moments}
\begin{proof}
    Since $\mathscr{M}^{(a)}\in\cR$, there exists an $f^{(a)}\in\mathfrak{R}$ such that
    \[
    \mathscr{J}^{(a)}=\frac{1}{4\pi}\int_{\bbS^2}f^{(a)}\,d\omega \quad \text{and} \quad \mathscr{H}^{(a)}_\mu=\frac{1}{4\pi}\int_{\bbS^2}f^{(a)}\mathscr{L}^\mu\,d\omega.
    \]
    There exists an analogous $f^{(b)}\in\mathfrak{R}$ for $\mathscr{M}^{(b)}$.
    Let $f^{(c)}=\theta_1f^{(a)}+\theta_2f^{(b)}$.
    Then, since $\theta_1,\theta_2>0$, $f^{(c)}\in\mathfrak{R}$, and thus $\mathscr{M}^{(c)}$ is realizable as 
    \[
    \mathscr{J}^{(c)}=\frac{1}{4\pi}\int_{\bbS^2}f^{(c)}\,d\omega \quad \text{and} \quad \mathscr{H}^{(c)}_\mu=\frac{1}{4\pi}\int_{\bbS^2}f^{(c)}\mathscr{L}^\mu\,d\omega.
    \]
\end{proof}
\begin{corollary}
    Let $\mathscr{M}\in\cR$.  Then, for $\theta>0$, $\theta\mathscr{M}\in\cR$.
\end{corollary}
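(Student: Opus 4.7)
The plan is to derive the corollary as an immediate consequence of Lemma~\ref{lem:convexcone}. The key observation is that the statement asserts closure of $\cR$ under multiplication by positive scalars, which is one half of the definition of a convex cone; the lemma already establishes closure under positive linear combinations of two elements, so the corollary should follow by specializing the lemma.

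Concretely, given $\mathscr{M}\in\cR$ and $\theta>0$, I would set $\mathscr{M}^{(a)}=\mathscr{M}^{(b)}=\mathscr{M}$ in Lemma~\ref{lem:convexcone} and pick any positive weights $\theta_1,\theta_2>0$ with $\theta_1+\theta_2=\theta$ (for instance $\theta_1=\theta_2=\theta/2$). Then the lemma yields
\[
\theta\mathscr{M}=\theta_1\mathscr{M}^{(a)}+\theta_2\mathscr{M}^{(b)}\in\cR,
\]
which is exactly the desired conclusion.

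Alternatively, if one prefers a self-contained argument that does not route through the two-element version, one can reprove it directly from the definitions in essentially one line: since $\mathscr{M}\in\cR$, there exists $f\in\mathfrak{R}$ generating $\mathscr{M}$ via Eq.~\eqref{eq:momentsGeneric}; then $\theta f\geq 0$ and $\tfrac{1}{4\pi}\int_{\bbS^2}\theta f\,d\omega=\theta\mathscr{J}>0$, so $\theta f\in\mathfrak{R}$, and by linearity of the integral $\theta f$ generates $\theta\mathscr{M}$, proving $\theta\mathscr{M}\in\cR$.

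I do not anticipate any real obstacle here: the corollary is a one-line specialization of the lemma, and the only subtlety worth remarking on is the strict positivity of $\theta$, which is needed to ensure the rescaled distribution still lies in $\mathfrak{R}$ (the set in Eq.~\eqref{eq:realizableDistributions} excludes the identically-zero distribution). The nontrivial content was already absorbed into Lemma~\ref{lem:convexcone}.
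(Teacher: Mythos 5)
Your proof is correct and matches the paper's intent: the corollary is stated without an explicit proof precisely because it is the one-line specialization of Lemma~\ref{lem:convexcone} that you give (set $\mathscr{M}^{(a)}=\mathscr{M}^{(b)}=\mathscr{M}$ and split $\theta$ into two positive weights). Your remark about needing $\theta>0$ to keep the rescaled distribution in $\mathfrak{R}$, and the alternative direct argument via $\theta f$, are both accurate and consistent with the paper's framework.
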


\begin{rem}
    The definition of the realizable set in Eq.~\eqref{eq:realizableSet} is made with respect to the \underline{generic} moment pair in Eq.~\eqref{eq:momentsGeneric}.  
    In Propositions~\ref{prop:momentBoundsOrthonormalComoving}-\ref{prop:momentBoundsEulerian} we prove the realizability of \underline{specific} moment pairs that appear in the analysis of our proposed numerical method.
\end{rem}

\begin{prop}
    Let the moments $\cJ$ and $\cH^{\hat{\mu}}$ be defined as in Eqs.~\eqref{eq:lagrangianMoments} and \eqref{eq:lagrangianMomentsHat}, respectively, 
    where $\varepsilon>0$, $f\in\mathfrak{R}$, and $\ell^{\hat{\mu}}$ is defined in Eq.~\eqref{eq:orthonormalComovingUnit}.  
    Then $(\cJ,\cH_{\hat{\mu}})^{\intercal}\in\cR$.
    \label{prop:momentBoundsOrthonormalComoving}
\end{prop}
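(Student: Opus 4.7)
The plan is to verify that the pair $(\cJ, \cH_{\hat{\mu}})^{\intercal}$ satisfies the two defining conditions of the realizable set $\cR$ in Eq.~\eqref{eq:realizableSet}, and to cast the problem in the form of the generic moment pair in Eq.~\eqref{eq:momentsGeneric} so that the proposition falls under the abstract definition.

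First I would observe that $\ell^{\hat{\mu}}$ in Eq.~\eqref{eq:orthonormalComovingUnit} has $\ell^{\hat{0}}=0$ and spatial components $(\cos\vartheta,\sin\vartheta\cos\varphi,\sin\vartheta\sin\varphi)$ which form a unit Euclidean vector, so $\ell_{\hat{\mu}}\ell^{\hat{\mu}}=\delta_{\hat{\imath}\hat{\jmath}}\ell^{\hat{\imath}}\ell^{\hat{\jmath}}=1$, confirming that $\ell_{\hat{\mu}}$ plays the role of the spacelike unit four-vector $\mathscr{L}_\mu$ in Eq.~\eqref{eq:momentsGeneric}. Next, I would note that the weight $\varepsilon>0$ can be absorbed into the distribution: setting $g=\varepsilon f$ gives $g\ge 0$ with $\frac{1}{4\pi}\int_{\bbS^2}g\,d\omega>0$ because $f\in\mathfrak{R}$, so $g\in\mathfrak{R}$. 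Then $(\cJ,\cH_{\hat{\mu}})^{\intercal}$ coincides exactly with $(\mathscr{J},\mathscr{H}_{\hat{\mu}})^{\intercal}$ in Eq.~\eqref{eq:momentsGeneric} with this choice of $g$ and $\mathscr{L}_{\hat{\mu}}=\ell_{\hat{\mu}}$.

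To close the argument it suffices to check the two defining inequalities. Positivity $\cJ>0$ is immediate from $g\in\mathfrak{R}$. For $\gamma(\boldsymbol{\mathscr{M}})\ge 0$, note that $\cH^{\hat{0}}=0$, so $\cH_{\hat{\mu}}\cH^{\hat{\mu}}=\delta_{\hat{\imath}\hat{\jmath}}\cH^{\hat{\imath}}\cH^{\hat{\jmath}}$ is a Euclidean norm squared. Letting $\hat{n}_{\hat{\imath}}=\cH_{\hat{\imath}}/\sqrt{\cH_{\hat{\jmath}}\cH^{\hat{\jmath}}}$ (taking the trivial case $\cH^{\hat{\imath}}=0$ separately), I would compute
\begin{equation*}
\sqrt{\cH_{\hat{\mu}}\cH^{\hat{\mu}}}
=\hat{n}_{\hat{\imath}}\cH^{\hat{\imath}}
=\frac{\varepsilon}{4\pi}\int_{\bbS^2}f\,\hat{n}_{\hat{\imath}}\ell^{\hat{\imath}}\,d\omega
\le\frac{\varepsilon}{4\pi}\int_{\bbS^2}f\,|\hat{n}_{\hat{\imath}}\ell^{\hat{\imath}}|\,d\omega
\le\frac{\varepsilon}{4\pi}\int_{\bbS^2}f\,d\omega=\cJ,
\end{equation*}
using $f\ge 0$ and the Cauchy--Schwarz bound $|\hat{n}_{\hat{\imath}}\ell^{\hat{\imath}}|\le 1$ in Euclidean three-space (since both $\hat{n}$ and $\ell^{\hat{\imath}}$ are unit). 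Hence $\gamma(\cJ,\cH_{\hat{\mu}})\ge 0$ and the pair lies in $\cR$.

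There is no real obstacle here; the only point that requires care is the reduction to the Euclidean setting, namely recognizing that in the orthonormal comoving basis both index raising on $\ell^{\hat{\imath}}$ and the contraction $\cH_{\hat{\mu}}\cH^{\hat{\mu}}$ are carried out with $\delta_{\hat{\imath}\hat{\jmath}}$ (because $\cH^{\hat{0}}=0$), so that the Cauchy--Schwarz argument applies cleanly. All subsequent realizability propositions for Eulerian and mixed moment pairs can then be bootstrapped from this orthonormal-frame statement via the Lorentz transformation $\cL^{\mu}_{\hspace{4pt}\hat{\mu}}$ in Eq.~\eqref{eq:lorentzTransformation}.
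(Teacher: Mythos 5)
Your proof is correct. The paper's own argument also reduces to the observation that $\cH^{\hat0}=0$ and then applies Cauchy--Schwarz, but at the level of the squared norm under the integral: it bounds $\cH_{\hat\imath}\cH^{\hat\imath}=\big(\frac{\varepsilon}{4\pi}\big)^2\int f\ell_{\hat\imath}\,d\omega\int f\ell^{\hat\imath}\,d\omega$ by $\big(\frac{\varepsilon}{4\pi}\big)^2\int f\,d\omega\int f\,\ell_{\hat\imath}\ell^{\hat\imath}\,d\omega=\cJ^2$, i.e.\ a vector-valued integral Cauchy--Schwarz, then takes a square root. You instead use the dual characterization of the Euclidean norm, projecting $\cH^{\hat\imath}$ onto its own unit direction $\hat n_{\hat\imath}$ and bounding the integrand pointwise via $|\hat n_{\hat\imath}\ell^{\hat\imath}|\le 1$ together with $f\ge 0$. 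The two routes are equivalent in content; yours buys you a more elementary pointwise bound at the cost of handling the degenerate case $\cH^{\hat\imath}=0$ separately (which you correctly flag), while the paper's version is marginally shorter because it never needs to normalize. Both correctly exploit the key structural point that in the orthonormal comoving frame the contraction $\cH_{\hat\mu}\cH^{\hat\mu}$ is purely Euclidean. Your closing remark that this orthonormal-frame result then propagates to the Lagrangian and Eulerian pairs via the Lorentz transformation is indeed the way the paper proceeds in Propositions~\ref{prop:momentBoundsLagrangian}--\ref{prop:momentBoundsEulerian}.
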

\begin{proof}
    Since $\varepsilon f\in\mathfrak{R}$ it follows that $\cJ>0$.  
    Using the Cauchy--Schwarz inequality
    \begin{align*}
        \cH_{\hat{\mu}}\cH^{\hat{\mu}}
        = \cH_{\hat{\imath}}\cH^{\hat{\imath}}
        &= \Big(\frac{\varepsilon}{4\pi}\Big)^{2}\int_{\bbS^{2}}f\ell_{\hat{\imath}}d\omega\int_{\bbS^{2}}f\ell^{\hat{\imath}}d\omega\\
        &\le \Big(\frac{\varepsilon}{4\pi}\Big)^{2}\int_{\bbS^{2}}fd\omega\int_{\bbS^{2}}f\ell_{\hat{\imath}}\ell^{\hat{\imath}}d\omega
        =\cJ^{2}.
    \end{align*}
    Taking the square root on both sides completes the proof.
\end{proof}

\begin{prop}
    Let the angular moments $\cJ$ and $\cH^{\mu}$ be defined as in Eq.~\eqref{eq:lagrangianMoments} with $\varepsilon f\in\mathfrak{R}$.  
    Then, $(\cJ,\cH_{\mu})^{\intercal}\in\cR$.
    \label{prop:momentBoundsLagrangian}
\end{prop}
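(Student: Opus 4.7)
The plan is to reduce this statement directly to Proposition~\ref{prop:momentBoundsOrthonormalComoving} by leveraging the tetrad formalism from Section~\ref{sec:tetradFormalism}, since the only difference between the two propositions is that the components of $\cH^{\mu}$ here are in the coordinate basis rather than the orthonormal comoving basis.

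First, I would verify that $\cJ>0$ exactly as in Proposition~\ref{prop:momentBoundsOrthonormalComoving}: this follows immediately from $\varepsilon f\in\mathfrak{R}$ and the definition in Eq.~\eqref{eq:lagrangianMoments}. Next, I would use Eq.~\eqref{eq:tetradUandL} to write $\ell^{\mu}=\cL^{\mu}_{\hspace{4pt}\hat{\mu}}\ell^{\hat{\mu}}$, so that $\cH^{\mu}=\cL^{\mu}_{\hspace{4pt}\hat{\mu}}\cH^{\hat{\mu}}$ with $\cH^{\hat{\mu}}$ as defined in Eq.~\eqref{eq:lagrangianMomentsHat}. Because $\cL^{\mu}_{\hspace{4pt}\hat{\mu}}$ is a Lorentz transformation, it preserves the Minkowski inner product, giving
\begin{equation*}
    \cH_{\mu}\cH^{\mu}=\eta_{\mu\nu}\cH^{\mu}\cH^{\nu}=\eta_{\hat{\mu}\hat{\nu}}\cH^{\hat{\mu}}\cH^{\hat{\nu}}=\cH_{\hat{\mu}}\cH^{\hat{\mu}}.
\end{equation*}
Since $\ell^{\hat{0}}=0$ from Eq.~\eqref{eq:orthonormalComovingUnit}, one has $\cH^{\hat{0}}=0$, which reduces the right-hand side to $\cH_{\hat{\imath}}\cH^{\hat{\imath}}$.

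Finally, I would invoke Proposition~\ref{prop:momentBoundsOrthonormalComoving}, which states that $(\cJ,\cH_{\hat{\mu}})^{\intercal}\in\cR$, meaning $\cJ\ge\sqrt{\cH_{\hat{\mu}}\cH^{\hat{\mu}}}$. Combining this with the Lorentz invariance identity above yields $\cJ\ge\sqrt{\cH_{\mu}\cH^{\mu}}$, and together with $\cJ>0$ this gives $(\cJ,\cH_{\mu})^{\intercal}\in\cR$ by the definition in Eq.~\eqref{eq:realizableSet}.

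There is essentially no obstacle here; the proof is a one-line consequence of the prior proposition once the Lorentz invariance of the Minkowski norm is invoked. The only subtle point worth stating carefully is that the realizability condition $\gamma(\boldsymbol{\mathscr{M}})\ge 0$ is frame-independent because both $\cJ$ (a scalar) and the Minkowski norm of $\cH^{\mu}$ are Lorentz invariant, so realizability in the orthonormal comoving basis transfers automatically to the coordinate basis.
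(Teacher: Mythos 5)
Your proof is correct and follows the same route as the paper's: positivity of $\cJ$ is inherited from Proposition~\ref{prop:momentBoundsOrthonormalComoving}, and the bound $\cH_{\mu}\cH^{\mu}\le\cJ^2$ is obtained by Lorentz invariance of the Minkowski inner product under the tetrad transformation, reducing to the orthonormal-comoving case already established. The only minor difference is that you explicitly note $\cH^{\hat{0}}=0$, which the paper leaves implicit.
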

\begin{proof}
    Since $\varepsilon f\in\mathfrak{R}$, positivity of $\cJ$ follows from Proposition~\ref{prop:momentBoundsOrthonormalComoving}.  
    Lorentz invariance of the inner product and using Proposition~\ref{prop:momentBoundsOrthonormalComoving} gives
    \begin{equation*}
        \cH^{2} 
        = \cH_{\mu}\cH^{\mu}
        = \cL^{\hat{\mu}}_{\hspace{4pt}\mu}\,\cL^{\mu}_{\hspace{4pt}\hat{\nu}}\,\cH_{\hat{\mu}}\,\cH^{\hat{\nu}}
        = \delta^{\hat{\mu}}_{\hspace{4pt}\hat{\nu}}\,\cH_{\hat{\mu}}\,\cH^{\hat{\nu}}
        = \cH_{\hat{\mu}}\,\cH^{\hat{\mu}}
        \le \cJ^{2}.
    \end{equation*}
    Taking the square root on both sides completes the proof.
\end{proof}

\begin{prop}
    Let the angular moments $\widehat{\cE}$ and $\widehat{\cF}^{\mu}$ be defined as
    \begin{subequations}
    \begin{alignat}{2} \label{eq:hatEF}
        \widehat{\cE} 
        &= n_{\mu}\,u_{\nu}\,\cT^{\mu\nu}
        &&= \frac{1}{4\pi}\int_{\bbS^{2}}E\,f\,d\omega
        \\
        \widehat{\cF}^{\mu} 
        &= - n_{\nu}\,h^{\mu}_{\hspace{4pt}\rho}\,\cT^{\nu\rho}
        &&= \frac{1}{4\pi}\int_{\bbS^{2}}E\,f\,\ell^{\mu}\,d\omega,
    \end{alignat}
    \end{subequations}
    respectively, where $\cT^{\mu\nu}$ is given in Eq.~\eqref{eq:angularMoments} with $f\in\mathfrak{R}$, and $E>0$ is defined in Eq.~\eqref{eq:EulerianParticleEnergy}.
    Then, $(\widehat{\cE},\widehat{\cF}_{\mu})^{\intercal}\in\cR$.  
    \label{prop:momentBoundsHat}
\end{prop}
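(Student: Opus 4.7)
The plan is to reduce the claim to the two preceding propositions by viewing $\widetilde{f} \vcentcolon= Ef$ as an effective angular density on $\bbS^{2}$. Under this identification, the definitions in Eq.~\eqref{eq:hatEF} read $\widehat{\cE} = \tfrac{1}{4\pi}\int_{\bbS^{2}} \widetilde{f}\,d\omega$ and $\widehat{\cF}^{\mu} = \tfrac{1}{4\pi}\int_{\bbS^{2}} \widetilde{f}\,\ell^{\mu}\,d\omega$, which is exactly the Lagrangian moment pair handled in Proposition~\ref{prop:momentBoundsLagrangian}, with the $\omega$-independent weight $\varepsilon$ replaced by the pointwise positive, $\omega$-dependent weight $E$. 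Since the earlier proofs only used nonnegativity of the angular weight, the argument should carry over once $\widetilde{f}$ is shown to be admissible.

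First I would verify $\widetilde{f}\in\mathfrak{R}$. The bound $E>0$ on $\bbS^{2}$ was already noted after Eq.~\eqref{eq:EulerianUnitDirection} (it follows from $\varepsilon>0$ and $v<1$), so together with $f\in\mathfrak{R}$ both $\widetilde{f}\geq0$ and $\int_{\bbS^{2}}\widetilde{f}\,d\omega>0$ follow immediately; in particular, $\widehat{\cE}>0$, which is the first half of realizability. Next I would establish $\sqrt{\widehat{\cF}_{\mu}\widehat{\cF}^{\mu}}\leq\widehat{\cE}$ by mirroring the two-step argument of Propositions~\ref{prop:momentBoundsOrthonormalComoving} and \ref{prop:momentBoundsLagrangian}. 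The projector $h^{\mu}_{\hspace{4pt}\rho}$ in the definition of $\widehat{\cF}^{\mu}$ enforces $u_{\mu}\widehat{\cF}^{\mu}=0$, which in the comoving orthonormal frame gives $\widehat{\cF}^{\hat{0}}=0$. Lorentz invariance of the Minkowski inner product then yields $\widehat{\cF}_{\mu}\widehat{\cF}^{\mu}=\widehat{\cF}_{\hat{\imath}}\widehat{\cF}^{\hat{\imath}}$, with $\widehat{\cF}^{\hat{\imath}} = \tfrac{1}{4\pi}\int_{\bbS^{2}}\widetilde{f}\,\ell^{\hat{\imath}}\,d\omega$. Applying Cauchy--Schwarz with respect to the nonnegative measure $\widetilde{f}\,d\omega$, and using $\ell_{\hat{\imath}}\ell^{\hat{\imath}}=1$, bounds this spatial contraction by $\widehat{\cE}^{2}$, and taking a square root finishes the proof.

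The only conceptually new point, relative to Proposition~\ref{prop:momentBoundsLagrangian}, is that $E$ depends on $\omega$ through $\ell^{\mu}$ in Eq.~\eqref{eq:EulerianParticleEnergy} and therefore cannot be pulled outside the angular integrals as $\varepsilon$ could. I do not expect this to pose a real obstacle: the Cauchy--Schwarz and Lorentz-invariance steps require only that the measure against which $\ell^{\hat{\imath}}$ is integrated be nonnegative with positive total mass, and both properties follow from $E>0$ alone. Thus I anticipate the proof to amount to an almost verbatim transcription of Proposition~\ref{prop:momentBoundsLagrangian} with $\varepsilon f$ replaced by $Ef$, the sole new ingredient being the elementary positivity $E>0$.
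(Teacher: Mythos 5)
Your proposal is correct and follows essentially the same route as the paper's proof: observe that $E>0$ makes $Ef\in\mathfrak{R}$ (hence $\widehat{\cE}>0$), pass to the comoving orthonormal frame where $\widehat{\cF}^{\hat{0}}=0$ so that $\widehat{\cF}_{\mu}\widehat{\cF}^{\mu}=\widehat{\cF}_{\hat{\imath}}\widehat{\cF}^{\hat{\imath}}$, and apply Cauchy--Schwarz with the nonnegative measure $Ef\,d\omega$ together with $\ell_{\hat{\imath}}\ell^{\hat{\imath}}=1$. You are a bit more explicit than the paper about the Lorentz-invariance step and about the (harmless) $\omega$-dependence of $E$, but the substance is identical.
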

\begin{proof}
    Since $E>0$, it follows that $E\,f\in\mathfrak{R}$, so that $\widehat{\cE}>0$.  
    Using the Cauchy--Schwarz inequality
    \begin{align*}
        \widehat{\cF}^{2}
        &= \Big(\frac{1}{4\pi}\Big)^{2}\int_{\bbS^{2}}E\,f\,\ell_{\hat{\imath}}\,d\omega\int_{\bbS^{2}}E\,f\,\ell^{\hat{\imath}}\,d\omega\\
        &\le \Big(\frac{1}{4\pi}\Big)^{2}\int_{\bbS^{2}}E\,f\,d\omega\int_{\bbS^{2}}E\,f\,\ell_{\hat{\imath}}\ell^{\hat{\imath}}\,d\omega
        =\widehat{\cE}^{2}.
    \end{align*}
    Taking the square root on both sides completes the proof.
\end{proof}

\begin{prop}
    Let the angular moments $\cE$ and $\cF^{\mu}$ be defined as in Eqs.~\eqref{eq:eulerianEnergy} and \eqref{eq:eulerianMomentum},
    \begin{subequations}
    \begin{alignat}{2}
        \cE
        &= n_{\mu}n_{\nu}\cT^{\mu\nu} 
        &&= \frac{1}{4\pi}\int_{\bbS^{2}}(E^{2}/\varepsilon)\,f\,d\omega
        \\
        \cF^{\mu}
        &= - \gamma^{\mu}_{\hspace{4pt}\nu} n_{\rho}\cT^{\nu\rho}
        &&= \frac{1}{4\pi}\int_{\bbS^{2}}(E^{2}/\varepsilon)\,f\,L^{\mu}\,d\omega,
    \end{alignat}
    \end{subequations}
    where $f\in\mathfrak{R}$ and $E,\varepsilon>0$.  
    Then $(\cE,\cF_{\mu})^{\intercal}\in\cR$.  
    \label{prop:momentBoundsEulerian}
\end{prop}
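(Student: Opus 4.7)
The plan is to mirror the strategy used in Propositions~\ref{prop:momentBoundsOrthonormalComoving}--\ref{prop:momentBoundsHat}: verify positivity of $\cE$ and then derive the Cauchy--Schwarz-type bound $\sqrt{\cF_{\mu}\cF^{\mu}}\le\cE$ to place $(\cE,\cF_{\mu})^{\intercal}$ in the realizable set of Eq.~\eqref{eq:realizableSet}. Since $E>0$, $\varepsilon>0$, and $f\in\mathfrak{R}$, the effective weighted distribution $(E^{2}/\varepsilon)\,f$ lies in $\mathfrak{R}$, which immediately yields $\cE>0$.

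For the norm bound, the key simplification is that $L^{0}=0$ (noted just below Eq.~\eqref{eq:EulerianUnitDirection}), so that $\cF^{0}=0$, and in Minkowski coordinates $\cF_{0}=\eta_{0\nu}\cF^{\nu}=0$ as well. Thus $\cF_{\mu}\cF^{\mu}$ reduces to the purely spatial sum $\cF_{i}\cF^{i}$. Applying the Cauchy--Schwarz inequality componentwise---factoring the nonnegative weight as $(E^{2}/\varepsilon)\,f = \sqrt{(E^{2}/\varepsilon)\,f}\cdot\sqrt{(E^{2}/\varepsilon)\,f}$---and summing over $i$ yields
\[
\cF_{i}\cF^{i}\le\Big(\frac{1}{4\pi}\Big)^{2}\int_{\bbS^{2}}(E^{2}/\varepsilon)\,f\,d\omega\int_{\bbS^{2}}(E^{2}/\varepsilon)\,f\,L_{i}L^{i}\,d\omega.
\]
Combining $L_{\mu}L^{\mu}=1$ (stated immediately after Eq.~\eqref{eq:EulerianUnitDirection}) with $L^{0}=L_{0}=0$ gives $L_{i}L^{i}=1$, so the right-hand side collapses to $\cE^{2}$, and taking the square root completes the bound.

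Structurally, the argument is identical to that of Proposition~\ref{prop:momentBoundsHat}, with the weight $E$ replaced by $E^{2}/\varepsilon$ and the comoving-frame unit vector $\ell^{\mu}$ replaced by the Eulerian-frame unit vector $L^{\mu}$; there is no substantive obstacle beyond careful index tracking and the one-line observation that $L^{0}=0$ allows the Minkowski inner product $\cF_{\mu}\cF^{\mu}$ to be rewritten as a Euclidean inner product over spatial indices. Alternatively, one could shortcut the derivation by invoking the generic-moment realizability framework of Eq.~\eqref{eq:momentsGeneric} directly, with the choice $g=(E^{2}/\varepsilon)\,f\in\mathfrak{R}$ and $\mathscr{L}^{\mu}=L^{\mu}$ satisfying $\mathscr{L}_{\mu}\mathscr{L}^{\mu}=1$; realizability of $(\cE,\cF_{\mu})^{\intercal}$ then follows from the definition of $\cR$.
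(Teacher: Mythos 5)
Your proof is correct and follows essentially the same route as the paper: both define $g=(E^{2}/\varepsilon)f\in\mathfrak{R}$ to get $\cE>0$, then apply the Cauchy--Schwarz inequality with $L_{i}L^{i}=1$ to obtain $\cF^{2}\le\cE^{2}$. The only difference is that you spell out explicitly the observation $L^{0}=L_{0}=0$ (which the paper leaves implicit when it writes $\cF^{2}=\cF_{i}\cF^{i}$), and you also note the valid shortcut of invoking the generic realizability framework of Eq.~\eqref{eq:realizableSet} directly.
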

\begin{proof}
    Since $E,\varepsilon>0$ and $f\in\mathfrak{R}$, it follows that $g\vcentcolon=(E^{2}/\varepsilon)f\in\mathfrak{R}$ and $\cE>0$.  
    Using the Cauchy--Schwarz inequality we have
    \begin{align*}
        \cF^{2} 
        = \cF_{i}\cF^{i}
        &= \Big(\frac{1}{4\pi}\Big)^{2}\int_{\bbS^{2}}g\,L_{i}\,d\omega
        \int_{\bbS^{2}}g\,L^{i}\,d\omega\\
        &\le\Big(\frac{1}{4\pi}\Big)^{2}\int_{\bbS^{2}}g\,d\omega
        \int_{\bbS^{2}}g\,L_{i}\,L^{i}\,d\omega
        =\cE^{2}.
    \end{align*}
    Taking the square root on both sides completes the proof.  
\end{proof}

\section{Numerical Method}
\label{sec:method}

In this section, we present the DG-IMEX method for the two-moment model in Eq.~\eqref{eq:momentEquations}.  
The semi-discretization of the two-moment model with the DG method is provided in Section~\ref{sec:semiDiscreteDG}, while Section~\ref{sec:imex} details the integration of the semi-discrete DG scheme with IMEX time-stepping.  

\subsection{Discontinous Galerkin Phase-Space Discretization}
\label{sec:semiDiscreteDG}

To discretize the phase-space of Eq.~\eqref{eq:momentEquations}, we divide the phase-space domain $\Omega=\Omega_{\varepsilon}\times\Omega_{\vect{x}}$ into a disjoint union $\cT$ of open elements $\bK=K_{\varepsilon}\times \bK_{\vect{x}}$, so that $\Omega=\cup_{\bK\in\cT}\bK$.  
Here, $\Omega_{\varepsilon}$ is the energy domain, and $\Omega_{\vect{x}}$ is the $d$-dimensional spatial domain.
We define the DG spatial and energy elements as
\begin{align}
    \bK_{\vect{x}}
    &=\{\,
        \vect{x}: x^i\in K^i = (x^i_{\Lo},x^i_{\Hi})\,\, \text{ for }\,\, i=1,\ldots,d
    \,\}
    \\
    K_{\varepsilon} &= (\varepsilon_{\Lo}, \varepsilon_{\Hi}),
\end{align}
respectively.  
We denote the volume of the DG element as
\begin{equation}
    |\bK|=\int_{\bK}\varepsilon^2\,d\varepsilon\,d\vect{x},
    \quad
    \text{where}
    \quad
    d\vect{x}=\prod_{i=1}^{d} dx^i.
\end{equation}
The length of individual spatial elements is given by $|K^i|=\int_{K^{i}}dx^{i}=x^i_{\Hi}-x^i_{\Lo}=\Delta x^{i}$.  
In the energy dimension we let $|K_{\varepsilon}|=\int_{K^{\varepsilon}}\varepsilon^{2}d\varepsilon=\frac{1}{3}(\varepsilon_{\Hi}^{3}-\varepsilon_{\Lo}^{3})$ and $\Delta\varepsilon=\varepsilon_{\Hi}-\varepsilon_{\Lo}$.  
We introduce the notation $\widetilde{\bK}^{i}= K_{\varepsilon}\times\widetilde{\bK}_{\vect{x}}^{i}$, where $\widetilde{\bK}_{\vect{x}}^{i}=(\times_{j\neq i}K^j)$, and $d\tilde{\vect{x}}^i=\prod_{j\neq i}dx^j$ to define the surface elements orthogonal to the $i$th spatial direction.  
We also define $|\widetilde{\bK}^{i}|=|K_{\varepsilon}|\times(\prod_{j\ne i}|K^{j}|)$.  
Finally we let $\vect{z}=(\varepsilon,\vect{x})$ denote the phase-space coordinate, and define $d\vect{z}=d\varepsilon\,d\vect{x}$ and $d\tilde{\vect{z}}^i=d\varepsilon\,d\tilde{\vect{x}}^i$.

We let the approximation space for the DG method be
\begin{equation}
    \bbV^k_h(\Omega) = \{\phi_h : \phi_h\rvert_{\bK}\in\bbQ^k(\bK),\,\forall\bK\in\cT\},
\end{equation}
where, locally on $\bK$, $\bbQ^k(\bK)$ is the tensor product of one-dimensional polynomials of maximal degree $k$.

To write the two-moment model in Eq.~\eqref{eq:momentEquations} compactly, we define
\begin{subequations}
    \label{eq:momentsCompact}
    \begin{align}
        \bU 
        = &\begin{bmatrix} \cE \\ \cF_j\end{bmatrix}, \\
        \bF^i 
        = &\begin{bmatrix} \cF^i \\ \cS^i_{\hspace{4pt}j} \end{bmatrix}, \\
        \bF^\varepsilon 
        = -\frac{1}{\varepsilon}&\begin{bmatrix} -n_{\mu}\,\cQ^{\mu\nu\rho}\,\p_{\nu}u_{\rho} \\ \gamma_{j\mu}\,\cQ^{\mu\nu\rho}\,\p_{\nu}u_{\rho} \end{bmatrix},\\
        \bcC 
        = &\begin{bmatrix} W\chi\,(\cJ_{\Equilibrium}-\cJ)-\kappa\,v^i\cH_i \\ -\kappa\,\cH_j+Wv_j\,\chi\,(\cJ_{\Equilibrium}-\cJ) \end{bmatrix}.
    \end{align}
\end{subequations}
Then, with the closure specified in Section~\ref{sec:closure}, Eq.~\eqref{eq:momentEquations} can be written as
\begin{equation}\label{eq:momentEquations_vector}
    \p_t\bU + \sum_{i=1}^{d}\p_i\big(\,\bF^i(\bU)\,\big)
    +\frac{1}{\varepsilon^2}\p_{\varepsilon}\big(\,\varepsilon^3\,\bF^\varepsilon(\bU)\,\big)=\bcC(\bU).
\end{equation}
The semi-discrete DG problem for Eq.~\eqref{eq:momentEquations_vector} is to find $\bU_h\in\bbV^k_h(\Omega)$, where $\bU_{h}$ approximates $\bU$, such that
\begin{align}\label{eq:semi-discrete_DG_full_source}
    &\int_{\bK}\p_t\bU_h\,\phi_h\,\varepsilon^2\,d\vect{z} \nonumber\\
    &\hspace{12pt}+\sum_{i=1}^{d}
    \Big\{\,
        \int_{\widetilde{\bK}^i}
        \big[\,\widehat{\bF}^i(\bU_{h})\,\phi_h\rvert_{x^i_{\Hi}}
        -\widehat{\bF}^i(\bU_{h})\phi_h\rvert_{x^i_{\Lo}}\,\big]\varepsilon^2\,d\Tilde{\vect{z}}^i \nonumber\\
        &\hspace{48pt}-\int_{\bK}\bF^i(\bU_{h})\p_i \phi_h\,\varepsilon^2\,d\vect{z}
    \,\Big\}
    \nonumber \\
    &\hspace{12pt}
    +\int_{\bK_{\vect{x}}}
    \big[\,\varepsilon^3\widehat{\bF}^\varepsilon(\bU_{h})\phi_h\rvert_{\varepsilon_{\Hi}} - \varepsilon^3\widehat{\bF}^\varepsilon(\bU_{h})\phi_h\rvert_{\varepsilon_{\Lo}}\,\big]\,d\vect{x} \nonumber\\
    &\hspace{12pt}-\int_{\bK}\varepsilon^3\bF^{\varepsilon}(\bU_{h})\p_\varepsilon \phi_h\,d{\vect{z}}
    =\int_{\bK}\bcC(\bU_h)\phi_h\varepsilon^2\,d\vect{z},
\end{align}
holds for all $\phi_h\in\bbV^k_h(\Omega)$ and all $\bK\in\cT$.  
In Eq.~\eqref{eq:semi-discrete_DG_full_source}, the numerical fluxes, $\widehat{\bF}^{i}$ and $\widehat{\bF}^{\varepsilon}$, approximate $\bF^{i}$ and $\bF^{\varepsilon}$, respectively, on the surface of $\bK$, and are given by Lax--Friedrichs-like fluxes.  
The numerical flux in the $i$th spatial dimension is 
\begin{equation}\label{eq:spatial_flux}
    \widehat{\bF}^i\rvert_x
    =\frac{1}{2}\Big[\,
        (\bF^i\rvert_{x^-}+\bF^i\rvert_{x^+})
        -a^i\,(\bU_h\rvert_{x^+}-\bU_h\rvert_{x^-})
    \,\Big],
\end{equation}
where $x^{\pm}=\lim_{\delta\to0^{+}}x\pm\delta$, and $a^i>0$ is an estimate of the spectral radius of the flux Jacobian matrix $\p\bF^i/\p\bU$. In practice we always assume $a^i = 1$. 
To compute the fluxes $\bF^{i}$, which requires closure evaluations, we first compute the \emph{primitive moments} $\bM=(\cJ,\cH_i)^{\intercal}$ from the \emph{conserved moments} $\bU$.  
This procedure is described in detail in Section~\ref{sec:moment_conversion}.  

The numerical flux in the energy dimension is given by
\begin{equation}\label{eq:energy_flux}
    \widehat{\bF}^\varepsilon\rvert_{\varepsilon}
    =
    \frac{1}{2}\Big[\,(\bF^\varepsilon\rvert_{\varepsilon^-}+\bF^\varepsilon\rvert_{\varepsilon^+})-a^\varepsilon(\widetilde{\bU}\rvert_{\varepsilon^+}-\widetilde{\bU}\rvert_{\varepsilon^-})\,\Big],
\end{equation}
where $\varepsilon^{\pm}=\lim_{\delta\to0^{+}}\varepsilon\pm\delta$,
\begin{equation}
    \widetilde{\bU}
    =\begin{bmatrix} \widetilde{\cE} \\ \widetilde{\cF}_j \end{bmatrix}
    \vcentcolon=
    \begin{bmatrix} 
        n_\mu u_\nu \cT^{\mu\nu} \\ 
        -\gamma_{j\mu}u_\nu\cT^{\mu\nu} 
    \end{bmatrix}
    =
    \begin{bmatrix}
        W\cJ+v^{i}\cH_{i} \\
        \cH_{j}+W\cJ v_{j}
    \end{bmatrix},
    \label{eq:Lagrangian_U_tilde}
\end{equation}
and $a^\varepsilon\geq |q|$ with the quantity $q$ defined as
\begin{equation}
    \label{eq:q}
    q\vcentcolon=\frac{p^\nu p^\rho}{\varepsilon^2}\p_\nu u_\rho=k^\nu k^\rho \p_\nu u_\rho,
\end{equation}
where $k^{\mu}=p^{\mu}/\varepsilon$. 
To compute $\bF^\varepsilon$ and $\widetilde{\bU}$, we again compute $\bM=(\cJ,\cH_i)^{\intercal}$ from $\bU$ (discussed in Section~\ref{sec:moment_conversion}).  
The processes by which we evaluate $\p_\nu u_\rho$ and $a^\varepsilon$ are discussed in the following paragraphs.

To compute $\bF^\varepsilon$, we need to compute the derivatives of the four-velocity, $u_\mu$.
In this paper, as in \cite{laiu_etal_2025}, we assume the four-velocity is independent of time, i.e., $\p_t u_\mu=0$.  
Let $(u_{\mu})_{h}\in\bbV_{h}^{k}(\Omega_{\vect{x}})$ denote the approximation of $u_{\mu}$.  
We compute the derivatives, $(\p_i u_\mu)_h\in\bbV^k_h(\Omega_{\vect{x}})$, by demanding that 
\begin{align}
    \int_{\bK_{\vect{x}}}(\p_i u_\mu)_h\phi_h\,d\vect{x}
    =&
    \int_{\tilde{\bK}_{\vect{x}}^i}
    \big[\,\widehat{u}_{\mu}\,\phi_{h}\rvert_{x^i_{\Hi}}-\widehat{u}_{\mu}\,\phi_{h}\rvert_{x^{i}_{\Lo}}\,\big]\,d\tilde{\vect{x}}^{i} \nonumber\\
    &-
    \int_{\bK_{\vect{x}}}(u_\mu)_{h}\,\p_{i}\phi_{h}\,d\vect{x},
    \label{eq:fourvelocitygradient}
\end{align}
holds for all $\phi_h\in\bbV^k_h(\Omega_{\vect{x}})$, and where velocity on the element boundaries is approximated with the average
\begin{equation}
    \widehat{u}_{\mu}(x^i,\tilde{\vect{x}}^i) 
    = \frac{1}{2}\big[\,(u_{\mu})_{h}(x^{i,-},\tilde{\vect{x}}^{i})+(u_{\mu})_{h}(x^{i,+},\tilde{\vect{x}}^{i})\,\big].
\end{equation}

For the numerical flux in Eq.~\eqref{eq:energy_flux}, we require that $a^\varepsilon\geq|q|$.
Thus, we need an upper bound for $|q|$.  
To this end, we first express $q$ in terms of a contraction with a symmetric tensor $A_{\nu\rho}$
\begin{align}
    q
    &=k^\nu k^\rho \p_\nu u_\rho \nonumber\\
    &=k^\nu k^\rho\Big[\frac{1}{2}(\p_\nu u_\rho+\p_\rho u_\nu)+\frac{1}{2}(\p_\nu u_\rho-\p_\rho u_\nu)\Big] \nonumber\\
    &=k^\nu k^\rho A_{\nu\rho}, 
\end{align}
where $A_{\nu\rho}=\frac{1}{2}(\p_\nu u_\rho + \p_\rho u_\nu)$, since the contraction of the symmetric $k^\nu k^\rho$ with the antisymmetric $\p_\nu u_\rho-\p_\rho u_\nu$ vanishes.  
Using an Eulerian decomposition of $A_{\nu\rho}$, we can derive an upper bound for $q$.  
Next, we introduce the Eulerian decomposition
\begin{equation}
    A_{\mu\nu}=An_\mu n_\nu+B_\mu n_\nu+B_\nu n_\mu+C_{\mu\nu},
    \label{eq:Eulerian_A_munu}
\end{equation}
where $n_\mu B^\mu=0$ and $n_\mu C^{\mu\nu}=0=n_\nu C^{\mu\nu}$.  
Using Eqs.~\eqref{eq:particleFourMomentum_Eulerian} and \eqref{eq:Eulerian_A_munu}, we have
\begin{equation}
    q = \frac{E^2}{\varepsilon^2}\,\big(\, A - 2L^i B_i + L^i L^j C_{ij}\,\big),
\end{equation}
where the components of the Eulerian decomposition in Eq.~\eqref{eq:Eulerian_A_munu} can be expressed in terms of derivatives of the three-velocity as 
\begin{alignat}{2}
    A     &= n_\mu n_\nu A^{\mu\nu} && = \p_t W, \\
    B_{i} &= -\gamma_{i\mu}n_\nu A^{\mu\nu} && = - \frac{1}{2}\,\big(\,\p_t(Wv_i)+\p_i (-W)\,\big),\\
    C_{ij}&= \gamma_{i\mu}\gamma_{j\nu}A^{\mu\nu} && = \frac{1}{2}\,\big(\,\p_i(Wv_j)+\p_j (Wv_i)\,\big).
\end{alignat}
Then we bound $|q|$ as
\begin{equation}
    |q| \leq \frac{E^2}{\varepsilon^2}\,\big(\,|A|+2\sqrt{B_iB^i}+\rho(C_{ij})\,\big),
\end{equation}
where the inequality follows from the triangle inequality, Cauchy--Schwarz, and using $L_iL^i=1$.  
Here, $\rho(C_{ij})$ denotes the spectral radius of $C_{ij}$.  
Since $C_{ij}$ is a symmetric $3\times 3$ matrix, we compute its eigenvalues directly to determine $\rho(C_{ij})$.  
The quantity $E/\varepsilon$ depends on $\ell^\mu$ (see Eq.~\eqref{eq:EulerianParticleEnergy}), which in practice is not known.  
We have that 
\begin{align}
    (E/\varepsilon)^2
    =(W+v^\mu\ell_\mu)^2
    &=(W+Wv^{\hat{\imath}}\ell_{\hat{\imath}})^2 \nonumber\\
    &\leq W^2(1+v)^2
    =\frac{(1+v)}{(1-v)},
    \label{eq:E_over_epsilon_bound}
\end{align}
where $v=\sqrt{v_iv^i}$.  
Hence
\begin{equation}
    |q|\leq \Big(\frac{1+v}{1-v}\Big)\,\big(\,|A|+2\sqrt{B_iB^i}+\rho(C_{ij})\,\big) =\vcentcolon a_{\max}^{\varepsilon}.
    \label{eq:q_upperbound}
\end{equation}
In Eq.~\eqref{eq:energy_flux}, we use this upper bound and set $a^{\varepsilon}\vcentcolon=a_{\max}^{\varepsilon}$.
We provide a discussion on the tightness of the upper bound $a_{\max}^{\varepsilon}$ in Appendix \ref{appendix:q_bound}.  
\response{
\begin{rem}
    While we assume that the fluid four-velocity is independent of time in this paper, this assumption is not a fundamental limitation of the proposed method.  
    Specifically, the bound on $|q|$ in Eq.~\eqref{eq:q_upperbound} is still valid for the case when the four-velocity is time dependent.  
    Thus, extension of the proposed method will require an approximation of $\p_{t}u_{\mu}$ to be included in the numerical flux in Eq.~\eqref{eq:energy_flux}.  
    This inclusion will increase $a_{\max}^{\varepsilon}$ (and $a^{\varepsilon}$), and potentially decrease the time step needed to preserve realizability of the cell average, given later in Proposition~\ref{prop:explicit_step_realizable}.  
\end{rem}
}

We use quadratures --- constructed by tensorization of one-dimensional quadratures --- to evaluate integrals over the multi-dimensional elements in Eq.~\eqref{eq:semi-discrete_DG_full_source}.
We denote the $N$-point Legendre--Gauss (LG) quadrature on $K_\varepsilon$ and $K^i$, respectively, by the points $S^{N}_\varepsilon(\bK)=\{\varepsilon_1,\ldots,\varepsilon_{N}\}$ and $S^{N}_i(\bK)=\{x^i_1,\ldots,x^i_{N}\}$.
Then the set of local DG nodes in the element $\bK$ is denoted as
\begin{equation}
    S_{\otimes}(\bK)=S^{N}_{\varepsilon}(\bK)\otimes\big(\bigotimes\nolimits_{i=1}^d S^{N}_i(\bK)\big).
\end{equation}
Also, the analysis of the realizability-preserving property in Section~\ref{sec:RealizabilityProof} uses specific quadrature rules.  
We denote the $M_\varepsilon$-point and $M_x$-point Legendre--Gauss--Lobatto (LGL) quadrature rules on the intervals $K_{\varepsilon}$ and $K^{i}$, respectively, by the points $\hat{S}^{M_\varepsilon}_\varepsilon(\bK)=\{\varepsilon_{\Lo}^{+}=\hat{\varepsilon}_{1},\hat{\varepsilon}_{2},\ldots,\hat{\varepsilon}_{M_\varepsilon}=\varepsilon_{\Hi}^{-}\}$ and $\hat{S}^{M_x}_{i}(\bK)=\{x_{\Lo}^{i,+}=\hat{x}^{i}_{1},\hat{x}_{2}^{i},\ldots,\hat{x}^{i}_{M_x}=x_{\Hi}^{i,-}\}$, with respective weights $\{\hat{w}^\varepsilon_q\}_{q=1}^{M_\varepsilon}$ and $\{\hat{w}^i_q\}_{q=1}^{M_x}$.
Here $M_\varepsilon\geq \frac{k+5}{2}$ so that the LGL quadrature rules integrate polynomials of degree $k+2$ or less exactly, and $M_x\geq \frac{k+3}{2}$ so that the LGL quadrature rules integrate polynomials of degree $k$ or less exactly.
Exact integration is required for the realizability-preserving analysis.
In an element $\bK$ we define the auxiliary sets
\begin{subequations}
\begin{align}
\label{eq:auxiliary_quadrature}
    \hat{S}_{\varepsilon,\otimes}(\bK)
    &=
    \hat{S}^{M_\varepsilon}_{\varepsilon}(\bK)\otimes\big(\bigotimes\nolimits_{i=1}^d S^{N}_i(\bK)\big),\\
    \hat{S}_{i,\otimes}(\bK)
    &=
    S^{N}_{\varepsilon}(\bK)\otimes\big(\bigotimes\nolimits_{j=1,j\neq i}^d S^{N}_j(\bK)\big)\otimes\hat{S}^{M_x}_i(\bK).
\end{align}
\end{subequations}
The union of the auxiliary sets in an element ${\bK}$ is denoted as
\begin{equation}
    \hat{S}_{\otimes}(\bK)
    =
    \hat{S}_{\varepsilon,\otimes}(\bK)\cup\big( 
\bigcup\nolimits_{i=1}^d \hat{S}_{i,\otimes}(\bK) \big),
\end{equation}
and the union of the auxiliary sets and the local DG nodes is denoted as
\begin{equation}
    \widetilde{S}_{\otimes}(\bK)
    =
    S_{\otimes}(\bK)\cup\hat{S}_{\otimes}(\bK).
\end{equation}

\subsection{Time Integration}
\label{sec:imex}

The semi-discrete DG problem in Eq.~\eqref{eq:semi-discrete_DG_full_source} can be rearranged into a system of ODEs of the form
\begin{equation}
    \dot{\bu} = \bT(\bu)+\bC(\bu),
    \label{eq:odeSystem}
\end{equation}
where $\bu = \{ \frac{1}{|\bK|}\int_{\bK}\bU_h\phi_h\,d\vect{z}: \bK\in\cT \}$ represents all the evolved degrees of freedom, $\bT$ is the transport operator corresponding to the terms in Eq.~\eqref{eq:semi-discrete_DG_full_source} arising from the spatial and energy derivatives, and $\bC$ is the collision operator corresponding to the right-hand side of Eq.~\eqref{eq:semi-discrete_DG_full_source}.
We use IMEX Runge--Kutta (RK) methods to evolve the degrees of freedom forward in time, and integrate the transport operator explicitly, and the collision operator implicitly.  
Diagonally implicit $s$-stage IMEX methods for the system of ODEs in Eq.~\eqref{eq:odeSystem} can be written generally as \cite{pareschiRusso_2005}
\begin{align}
    \bu^{(i)}
    &=
    \bu^{n} + 
    \Delta t \sum_{j=1}^{i-1}\tilde{a}_{ij}\,\bT(\bu^{(j)}) + 
    \Delta t \sum_{j=1}^i a_{ij}\,\bC(\bu^{(j)}),
    \quad i=1,\ldots,s \label{eq:IMEX_ith_stage}\\
    \bu^{n+1}
    &=
    \bu^{n} + 
    \Delta t \sum_{i=1}^{s}\tilde{w}_i\,\bT(\bu^{(i)}) + 
    \Delta t \sum_{i=1}^s w_i\,\bC(\bu^{(i)}). \label{eq:IMEX_assembly}
\end{align}
Here $\tilde{A}=\begin{pmatrix}\tilde{a}_{ij}\end{pmatrix}$ and $A=\begin{pmatrix}a_{ij}\end{pmatrix}$ are the $s\times s$ stage coefficient matrices for the respective explicit and implicit parts of the IMEX scheme.
While the $s$-vectors $\tilde{\bw}=(\tilde{w}_i)$ and $\bw=(w_i)$ are the respective coefficients for the assembly stage of the explicit and implicit parts.  
These matrices and vectors are subject to order conditions, which can be found in  \cite[Section 2.1]{pareschiRusso_2005}.

Importantly, for the purpose of proving the realizability-preserving property of our method, one hopes that the stage equations in Eq.~\eqref{eq:IMEX_ith_stage} can be expressed in the Shu--Osher form
\begin{align}
    \bu^{(0)} &= \bu^{n}\\
    \bu^{(i)} &= \sum_{j=0}^{i-1} c_{ij}\bu^{(ij)} + a_{ii}\,\Delta t\, \bC(\bu^{(i)}),
    \quad
    i=1,\ldots,s,
    \label{eq:stageq_ShuOsher}
\end{align}
where the coefficients satisfy $c_{ij}\geq 0$ and $\sum_{j=0}^{i-1} c_{ij} = 1$, and $\bu^{(ij)}$ is given by the forward Euler step
\begin{equation}
    \bu^{(ij)} = \bu^{(j)}+\hat{c}_{ij}\,\Delta t\,\bT(\bu^{(j)}),
\end{equation}
where the parameters $\hat{c}_{ij}\geq 0$. 
Details on determining $c_{ij}$ and $\hat{c}_{ij}$ for IMEX schemes can be found in \cite{chu_etal_2019,hu_etal_2018}, and for explicit RK methods in \cite{shuOsher_1988}.  
The Shu--Osher form of the stage equations in Eq.~\eqref{eq:stageq_ShuOsher} allows us to express $\bu^{(i)}$ as a convex combination of forward Euler steps with step sizes $\hat{c}_{ij}\Delta t$.  
Then to show the realizability-preserving property of the updates in Eq.~\eqref{eq:stageq_ShuOsher}, one only needs to consider a sequence of forward and backward Euler steps.  
The other advantage of the Shu--Osher form is the derived time-step restriction.  
If $\Delta t\leq\Delta t_{\mathrm{Ex}}$ is the time-step restriction to maintain realizability of the cell average in the Forward Euler step, then, assuming the implicit solve in Eq.~\eqref{eq:stageq_ShuOsher} does not impose a restriction on $\Delta t$, the time-step restriction to maintain realizability for the IMEX method is $\Delta t \leq \hat{c}\,\Delta t_{\mathrm{Ex}}$, where
\begin{equation}
    \hat{c}=\min_{i,j} \frac{1}{\hat{c}_{ij}}.
    \label{eq:timestep_c_hat}
\end{equation}
It is then ideal for the IMEX method to have $\hat{c}$ as close to $1$ as possible. 

In this paper, we consider $s$-stage IMEX methods that are diagonally implicit ($a_{ij}=0$ for $j>i$) and globally stiffly accurate (GSA), i.e., $\tilde{a}_{si}=\tilde{w}_{i}$ and $a_{si}=w_{i}$, so that the assembly step in Eq.~\eqref{eq:IMEX_assembly} can be omitted and $\bu^{n+1}=\bu^{(s)}$.  
Specifically, we use the IMEX PD-ARS \cite{chu_etal_2019} scheme, whose Butcher tableau is
\begin{equation}
    \begin{array}{c|c}
         \tilde{\bc} &\tilde{A}  \\
         \hline
         & \tilde{\bw}
    \end{array}
    =
    \begin{array}{c|ccc}
         0 &0 &0 &0 \\
         1 &1 &0 &0 \\
         1 &1/2 &1/2 &0 \\
         \hline
         &1/2 &1/2 &0
    \end{array}
    \quad
    \begin{array}{c|c}
         \bc& A  \\
         \hline
         & \bw
    \end{array}
    =
    \begin{array}{c|ccc}
         0 &0 &0 &0  \\
         1 &0 &1 &0\\
         1 &0 &1/2 &1/2\\
         \hline
         &0 &1/2 &1/2
    \end{array}
    \label{eq:IMEXPDARS_table}
\end{equation}
where the left table represents the coefficients for the explicit method, and the right table represents the coefficients for the implicit method.
The vectors $\tilde{\bc}$ and $\bc$ are used for the treatment of non-autonomous systems.
The IMEX PD-ARS scheme is formally only first order accurate, but performs well in the diffusion limit, and recovers the optimal second-order explicit RK scheme from \cite{shuOsher_1988} in the streaming limit ($\bC = 0$).
The Shu--Osher coefficients for the IMEX PD-ARS method are
\begin{subequations}
\label{eq:IMEXPDARS_ShuOsher}
\begin{align}
    \begin{bmatrix}
        c_{10} &\\
        c_{20} &c_{21}
    \end{bmatrix}
    &=
    \begin{bmatrix}
        1 &\\
        1/2 &1/2
    \end{bmatrix}\\
    \begin{bmatrix}
        \hat{c}_{10} &\\
        \hat{c}_{20} &\hat{c}_{21}
    \end{bmatrix}
    &=
    \begin{bmatrix}
        1 &\\
        0 &1
    \end{bmatrix}.
\end{align}
\end{subequations}

For problems with no collisions, $\bC = 0$, we set $a_{ii}=0$, and use explicit strong stability-preserving (SSP) RK methods.  
Specifically, we use the second and third order SSP-RK methods from \cite{shuOsher_1988} (SSPRK$2$ and SSPRK$3$, respectively).  
The Shu--Osher coefficients for SSPRK$2$ are identical to Eq.~\eqref{eq:IMEXPDARS_ShuOsher}, while for the SSPRK$3$ method they are
\begin{subequations}
\label{eq:SSPRK3_ShuOsher}
\begin{align}
    \begin{bmatrix}
        c_{10} & &\\
        c_{20} &c_{21} &\\
        c_{30} &c_{31} &c_{32}
    \end{bmatrix}
    &=
    \begin{bmatrix}
        1 & &\\
        3/4 &1/4 &\\
        1/3 &0 &2/3
    \end{bmatrix}\\
    \begin{bmatrix}
        \hat{c}_{10} & &\\
        \hat{c}_{20} &\hat{c}_{21} &\\
        \hat{c}_{30} &\hat{c}_{31} &\hat{c}_{32}
    \end{bmatrix}
    &=
    \begin{bmatrix}
        1 & &\\
        0 &1 &\\
        0 &0 &1
    \end{bmatrix}.
\end{align}
\end{subequations}
Note for each method we have $\hat{c}=1$.
As we will show in Section \ref{sec:CollisionAnalysis} that our implicit collision solver does not impose a restriction on the time step $\Delta t$, then our derived time step will be no different than if we were solving a collisionless problem with forward Euler.  

\section{Iterative Solvers}
\label{sec:iterativeSolvers}

The proposed DG-IMEX scheme requires two nonlinear solvers: (a) for the recovery of primitive moments $\bM$ from the conserved moments $\bU$, and (b) for the implicit collision solver, which is formulated directly on the primitive moments.  
Both solvers involve element-local data and are formulated in terms of point values, where input data is provided from pointwise evaluations of the polynomial representation; i.e., $\bU=\bU_{h}(\vect{z})$ for some $\vect{z}\in\bK$.  
Both solvers also require the components of the three-velocity, $\vect{v}=(v^{1},v^{2},v^{3})^{\intercal}$, as input.   

\subsection{Conserved to Primitive Moment Conversion}
\label{sec:moment_conversion}

The recovery of the Lagrangian moments $\bM=(\cJ,\cH_{i})^{\intercal}$ from the evolved Eulerian moments $\bU=(\cE,\cF_{j})^{\intercal}$ consists of two steps.  
First, the ``hat" moments $\widehat{\bU}=(\widehat{\cE},\widehat{\cF}_j)^{\intercal}$, defined in Eq.~\eqref{eq:hatEF}, are obtained from $\bU$.  
These moments are linearly related by
\begin{equation}\label{eq:Hat_to_Conserved}
    \cE
    = W\widehat{\cE}+v^{i}\widehat{\cF}_{i}
    \quad\text{and}\quad
    \cF_{i}
    = \widehat{\cF}_{i}+Wv_{i}\widehat{\cE},
\end{equation}
which can be easily inverted to give
\begin{equation}\label{eq:Conserved_to_Hat}
    \widehat{\cE}
    = W\big(\cE-v^{k}\cF_{k}\big)
    \quad\text{and}\quad
    \widehat{\cF}_{i}
    =\cF_{i}-W^{2}v_{i}\big(\cE-v^{k}\cF_{k}\big).
\end{equation}
For use later, we write Eq.~\eqref{eq:Hat_to_Conserved} as
\begin{equation}
    \bU = A(\vect{v})\,\widehat{\bU},
    \quad\text{where}\quad
    A(\vect{v})=
    \left(\begin{array}{cc} 
        W & \vect{v}^{\intercal} \\ 
        W\vect{v} & I 
    \end{array}\right),
    \label{eq:Hat_to_Conserved_Matrix}
\end{equation}
where $I$ is the $3\times3$ identity matrix.  

Next, with $\widehat{\bU}$ known, the Lagrangian moments $\bM$ are obtained through an iterative procedure.  
The moments are nonlinearly related by
\begin{equation}\label{eq:inverse_problem}
    \widehat{\cE} = W\cJ+v^{i}\cH_{i}
    \quad\text{and}\quad
    \widehat{\cF}_{j} = W\cH_{j}+v^{i}\cK_{ij}(\bM).  
\end{equation}
To solve this system for the Lagrangian moments, we adopt the idea from \cite{laiu_etal_2025}, which is based on Richardson iteration for linear systems, and formulate the following fixed-point problem
\begin{align}
    \bM 
    = \left(\begin{array}{c} \cJ \\ \cH_{j} \end{array}\right)
    &= \left(\begin{array}{c} \cJ \\ \cH_{j} \end{array}\right)
    -\frac{\lambda}{W}\,\left(\begin{array}{c}
        (W\cJ+v^{i}\cH_{i}) - \widehat{\cE} \\
        (W\cH_{j}+v^{i}\cK_{ij}) - \widehat{\cF}_{j}
    \end{array}\right) \nonumber\\
    &=\bM-\frac{\lambda}{W}\,\bF_{\widehat{\bU}}(\bM)
    \vcentcolon=\bH_{\widehat{\bU}}(\bM),
    \label{eq:con2prim_fixed_point}
\end{align}
where $\lambda/W$ is a constant ``step size'', and $\lambda$ is taken to be
\begin{equation}
    \lambda = \frac{1}{1+v}, \qquad v=\sqrt{v_{i}v^{i}}.
\end{equation}
The Lagrangian moments are then obtained with Picard iteration,
\begin{equation}
    \bM^{[k+1]} = \bH_{\widehat{\bU}}(\bM^{[k]}),
    \quad k=0,1,\ldots,
    \label{eq:Richardson_update}
\end{equation}
with initial guess $\bM^{[0]}=\widehat{\bU}/W$.  
We consider the method to have converged when the residual satisfies
\begin{equation}
\label{eq:convergence_criteria}
    \|\bF_{\widehat{\bU}}(\bM^{[k]})\|
    \leq \texttt{tol}_{\rm C2P},
\end{equation}
where $\|\cdot\|$ is the Euclidean norm and $\texttt{tol}_{\rm C2P}$ is a user-specified tolerance.  
\begin{rem}\label{rem:convergence}
    Convergence of the iterates generated by Eq.~\eqref{eq:Richardson_update} to a unique solution is guaranteed for $v < 0.221075$ (see Appendix \ref{appendix:convergence}), however, in our numerical tests we have not encountered a velocity for which the fixed-point iteration fails to converge when using $\lambda = (1+v)^{-1}$ (see Figure \ref{fig:iteration_count_comparison}).
    The convergence analysis requires that, when $\bM^{[0]}\in\cR$, each iterate $\bM^{[k]}$ remains realizable. This property is proved later in Lemma~\ref{lem:momentconversion_realizable}.
\end{rem}

The conserved to primitive moment conversion problem can also be solved by Newton's method.  
To this end, we write Eq.~\eqref{eq:inverse_problem} as
\begin{equation}
    \bF_{\widehat{\bU}}(\bM)=0,
\end{equation}
The Newton update step is then given by
\begin{equation}
    \bJ(\bM^{[k]})\,\big(\bM^{[k+1]}-\bM^{[k]}\big)
    =-\bF_{\widehat{\bU}}(\bM^{[k]}),
    \quad k=0,1,\ldots,
\end{equation}
where $\bJ=[\partial\bF_{\widehat{\bU}}/\partial\bM]$ is the Jacobian matrix, and $\bM^{[0]}=\widehat{\bU}/W$ is used as the initial guess.  
We also consider Newton's method to have converged when Eq.~\eqref{eq:convergence_criteria} is satisfied.  
Both the modified Richardson iteration method and Newton’s method have been implemented and are studied later in Section \ref{sec:MomentConverstionTest}.

\subsection{Collision Solver}
\label{sec:collision_solver}

With collisional source terms, a nonlinear solve is required when performing the implicit update during time integration.  
The element-local nodal implicit update of the IMEX scheme in Eq.~\eqref{eq:stageq_ShuOsher} can be formulated as a backward Euler solve with step size $\Delta \tau$,
\begin{equation}
    \bU = \bU^{(*)} + \Delta \tau\,\bcC(\bU),
    \label{eq:backwardEulerConserved}
\end{equation}
where $\bU^{(*)}$ represents the first (known) term on right-hand side of Eq.~\eqref{eq:stageq_ShuOsher}, and $\bU$ represents the unknowns to be solved for.  
(For notational convenience we omit the superscript $(i)$ on the unknowns.)
We formulate the implicit solve in terms of the primitive moments.  
To this end, using the matrix $A$ defined in Eq.~\eqref{eq:Hat_to_Conserved_Matrix}, we can write the collision term as
\begin{equation}
    \bcC(\bU) = A(\vect{v})\,\widehat{\bcC}(\bM)
    \quad\text{where}\quad
    \widehat{\bcC}(\bM)
    =\left(\begin{array}{c}
        \chi\,(\cJ_{\Equilibrium}-\cJ) \\
        -\kappa\,\cH_{j}
    \end{array}\right),
    \label{eq:conservedCtoHatC}
\end{equation}
so that multiplying Eq.~\eqref{eq:backwardEulerConserved} by $A^{-1}$ on both sides leads to 
\begin{equation}
    \widehat{\bU}(\bM)=\widehat{\bU}^{(*)}
    +\Delta\tau\,\widehat{\bcC}(\bM),
    \label{eq:backwardEulerHat}
\end{equation}
where $\widehat{\bU}^{(*)}=A^{-1}\bU^{(*)}=(\widehat{\cE}^{(*)},\widehat{\cF}_{j}^{(*)})^{\intercal}$.  
Next, we formulate a fixed-point method to solve Eq.~\eqref{eq:backwardEulerHat}.  
With $\bM$ known, $\bcC(\bU)$ is evaluated and used to update the conserved moments in Eq.~\eqref{eq:backwardEulerConserved}.  

Similar to Eq.~\eqref{eq:con2prim_fixed_point}, and following \cite{laiu_etal_2025}, we use the Richardson iteration idea to formulate the fixed-point problem 
\begin{align}\label{eq:collision_fixed_point}
    \bM &= \bM 
    - \lambda\,\Lambda\,
    \big[\,\widehat{\bU}(\bM)-\big(\,\widehat{\bU}^{(*)}+\Delta\tau\,\widehat{\bcC}(\bM)\,\big)\,\big] \nonumber\\
    &=\bM 
    - \lambda\,\Lambda\,\bG_{\widehat{\bU}}(\bM) =\vcentcolon \bQ_{\widehat{\bU}}(\bM),
\end{align}
where $\Lambda=\mbox{diag}(\mu_{\chi},\mu_{\kappa},\mu_{\kappa},\mu_{\kappa})$, $\mu_{\chi}=(W+\lambda\,\Delta\tau\,\chi)^{-1}$, $\mu_{\kappa}=(W+\lambda\,\Delta\tau\,\kappa)^{-1}$, and $\lambda\in(0,1]$ is a step size parameter.  
In this paper, for reasons detailed in Section~\ref{sec:CollisionAnalysis}, we propose $\lambda=1/(1+v)$.  
\begin{rem}
    The fixed-point problem in Eq.~\eqref{eq:collision_fixed_point} reduces to Eq.~\eqref{eq:con2prim_fixed_point} when $\chi=\kappa=0$; i.e., when $\bcC(\bU)=0$.
\end{rem}
\noindent
The primitive moments are then obtained through Picard iteration, 
\begin{equation}
    \bM^{[k+1]} = \bQ_{\widehat{\bU}}(\bM^{[k]}),
    \quad k=0,1,\ldots,
    \label{eq:collision_richardson}
\end{equation}
with initial guess $\bM^{[0]}=\widehat{\bU}^{(*)}$.  
Iterations continue until the residual satisfies
\begin{equation}
    \|\bG_{\widehat{\bU}}(\bM^{[k]})\|
    \leq \texttt{tol}_{\rm Coll},
    \label{eq:convergence_criteria_collisions}
\end{equation}
where, as in Eq.~\eqref{eq:convergence_criteria}, we use the Euclidean norm, and $\texttt{tol}_{\rm Coll}$ is a user-specified tolerance.  

\section{Realizability-Preserving Property of the DG-IMEX Scheme}
\label{sec:RealizabilityProof}

The realizability-preserving scheme is designed to preserve realizability of cell averages during time integration.  
The realizability of cell averages is then leveraged to recover pointwise realizability within each element with the aid of a realizability-enforcing limiter, which is detailed below in Section~\ref{sec:limiter}. 

The cell average of the moments is defined as
\begin{equation}
    \bU_{\bK}=\frac{1}{|\bK|}\int_{\bK}\bU_h\,\varepsilon^{2}\,d\vect{z}.
\end{equation}
Recall the evolved degrees of freedom are $\bu = \{ \frac{1}{|\bK|}\int_{\bK}\bU_h\phi_h\,\varepsilon^{2}\,d\vect{z}: \bK\in\cT \}$. 
For $\phi_{h}=1$, the evolved degrees of freedom represent the cell averaged moments.  
Using Eq.~\eqref{eq:stageq_ShuOsher}, the stage equation for the cell average of the moments is then
\begin{equation}
\label{eq:cell_avg_stage_equation}
    \bU_{\bK}^{(i)}
    =
    \sum_{j=0}^{i-1}c_{ij}\bU_{\bK}^{(ij)}
    +a_{ii}\,\Delta t\,\bC(\bU_{\bK}^{(i)}), 
    \quad i=1,\ldots,s,
\end{equation}
where
\begin{equation} 
\label{eq:U_K^ij}
    \bU_{\bK}^{(ij)}
    =\bU_{\bK}^{(j)}
    +\hat{c}_{ij}\,\Delta t\,\bT(\bU_{h}^{(j)})_{\bK},
\end{equation}
and (see Eq.~\eqref{eq:semi-discrete_DG_full_source})
\begin{align}
    \bT(\bU_{h}^{(j)})_{\bK}
    =& 
    -\frac{1}{|\bK|}\int_{\bK_{\vect{x}}}
    \big[\,
        \varepsilon^3\,\widehat{\bF}^\varepsilon\rvert_{\varepsilon_{\Hi}} 
        -\varepsilon^3\,\widehat{\bF}^\varepsilon\rvert_{\varepsilon_{\Lo}}
    \,\big]\,d\vect{x} \nonumber \\
    &-\sum_{i=1}^{d}
    \frac{1}{|\bK|}\int_{\widetilde{\bK}^i}
    \big[\,
        \widehat{\bF}^i\rvert_{x^i_{\Hi}}
        -\widehat{\bF}^i\rvert_{x^i_{\Lo}}
    \,\big]\,\varepsilon^2\,d\Tilde{\vect{z}}^i.
\end{align}
Eq.~\eqref{eq:cell_avg_stage_equation} can be separated into an explicit update and an implicit update.
We define the explicit update as
\begin{equation}
\label{eq:ExplicitUpdate}
    \bU^{(i)}_{\mathrm{Ex}}
    =
    \sum_{j=0}^{i-1}c_{ij}\bU_{\bK}^{(ij)},
\end{equation}
so that the implicit update is
\begin{equation}
    \label{eq:ImplicitUpdate}
    \bU_{\bK}^{(i)}
    =
    \bU^{(i)}_{\mathrm{Ex}}+a_{ii}\Delta t\bC(\bU_{\bK}^{(i)}), 
    \quad i=1,\ldots,s.
\end{equation}

\begin{prop}
\label{prop:explicit_step_realizable}
    Consider the stages of the IMEX scheme in Eq.~\eqref{eq:stageq_ShuOsher} applied to the DG discretization of the evolution equations in Eq.~\eqref{eq:semi-discrete_DG_full_source}.
    Assume that 
    \begin{enumerate}
        \item For all $\ell=1,\ldots,d$, the LGL quadrature points, $\hat{S}^{M_x}_\ell(\bK)$, are chosen such that it is exact for computing the cell average of $\bU_h^{(j)}$ over $K^\ell$. 
        \item The LGL quadrature points, $\hat{S}^{M_\varepsilon}_{\varepsilon}(\bK)$, are chosen such that it is exact for computing the cell average of $\bU_h^{(j)}$ over $K_\varepsilon$.
        \item For all $0\leq j\leq i-1<s$, the values $\bU_h^{(j)}(\vect{x})\in\cR$ for all $\vect{x}\in\widetilde{S}_{\otimes}(\bK)$.
        \item The time step $\Delta t$ is chosen such that 
        \begin{equation}\label{eq:realizable_timestep}
            \Delta t 
            \leq 
            \hat{c}\min_{K_\varepsilon,K^\ell}
            \left\{\, 
                W(1-v)\frac{\hat{w}_{M_\varepsilon}^\varepsilon\Delta\varepsilon}{(d+1)\varepsilon_{\Hi} a^\varepsilon},\, 
                \frac{\hat{w}_{M_x}^\ell\Delta x^\ell}{(d+1)a^\ell}
            \,\right\},
        \end{equation}
        with $a^\varepsilon\geq|q|$, $a^\ell\geq 1$, and $\hat{c}$ is defined in Eq.~\eqref{eq:timestep_c_hat}.
    \end{enumerate}
    Then $\bU^{(i)}_{\mathrm{Ex}}\in\cR$.
\end{prop}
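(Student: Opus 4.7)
The plan is to reduce the statement to realizability of a forward Euler update, then apply a Zhang--Shu--type convex-decomposition argument on each phase-space dimension. By the Shu--Osher form in Eq.~\eqref{eq:ExplicitUpdate}, $\bU^{(i)}_{\mathrm{Ex}}$ is a convex combination (with coefficients $c_{ij}\ge 0$ summing to one) of the intermediate states $\bU_{\bK}^{(ij)}$ from Eq.~\eqref{eq:U_K^ij}. Using Lemma~\ref{lem:convexcone} (realizability forms a convex cone), it suffices to show that each $\bU_{\bK}^{(ij)}$ is realizable whenever $\bU_h^{(j)}$ is realizable at every node of $\widetilde{S}_{\otimes}(\bK)$ and $\hat{c}_{ij}\Delta t$ satisfies the CFL bound inside the braces of Eq.~\eqref{eq:realizable_timestep}. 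Thus the entire argument boils down to showing realizability of a single forward Euler step on the cell average.

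First I would use assumptions (1)--(2) to rewrite $\bU_{\bK}^{(j)}$ as an LGL quadrature over $\bK$, weighted by $\varepsilon^2$. The tensor-product LGL rule decomposes as a sum over the interior nodes of $S_{\otimes}(\bK)$ (pure interior contributions) plus the boundary-containing parts indexed by $\hat{S}_{\varepsilon,\otimes}(\bK)$ and $\hat{S}_{\ell,\otimes}(\bK)$, $\ell=1,\dots,d$. I would then split $\hat{c}_{ij}\Delta t\,\bT(\bU_h^{(j)})_{\bK}$ into $d+1$ pieces—one per dimension—assign each flux difference to the corresponding boundary-quadrature sum, and introduce a convex weighting so that $\bU_{\bK}^{(ij)}$ becomes a convex combination of (a) interior-node values $\bU_h^{(j)}$ and (b) ``edge'' states that pair the endpoint values with the numerical flux difference for exactly one dimension.

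The spatial edge states are the standard ones. For each $\ell$, the boundary endpoints $x^\ell_{\Lo}{}^+$ and $x^\ell_{\Hi}{}^-$ together with the Lax--Friedrichs flux in Eq.~\eqref{eq:spatial_flux} produce an update of the form $\bU_h^{(j)}\rvert_{x^\ell_{\Hi}{}^-}-\alpha(\widehat{\bF}^\ell\rvert_{x^\ell_{\Hi}}-\bF^\ell(\bU_h^{(j)}\rvert_{x^\ell_{\Hi}{}^-}))$ and the symmetric expression at $x^\ell_{\Lo}{}^+$. Expanding the numerical flux and collecting terms writes each as a further convex combination of $\bU\pm\bF^\ell(\bU)$ (evaluated at interior points in $\bK$ or in neighboring cells) with weights involving $a^\ell$ and $\Delta t/(\hat w^\ell_{M_x}\Delta x^\ell)$. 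The classical identity $a^\ell\bU\pm\bF^\ell(\bU)\in\cR$ for $a^\ell\ge1$ (verified by a Cauchy--Schwarz argument on the representation of $\bU$ and $\bF^\ell$ as moments against $L^\mu$, using $|L|=1$ from the discussion after Eq.~\eqref{eq:EulerianUnitDirection}, and already exploited in \cite{chu_etal_2019}) makes each spatial edge state realizable provided $\hat c_{ij}\Delta t\le \hat w^\ell_{M_x}\Delta x^\ell/((d+1)a^\ell)$, which matches the spatial term in Eq.~\eqref{eq:realizable_timestep}.

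The main obstacle is the energy dimension, where two nonstandard features appear: the integration weight $\varepsilon^2$, and the fact that the flux in Eq.~\eqref{eq:energy_flux} dissipates on $\widetilde{\bU}$ (comoving-frame) rather than on $\bU$ (Eulerian-frame). To handle the weight I would use that the LGL rule with $M_\varepsilon$ nodes is exact for polynomials of degree $k+2$ (assumption~2), so that $\int_{K_\varepsilon}\bU_h^{(j)}\varepsilon^2\,d\varepsilon=\sum_q\hat w_q^\varepsilon\,\varepsilon_q^2\,\bU_h^{(j)}(\varepsilon_q)$, and the boundary contribution carries the factor $\hat w_{M_\varepsilon}^\varepsilon\,\varepsilon_{\Hi}^2$. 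The energy boundary state then takes the Lax--Friedrichs form with dissipation $a^\varepsilon(\widetilde{\bU}\rvert_{\varepsilon^+}-\widetilde{\bU}\rvert_{\varepsilon^-})$. The crux is to prove that the resulting edge state can again be written as a convex combination of realizable quantities; this is where the conversion matrix $A(\vect v)$ of Eq.~\eqref{eq:Hat_to_Conserved_Matrix}, the identity $\bcC$-analog $\bU=A(\vect v)\widehat{\bU}$ applied to the energy flux (via $\widetilde{\bU}=A(\vect v)\widehat{\bU}$ up to the scaling inherent in $\widetilde{\bU}$, Eq.~\eqref{eq:Lagrangian_U_tilde}), and the pointwise bound $|q|\le a^\varepsilon$ combine to transform $\widetilde{\bU}\pm\bF^\varepsilon/a^\varepsilon$ into an Eulerian-frame quantity that is a nonnegative-weight moment of $f$ against $L^\mu$; Propositions~\ref{prop:momentBoundsHat} and \ref{prop:momentBoundsEulerian} then supply realizability. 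Tracking the scaling through the change of frame yields the factor $W(1-v)$ and the ratio $\hat w_{M_\varepsilon}^\varepsilon\Delta\varepsilon/\varepsilon_{\Hi}$ on the energy side of Eq.~\eqref{eq:realizable_timestep}. Assembling all $d+1$ convex combinations with the minimum CFL over directions gives $\bU_{\bK}^{(ij)}\in\cR$, and hence $\bU^{(i)}_{\mathrm{Ex}}\in\cR$ by Lemma~\ref{lem:convexcone}.
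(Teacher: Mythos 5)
Your overall architecture matches the paper's: reduce $\bU^{(i)}_{\mathrm{Ex}}$ to a convex combination of forward-Euler states $\bU^{(ij)}_{\bK}$ via the Shu--Osher form and Lemma~\ref{lem:convexcone}; rewrite $\bU^{(ij)}_{\bK}$ as a $(d+1)$-term convex combination of per-dimension quantities $\Gamma^\varepsilon,\Gamma^\ell$ using exactness of the tensor-product LGL rule; handle the spatial direction with the Lax--Friedrichs decomposition into $\Phi^{\ell,\pm}=\tfrac12(\bU\pm\bF^\ell/a^\ell)$ (Lemma~\ref{lem:realizableflux}); identify the energy direction as the novel difficulty because of the $\varepsilon^2$ weight and the dissipation on $\widetilde{\bU}$; and extract the $W(1-v)$ factor by lower-bounding $E/\varepsilon$. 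That is essentially the paper's proof skeleton (Lemmas~\ref{lem:realizableflux}--\ref{lem:Uij_realizable}).

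However, the pivot of your energy-direction argument rests on an identity that is false. You assert $\widetilde{\bU}=A(\vect v)\widehat{\bU}$ ``up to scaling,'' but from Eqs.~\eqref{eq:Lagrangian_U_tilde} and \eqref{eq:inverse_problem} one has $\widetilde{\cE}=\widehat{\cE}$ while $\widetilde{\cF}_{j}=\cH_{j}+W\cJ v_{j}$ differs from $\widehat{\cF}_{j}=W\cH_{j}+v^{i}\cK_{ij}$ by terms involving $\cK$ that no scalar scaling absorbs. What is actually true is $\widetilde{\bU}=A(\vect v)\bM$ with $\bM=(\cJ,\cH_{j})^{\intercal}$ the \emph{primitive} moments, whereas $\bU=A(\vect v)\widehat{\bU}$. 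So you have conflated $\widehat{\bU}$ with $\bM$, and an argument built on that conflation would not close. More to the point, no change-of-frame matrix is needed at all: the paper's Lemmas~\ref{lem:energy_lemma_1} and \ref{lem:energy_lemma_2} observe directly that $\widetilde{\bU}=\bigl(\tfrac{1}{4\pi}\int_{\bbS^{2}}E f\,d\omega,\ \tfrac{1}{4\pi}\int_{\bbS^{2}}E f\,L_{j}\,d\omega\bigr)^{\intercal}$ is a moment pair of the same nonnegative $f$ against weights $E$ and $E L_{j}$. Therefore $\bU-\alpha a^{\varepsilon}\widetilde{\bU}$ has underlying density $(E^{2}/\varepsilon)(1-\alpha a^{\varepsilon}\varepsilon/E)f\ge0$ whenever $\alpha a^{\varepsilon}\le E/\varepsilon$, and $\widetilde{\bU}\pm\bF^{\varepsilon}/a^{\varepsilon}_{\max}$ has density $E(1\pm q/a^{\varepsilon}_{\max})f\ge0$ whenever $|q|\le a^{\varepsilon}_{\max}$; realizability of each follows by Cauchy--Schwarz exactly as in Propositions~\ref{prop:momentBoundsHat}--\ref{prop:momentBoundsEulerian}, and the boundary state at $\varepsilon_{\Hi}$ or $\varepsilon_{\Lo}$ decomposes as a conical combination of these two types of terms (Lemma~\ref{lem:Gamma:realizable_energy}). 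You should replace your appeal to $A(\vect v)$ with this direct moment-representation argument; otherwise the proof does not go through.

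Finally, a smaller point: your reduction implicitly assumes the closure and primitive-moment recovery produce fluxes consistent with some $f\in\mathfrak{R}$, which is what lets Lemmas~\ref{lem:realizableflux}--\ref{lem:energy_lemma_2} apply. The paper makes this explicit via the exact-closure assumption together with Lemmas~\ref{lem:realizable_hat_moments}--\ref{lem:momentconversion_realizable}; your proof should at least flag that dependency.
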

As a consequence of Proposition \ref{prop:explicit_step_realizable}, we have that the implicit update given by Eq.~\eqref{eq:ImplicitUpdate} is realizable.
\begin{prop}
\label{prop:realizable_stage_values}
    Consider the stages of the IMEX scheme in Eq.~\eqref{eq:stageq_ShuOsher} applied to the DG discretization of the evolution equations in Eq.~\eqref{eq:semi-discrete_DG_full_source}.  
    Assume that the conditions of Proposition \ref{prop:explicit_step_realizable} hold so that $\bU^{(i)}_{\mathrm{Ex}}\in\cR$.  
    Let $\bU_{\bK}^{(i)}$ in Eq.~\eqref{eq:ImplicitUpdate} be obtained by the modified Richardson iteration described by Eq.~\eqref{eq:collision_richardson} with $\lambda \leq (1+v)^{-1}$.  
    Then $\bU_{\bK}^{(i)}\in\cR$.
\end{prop}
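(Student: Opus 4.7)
The plan is to track membership in $\cR$ through the three stages embedded in the implicit update: converting the realizable explicit input $\bU^{(i)}_{\mathrm{Ex}}$ to the ``hat'' moments $\widehat{\bU}^{(*)}$, verifying that each Richardson iterate $\bQ_{\widehat{\bU}}$ preserves $\cR$, and reconstructing $\bU_{\bK}^{(i)}$ from the converged primitive moments. The argument is a direct extension of Lemma~\ref{lem:momentconversion_realizable} (the $\chi = \kappa = 0$ case), with extra care needed for the collisional source terms.

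Since $\bU^{(i)}_{\mathrm{Ex}} \in \cR$ by Proposition~\ref{prop:explicit_step_realizable}, the linear inversion Eq.~\eqref{eq:Conserved_to_Hat} combined with $v<1$ and a Cauchy--Schwarz estimate (mirroring the proof of Proposition~\ref{prop:momentBoundsHat}) gives $\widehat{\bU}^{(*)} \in \cR$, so the initial guess $\bM^{[0]} = \widehat{\bU}^{(*)}$ is realizable. For the inductive step $\bM^{[k]} \in \cR \implies \bM^{[k+1]} \in \cR$, I would expand Eq.~\eqref{eq:collision_richardson} componentwise, and simplify using $\mu_\chi = (W + \lambda\Delta\tau\chi)^{-1}$ and $\mu_\kappa = (W + \lambda\Delta\tau\kappa)^{-1}$ to obtain
\begin{align*}
    \cJ^{[k+1]} &= \mu_\chi\bigl[\,W(1-\lambda)\cJ^{[k]} - \lambda v^i\cH_i^{[k]} + \lambda\widehat{\cE}^{(*)} + \lambda\Delta\tau\chi\cJ_{\Equilibrium}\,\bigr], \\
    \cH_j^{[k+1]} &= \mu_\kappa\bigl[\,W(1-\lambda)\cH_j^{[k]} - \lambda v^i\cK_{ij}(\bM^{[k]}) + \lambda\widehat{\cF}_j^{(*)}\,\bigr].
\end{align*}
The bracketed expression on each line is a sum of three contributions: a ``Richardson'' piece mirroring the collisionless iteration, the scaled input $\lambda\widehat{\bU}^{(*)}$, and an isotropic emission contribution with $(\lambda\Delta\tau\chi\cJ_{\Equilibrium}, 0)^\intercal$. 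The second and third pieces are individually in $\cR$ (the emission is positive and isotropic); the step-size bound $\lambda \leq 1/(1+v)$ delivers $W(1-\lambda) \geq \lambda v$, which together with $|v^i\cH_i^{[k]}| \leq v\cJ^{[k]}$ and the analogous closure-based bound on $v^i\cK_{ij}(\bM^{[k]})$ yields realizability of the Richardson piece. Lemma~\ref{lem:convexcone} then gives realizability of the bracketed sum. Because $\mu_\chi, \mu_\kappa > 0$ and $\mu_\kappa \leq \mu_\chi$ (as $\sigma \geq 0$), the component-wise rescaling by $\Lambda$ preserves cone membership, so $\bM^{[k+1]} \in \cR$.

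Since $\cR$ is closed, the fixed-point limit $\bM^*$ is realizable. At convergence the backward Euler equation Eq.~\eqref{eq:backwardEulerHat} gives $\bU_{\bK}^{(i)} = A(\vect{v})\,\widehat{\bU}(\bM^*)$, and realizability of the Eulerian pair follows from Proposition~\ref{prop:momentBoundsEulerian} via the identification of $\bM^*$ with an underlying nonnegative maximum-entropy distribution. The main obstacle I anticipate is the pressure-tensor term $v^i\cK_{ij}(\bM^{[k]})$ in the Richardson piece: unlike $\cH_i$, the moment $\cK_{ij}$ is supplied by the closure rather than by a direct angular integral of the iterate, so its realizability-compatible bound requires either the explicit algebraic structure of Eq.~\eqref{eq:pressureTensor} together with the Eddington factor $\mathsf{k} \in [\tfrac13,1]$, or a tetrad-frame Cauchy--Schwarz estimate on the underlying maximum-entropy distribution whose moments are $\bM^{[k]}$. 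This is precisely the mechanism that forces the sharp step-size bound $\lambda \leq 1/(1+v)$, and is the crux that the deferred Lemma~\ref{lem:momentconversion_realizable} presumably supplies.
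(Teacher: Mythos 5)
Your proof follows the same route as the paper: it reduces Proposition~\ref{prop:realizable_stage_values} to (i) propagating realizability from $\bU^{(i)}_{\mathrm{Ex}}$ to the hat moments via Lemma~\ref{lem:realizable_hat_moments}, (ii) an induction on the Picard iterates using exactly the componentwise decomposition that the paper packages as Lemma~\ref{lem:realizable_collision_solver}, and (iii) a conversion from the converged primitive moments back to Eulerian moments. The Richardson-piece/source/emission splitting you write down is precisely what the paper's Lemma~\ref{lem:realizable_collision_solver} carries out, where the Richardson piece is recognized as the moments of $g^{[k]} = \mu_\chi[(1-\lambda)W - \lambda v^i\ell_i]\,\varepsilon f^{[k]}$ with $f^{[k]}$ the nonnegative distribution supplied by the exact closure assumption---this is exactly the tetrad-frame Cauchy--Schwarz route you flag as the crux, not the algebraic-Eddington-factor route.

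Two points of precision worth fixing. First, ``the component-wise rescaling by $\Lambda$ preserves cone membership'' is not a cone property (cones are only closed under uniform positive scaling), and as stated it is false for an arbitrary positive diagonal $\Lambda$; the step survives only because $\mu_\kappa \le \mu_\chi$ makes the zeroth component stretch at least as much as the spatial ones, preserving $\gamma \ge 0$. You do note the key inequality, but it should be invoked as the \emph{reason} rather than an aside. Second, the final reconstruction step: you invoke Proposition~\ref{prop:momentBoundsEulerian}, which indeed suffices under the exact closure assumption, but the paper proves Lemma~\ref{lemma12} specifically so that the conclusion also holds with the \emph{algebraic} Eddington factor from Eq.~\eqref{eq:eddingtonFactorAlgebraic}, which is what the implementation actually uses. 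If you intend your result to cover the practical scheme rather than the idealized exact-closure analysis, you need the Lemma~\ref{lemma12}-style argument for that last step. A minor technical remark you gesture at but don't fully close: $\cR$ is not a closed set (since $\mathscr{J}>0$ is open), so ``the fixed-point limit is realizable because $\cR$ is closed'' does not literally apply; in practice the $\mathscr{J}$-component is bounded away from zero by the contributions from $\widehat{\cE}^{(*)}$ and the emission term, but this should be stated rather than deduced from closedness.
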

Finally using Propositions \ref{prop:explicit_step_realizable} and \ref{prop:realizable_stage_values} in conjunction with the realizability-enforcing limiter described in Section~\ref{sec:limiter} we arrive at our main result.
\begin{theorem}
\label{thm:realizable_scheme}
    Consider the stages of the IMEX scheme in Eq.~\eqref{eq:stageq_ShuOsher} applied to the DG discretization of the evolution equations in Eq.~\eqref{eq:semi-discrete_DG_full_source}.  Assume that
    \begin{enumerate}
        \item Conditions 1, 2, 4 of Proposition \ref{prop:explicit_step_realizable} hold, and Condition~3 of Proposition~\ref{prop:explicit_step_realizable} holds for $i=1$.  
        \item With $\bU_{\bK}^{(i)}\in\cR$, the realizability-enforcing limiter is invoked to enforce $\bU_h^{(i)}(\vect{x})\in\cR$ for all $\vect{x}\in\widetilde{S}_{\otimes}(\bK)$.
        \item In the $s$-stage IMEX scheme $\bu^{n+1} = \bu^{(s)}$.
    \end{enumerate}
    Then $\bU_{\bK}^{n+1}\in\cR$.
\end{theorem}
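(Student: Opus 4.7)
The plan is to prove this by induction on the stage index $i=1,\ldots,s$, with Propositions~\ref{prop:explicit_step_realizable} and \ref{prop:realizable_stage_values} providing the per-stage guarantee and the limiter of Section~\ref{sec:limiter} propagating the hypothesis from one stage to the next. Since $\bu^{n+1}=\bu^{(s)}$, establishing $\bU_{\bK}^{(i)}\in\cR$ for every $i\le s$ immediately yields the theorem.

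For the base case $i=1$, the hypotheses directly match the assumptions of Proposition~\ref{prop:explicit_step_realizable}: Conditions~1, 2, and 4 are given, and Condition~3 (pointwise realizability of $\bU_h^{(0)}$ at all nodes of $\widetilde{S}_{\otimes}(\bK)$) is the $i=1$ case we explicitly assume. Hence $\bU_{\mathrm{Ex}}^{(1)}\in\cR$. Applying Proposition~\ref{prop:realizable_stage_values} to the implicit update in Eq.~\eqref{eq:ImplicitUpdate} then yields $\bU_{\bK}^{(1)}\in\cR$. Invoking the realizability-enforcing limiter of Section~\ref{sec:limiter} on $\bU_h^{(1)}$ (whose cell average is the realizable $\bU_{\bK}^{(1)}$) produces a limited polynomial representation satisfying $\bU_h^{(1)}(\vect{x})\in\cR$ for all $\vect{x}\in\widetilde{S}_{\otimes}(\bK)$, while leaving the cell average unchanged.

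For the inductive step, assume that for every $j<i$ we have already arranged $\bU_h^{(j)}(\vect{x})\in\cR$ on $\widetilde{S}_{\otimes}(\bK)$. This is precisely Condition~3 of Proposition~\ref{prop:explicit_step_realizable} at stage $i$, so the hypotheses of that proposition are again in force and $\bU_{\mathrm{Ex}}^{(i)}\in\cR$. Proposition~\ref{prop:realizable_stage_values}, with the step-size choice $\lambda\le(1+v)^{-1}$ in the Richardson iteration, converts this into realizability of the implicit update $\bU_{\bK}^{(i)}$. Reapplying the limiter restores the pointwise realizability condition needed to run the induction at the next stage. This chain of implications is the whole content of the proof; the real analytical work has been carried by the two propositions and the limiter, not by the theorem itself.

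I expect no serious obstacles in assembling this argument. The only subtlety is bookkeeping: one must verify that the hypothesis fed into Proposition~\ref{prop:explicit_step_realizable} at stage $i$ is \emph{pointwise} realizability at the augmented quadrature set $\widetilde{S}_{\otimes}(\bK)$, which is exactly what the limiter guarantees, and that the limiter does not disturb the cell average output by Proposition~\ref{prop:realizable_stage_values}. These are standard properties of the limiter from \cite{chu_etal_2019}, so invoking it after each stage closes the induction cleanly. Concluding with $\bU_{\bK}^{n+1}=\bU_{\bK}^{(s)}\in\cR$ completes the proof.
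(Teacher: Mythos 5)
Your proposal is correct and takes essentially the same route as the paper: the paper's proof establishes the $i=1$ case via Propositions~\ref{prop:explicit_step_realizable} and \ref{prop:realizable_stage_values}, applies the limiter to restore pointwise realizability on $\widetilde{S}_{\otimes}(\bK)$, and then states that ``repeated application of these steps for $i=2,\ldots,s$'' gives the result. Your version simply makes that repeated application an explicit induction on the stage index, with the limiter closing the loop by re-establishing Condition~3 of Proposition~\ref{prop:explicit_step_realizable} for the next stage; this is the same argument, just spelled out.
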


\begin{figure}
    \centering
    \includegraphics[width = 8.6cm]{./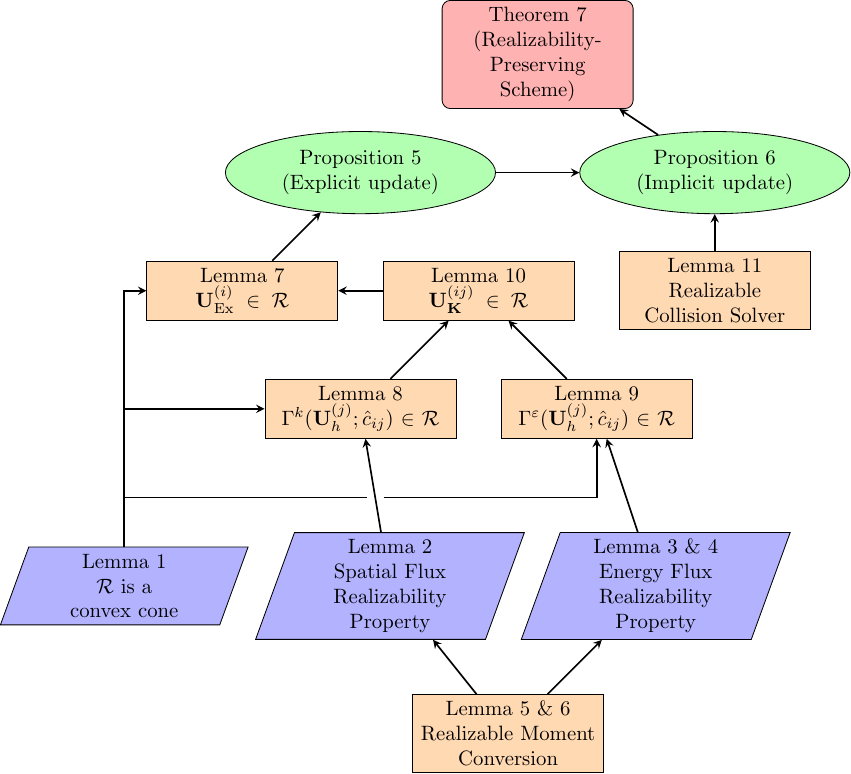}
    \caption{Flowchart depicting how the Lemmas introduced in Section~\ref{sec:RealizabilityProof} build upon each other to prove Theorem~\ref{thm:realizable_scheme}.
    Realizability of the moment conversion process guarantees realizability-preserving properties that can be derived from our numerical fluxes introduced by the DG discretization.
    These properties, along with the set $\cR$ being a convex cone, ensure the explicit update of the conserved moments maintains realizability.
    Then the realizability-preserving collision solver guarantees the implicit update maintains realizability.
    Finally, this allows us to conclude that updating $\bU_{\bK}^{n}$ to $\bU_{\bK}^{n+1}$ preserves the realizability of the conserved moments.
    }
    \label{fig:proof_flowchart}
\end{figure}

The remainder of Section~\ref{sec:RealizabilityProof} is devoted to proving Propositions~\ref{prop:explicit_step_realizable} and \ref{prop:realizable_stage_values} and Theorem~\ref{thm:realizable_scheme}.
We encourage the reader to refer to the flowchart in Figure \ref{fig:proof_flowchart}, which aims to explain how various lemmas are combined to prove Propositions~\ref{prop:explicit_step_realizable} and \ref{prop:realizable_stage_values} and Theorem~\ref{thm:realizable_scheme}.

In the following analysis, as in \cite[Assumption~1]{laiu_etal_2025}, we employ the exact closure assumption; i.e., given the lower-order primitive moments $\{\cJ,\cH^{\mu}\}$, the higher-order primitive moments $\{\cK^{\mu\nu},\cL^{\mu\nu\rho}\}$ are computed such that $\{\cJ,\cH^{\mu},\cK^{\mu\nu},\cL^{\mu\nu\rho}\}$ satisfy Eqs.~\eqref{eq:lagrangianMoments} and \eqref{eq:lagrangianRankThreeMoment} for some nonnegative distribution $f$.  

\subsection{Preparations for Analysis of Numerical Method}

Our choice of numerical fluxes is motivated by the goal of designing a realizability-preserving numerical scheme for the two-moment model.
The following lemma is useful when proving that our choice of spatial numerical fluxes in Eq.~\eqref{eq:spatial_flux} will preserve moment realizability under a CFL-type restriction on the time step.
\begin{lemma}
\label{lem:realizableflux} 
    For a given $i\in\{1,\ldots,d\}$, let $\bU=(\cE,\cF_j)^{\intercal}$ and $\bF^i=(\cF^i,\cS^i_{\hspace{4pt}j})^{\intercal}$, as defined in Eq.~\eqref{eq:momentsCompact}, be moments of a distribution function $f\in\mathfrak{R}$. 
    Let $\alpha$ be such that $0<\alpha^{-1}\leq 1$, then
    $\Phi^{i,\pm}(\bU)\vcentcolon=\frac{1}{2}(\bU\pm\frac{1}{\alpha}\bF^i)\in\cR$.
\end{lemma}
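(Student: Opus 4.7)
The plan is to exhibit $\Phi^{i,\pm}(\bU)$ directly as a pair of moments in the generic form of Eq.~\eqref{eq:momentsGeneric}, against a nonnegative density built from $f$, and then apply Proposition~\ref{prop:momentBoundsEulerian} (the Eulerian realizability proposition) verbatim. The key observation is that the Eulerian conserved moments and Eulerian fluxes already admit a common integral representation: writing $g = (E^{2}/\varepsilon)\,f$, we have $\cE=\tfrac{1}{4\pi}\!\int_{\bbS^{2}} g\,d\omega$, $\cF_{j}=\tfrac{1}{4\pi}\!\int_{\bbS^{2}} g\,L_{j}\,d\omega$, $\cF^{i}=\tfrac{1}{4\pi}\!\int_{\bbS^{2}} g\,L^{i}\,d\omega$, and $\cS^{i}{}_{j}=\tfrac{1}{4\pi}\!\int_{\bbS^{2}} g\,L^{i}L_{j}\,d\omega$, where $L^{\mu}$ (from Eq.~\eqref{eq:EulerianUnitDirection}) is spacelike with $L_{\mu}L^{\mu}=1$ and $L^{0}=0$. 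Thus the flux vector factors as $L^{i}$ times the same $(1,L_{j})$ angular structure that appears in $\bU$.

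First I would combine these representations to rewrite, for the chosen spatial index $i$,
\begin{equation*}
\Phi^{i,\pm}(\bU)
=\frac{1}{4\pi}\int_{\bbS^{2}}\tilde{g}^{\pm}\,\big(1,\,L_{j}\big)^{\intercal}\,d\omega,
\qquad
\tilde{g}^{\pm}\vcentcolon=\tfrac{1}{2}\,g\,\Big(1\pm\tfrac{1}{\alpha}L^{i}\Big).
\end{equation*}
Next I would verify $\tilde{g}^{\pm}\in\mathfrak{R}$. Nonnegativity follows from $g\ge 0$ (since $E,\varepsilon>0$ and $f\in\mathfrak{R}$) together with $|L^{i}|\le\sqrt{L_{k}L^{k}}=1$ and the hypothesis $0<\alpha^{-1}\le 1$, which force $1\pm L^{i}/\alpha\ge 0$. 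Nontriviality, i.e.\ $\tfrac{1}{4\pi}\!\int\tilde{g}^{\pm}d\omega>0$, reduces to $\cE\pm\cF^{i}/\alpha>0$; this is controlled by Proposition~\ref{prop:momentBoundsEulerian}, because $|\cF^{i}|\le\sqrt{\cF_{k}\cF^{k}}\le\cE$ and $\alpha^{-1}\le 1$ give $\cE\pm\cF^{i}/\alpha\ge\cE(1-\alpha^{-1})\ge 0$, with strict positivity in all non-degenerate cases (which is the only case $f\in\mathfrak{R}$ produces).

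Finally, with $\tilde{g}^{\pm}\in\mathfrak{R}$ in hand, I would invoke Proposition~\ref{prop:momentBoundsEulerian} (or equivalently the generic realizability statement in Eq.~\eqref{eq:realizableSet} applied with $\mathscr{L}_{\mu}=L_{\mu}$, since $L_{\mu}L^{\mu}=1$) to conclude $\Phi^{i,\pm}(\bU)\in\cR$: Cauchy--Schwarz against $\tilde{g}^{\pm}\ge 0$ yields $\big(\tfrac{1}{4\pi}\!\int\tilde{g}^{\pm}L_{j}\,d\omega\big)\big(\tfrac{1}{4\pi}\!\int\tilde{g}^{\pm}L^{j}\,d\omega\big)\le\big(\tfrac{1}{4\pi}\!\int\tilde{g}^{\pm}\,d\omega\big)^{2}$, which is exactly $\gamma(\Phi^{i,\pm}(\bU))\ge 0$.

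I do not anticipate a serious obstacle: the only place requiring care is the strict positivity $\mathscr{J}^{\pm}>0$ when $\alpha=1$ and $\cF^{i}=\mp\cE$ occurs exactly at the boundary of $\cR$. I would handle this by noting that such configurations arise only from $f$ concentrated on $\{L^{i}=\mp 1\}$, which is incompatible with $f\in\mathfrak{R}$ having positive zeroth moment in all directions; alternatively, one can state the result on the closure $\overline{\cR}$ and invoke the convex cone property (Lemma~\ref{lem:convexcone}), since $\Phi^{i,\pm}(\bU)$ is then a limit of interior points. The rest of the argument is purely algebraic bookkeeping once $\tilde{g}^{\pm}\ge 0$ is observed.
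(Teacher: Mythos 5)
Your proof is correct and takes essentially the same route as the paper's: you write each component of $\Phi^{i,\pm}(\bU)$ as an angular moment of a manifestly nonnegative modified density $\tilde{g}^{\pm}=\tfrac12\,(E^{2}/\varepsilon)f\,(1\pm L^{i}/\alpha)$ (which, incidentally, is slightly more careful than the paper's own write-up, where the $E^{2}/\varepsilon$ weight is suppressed in defining $g$), verify $\tilde{g}^{\pm}\in\mathfrak{R}$ using $|L^{i}|\le 1$ and $0<\alpha^{-1}\le 1$, and then invoke the Cauchy--Schwarz argument of Proposition~\ref{prop:momentBoundsEulerian}. The extra paragraph about strict positivity at the $\alpha=1$, $|\cF^{i}|=\cE$ boundary is a reasonable thing to flag, but it is already subsumed by the definition of $\mathfrak{R}$ (a distribution concentrated on $L^{i}=\mp1$ is a measure-zero set, so it has zero total integral and hence lies outside $\mathfrak{R}$), so no additional argument is needed.
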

\begin{proof}
    The first component of $\Phi^{i,\pm}(\bU)$ is
    \begin{align*}
        \frac{1}{2}(\cE\pm\frac{1}{\alpha}\cF^i)
        =
        \frac{1}{4\pi}\int_{\mathbb{S}^2}g\,d\omega,
    \end{align*}
    where we have defined $g:  =\frac{1}{2}(1\pm L^i/\alpha)f$, where $L^i$ is given by Eq.~\eqref{eq:EulerianUnitDirection}.
    Similarly, the second component of $\Phi^{i,\pm}(\bU)$ is
    \begin{align*}
        \frac{1}{2}(\cF_j\pm\frac{1}{\alpha}\cS^i_{\hspace{4pt}j})
        &=
        \frac{1}{4\pi}\int_{\mathbb{S}^2}g L_j\,d\omega.
    \end{align*}    
    Since $|L^{i}|\le1$ and $0<\alpha^{-1}\leq1$, $g\in\mathfrak{R}$.
    Using the same reasoning as in the proof of Proposition~\ref{prop:momentBoundsEulerian}, it follows that $\Phi^{i,\pm}(\bU)\in\cR$.
\end{proof}

The following two lemmas will be useful when proving that our choice of numerical flux in the energy dimension in Eq.~\eqref{eq:energy_flux} will result in a realizability-preserving scheme under a CFL condition.
\begin{lemma}
\label{lem:energy_lemma_1} 
    Given $f\in\mathfrak{R}$, let $\bU=(\cE,\cF_j)^{\intercal}$ and $\widetilde{\bU}=(\widetilde{\cE},\widetilde{\cF}_{j})$ be as defined in Eq.~\eqref{eq:momentsCompact} and \eqref{eq:Lagrangian_U_tilde}, respectively. Then, for $\alpha\geq0$ such that $0\leq\alpha a^\varepsilon\leq W+v^\mu\ell_\mu=E/\varepsilon$ (see Eq.~\eqref{eq:EulerianParticleEnergy}),
    $\bU-\alpha a^\varepsilon\widetilde{\bU}$ is realizable.  
\end{lemma}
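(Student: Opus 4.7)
\textbf{Proof proposal for Lemma~\ref{lem:energy_lemma_1}.}

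The plan is to write $\bU - \alpha a^\varepsilon \widetilde{\bU}$ as the realizable moments of a single effective nonnegative distribution, then invoke a Cauchy--Schwarz argument modeled on the proof of Proposition~\ref{prop:momentBoundsEulerian}. The key observation is that both $\bU$ and $\widetilde{\bU}$ admit integral representations with a common angular measure, differing only by a scalar weight in $\omega$.

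First, I would express all four components as integrals over $\bbS^{2}$. For $\bU=(\cE,\cF_{j})^{\intercal}$, the representations from Proposition~\ref{prop:momentBoundsEulerian} give
\begin{equation*}
    \cE=\frac{1}{4\pi}\int_{\bbS^{2}}(E^{2}/\varepsilon)f\,d\omega,
    \qquad
    \cF_{j}=\frac{1}{4\pi}\int_{\bbS^{2}}(E^{2}/\varepsilon)f\,L_{j}\,d\omega.
\end{equation*}
For $\widetilde{\bU}=(\widetilde{\cE},\widetilde{\cF}_{j})^{\intercal}$, using $u_{\nu}p^{\nu}=-\varepsilon$, $n_{\mu}p^{\mu}=-E$, and $\gamma_{j\mu}p^{\mu}=E L_{j}$ in the definitions in Eq.~\eqref{eq:Lagrangian_U_tilde} yields
\begin{equation*}
    \widetilde{\cE}=\frac{1}{4\pi}\int_{\bbS^{2}}E\,f\,d\omega,
    \qquad
    \widetilde{\cF}_{j}=\frac{1}{4\pi}\int_{\bbS^{2}}E\,f\,L_{j}\,d\omega.
\end{equation*}

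With these in hand, the components of $\bU-\alpha a^{\varepsilon}\widetilde{\bU}$ factor cleanly: defining
\begin{equation*}
    g \vcentcolon= \big(E/\varepsilon-\alpha a^{\varepsilon}\big)\,E\,f,
\end{equation*}
I would write $\cE-\alpha a^{\varepsilon}\widetilde{\cE}=\tfrac{1}{4\pi}\int_{\bbS^{2}}g\,d\omega$ and $\cF_{j}-\alpha a^{\varepsilon}\widetilde{\cF}_{j}=\tfrac{1}{4\pi}\int_{\bbS^{2}}g\,L_{j}\,d\omega$. The hypothesis $0\le\alpha a^{\varepsilon}\le E/\varepsilon$ (valid pointwise in $\omega$) together with $E>0$ (noted after Eq.~\eqref{eq:EulerianUnitDirection}) and $f\in\mathfrak{R}$ gives $g\ge 0$; since $E/\varepsilon=W+v^{\mu}\ell_{\mu}$ varies nontrivially in $\omega$ whenever $v\neq 0$ (and reduces to the trivially realizable scaling by $(1-\alpha a^{\varepsilon})$ when $v=0$), the integral of $g$ is strictly positive, so $g\in\mathfrak{R}$.

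Finally, the proof concludes by the same Cauchy--Schwarz chain used in Proposition~\ref{prop:momentBoundsEulerian}, applied to $g$ and $L^{\mu}$: using $L^{0}=0$ and $L_{i}L^{i}=L_{\mu}L^{\mu}=1$,
\begin{equation*}
    \big(\cF_{j}-\alpha a^{\varepsilon}\widetilde{\cF}_{j}\big)\big(\cF^{j}-\alpha a^{\varepsilon}\widetilde{\cF}^{j}\big)
    \le \Big(\tfrac{1}{4\pi}\int_{\bbS^{2}}g\,d\omega\Big)^{2}
    =\big(\cE-\alpha a^{\varepsilon}\widetilde{\cE}\big)^{2},
\end{equation*}
so $\bU-\alpha a^{\varepsilon}\widetilde{\bU}\in\cR$ by the definition in Eq.~\eqref{eq:realizableSet}. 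The only mildly delicate step is the verification of the integral representation for $\widetilde{\cF}_{j}$ (a short tetrad/projector calculation); once that is in place, the rest is structurally identical to Proposition~\ref{prop:momentBoundsEulerian}, with the weight $(E^{2}/\varepsilon)f$ replaced by the nonnegative weight $g$.
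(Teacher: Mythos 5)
Your proposal is correct and follows essentially the same route as the paper: both define the nonnegative effective weight $g=(E/\varepsilon-\alpha a^{\varepsilon})E\,f$ (written in the paper as $(E^{2}/\varepsilon)(1-\alpha a^{\varepsilon}\varepsilon/E)f$, which is the same quantity), observe $g\in\mathfrak{R}$ from the hypothesis $\alpha a^{\varepsilon}\le E/\varepsilon$, and then appeal to the Cauchy--Schwarz argument of Proposition~\ref{prop:momentBoundsEulerian}. The only difference is cosmetic: you spell out the integral representation of $\widetilde{\bU}$ and a remark about strict positivity of $\int g$, both of which the paper leaves implicit.
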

\begin{proof}
    The first component of $\bU-\alpha a^\varepsilon\,\widetilde{\bU}$ is
    \begin{equation*}
        \frac{1}{4\pi}\int_{\bbS^2}(E^2/\varepsilon)f\,d\omega-\frac{1}{4\pi}\int_{\bbS^2}\alpha a^\varepsilon Ef\,d\omega
        =\frac{1}{4\pi}\int_{\bbS^2}g\,d\omega,
    \end{equation*}
    where we have defined $g\vcentcolon=(E^2/\varepsilon)(1-\alpha a^\varepsilon(\varepsilon/E))\,f$.
    Since $\alpha a^\varepsilon\leq E/\varepsilon$, $g\in\mathfrak{R}$.
    The second component of $\bU-\alpha a^\varepsilon\,\widetilde{\bU}$ is
    \begin{equation*}
        \frac{1}{4\pi}\int_{\bbS^2}(E^2/\varepsilon)fL_j\,d\omega-\frac{1}{4\pi}\int_{\bbS^2}\alpha a^\varepsilon EfL_j\,d\omega
        =\frac{1}{4\pi}\int_{\bbS^2}gL_j\,d\omega.
    \end{equation*}
    It follows from arguments similar to Proposition \ref{prop:momentBoundsEulerian} that $\bU-\alpha a^\varepsilon\widetilde{\bU}\in\cR$.  
\end{proof}
\begin{lemma}
\label{lem:energy_lemma_2}
    Given $f\in\mathfrak{R}$, let $\widetilde{\bU}$ and  $\bF^\varepsilon$ be given by the definitions in Eqs.~\eqref{eq:Lagrangian_U_tilde} and \eqref{eq:momentsCompact}, respectively.  
    Let $a_{\max}^{\varepsilon}\ge|q|$, where $q$ is defined in Eq.~\eqref{eq:q}.
    Then $\widetilde{\bU}\pm\frac{1}{a_{\max}^{\varepsilon}}\bF^\varepsilon\in\cR$. 
\end{lemma}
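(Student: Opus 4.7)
The plan is to show that $\widetilde{\bU}\pm\tfrac{1}{a_{\max}^{\varepsilon}}\bF^{\varepsilon}$ arises as the moment pair (in the sense of Eq.~\eqref{eq:momentsGeneric}) of a nonnegative distribution against the Eulerian unit four-vector $L^{\mu}$ from Eq.~\eqref{eq:EulerianUnitDirection}, after which realizability follows from a Cauchy--Schwarz argument mirroring Proposition~\ref{prop:momentBoundsEulerian}. The key observation is that $\widetilde{\bU}$ and $\bF^{\varepsilon}$ share the same tensorial integrand $f\,E\,(1,L_{j})^{\intercal}$ against $f$, up only to the scalar weight $-q$.

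Step one is to rewrite both quantities as explicit integrals over $\bbS^{2}$. Starting from the definition of $\cQ^{\mu\nu\rho}$ in Eq.~\eqref{eq:angularMoments} together with the identity $p^{\nu}p^{\rho}\p_{\nu}u_{\rho}=\varepsilon^{2}q$ from Eq.~\eqref{eq:q}, I get $\tfrac{1}{\varepsilon}\cQ^{\mu\nu\rho}\p_{\nu}u_{\rho}=\tfrac{1}{4\pi}\int_{\bbS^{2}}f\,p^{\mu}\,q\,d\omega$. Contracting this with $n_{\mu}$ (which gives $-E$ by Eq.~\eqref{eq:particleFourMomentum_Eulerian}) and with $\gamma_{j\mu}$ (which gives $EL_{j}$ because $L^{0}=0$), the definition of $\bF^{\varepsilon}$ in Eq.~\eqref{eq:momentsCompact} becomes
\begin{equation*}
\bF^{\varepsilon}=-\frac{1}{4\pi}\int_{\bbS^{2}}f\,E\,q\,\begin{pmatrix}1\\ L_{j}\end{pmatrix}d\omega,
\end{equation*}
while a parallel rewriting of Eq.~\eqref{eq:Lagrangian_U_tilde} using $EL_{j}=\varepsilon(\ell_{j}+Wv_{j})$ gives $\widetilde{\bU}=\tfrac{1}{4\pi}\int_{\bbS^{2}}f\,E\,(1,L_{j})^{\intercal}\,d\omega$. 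Combining these,
\begin{equation*}
\widetilde{\bU}\pm\frac{1}{a_{\max}^{\varepsilon}}\bF^{\varepsilon}=\frac{1}{4\pi}\int_{\bbS^{2}}g_{\pm}\begin{pmatrix}1\\ L_{j}\end{pmatrix}d\omega,\quad g_{\pm}\vcentcolon= f\,E\,\big(1\mp q/a_{\max}^{\varepsilon}\big).
\end{equation*}
Because $E>0$, $f\in\mathfrak{R}$, and $|q|\le a_{\max}^{\varepsilon}$, the scalar weight $1\mp q/a_{\max}^{\varepsilon}$ is nonnegative, so $g_{\pm}\ge 0$ pointwise.

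The remainder of the argument mirrors the proofs of Propositions~\ref{prop:momentBoundsHat} and~\ref{prop:momentBoundsEulerian}: using $L^{0}=0$ so that $L_{\mu}L^{\mu}=L_{j}L^{j}=1$, the Cauchy--Schwarz inequality yields $\big(\tfrac{1}{4\pi}\int g_{\pm}L_{j}\,d\omega\big)\big(\tfrac{1}{4\pi}\int g_{\pm}L^{j}\,d\omega\big)\le\big(\tfrac{1}{4\pi}\int g_{\pm}\,d\omega\big)^{2}$, which is precisely the realizability condition $\gamma(\boldsymbol{\mathscr{M}}_{\pm})\ge 0$ of Eq.~\eqref{eq:realizableSet}; strict positivity of the first component holds generically because $a_{\max}^{\varepsilon}$ from Eq.~\eqref{eq:q_upperbound} is a \emph{uniform} upper bound on $|q|$, so $1\mp q/a_{\max}^{\varepsilon}>0$ on a set of positive measure in $\bbS^{2}$. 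I do not anticipate a real obstacle; the only nontrivial step is the algebraic recasting of $\bF^{\varepsilon}$ into the same $(1,L_{j})^{\intercal}$-integrand form as $\widetilde{\bU}$, and this is precisely what the definition of $q$ and the Eulerian decomposition of $p^{\mu}$ are tailored to enable — the choice $a^{\varepsilon}\ge|q|$ in Eq.~\eqref{eq:energy_flux} is exactly what makes the construction produce a nonnegative effective phase-space density.
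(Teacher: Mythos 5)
Your proof is correct and takes essentially the same route as the paper's: both express $\widetilde{\bU}$ and $\bF^{\varepsilon}$ as moments of $E\,f$ against $(1,L_{j})^{\intercal}$ up to the scalar weight $q$, combine them into a single nonnegative effective density $g_{\pm}=E\,f\,(1\mp q/a_{\max}^{\varepsilon})$, and invoke the Cauchy--Schwarz argument of Proposition~\ref{prop:momentBoundsEulerian}. (Incidentally, the paper's own proof writes $g=E(1\pm q/a_{\max}^{\varepsilon})f$ without the sign flip induced by the explicit minus in the first entry of Eq.~\eqref{eq:momentsCompact}; your sign is the correct one, but since the lemma covers both signs the discrepancy is immaterial, and you are also more careful than the paper in flagging that $\mathscr{J}>0$ requires $g_{\pm}$ to be nonvanishing on a set of positive measure rather than merely nonnegative.)
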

\begin{proof}
    The first component of $\widetilde{\bU}\pm\frac{1}{a_{\max}^{\varepsilon}}\bF^\varepsilon$ is
    \begin{equation*}
        \frac{1}{4\pi}\int_{\bbS^2}E\,f\,d\omega
        \pm\frac{1}{4\pi}\int_{\bbS^2}\frac{q}{a_{\max}^{\varepsilon}}\,E\,f\,d\omega
        =
        \frac{1}{4\pi}\int_{\bbS^2}g\,d\omega,
    \end{equation*}
    where we have defined $g\vcentcolon=E\,(1\pm\frac{q}{a_{\max}^{\varepsilon}})\,f$.
    Since $\frac{|q|}{a_{\max}^{\varepsilon}}\leq 1$, $g\in\mathfrak{R}$.  The second component of $\widetilde{\bU}\pm\frac{1}{a_{\max}^{\varepsilon}}\bF^\varepsilon$ is
    \begin{equation*}
        \frac{1}{4\pi}\int_{\bbS^2}E\,f\,L_j\,d\omega
        \pm\frac{1}{4\pi}\int_{\bbS^2}\frac{q}{a_{\max}^{\varepsilon}}E\,f\,L_j\,d\omega
        =
        \frac{1}{4\pi}\int_{\bbS^2}g\,L_j\,d\omega.
    \end{equation*}
    It follows that $\widetilde{\bU}\pm\frac{1}{a_{\max}^{\varepsilon}}\bF^\varepsilon\in\cR$.
\end{proof}

\subsection{Conserved to Primitive Moment Conversion}
\label{sec:MomentConversionAnalysis}

In this section we prove that the Picard iteration defined in Eq.~\eqref{eq:Richardson_update} for the moment conversion problem is realizability-preserving.  
We need the conversion to be realizability-preserving so that the numerical fluxes are evaluated using moments of a distribution $f\in\mathfrak{R}$ so that Lemmas~\ref{lem:realizableflux}, \ref{lem:energy_lemma_1}, and \ref{lem:energy_lemma_2} hold.  
The following two lemmas establish realizability-preserving conversion from conserved moments $\bU$ to primitive moments $\bM$.
\begin{lemma}\label{lem:realizable_hat_moments}
    Assume that $\bU=(\cE,\cF_j)^{\intercal}$, defined in Proposition~\ref{prop:momentBoundsEulerian}, are realizable.  
    Let the three-velocity be $v^\mu=(0,v^i)^{\intercal}$ such that $0\leq v_iv^i<1$.  
    Then $\widehat{\bU}=(\widehat{\cE},\widehat{\cF}_j)^{\intercal}$, defined in Proposition~\ref{prop:momentBoundsHat}, are realizable.
\end{lemma}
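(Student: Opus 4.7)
The plan is to verify the two realizability conditions for $\widehat{\bU}$ directly from the explicit formulas in Eq.~\eqref{eq:Conserved_to_Hat}, using the realizability of $\bU$ and the hypothesis $v<1$. Positivity of $\widehat{\cE}$ is immediate; the main work is to establish an algebraic identity that reduces the Minkowski norm bound on $\widehat{\bU}$ to the corresponding bound already satisfied by $\bU$.

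For $\widehat{\cE}>0$, Cauchy--Schwarz together with realizability of $\bU$ gives $|v^k\cF_k|\le v\sqrt{\cF_i\cF^i}\le v\,\cE$, and then $v<1$ yields $\cE-v^k\cF_k\ge(1-v)\cE>0$, so that $\widehat{\cE}=W(\cE-v^k\cF_k)>0$.

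For the norm bound, I would first pin down $\widehat{\cF}^{0}$, which is not among the listed components of $\widehat{\bU}$ but enters $\widehat{\cF}_\mu\widehat{\cF}^\mu$. The tensor definition $\widehat{\cF}^\mu=-n_\nu h^\mu_{\hspace{4pt}\rho}\,\cT^{\nu\rho}$ together with $u_\mu h^\mu_{\hspace{4pt}\rho}=0$ implies $u_\mu\widehat{\cF}^\mu=0$, hence $\widehat{\cF}^{0}=v^i\widehat{\cF}_i$ and $\widehat{\cF}_\mu\widehat{\cF}^\mu=\widehat{\cF}_i\widehat{\cF}^i-(v^i\widehat{\cF}_i)^{2}$. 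Substituting $\widehat{\cF}_i=\cF_i-Wv_i\widehat{\cE}$ from Eq.~\eqref{eq:Conserved_to_Hat}, using $W^{2}(1-v^{2})=1$, and observing that $\widehat{\cE}/W+v^i\cF_i=\cE$, a short algebraic calculation yields the key identity
\begin{equation*}
    \widehat{\cE}^{2}-\widehat{\cF}_\mu\widehat{\cF}^\mu
    =\bigl(\widehat{\cE}/W+v^i\cF_i\bigr)^{2}-\cF_i\cF^i
    =\cE^{2}-\cF_\mu\cF^\mu.
\end{equation*}
Realizability of $\bU$ gives $\cE^{2}\ge\cF_\mu\cF^\mu$, so $\widehat{\cE}^{2}\ge\widehat{\cF}_\mu\widehat{\cF}^\mu$, which completes the proof.

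The main obstacle is recognizing that the norm bound collapses to an \emph{exact} identity matching the realizability gap of $\bU$, rather than merely an inequality; without this observation, one might attempt to bound things via the triangle inequality and end up with spurious factors of $W$ that prevent closure of the estimate near $v\to 1$. The one subtle point in the algebra is correctly handling the missing time component $\widehat{\cF}^{0}$ via the orthogonality $u_\mu\widehat{\cF}^\mu=0$. A more conceptual alternative would appeal directly to Proposition~\ref{prop:momentBoundsHat} by using the underlying distribution $f\in\mathfrak{R}$ provided by Proposition~\ref{prop:momentBoundsEulerian}, but the algebraic route is self-contained and makes transparent the geometric content of the linear map~\eqref{eq:Conserved_to_Hat}.
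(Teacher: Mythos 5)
Your proof is correct and follows essentially the same algebraic route as the paper: it uses the relations~\eqref{eq:Conserved_to_Hat}, the orthogonality $u_\mu\widehat{\cF}^\mu=0$ to express $\widehat{\cF}^{0}$ in terms of $\widehat{\cF}_i$, and $W^{2}(1-v^{2})=1$ to reduce the norm bound to that of $\bU$. You present it as the exact identity $\widehat{\cE}^{2}-\widehat{\cF}_\mu\widehat{\cF}^\mu=\cE^{2}-\cF_\mu\cF^\mu$, which is a slightly cleaner packaging than the paper's proof (the paper inserts $\cF^{2}\leq\cE^{2}$ midway and derives only the one-sided bound $\widehat{\cF}^{2}\leq\widehat{\cE}^{2}$), but the underlying computation is identical.
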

\begin{proof}
    To prove realizability of $(\widehat{\cE},\widehat{\cF}_j)^{\intercal}$, it is sufficient to show that $\widehat{\cE}>0$ and $\widehat{\cE}\geq\widehat{\cF}$, where $\widehat{\cF}=\sqrt{\widehat{\cF}_\mu\widehat{\cF}^\mu}$.  Since $(\cE,\cF_j)^{\intercal}$ are realizable, $\cE>0$ and $\cE\geq\cF$, where $\cF=\sqrt{\cF_{i}\cF^{i}}$.  
    Applying the Cauchy--Schwarz inequality to the left expression in Eq.~\eqref{eq:Conserved_to_Hat}, it is straightforward to show that $\widehat{\cE}>0$;
    \[
        \widehat{\cE}=W(\cE-v^k\cF_k)
        \geq W(\cE-\sqrt{v_iv^i}\sqrt{\cF_k\cF^k})
        >W(\cE-\cF)\geq 0.
    \]
    To show that $\widehat{\cE}\geq\widehat{\cF}$, we note two useful equalities.  
    First, since $u^\mu\widehat{\cF}_\mu=0$, $\widehat{\cF}_{0}=-v^{i}\widehat{\cF}_{i}$.  
    Then, using the right expression in Eq.~\eqref{eq:Hat_to_Conserved} gives
    \begin{equation*}
        \widehat{\cF}_0=Wv^iv_i\widehat{\cE}-v^i\cF_i. 
    \end{equation*}
    Second, using the definition of the Lorentz factor, we have
    \begin{equation*}
        1+W^2v_iv^i=1+\frac{1}{1-v_iv^i}v_iv^i=
        \frac{1}{1-v_iv^i}=W^2.
    \end{equation*}
    We then find
    \begin{align*}
        \widehat{\cF}^2
        &=
        -(\,Wv^iv_i\widehat{\cE}-v^i\cF_i\,)^2
        +(\cF_i-Wv_i\widehat{\cE})(\cF^i-Wv^i\widehat{\cE}) \\
        &=
        -(v^i\cF_i)^2+\cF^2+2W\widehat{\cE}v^i\cF_i(v_iv^i-1) \\
        &\hspace{12pt}+W^2\widehat{\cE}^2v_iv^i(1-v_iv^i)\\
        &=
        -(v^i\cF_i)^2+\cF^2-2\,\widehat{\cE}\,v^i\cF_i/W+\widehat{\cE}^2v_iv^i \\ 
        &\leq
        -(v^i\cF_i)^2+\cE^2-2\,\widehat{\cE}\,v^i\cF_i/W+\widehat{\cE}^2v_iv^i\\
        &=
        (\,\cE-v^i\cF_i\,)\,(\,\cE+v^i\cF_i\,)\,
        -2\,\widehat{\cE}v^i\cF_i/W+\widehat{\cE}^2v_iv^i\\
        &=
        \widehat{\cE}\,
        (\,\cE-v^i\cF_i\,)/W+\widehat{\cE}^2v_iv^i\\
        &=\widehat{\cE}^2\,(\,1+W^2v_iv^i\,)/W^{2}
        =\widehat{\cE}^2.
    \end{align*}
    Taking a square root on both sides gives the desired result.
\end{proof}

\begin{lemma}\label{lem:momentconversion_realizable}
    Let $\widehat{\bU} = (\widehat{\cE},\widehat{\cF}_i)^{\intercal}$ and $\bM^{[k]} = (\cJ^{[k]},\cH_i^{[k]})^{\intercal}$ be realizable.
    If $\lambda\leq (1+v)^{-1}$, then $\bM^{[k+1]} = (\cJ^{[k+1]},\cH_i^{[k+1]})^{\intercal}$, determined by the modified Richardson iteration scheme given by Eq.~\eqref{eq:Richardson_update}, is realizable.
\end{lemma}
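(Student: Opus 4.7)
The plan is to exhibit $\bM^{[k+1]}$ as the Lagrangian moments of an explicitly constructed nonnegative distribution on $\bbS^{2}$, after which realizability follows from Proposition~\ref{prop:momentBoundsLagrangian}. The first step is to fix realizing distributions. Using the assumed realizability of $\widehat{\bU}$ together with Proposition~\ref{prop:momentBoundsHat}, I obtain $f\in\mathfrak{R}$ with $\widehat{\cE}=\tfrac{1}{4\pi}\!\int_{\bbS^{2}}Ef\,d\omega$ and $\widehat{\cF}_\mu=\tfrac{1}{4\pi}\!\int_{\bbS^{2}}Ef\,\ell_\mu\,d\omega$; and, under the exact closure assumption adopted at the start of Section~\ref{sec:RealizabilityProof}, I obtain $\mathfrak{f}^{[k]}\in\mathfrak{R}$ that simultaneously generates $\cJ^{[k]}$, $\cH^{[k]}_i$, and $\cK^{[k]}_{ij}$ via Eq.~\eqref{eq:lagrangianMoments}.

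Next I would substitute these integral representations into the Richardson update in Eq.~\eqref{eq:Richardson_update}. Writing $E/\varepsilon=W+v^i\ell_i$ and defining $\beta\vcentcolon=1+v^i\ell_i/W$, so that $\lambda E/(W\varepsilon)=\lambda\beta$, the contributions from $f$ and $\mathfrak{f}^{[k]}$ collect into a single integrand and yield
\begin{equation*}
\cJ^{[k+1]}=\frac{\varepsilon}{4\pi}\int_{\bbS^{2}}g\,d\omega,\qquad
\cH_j^{[k+1]}=\frac{\varepsilon}{4\pi}\int_{\bbS^{2}}g\,\ell_j\,d\omega,\qquad
g\vcentcolon=(1-\lambda\beta)\,\mathfrak{f}^{[k]}+\lambda\beta\,f.
\end{equation*}
The tacit zeroth component $\cH_0^{[k+1]}$ needed for the Lagrangian orthogonality $u_\mu\cH^{\mu\,[k+1]}=0$ is automatically consistent: since $\ell_0=-v^i\ell_i$, the formula above gives $\cH_0^{[k+1]}=-v^i\cH_i^{[k+1]}$.

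It then remains to verify $g\in\mathfrak{R}$. The constraints $\ell_\mu\ell^\mu=1$ and $\ell^0=v^i\ell_i$ force $\sqrt{\ell_i\ell^i}\le W$, so Cauchy--Schwarz gives $|v^i\ell_i|\le vW$ and therefore $\beta\in[1-v,\,1+v]$. Under the hypothesis $\lambda\le(1+v)^{-1}$, this gives $0\le\lambda\beta\le1$ pointwise in $\omega$, so both coefficients of $g$ are nonnegative; strict positivity of $\int_{\bbS^{2}} g\,d\omega$ follows from $f,\mathfrak{f}^{[k]}\in\mathfrak{R}$ together with $\lambda\beta>0$. Applying Proposition~\ref{prop:momentBoundsLagrangian} to $\varepsilon g\in\mathfrak{R}$ then yields $(\cJ^{[k+1]},\cH_\mu^{[k+1]})\in\cR$.

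The principal obstacle is the algebraic identification of $g$: one must recognize that the factor $E/(W\varepsilon)$ arising from $(\lambda/W)\widehat{\bU}$ is exactly the weight $\beta$ that balances the $-(\lambda/W)v^i$ contribution appearing with $\cH_i^{[k]}$ and $\cK_{ij}^{[k]}$, so that $f$ and $\mathfrak{f}^{[k]}$ enter with complementary weights summing to unity. Once this structure is in hand, realizability reduces to the pointwise bound $\beta\le 1+v$, which is saturated when $\ell^\mu$ is aligned with $v^i$ in the comoving frame, explaining why $\lambda=(1+v)^{-1}$ is the natural step-size choice.
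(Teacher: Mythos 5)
Your proof is correct and follows essentially the same route as the paper's: the heart of both arguments is the pointwise nonnegativity of the weight $1-\lambda\beta=(1-\lambda)-\tfrac{\lambda}{W}v^{i}\ell_{i}$ multiplying the realizing distribution of $\bM^{[k]}$, which the hypothesis $\lambda\le(1+v)^{-1}$ guarantees via $|v^{i}\ell_{i}|\le Wv$. The paper packages the conclusion by writing $\bM^{[k+1]}=\mathscr{M}^{[k]}+\tfrac{\lambda}{W}\widehat{\bU}$, showing $\mathscr{M}^{[k]}\in\cR$ from its own nonnegative distribution, and then appealing to Lemma~\ref{lem:convexcone}; you instead absorb the $\widehat{\bU}$ contribution into the same integrand (using the $E$-weighted representation, so $\tfrac{\lambda}{W}\widehat{\bU}$ appears with weight $\lambda\beta$) and apply Proposition~\ref{prop:momentBoundsLagrangian} once to the single combined distribution $g=(1-\lambda\beta)\mathfrak{f}^{[k]}+\lambda\beta f$. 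The pointwise complementarity of the two weights that you point out is a nice structural observation the paper does not make explicit, but the underlying estimate is the same. One small caveat: Proposition~\ref{prop:momentBoundsHat} is stated in the forward direction only, so citing it to produce $f$ from the assumed realizability of $\widehat{\bU}$ is a slight abuse --- realizability gives some $\phi\in\mathfrak{R}$ with $\widehat{\cE}=\tfrac{1}{4\pi}\int\phi\,d\omega$ etc., and you then take $f=\phi/E$, which is legitimate since $E>0$, but deserves a word.
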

\begin{proof}
    The first component of $\bM^{[k+1]}$ is
    \begin{equation*}
        \cJ^{[k]}
        = \mathscr{J}^{[k]} + \frac{\lambda}{W}\widehat{\cE},
    \end{equation*}
    where we have defined
    \begin{equation*}
        \mathscr{J}^{[k]}
        =
        (1-\lambda)\cJ^{[k]}-\frac{\lambda}{W} v^{j}\cH^{[k]}_{j}
        =
        \frac{1}{4\pi}
        \int_{\bbS^2}g^{[k]}\,d\omega,
    \end{equation*}
    and $g^{[k]}\vcentcolon=\big[\,(1-\lambda)-\frac{\lambda}{W}\,v^{j}\ell_{j}\,\big]\,\varepsilon f^{[k]}$.  
    Similarly, the second component of $\bM^{[k+1]}$ is
    \begin{equation*}
        \cH_i^{[k+1]}
        =\mathscr{H}_i^{[k]}
        +\frac{\lambda}{W}\widehat{\cF}_i,
    \end{equation*}
    where
    \begin{equation*}
        \mathscr{H}_i^{[k]}
        =
        (1-\lambda)\cH^{[k]}_i-\frac{\lambda}{W} v^j\cK^{[k]}_{ij}
        =
        \frac{1}{4\pi}\int_{\bbS^2}g^{[k]}\,\ell_i\,d\omega.
    \end{equation*}
    Note that, since $\ell_{0}=-v^{i}\ell_{i}$, we also have
    \begin{equation*}
        \mathscr{H}_{0}^{[k]}=-v^{i}\mathscr{H}_{i}^{[k]}
        =
        \frac{1}{4\pi}\int_{\bbS^2}
        g^{[k]}\,\ell_0\,d\omega.
    \end{equation*}
    We let $\mathscr{M}^{[k]} = (\mathscr{J}^{[k]},\mathscr{H}_{\mu}^{[k]})^{\intercal}$. 
    Since the first and second components of $\mathscr{M}^{[k]}$ are expressed as moments arising from the same distribution function, $g^{[k]}$, then
    $\mathscr{M}^{[k]}\in\cR$ follows from Proposition \ref{prop:momentBoundsLagrangian}, provided $g^{[k]}\in\mathfrak{R}$.  
    By assumption $\varepsilon f^{[k]}\in\mathfrak{R}$, in order to obtain $g^{[k]}\in\mathfrak{R}$, we require
    \[
        (1-\lambda)-\frac{\lambda}{W} v^i\ell_i\geq 0.
    \]
    Since $v^i\ell_i=\ell^0=Wv_{\hat{\iota}}\ell^{\hat{\iota}}$, 
    \[
        -Wv\leq v^i\ell_i\leq Wv.
    \]
    Then since $(1-\lambda)-\frac{\lambda}{W} v^i\ell_i
    \geq (1-\lambda)-\lambda v$, requiring the latter to be greater than $0$ yields
    \begin{equation*}
        \lambda\leq\frac{1}{1+v}.
    \end{equation*}
    Since $\mathscr{M}^{[k]},\widehat{\bU}\in\cR$, realizability of $\bM^{[k+1]}=\mathscr{M}^{[k]}+\frac{\lambda}{W}\widehat{\bU}$ follows from Lemma~\ref{lem:convexcone}.
\end{proof}

\begin{rem}
    The mapping in Eq.~\eqref{eq:Richardson_update} is formulated for an effective step size $\lambda'=\lambda/W$.  
    Given the result of Lemma~\ref{lem:momentconversion_realizable}, the Picard iteration is realizability-preserving when $\lambda'\le\frac{1}{W(1+v)}$.
\end{rem}

Following the realizability-preserving result in Lemma~\ref{lem:momentconversion_realizable}, the convergence of the modified Richardson iteration scheme in Eq.~\eqref{eq:Richardson_update} is proved in Appendix~\ref{appendix:convergence} under conditions detailed in Remark~\ref{rem:convergence}, which concludes the realizability-preserving property analysis for the conserved to primitive moment conversion step.

\subsection{Explicit Update}

Here we establish conditions for which the explicit update $\bU^{(i)}_{\mathrm{Ex}}$ in Eqs.~\eqref{eq:U_K^ij}-\eqref{eq:ExplicitUpdate} is realizable.
\begin{lemma}
\label{lem:Ui_realizable}
    Assume that $\bU^{(ij)}_{\bK}\in\cR$ for all i and $j\leq i-1$.
    Then $\bU^{(i)}_{\mathrm{Ex}}\in\cR$ for all i.
\end{lemma}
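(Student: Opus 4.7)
The plan is to recognize that $\bU^{(i)}_{\mathrm{Ex}}$, as defined in Eq.~\eqref{eq:ExplicitUpdate}, is a convex combination of the moments $\bU^{(ij)}_{\bK}$, since the Shu--Osher coefficients satisfy $c_{ij}\ge 0$ with $\sum_{j=0}^{i-1}c_{ij}=1$. Given that each $\bU^{(ij)}_{\bK}\in\cR$ by hypothesis, the result will follow immediately from the convex cone structure of $\cR$ established in Lemma~\ref{lem:convexcone} and its Corollary.

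Concretely, I would proceed as follows. First, write out the explicit update $\bU^{(i)}_{\mathrm{Ex}}=\sum_{j=0}^{i-1}c_{ij}\bU^{(ij)}_{\bK}$ and let $\mathcal{J}_i=\{\,j\in\{0,\ldots,i-1\}: c_{ij}>0\,\}$, which is nonempty because $\sum_{j} c_{ij}=1$. The sum over $j\notin\mathcal{J}_i$ contributes zero and can be dropped, so we may restrict attention to $\bU^{(i)}_{\mathrm{Ex}}=\sum_{j\in\mathcal{J}_i}c_{ij}\bU^{(ij)}_{\bK}$ with strictly positive coefficients.

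Next, I would argue by induction on $|\mathcal{J}_i|$. If $|\mathcal{J}_i|=1$, say $\mathcal{J}_i=\{j_0\}$, then $c_{i,j_0}=1$ and $\bU^{(i)}_{\mathrm{Ex}}=\bU^{(ij_0)}_{\bK}\in\cR$ by assumption. For the inductive step, pick any $j_\star\in\mathcal{J}_i$, write
\begin{equation*}
    \bU^{(i)}_{\mathrm{Ex}}
    = c_{i,j_\star}\bU^{(ij_\star)}_{\bK} + \sum_{j\in\mathcal{J}_i\setminus\{j_\star\}}c_{ij}\bU^{(ij)}_{\bK},
\end{equation*}
and apply Lemma~\ref{lem:convexcone} with $\theta_1=c_{i,j_\star}>0$, $\theta_2=1$, where the second term is realizable by the inductive hypothesis (it is a positive combination of fewer realizable moments, using Lemma~\ref{lem:convexcone} at each step). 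The conclusion $\bU^{(i)}_{\mathrm{Ex}}\in\cR$ follows.

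There is no real obstacle here: the lemma is a direct structural consequence of realizability being preserved under nonnegative linear combinations, and the only minor subtlety is that Lemma~\ref{lem:convexcone} is stated with strictly positive weights, which is handled by restricting to $\mathcal{J}_i$ as above. The substantive work is hidden in the hypothesis $\bU^{(ij)}_{\bK}\in\cR$, which will be established separately (via the analysis of the numerical fluxes together with Lemmas~\ref{lem:realizableflux}--\ref{lem:energy_lemma_2} and the moment-conversion realizability result of Lemma~\ref{lem:momentconversion_realizable}) under the CFL-type time-step restriction in Eq.~\eqref{eq:realizable_timestep}.
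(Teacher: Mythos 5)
Your proof is correct and follows essentially the same route as the paper, which simply invokes Lemma~\ref{lem:convexcone} together with the fact that $c_{ij}\ge 0$ and $\sum_j c_{ij}=1$. You add two small but legitimate refinements that the paper leaves implicit: you drop the zero-coefficient terms so that the strict-positivity hypothesis of Lemma~\ref{lem:convexcone} is met (relevant, e.g., for $c_{31}=0$ in SSPRK$3$), and you supply the induction needed to extend the two-term statement of Lemma~\ref{lem:convexcone} to a conical combination of arbitrarily many realizable states.
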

\begin{proof}
    The proof is a consequence of Lemma \ref{lem:convexcone} since we consider methods where $c_{ij}\geq 0$.
\end{proof}

We now establish the conditions for which $\bU_{\bK}^{(ij)}$ is realizable.
To this end, define
\begin{subequations}
    \label{eq:Gamma_3D}
    \begin{align}
        \Gamma^\varepsilon[\bU_h^{(j)};\hat{c}_{ij}]
        =&
        \frac{1}{|K_\varepsilon|}
        \int_{K_\varepsilon}\bU_h^{(j)}\,\varepsilon^2\,d\varepsilon \nonumber \\
        &-
        \frac{(d+1)\,\hat{c}_{ij}\,\Delta t}{|K_\varepsilon|}
        \big[\,
            \varepsilon^3\widehat{\bF}^\varepsilon(\bU_{h}^{(j)})\rvert_{\varepsilon_{\Hi}} \nonumber \\
            &\hspace{6em}-\varepsilon^3\widehat{\bF}^\varepsilon(\bU_{h}^{(j)})\rvert_{\varepsilon_{\Lo}}
        \,\big], \label{eq:Gamma_3D_energy} \\
        \Gamma^\ell[\bU_h^{(j)};\hat{c}_{ij}]
        =&
        \frac{1}{|K^\ell|}\int_{K^\ell}\bU_h^{(j)}\,dx^\ell \nonumber \\
        &-
        \frac{(d+1)\,\hat{c}_{ij}\,\Delta t}{|K^\ell|}
        \big[\,
            \widehat{\bF}^\ell(\bU_{h}^{(j)})\rvert_{x^\ell_{\Hi}} \nonumber \\
            &\hspace{6em}-\widehat{\bF}^\ell(\bU_{h}^{(j)})\rvert_{x^\ell_{\Lo}}
        \,\big], \label{eq:Gamma_3D_space}
    \end{align}
\end{subequations}
for $\ell=1,\ldots,d$.
Note that $\Gamma^{\varepsilon}$ is a function of $\vect{x}$ and $\Gamma^{\ell}$ is a function of $\tilde{\vect{z}}^{\ell}=\{\varepsilon,\tilde{\vect{x}}^{\ell}\}$, but to ease notation we do not include this explicit dependence below.  
It is straightforward to show that
\begin{align}
\label{eq:Uij_equals_sum_Gamma}
    \bU_{\bK}^{(ij)}
    &=
    \frac{1}{(d+1)|\bK_{\vect{x}}|}\int_{\bK_{\vect{x}}}\Gamma^\varepsilon[\bU_h^{(j)};\hat{c}_{ij}]\,d\vect{x} \nonumber \\
    &+
    \sum_{\ell=1}^{d}\frac{1}{(d+1)|\widetilde{\bK}^\ell|}\int_{\widetilde{\bK}^\ell}\Gamma^\ell[\bU_h^{(j)};\hat{c}_{ij}]\,\varepsilon^{2}\,d\Tilde{\vect{z}}^\ell.
\end{align}
Recall that we use $M_\varepsilon$-point and $M_x$-point LGL quadrature rules on the intervals $K_\varepsilon$ and $K^{\ell}$, given respectively by the points $\hat{S}^{M_\varepsilon}_{\varepsilon}(\bK)=\{\varepsilon_{\Lo}^{+}=\hat{\varepsilon}_1,\hat{\varepsilon}_{2},\ldots,\hat{\varepsilon}_{M_\varepsilon}=\varepsilon_{\Hi}^{-}\}$ and $\hat{S}^{M_x}_{\ell}(\bK)=\{x_{\Lo}^{\ell,+}=\hat{x}_{1}^{\ell},\hat{x}_{2}^{\ell},\ldots,\hat{x}_{M_x}^{\ell}=x_{\Hi}^{\ell,-}\}$ with respective weights $\{\hat{w}^\varepsilon_q\}_{q=1}^{M_\varepsilon}$ and $\{\hat{w}^\ell_q\}_{q=1}^{M_x}$.
\begin{lemma}
    \label{lem:Gamma:realizable_space}
    Assume $\bU_h^{(j)}(\hat{x}^\ell_q)\in\cR$ for all $\hat{x}^\ell_q\in \hat{S}^{M_x}_\ell(\bK)$, and $M_{x}\ge (k+3)/2$, where $k$ is the polynomial degree of $\bU_{h}^{(j)}$.  
    Let the spatial numerical flux be given as in Eq.~\eqref{eq:spatial_flux}, and $\Delta t$ be chosen such that
    \begin{equation}
      \gamma^\ell_{ij}
      \vcentcolon=\frac{(d+1)\,\hat{c}_{ij}\,\Delta t\,a^\ell}{\hat{w}^\ell_{M_x}\,\Delta x^\ell}\leq 1.
    \end{equation}
    Then $\Gamma^\ell[\bU_h^{(j)};\hat{c}_{ij}]\in\cR$ for $\ell=1,\ldots,d$.
\end{lemma}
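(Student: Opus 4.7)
The plan is to apply the $M_x$-point LGL quadrature to the cell-averaged integral in $\Gamma^\ell$, regroup the boundary-node values with the numerical flux differences, and then exhibit $\Gamma^\ell$ as a non-negative linear combination of realizable moments; the convex-cone property of $\cR$ (Lemma~\ref{lem:convexcone} and its corollary) then closes the argument. The realizable ingredients will come from three sources: (i) the interior LGL nodal values $\bU_h^{(j)}(\hat{x}^\ell_q)$ for $q=2,\ldots,M_x-1$, which are in $\cR$ by hypothesis; (ii) the split fluxes $\Phi^{\ell,\pm}$ evaluated at the cell-interior and neighbor-cell boundary traces, which are in $\cR$ by Lemma~\ref{lem:realizableflux}; and (iii) non-negative combination coefficients, guaranteed precisely by the CFL threshold $\gamma^\ell_{ij}\le 1$.

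First I would exploit that $\bU_h^{(j)}|_{K^\ell}$ is a polynomial of degree at most $k$ and that $M_x\ge(k+3)/2$, so the LGL rule is exact and
\[
\frac{1}{|K^\ell|}\int_{K^\ell}\bU_h^{(j)}\,dx^\ell
=\sum_{q=1}^{M_x}\hat{w}^\ell_q\,\bU_h^{(j)}(\hat{x}^\ell_q).
\]
Peeling off the two boundary nodes $\hat{x}^\ell_1=x^{\ell,+}_\Lo$ and $\hat{x}^\ell_{M_x}=x^{\ell,-}_\Hi$ and using the LGL symmetry $\hat{w}^\ell_1=\hat{w}^\ell_{M_x}$ together with the identity $\frac{(d+1)\hat{c}_{ij}\Delta t}{\Delta x^\ell}=\frac{\gamma^\ell_{ij}\hat{w}^\ell_{M_x}}{a^\ell}$ yields
\[
\Gamma^\ell
=\sum_{q=2}^{M_x-1}\hat{w}^\ell_q\,\bU_h^{(j)}(\hat{x}^\ell_q)
+\hat{w}^\ell_{M_x}\Big[\,\bU^-_\Hi-\tfrac{\gamma^\ell_{ij}}{a^\ell}\widehat{\bF}^\ell_\Hi\,\Big]
+\hat{w}^\ell_1\Big[\,\bU^+_\Lo+\tfrac{\gamma^\ell_{ij}}{a^\ell}\widehat{\bF}^\ell_\Lo\,\Big],
\]
where $\bU^\pm_\Hi=\bU_h^{(j)}(x^{\ell,\pm}_\Hi)$ and $\bU^\pm_\Lo=\bU_h^{(j)}(x^{\ell,\pm}_\Lo)$.

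The central identity, which I would verify by direct substitution into Eq.~\eqref{eq:spatial_flux} using the definition $\Phi^{\ell,\pm}(\bU)=\frac{1}{2}(\bU\pm\frac{1}{a^\ell}\bF^\ell)$, is
\[
\widehat{\bF}^\ell\rvert_x
=a^\ell\,\big[\,\Phi^{\ell,+}(\bU\rvert_{x^-})-\Phi^{\ell,-}(\bU\rvert_{x^+})\,\big].
\]
Combined with the trivial splitting $\bU=\Phi^{\ell,+}(\bU)+\Phi^{\ell,-}(\bU)$, this gives
\[
\bU^-_\Hi-\tfrac{\gamma^\ell_{ij}}{a^\ell}\widehat{\bF}^\ell_\Hi
=(1-\gamma^\ell_{ij})\,\Phi^{\ell,+}(\bU^-_\Hi)+\Phi^{\ell,-}(\bU^-_\Hi)+\gamma^\ell_{ij}\,\Phi^{\ell,-}(\bU^+_\Hi),
\]
and an analogous decomposition at the left boundary. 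Under $\gamma^\ell_{ij}\le 1$, every coefficient appearing in the resulting expression for $\Gamma^\ell$ is non-negative, each $\Phi^{\ell,\pm}$ term is in $\cR$ by Lemma~\ref{lem:realizableflux} (its input trace values lie in $\hat{S}^{M_x}_\ell(\bK)$ or in the analogous LGL boundary set of the neighboring cell, hence are realizable by hypothesis), and the interior nodal values are in $\cR$ by hypothesis; invoking Lemma~\ref{lem:convexcone} completes the proof.

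The main technical obstacle is engineering the third-step algebraic rearrangement so that the boundary trace contributions from the quadrature and from the numerical flux combine with \emph{manifestly} non-negative coefficients precisely at the threshold $\gamma^\ell_{ij}\le 1$. Once the $\Phi^{\ell,\pm}$ splitting of $\widehat{\bF}^\ell$ is installed, the remaining ingredients---LGL exactness, symmetry of the endpoint weights, and Lemmas~\ref{lem:convexcone} and \ref{lem:realizableflux}---assemble routinely.
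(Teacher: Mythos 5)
Your proof is correct and follows essentially the same route as the paper's: write the cell average as an exact $M_x$-point LGL sum, split off the two endpoint nodes, substitute the $\Phi^{\ell,\pm}$ decomposition of the Lax--Friedrichs flux $\widehat{\bF}^\ell\rvert_x=a^\ell\big[\Phi^{\ell,+}(\bU\rvert_{x^-})-\Phi^{\ell,-}(\bU\rvert_{x^+})\big]$, and observe that $\gamma^\ell_{ij}\le1$ makes all resulting coefficients non-negative so that Lemmas~\ref{lem:realizableflux} and~\ref{lem:convexcone} close the argument. The only cosmetic difference is that the paper leaves $(1-\gamma^\ell_{ij})\hat w^\ell_{1}\bU_h(\hat x^\ell_1)$ and $(1-\gamma^\ell_{ij})\hat w^\ell_{M_x}\bU_h(\hat x^\ell_{M_x})$ unexpanded and compensates by including $\Phi^{\ell,-}(\bU_h)\rvert_{x^{\ell,-}_{\Hi}}$ and $\Phi^{\ell,+}(\bU_h)\rvert_{x^{\ell,+}_{\Lo}}$ in the $\gamma^\ell_{ij}$-weighted bracket, whereas you fully expand each boundary contribution into $\Phi^{\ell,\pm}$ pieces; the two forms are algebraically identical.
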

\begin{proof}
    Using the spatial flux in Eq.~\eqref{eq:spatial_flux} and the $M_x$-point LGL quadrature rule defined by $\hat{S}^{M_x}_\ell(\bK)$ to evaluate the integral in Eq.~\eqref{eq:Gamma_3D_space} exactly, it is straightforward to show that
    \begin{align*}
        &\Gamma^\ell[\bU_h^{(j)};\hat{c}_{ij}]
        =\sum_{q=2}^{M_x-1} \hat{w}_q^\ell\bU_h^{(j)}(\hat{x}^\ell_q) \\
        &\qquad 
        +(1-\gamma_{ij}^{\ell})\,\hat{w}_{1}^{\ell}\,\bU_{h}^{(j)}(\hat{x}_{1}^{\ell})
        +(1-\gamma_{ij}^{\ell})\,\hat{w}_{M_x}^{\ell}\,\bU_{h}^{(j)}(\hat{x}_{M}^{\ell}) \\
        &\qquad 
        +\gamma_{ij}^{\ell}\,\hat{w}_{M_x}^{\ell}
        \big[\,
            \Phi^{\ell,-}(\bU_{h}^{(j)})\rvert_{x_{\Hi}^{\ell,+}}
            +\Phi^{\ell,-}(\bU_{h}^{(j)})\rvert_{x_{\Hi}^{\ell,-}}\\
            &\hspace{24pt}
            +\Phi^{\ell,+}(\bU_{h}^{(j)})\rvert_{x_{\Lo}^{\ell,-}}
            +\Phi^{\ell,+}(\bU_{h}^{(j)})\rvert_{x_{\Lo}^{\ell,+}}
        \,\big],
    \end{align*}
    where $\Phi^{\ell,\pm}$ is defined as in Lemma \ref{lem:realizableflux}.  
    Since $\bU_h^{(j)}(\hat{x}^\ell_q)\in\cR$ for all $\hat{x}^\ell_q\in \hat{S}^{M_x}_\ell(\bK)$, application of Lemma~\ref{lem:realizableflux} gives $\Phi^{\ell,\pm}\in\cR$.  
    Since $\Delta t$ is chosen such that $\gamma^{\ell}_{ij}\leq 1$, Lemma~\ref{lem:convexcone} implies $\Gamma^\ell[\bU_h^{(j)};c_{ij}]\in\cR$, since it is expressed as a conical combination of realizable states.
\end{proof}

\begin{lemma}
    \label{lem:Gamma:realizable_energy}
    Assume $\bU_h^{(j)}(\hat{\varepsilon}_q)\in\cR$ for all $\hat{\varepsilon}_q\in \hat{S}^{M_\varepsilon}_\varepsilon(\bK)$. 
    Let the numerical flux in the energy dimension be given as in Eq.~\eqref{eq:energy_flux}, with $a^\varepsilon=a_{\max}^{\varepsilon}\ge|q|$, as defined in Lemma~\ref{lem:energy_lemma_2}.  
    Let $\Delta t$ be chosen such that
    \begin{equation}
        \gamma^\varepsilon_{ij}
        \vcentcolon=\frac{(d+1)\,\hat{c}_{ij}\,\Delta t\,\varepsilon_{\Hi}\,a_{\max}^{\varepsilon}}{\hat{w}^\varepsilon_{M_\varepsilon}\,\Delta\varepsilon}\leq \frac{E}{\varepsilon}.
    \end{equation}
    Then $\Gamma^\varepsilon[\bU_h^{(j)};\hat{c}_{ij}]\in\cR$.
\end{lemma}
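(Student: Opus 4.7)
The plan is to closely mirror the proof of Lemma~\ref{lem:Gamma:realizable_space}, adapted to the energy dimension where the numerical flux is built from $\widetilde{\bU}$ (rather than $\bU$) and the integration measure carries the factor $\varepsilon^{2}$. First I apply the $M_\varepsilon$-point LGL quadrature on $K_\varepsilon$; under the standing assumption $M_\varepsilon\ge(k+5)/2$ it integrates the polynomial $\bU_h^{(j)}\,\varepsilon^{2}$ exactly, so the cell-average integral in Eq.~\eqref{eq:Gamma_3D_energy} is replaced by a weighted sum over the quadrature nodes in $\hat{S}_\varepsilon^{M_\varepsilon}(\bK)$. The interior nodes $\hat{\varepsilon}_q$ for $q=2,\ldots,M_\varepsilon-1$ contribute positive multiples of $\bU_h^{(j)}(\hat{\varepsilon}_q)\in\cR$ and are immediately fine; the substance of the proof lies at the two endpoints $\hat{\varepsilon}_1=\varepsilon_{\Lo}^{+}$ and $\hat{\varepsilon}_{M_\varepsilon}=\varepsilon_{\Hi}^{-}$, which must absorb the inside parts of the Lax--Friedrichs-type energy flux.

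To expose that structure, I invoke Lemma~\ref{lem:energy_lemma_2} together with the corollary to Lemma~\ref{lem:convexcone} to introduce the realizable quantities $R^{\pm}(\bU)\vcentcolon=a^{\varepsilon}\widetilde{\bU}\pm\bF^{\varepsilon}$, so that the numerical flux in Eq.~\eqref{eq:energy_flux} rewrites compactly as $\widehat{\bF}^{\varepsilon}\rvert_{\varepsilon}=\tfrac{1}{2}R^{+}(\bU\rvert_{\varepsilon^-})-\tfrac{1}{2}R^{-}(\bU\rvert_{\varepsilon^+})$. Substituting into $\Gamma^{\varepsilon}$, the ``outside'' pieces $R^{-}(\bU\rvert_{\varepsilon_{\Hi}^{+}})$ and $R^{+}(\bU\rvert_{\varepsilon_{\Lo}^{-}})$ carry positive coefficients and are realizable by the assumed realizability of neighboring-cell data. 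What remains are two endpoint-plus-flux pairings; after collecting the common prefactor, the one at $\varepsilon_{\Hi}^{-}$ reads
\begin{equation*}
  \frac{\Delta\varepsilon\,\hat{w}_{M_\varepsilon}^{\varepsilon}\,\varepsilon_{\Hi}^{2}}{|K_\varepsilon|}\,\Bigl[\,\bU-\tfrac{\gamma_{ij}^{\varepsilon}}{2a^{\varepsilon}}\,R^{+}(\bU)\,\Bigr]\Big|_{\varepsilon_{\Hi}^{-}},
\end{equation*}
with a companion pairing at $\varepsilon_{\Lo}^{+}$ obtained by swapping $R^{+}\leftrightarrow R^{-}$ and $\varepsilon_{\Hi}\leftrightarrow\varepsilon_{\Lo}$.

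The main obstacle is showing that the bracket lies in $\cR$; neither Lemma~\ref{lem:energy_lemma_1} nor Lemma~\ref{lem:energy_lemma_2} delivers this directly, because the bracket couples $\bU$, $\widetilde{\bU}$, and $\bF^{\varepsilon}$ simultaneously. The decisive step in the plan is the algebraic identity
\begin{equation*}
  \bU-\tfrac{\gamma_{ij}^{\varepsilon}}{2a^{\varepsilon}}\,R^{+}(\bU)
  =\bigl(\,\bU-\gamma_{ij}^{\varepsilon}\,\widetilde{\bU}\,\bigr)+\tfrac{\gamma_{ij}^{\varepsilon}}{2a^{\varepsilon}}\,R^{-}(\bU),
\end{equation*}
which splits the bracket into two realizable pieces: the first by Lemma~\ref{lem:energy_lemma_1} with $\alpha\,a^{\varepsilon}=\gamma_{ij}^{\varepsilon}\le E/\varepsilon$, which is exactly the stated CFL hypothesis, and the second by Lemma~\ref{lem:energy_lemma_2} together with the cone property. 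Lemma~\ref{lem:convexcone} then yields realizability of the bracket. The $\varepsilon_{\Lo}^{+}$ endpoint uses the analogous identity $\bU-\tfrac{\gamma_{1}}{2a^{\varepsilon}}R^{-}(\bU)=(\bU-\gamma_{1}\widetilde{\bU})+\tfrac{\gamma_{1}}{2a^{\varepsilon}}R^{+}(\bU)$, where $\gamma_{1}$ is the corresponding CFL coefficient involving $\varepsilon_{\Lo}$ and $\hat{w}_{1}^{\varepsilon}$; because $\varepsilon_{\Lo}\le\varepsilon_{\Hi}$ and the LGL weights satisfy $\hat{w}_{1}^{\varepsilon}=\hat{w}_{M_\varepsilon}^{\varepsilon}$, one has $\gamma_{1}\le\gamma_{ij}^{\varepsilon}$, so the same hypothesis suffices. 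Assembling every term, $\Gamma^{\varepsilon}[\bU_h^{(j)};\hat{c}_{ij}]$ is a conical combination of elements of $\cR$, and a final application of Lemma~\ref{lem:convexcone} closes the argument.
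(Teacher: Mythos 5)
Your proof is correct and takes essentially the same route as the paper: after LGL quadrature exposes the two endpoint-plus-flux terms, your identity $\bU-\tfrac{\gamma}{2a^{\varepsilon}}R^{+}(\bU)=(\bU-\gamma\widetilde{\bU})+\tfrac{\gamma}{2a^{\varepsilon}}R^{-}(\bU)$ is exactly the paper's decomposition (with $R^{\pm}=a^{\varepsilon}\widetilde{\bU}\pm\bF^{\varepsilon}$ simply naming the realizable states of Lemma~\ref{lem:energy_lemma_2}), and both arguments then invoke Lemma~\ref{lem:energy_lemma_1} under $\gamma^{\varepsilon}_{ij}\le E/\varepsilon$, Lemma~\ref{lem:energy_lemma_2}, and the cone property. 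The only presentational difference is that you peel off the ``outside'' pieces $R^{\mp}$ from the numerical flux first and note $\gamma_1\le\gamma^{\varepsilon}_{ij}$ via the symmetry of LGL endpoint weights, whereas the paper keeps the whole endpoint term together and observes $\lambda^{\Hi}_{ij}>\lambda^{\Lo}_{ij}$; the content is identical.
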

\begin{proof}
    Using the $M_\varepsilon$-point LGL quadrature rule defined by $\hat{S}^{M_\varepsilon}_\varepsilon(\bK)$ to evaluate the integral in Eq.~\eqref{eq:Gamma_3D_energy} exactly, we can express $\Gamma^\varepsilon[\bU_h^{(j)};\hat{c}_{ij}]$ as
    \begin{align*}
        \Gamma^\varepsilon[\bU_h^{(j)};\hat{c}_{ij}]
        =&
        \frac{\Delta\varepsilon}{|K_{\varepsilon}|}
        \Big\{
        \sum_{q=2}^{M_\varepsilon-1}
        \hat{w}^{\varepsilon}_{q}\,
        \hat{\varepsilon}^{2}_{q}\,
        \bU^{(j)}_{q} \\
        &+\hat{w}^{\varepsilon}_{1}\,
        \hat{\varepsilon}^{2}_{1}\,
        \big(\,
            \bU^{(j)}_{h}\rvert_{\varepsilon_{\Lo}^{+}}
            +
            \lambda^{\Lo}_{ij}
            \,
            \widehat{\bF}^{\varepsilon}(\bU_{h}^{(j)})\rvert_{\varepsilon_{\Lo}}
        \,\big) \nonumber \\
        &
        +\hat{w}^{\varepsilon}_{M_\varepsilon}\,
        \hat{\varepsilon}^{2}_{M_\varepsilon}\,
        \big(\,
            \bU^{(j)}_{h}\rvert_{\varepsilon_{\Hi}^{-}}
            -
            \lambda^{\Hi}_{ij}
            \,
            \widehat{\bF}^\varepsilon(\bU_{h}^{(j)})\rvert_{\varepsilon_{\Hi}}
        \,\big)
        \Big\}.
    \end{align*}
    Here, $\bU_q^{(j)}=\bU_h^{(j)}\rvert_{\hat{\varepsilon}_{q}}$, and $\lambda^{\Lo,\Hi}_{ij} = \frac{(d+1)\,\hat{c}_{ij}\,\Delta t\,\varepsilon_{\Lo,\Hi}}{\hat{w}^\varepsilon_M\,\Delta\varepsilon}$.
    Note that $\hat{\varepsilon}_{1} = \varepsilon_{\Lo}$ and $\hat{\varepsilon}_{M_\varepsilon} = \varepsilon_{\Hi}$.
    Since $\hat{w}^\varepsilon_q\,\hat{\varepsilon}_{q}^{2}\ge0$ for all $q$, the above expression is a conical combination.  
    To prove realizability it remains to show that
    \[
        \bU^{(j)}_{h}\rvert_{\varepsilon_{\Lo}^{+}}
        +\lambda_{ij}^{\Lo}\,\widehat{\bF}^\varepsilon(\bU_{h}^{(j)})
        \rvert_{\varepsilon_{\Lo}}\in\cR
    \]
    and
    \[
        \bU^{(j)}_{h}\rvert_{\varepsilon_{\Hi}^{-}}
        -\lambda_{ij}^{\Hi}\,\widehat{\bF}^\varepsilon(\bU_{h}^{(j)})
        \rvert_{\varepsilon_{\Hi}}\in\cR,
    \]  
    Note that $\lambda^{\Hi}_{ij}>\lambda^{\Lo}_{ij}$, since $\varepsilon_{\Hi}>\varepsilon_{\Lo}$,
    Expanding the flux term in the first expression using Eq.~\eqref{eq:energy_flux} we have
    \begin{align*}
        &\bU^{(j)}_{h}\rvert_{\varepsilon_{\Lo}^{+}} +\lambda_{ij}^{\Lo}\,\widehat{\bF}^\varepsilon(\bU_{h}^{(j)})|_{\varepsilon_{\Lo}} \\
        &\hspace{12pt}=
        \big(\,
            \bU^{(j)}_{h}\rvert_{\varepsilon_{\Lo}^{+}}
            -\lambda_{ij}^{\Lo}\, a_{\max}^{\varepsilon}\,\widetilde{\bU}\rvert_{\varepsilon_{\Lo}^{+}}
        \,\big)\\
        &\hspace{12pt}
        +\frac{\lambda_{ij}^{\Lo}\,a_{\max}^{\varepsilon}}{2}
        \big(\,
            \widetilde{\bU}\rvert_{\varepsilon_{\Lo}^{-}}
            +\frac{1}{a_{\max}^{\varepsilon}}\,\bF^\varepsilon\rvert_{\varepsilon_{\Lo}^{-}}
        \,\big)\\
        &\hspace{12pt}
        +\frac{\lambda_{ij}^{\Lo}\,a_{\max}^{\varepsilon}}{2}
        \big(\,
            \widetilde{\bU}\rvert_{\varepsilon_{\Lo}^{+}}
            +\frac{1}{a_{\max}^{\varepsilon}}\,\bF^\varepsilon\rvert_{\varepsilon_{\Lo}^{+}}
        \,\big).
    \end{align*}
    Applying Lemmas~\ref{lem:energy_lemma_1} and \ref{lem:energy_lemma_2}, we have that $\bU^{(j)}_{h}\rvert_{\varepsilon_{\Lo}^{+}} +\lambda_{ij}^{\Lo}\,\widehat{\bF}^\varepsilon(\bU_{h}^{(j)})|_{\varepsilon_{\Lo}}\in\cR$.
    Similarly,
    \begin{align*}
        &\bU^{(j)}_{h}\rvert_{\varepsilon_{\Hi}^{-}}
        -\lambda_{ij}^{\Hi}\,\widehat{\bF}^\varepsilon(\bU_{h}^{(j)})
        \rvert_{\varepsilon_{\Hi}} \\
        &\hspace{12pt}=
        \big(\,
            \bU^{(j)}_{h}\rvert_{\varepsilon_{\Hi}^{-}}
            -\lambda_{ij}^{\Hi}\,a_{\max}^\varepsilon\,\widetilde{\bU}\rvert_{\varepsilon_{\Hi}^{-}}
        \,\big)\\
        &\hspace{12pt}
        +\frac{\lambda_{ij}^{\Hi}\,a_{\max}^{\varepsilon}}{2}\,
        \big(\,
            \widetilde{\bU}\rvert_{\varepsilon_{\Hi}^{-}}
            -\frac{1}{a_{\max}^\varepsilon}\,\bF^\varepsilon\rvert_{\varepsilon_{\Hi}^{-}}
        \,\big)\\
        &\hspace{12pt}
        +\frac{\lambda_{ij}^{\Hi}\,a_{\max}^{\varepsilon}}{2}\,
        \big(\,
            \widetilde{\bU}\rvert_{\varepsilon_{\Hi}^{+}}
            -\frac{1}{a_{\max}^{\varepsilon}}\bF^\varepsilon\rvert_{\varepsilon_{\Hi}^{+}}).
    \end{align*}
    Again, applying Lemmas~\ref{lem:energy_lemma_1} and \ref{lem:energy_lemma_2} we have $\bU^{(j)}_{h}\rvert_{\varepsilon_{\Hi}^{-}}
    -\lambda_{ij}^{\Hi}\,\widehat{\bF}^\varepsilon(\bU_{h}^{(j)})
    \rvert_{\varepsilon_{\Hi}}\in\cR$.
    Realizability of $\Gamma^\varepsilon[\bU_h^{(j)};\hat{c}_{ij}]$ then follows from Lemma~\ref{lem:convexcone}.
\end{proof}

The previous two lemmas require assumptions on the time step $\Delta t$.  Satisfying those assumptions gives us our realizability-preserving time-step restriction in Eq.~\eqref{eq:realizable_timestep}.  
Now, to establish realizability of $\bU_{\bK}^{(ij)}$, it is sufficient for Eq.~\eqref{eq:Gamma_3D} to hold for the auxiliary quadrature sets defined in Eq.~\eqref{eq:auxiliary_quadrature}, $\hat{S}_{\varepsilon,\otimes}(\bK)$ and $\hat{S}_{\ell,\otimes}(\bK)$, provided $\Gamma^\varepsilon[\bU_h^{(j)};c_{ij}]$ and $\Gamma^\ell[\bU_h^{(j)};c_{ij}]$ are realizable.
\begin{lemma}
\label{lem:Uij_realizable}
    Assume that the conditions for Lemmas~\ref{lem:Gamma:realizable_space} and \ref{lem:Gamma:realizable_energy} hold for all quadrature points in $\hat{S}_{\varepsilon,\otimes}(\bK)$ and $\hat{S}_{\ell,\otimes}(\bK)$.
    Then $\bU_{\bK}^{(ij)}\in\cR$.
\end{lemma}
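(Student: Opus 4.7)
The plan is to exploit the decomposition in Eq.~\eqref{eq:Uij_equals_sum_Gamma}, which writes $\bU_{\bK}^{(ij)}$ as a positively weighted sum of $(d+1)$ integrals: one of $\Gamma^{\varepsilon}[\bU_{h}^{(j)};\hat{c}_{ij}](\vect{x})$ over $\bK_{\vect{x}}$, and, for each $\ell=1,\ldots,d$, one of $\Gamma^{\ell}[\bU_{h}^{(j)};\hat{c}_{ij}](\tilde{\vect{z}}^{\ell})$ over $\widetilde{\bK}^{\ell}$ against the measure $\varepsilon^{2}\,d\tilde{\vect{z}}^{\ell}$. All prefactors are strictly positive, so by Lemma~\ref{lem:convexcone} it suffices to exhibit each of these $(d+1)$ integrals as a realizable state; the final conclusion then follows from one last application of Lemma~\ref{lem:convexcone}.

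For the $\Gamma^{\varepsilon}$ integral, I would evaluate it by the tensor-product $N$-point LG quadrature in the spatial coordinates, with nodes from $\bigotimes_{i=1}^{d}S^{N}_{i}(\bK)$. At each spatial node $\vect{x}_{s}$ in this set, every point $(\hat{\varepsilon}_{q},\vect{x}_{s})$ with $\hat{\varepsilon}_{q}\in\hat{S}^{M_{\varepsilon}}_{\varepsilon}(\bK)$ lies in $\hat{S}_{\varepsilon,\otimes}(\bK)$, so by hypothesis $\bU_{h}^{(j)}(\hat{\varepsilon}_{q},\vect{x}_{s})\in\cR$ for all such $\hat{\varepsilon}_{q}$, and Lemma~\ref{lem:Gamma:realizable_energy} then delivers $\Gamma^{\varepsilon}(\vect{x}_{s})\in\cR$. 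Because the LG weights are strictly positive, the quadrature sum is a conical combination of realizable states, which lies in $\cR$ by Lemma~\ref{lem:convexcone}. Each $\Gamma^{\ell}$ integral is handled in the same manner: I would apply $N$-point LG quadrature in $\varepsilon$ and in each spatial direction $j\neq\ell$; at every outer node so produced, pairing with the LGL nodes $\hat{S}^{M_{x}}_{\ell}(\bK)$ yields points in $\hat{S}_{\ell,\otimes}(\bK)$, so the hypothesis supplies realizability of $\bU_{h}^{(j)}$ at all those points. Lemma~\ref{lem:Gamma:realizable_space} then gives $\Gamma^{\ell}\in\cR$ at every outer node, and positivity of the LG weights together with the non-negative $\varepsilon^{2}$ factor again produces a conical combination in $\cR$.

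Summing the resulting $(d+1)$ realizable integrals with the positive prefactors in Eq.~\eqref{eq:Uij_equals_sum_Gamma} yields $\bU_{\bK}^{(ij)}\in\cR$. The only nontrivial bookkeeping I anticipate is certifying that these ``outer'' LG quadratures reproduce the integrals in Eq.~\eqref{eq:Uij_equals_sum_Gamma} \emph{exactly}, so that the quadrature representation equals the cell-average update rather than merely approximating it. Because $\bU_{h}^{(j)}\in\bbV^{k}_{h}(\Omega)$, the outer-variable dependence of $\Gamma^{\varepsilon}$ and $\varepsilon^{2}\,\Gamma^{\ell}$ has the same polynomial character that the DG scheme already handles exactly with its $N$-point LG rules, so this is a routine verification rather than a genuine obstacle. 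Once exactness is in hand, the entire proof reduces mechanically to chaining Lemmas~\ref{lem:Gamma:realizable_space}, \ref{lem:Gamma:realizable_energy}, and \ref{lem:convexcone}.
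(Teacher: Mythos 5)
Your approach is the same as the paper's: decompose $\bU_{\bK}^{(ij)}$ via Eq.~\eqref{eq:Uij_equals_sum_Gamma}, verify that the hypotheses of Lemmas~\ref{lem:Gamma:realizable_space} and \ref{lem:Gamma:realizable_energy} hold at each outer LG node so that $\Gamma^{\ell}$ and $\Gamma^{\varepsilon}$ are pointwise realizable there, rewrite the outer integrals as conical combinations with positive LG weights (and the nonnegative $\varepsilon^{2}$ factor for the spatial pieces), and close with Lemma~\ref{lem:convexcone}. The structure, the lemmas invoked, and the conclusion all match.

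The one weak spot is your justification for exactness. You assert that the outer-variable dependence of $\Gamma^{\varepsilon}$ and $\varepsilon^{2}\Gamma^{\ell}$ has ``the same polynomial character'' that the $N$-point LG rules integrate exactly. That is true for the volume-average contributions (which are degree-$k$ polynomials of $\bU_h^{(j)}$), but it is false for the flux contributions: $\Gamma^{\varepsilon}$ and $\Gamma^{\ell}$ contain $\widehat{\bF}^{\varepsilon}$ and $\widehat{\bF}^{\ell}$, which depend nonlinearly on $\bU_h^{(j)}$ through the moment closure (Eqs.~\eqref{eq:pressureTensor}--\eqref{eq:heatFluxFactorAlgebraic}) and through the conserved-to-primitive map, so the outer integrands are not polynomials and the LG quadrature is not exact for them in the algebraic sense you invoke. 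The correct (and, in the paper, implicit) resolution is different: the fully discrete DG scheme in Eq.~\eqref{eq:semi-discrete_DG_full_source} is itself implemented by evaluating those surface-flux integrals with precisely the $N$-point LG rules appearing in $\hat{S}_{\varepsilon,\otimes}(\bK)$ and $\hat{S}_{\ell,\otimes}(\bK)$; hence the quadrature sum \emph{is} the computed cell average by construction, and the equality you need follows from consistency of the quadratures used in the scheme and in the decomposition, not from polynomial exactness. This is a bookkeeping misattribution rather than a hole in the argument --- with the justification corrected, your proof is sound and coincides with the paper's.
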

\begin{proof}
    The conditions of Lemma~\ref{lem:Gamma:realizable_space} and Lemma~\ref{lem:Gamma:realizable_energy} hold, which allow us to conclude $\Gamma^\varepsilon[\bU_h^{(j)};c_{ij}]\in\cR$ and $\Gamma^{\ell}[\bU_h^{(j)};\hat{c}_{ij}]\in\cR$ for $\ell=1,\ldots,d$.
    Then using the quadrature sets $\hat{S}_{\varepsilon,\otimes}(\bK)$ and $\hat{S}_{\ell,\otimes}(\bK)$ to evaluate the integrals over $\bK_{\vect{x}}$ and $\widetilde{\bK}^\ell$ in Eq.~\eqref{eq:Uij_equals_sum_Gamma}, respectively, yields
    \begin{align*}
        \bU^{(ij)}_{\bK}
        =&
        \frac{1}{(d+1)|\bK_{\vect{x}}|}\sum_{\vect{x}_q\in\hat{S}_{\varepsilon,\otimes}(\bK)}\vect{w}_q\Gamma^\varepsilon[\bU_h^{(j)};\hat{c}_{ij}](\vect{x}_q) \\
        &+
        \sum_{\ell=1}^d \frac{1}{(d+1)|\widetilde{\bK}^\ell|}
        \sum_{\widetilde{\vect{z}}^\ell_q\in\hat{S}_{\ell,\otimes}(\bK)}\varepsilon^2\widetilde{\vect{w}}^\ell_q\Gamma^\ell[\bU_h^{(j)}\hat{c}_{ij}](\widetilde{\vect{z}}^\ell_q)
    \end{align*}
    Here $\vect{x}_q=(x^1_{q_1},x^2_{q_2},\cdots,x^d_{q_d})$ and $\vect{w}_q=w^1_{q_1} w^2_{q_2}\hdots w^d_{q_2}$, where $1\leq q_1,q_2,\cdots,q_d \leq N$.  The quadrature points $\widetilde{\vect{z}}^\ell_q$ and their associated weights, $\widetilde{\vect{w}}^\ell_q$ are similarly defined, and recall by the tilde we mean to exclude the $\ell^{\mathrm{th}}$ dimension.
    Because the weights $\vect{w}_q$ and $\widetilde{\vect{w}}^\ell_q$ are from LG quadratures,
    we have expressed $\bU^{(ij)}_{\bK}$ as convex combinations of $\Gamma^\varepsilon[\bU_h^{(j)};c_{ij}]$ and $\Gamma^\ell[\bU_h^{(j)};c_{ij}]$, respectively.
    Applying Lemma \ref{lem:convexcone} then gives the desired result.
\end{proof}

We can now prove Proposition~\ref{prop:explicit_step_realizable}.
\begin{proof}[Proof of Proposition \ref{prop:explicit_step_realizable}]
    We have that
    \[
    \bU^{(i)}_{\mathrm{Ex}}=\sum_{j=0}^{i-1}c_{ij}\bU^{(ij)}_{\bK}.
    \]
    The four assumptions of the proposition statement satisfy the assumptions needed for Lemmas \ref{lem:Gamma:realizable_space} and \ref{lem:Gamma:realizable_energy}. Since these two lemmas hold, we can apply Lemma \ref{lem:Uij_realizable} to get $\bU^{(ij)}_{\bK}\in\cR$.
    Finally we can apply Lemma \ref{lem:Ui_realizable} to get the desired result.
\end{proof}

\begin{rem}
    Lemma \ref{lem:Gamma:realizable_space} and \ref{lem:Gamma:realizable_energy} require inequalities on $\gamma_{ij}^\ell$ and $\gamma_{ij}^\varepsilon$, respectively, to hold.
    Solving for $\Delta t$ in both inequalities gives the quantities we minimize over to determine the realizable time step in Eq.~\eqref{eq:realizable_timestep}.
    The inequality in Lemma \ref{lem:Gamma:realizable_energy} requires that $\gamma^{\varepsilon}_{ij}\leq E/\varepsilon$.
    Then for realizability we require $\Delta t \leq \frac{E}{\varepsilon}\frac{\hat{w}_{M_\varepsilon}^\varepsilon\,\Delta\varepsilon}{(d+1)\,\varepsilon_{\Hi}\,a^\varepsilon}$.
    In practice, $E/\varepsilon$ is not known precisely.
    Therefore, to arrive at Eq.~\eqref{eq:realizable_timestep}, we lower bound $E/\varepsilon$ by $W(1-v)$.
\end{rem}

\subsection{Realizability-Enforcing Limiter}\label{sec:limiter}

Proposition~\ref{prop:explicit_step_realizable} guarantees the explicit update of the $i$th stage cell-average is realizable.
It does not guarantee that $\bU_h(\vect{x})^{(j)}\in\cR$ for all $\vect{x}\in\widetilde{S}_{\otimes}(\bK)$, as required by Condition~3 of Proposition~\ref{prop:explicit_step_realizable}.
To ensure this requirement, we employ the following element-local realizability-enforcing limiter from \cite{chu_etal_2019} after each explicit update.

To enforce positivity of the zeroth moment, $\cE$, we replace the polynomial $\cE_h(\vect{x})$ with the limited polynomial
\begin{equation}
    \overline{\cE}_h(\vect{x}) 
    = \theta_1\,\cE_h(\vect{x}) + (1-\theta_1)\,\cE_{\bK},
\end{equation}
where the limiter parameter $\theta_1$ is given by
\begin{equation}
    \theta_1 = \min\left\{\left| \frac{m-\cE_{\bK}}{m_{\widetilde{S}_{\otimes}}-\cE_{\bK}} \right|,1\right\},
\end{equation}
with
\[
    m=0
    \quad
    \text{and}
    \quad
    m_{\widetilde{S}_{\otimes}}
    =\min_{\vect{x}\in\widetilde{S}_{\otimes}(\bK)}\cE_{h}(\vect{x}).
\]

The next step enforces $\cE-\cF \geq 0$.
We define $\overline{\bU}_h = (\overline{\cE}_{h},\cF_h)^{\intercal}$.
If $\overline{\bU}_{q}=\overline{\bU}_h(\vect{x}_{q})$ lies outside of $\cR$ for any quadrature point $\vect{x}_q$, then $\vect{x}_q$ is a troubled quadrature point. 
For troubled quadrature points, there exists a line connecting the realizable cell average, $\bU_{\bK}$, and $\overline{\bU}_q$ that intersects the boundary of $\cR$.  
This line is parametrized by
\begin{equation}
    \bs_q(\psi) = \psi\overline{\bU}_q + (1-\psi) \bU_{\bK}, \quad \psi\in[0,1].
\end{equation}
The unique point where $\bs_q(\psi)$ intersects the boundary of $\cR$ is obtained by solving $\gamma(\bs_q(\psi))=0$ for $\psi$ using the bisection algorithm.  
(Recall that for $\bU = (\cE,\cF_j)^{\intercal}$, $\gamma(\bU) = \cE - \cF$.)  
We then replace $\overline{\bU}_h$ by
\begin{equation}
    \overline{\overline{\bU}}_h
    =
    \theta_2\,\overline{\bU}_h + (1-\theta_2)\,\bU_{\bK},
\end{equation}
where the parameter $\theta_2$ is the smallest $\psi$ obtained in element $\bK$ by considering all troubled quadrature points.  
The limiter is conservative in the sense that it preserves the cell average; i.e., $\bU_{\bK} = \overline{\bU}_{\bK} = \overline{\overline{\bU}}_{\bK}$.

\subsection{Collision Update}
\label{sec:CollisionAnalysis}

In this section, we prove the realizability-preserving property of the implicit collision solver.
We first note that our analysis of the explicit update shows that $\bU^{(*)}$ in Eq.~\eqref{eq:backwardEulerConserved} is realizable, when $\bU^{(*)}=\bU^{(i)}_{\mathrm{Ex}}$.
Furthermore, by Lemma \ref{lem:realizable_hat_moments}, $\widehat{\bU}^{(*)}$, as defined in Eq.~\eqref{eq:backwardEulerHat}, is realizable if $\bU^{(*)}$ is realizable.  
\begin{lemma}
\label{lem:realizable_collision_solver}
    Let $\bU^{(*)}=(\cE^{(*)},\cF_\mu^{(*)})^\intercal$, defined as in Eq.~\eqref{eq:backwardEulerConserved}, be realizable, and assume that $\bM^{[k]} = (\cJ^{[k]},\cH_\mu^{[k]})^{\intercal}$ is realizable.
    Let $\bM^{[k+1]} = (\cJ^{[k+1]},\cH_\mu^{[k+1]})^{\intercal}$ be given by the Picard iteration in Eq.~\eqref{eq:collision_richardson} with $\lambda \leq (1+v)^{-1}$.
    Then $\bM^{[k+1]}$ is realizable.
\end{lemma}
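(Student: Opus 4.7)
The plan is to mimic the structure of the proof of Lemma~\ref{lem:momentconversion_realizable}, with additional care to accommodate (i) the diagonal matrix $\Lambda$, whose entries $\mu_\chi$ and $\mu_\kappa$ scale the zeroth and first moments unequally, and (ii) the extra equilibrium source term $\lambda\Delta\tau\chi\,\cJ_{\Equilibrium}$ that appears only in the zeroth-moment equation.

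First, I would factor $\mu_\chi$ and $\mu_\kappa$ out of the update by expanding Eq.~\eqref{eq:collision_richardson} and using the identities $\mu_\chi^{-1}=W+\lambda\Delta\tau\chi$ and $\mu_\kappa^{-1}=W+\lambda\Delta\tau\kappa$. A short computation yields the clean factorization
\begin{equation*}
    \cJ^{[k+1]}=\mu_\chi\,\tilde{\cJ}^{[k+1]},\qquad
    \cH_j^{[k+1]}=\mu_\kappa\,\tilde{\cH}_j^{[k+1]},
\end{equation*}
with
\begin{equation*}
    \tilde{\cJ}^{[k+1]}
    \vcentcolon=(1-\lambda)W\,\cJ^{[k]}-\lambda\,v^i\cH_i^{[k]}+\lambda\,\widehat{\cE}^{(*)}+\lambda\Delta\tau\chi\,\cJ_{\Equilibrium},
\end{equation*}
\begin{equation*}
    \tilde{\cH}_j^{[k+1]}
    \vcentcolon=(1-\lambda)W\,\cH_j^{[k]}-\lambda\,v^i\cK_{ij}(\bM^{[k]})+\lambda\,\widehat{\cF}_j^{(*)}.
\end{equation*}
Notice that the coefficients of $\cJ^{[k]}$, $\cH_j^{[k]}$, and $\cK_{ij}^{[k]}$ in $\tilde{\bM}^{[k+1]}$ no longer carry any residual dependence on $\chi$ or $\kappa$.

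Next I would establish $\tilde{\bM}^{[k+1]}\in\cR$ by writing it as a conical combination of three realizable moments,
\begin{equation*}
    \tilde{\bM}^{[k+1]}
    =W\,\mathscr{M}^{[k]}+\lambda\,\widehat{\bU}^{(*)}+\lambda\Delta\tau\chi\,\bigl(\cJ_{\Equilibrium},\mathbf{0}\bigr)^{\intercal},
\end{equation*}
where $\mathscr{M}^{[k]}=(\mathscr{J}^{[k]},\mathscr{H}_\mu^{[k]})^{\intercal}$ is exactly the auxiliary moment constructed inside the proof of Lemma~\ref{lem:momentconversion_realizable}. That lemma, which requires $\lambda\leq(1+v)^{-1}$ and $\bM^{[k]}\in\cR$, yields $\mathscr{M}^{[k]}\in\cR$, so $W\,\mathscr{M}^{[k]}\in\cR$ by the corollary to Lemma~\ref{lem:convexcone}; the second term is realizable because $\bU^{(*)}\in\cR$ combined with Lemma~\ref{lem:realizable_hat_moments} gives $\widehat{\bU}^{(*)}\in\cR$; and the third term is the moment of a nonnegative isotropic equilibrium distribution, hence realizable. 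Lemma~\ref{lem:convexcone} then yields $\tilde{\bM}^{[k+1]}\in\cR$.

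The main obstacle is the final passage from $\tilde{\bM}^{[k+1]}\in\cR$ to $\bM^{[k+1]}=\Lambda\,\tilde{\bM}^{[k+1]}\in\cR$, since $\cR$ is invariant under \emph{uniform} positive rescaling but the diagonal entries of $\Lambda$ are unequal. I would resolve this by invoking $\chi\leq\kappa$, which forces $\mu_\chi\geq\mu_\kappa>0$; combined with $\tilde{\cJ}^{[k+1]}\geq\sqrt{\tilde{\cH}_\mu^{[k+1]}\tilde{\cH}^{\mu,[k+1]}}$ this gives
\begin{equation*}
    \sqrt{\cH_\mu^{[k+1]}\cH^{\mu,[k+1]}}
    =\mu_\kappa\sqrt{\tilde{\cH}_\mu^{[k+1]}\tilde{\cH}^{\mu,[k+1]}}
    \leq\mu_\kappa\,\tilde{\cJ}^{[k+1]}
    \leq\mu_\chi\,\tilde{\cJ}^{[k+1]}
    =\cJ^{[k+1]},
\end{equation*}
while $\cJ^{[k+1]}>0$ is immediate from $\mu_\chi>0$ and $\tilde{\cJ}^{[k+1]}>0$. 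Equivalently, one can decompose $\bM^{[k+1]}=\mu_\kappa\,\tilde{\bM}^{[k+1]}+(\mu_\chi-\mu_\kappa)\bigl(\tilde{\cJ}^{[k+1]},\mathbf{0}\bigr)^{\intercal}$, a conical combination of two manifestly realizable moments, and invoke Lemma~\ref{lem:convexcone} one last time.
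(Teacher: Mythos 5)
Your proof is correct and reaches the same conclusion via the same key ingredients (the auxiliary moment construction of Lemma~\ref{lem:momentconversion_realizable}, realizability of $\widehat{\bU}^{(*)}$ via Lemma~\ref{lem:realizable_hat_moments}, the inequality $\mu_\kappa\leq\mu_\chi$ from $\chi\leq\kappa$, and Lemma~\ref{lem:convexcone}), but your organization of the argument is a genuine improvement over the paper's. The paper keeps $\mu_\chi$ and $\mu_\kappa$ embedded inside its auxiliary moments from the start: its $\mathscr{J}^{[k]}$ carries $\mu_\chi$ and its $\mathscr{H}_\mu^{[k]}$ carries $\mu_\kappa$, which forces it to introduce a second intermediate quantity $\widetilde{\mathscr{H}}_\mu^{[k]}$ (rescaled by $\mu_\chi$ so as to match the distribution $g^{[k]}$), and then to invoke $\mu_\kappa\leq\mu_\chi$ twice --- once for $\mathscr{M}^{[k]}$ and once, only gestured at with ``by a similar argument,'' for $\Lambda\widehat{\bU}^{(*)}$. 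By factoring $\Lambda$ completely out of the iterate, you reuse Lemma~\ref{lem:momentconversion_realizable}'s $\mathscr{M}^{[k]}$ verbatim, handle the unequal scaling in a single explicit passage at the end, and avoid the duplicated argument. The trade-off is that you need the algebraic identity $1-\lambda\mu_\chi W-\lambda\mu_\chi\Delta\tau\chi=\mu_\chi W(1-\lambda)$ (and its $\kappa$-analogue), which is easy but worth stating; and, pedantically, when $\chi=0$ or $\cJ_{\Equilibrium}=0$ the third term of your conical combination vanishes and should simply be dropped before invoking Lemma~\ref{lem:convexcone}, which requires strictly positive coefficients --- a minor point the paper's own proof shares.
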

\begin{proof}
    The first component of $\bM^{[k+1]}$ is
\begin{equation*}
    \cJ^{[k+1]}
    =
    \mathscr{J}^{[k]} + \lambda\mu_{\chi}(\widehat{\cE}^{(*)}+\Delta\tau \chi \cJ_{\Equilibrium}),
\end{equation*}
where we have defined
\begin{equation*}
    \mathscr{J}^{[k]}
    =
    \mu_\chi\,
    \big[\,
        (1-\lambda)\,W\cJ^{[k]}-\lambda\,v^i\cH^{[k]}_i
    \,\big]
    =
    \frac{1}{4\pi}\int_{\bbS^2}g^{[k]}\,d\omega
\end{equation*}
and $g^{[k]}=\mu_{\chi}\big[\,(1-\lambda)W-\lambda v^i\ell_i\,\big]\,\varepsilon f^{[k]}$.
Similarly, the second component of $\bM^{[k+1]}$ is
\begin{equation*}
    \cH_{\mu}^{[k+1]}
    =
    \mathscr{H}^{[k]}_\mu + \lambda\mu_{\kappa}\widehat{\cF}_\mu^{(*)},
\end{equation*}
where we have defined
\begin{equation*}
    \mathscr{H}^{[k]}_\mu
    =
    \mu_{\kappa}\,
    \big[\,(1-\lambda)\,W\cH^{[k]}_\mu-\lambda\, v^j\cK^{[k]}_{\mu j}\,\big]
    =\frac{\mu_\kappa}{\mu_\chi}\,\widetilde{\mathscr{H}}^{[k]}_\mu,
\end{equation*}
with
\begin{equation*}
    \widetilde{\mathscr{H}}^{[k]}_\mu=\frac{1}{4\pi}\int_{\bbS^2}g^{[k]}\ell_\mu\,d\omega.
\end{equation*}
Note that there is no independent update for $\cH^{[k+1]}_0$ in Eq.~\eqref{eq:collision_richardson}.
Instead, it is completely determined by $\cH^{[k+1]}_i$, since $\cH^{[k+1]}_0=-v^i\cH^{[k+1]}_i=-v^i\mathscr{H}^{[k]}_i-\lambda\mu_{\kappa}v^i\widehat{\cF}^{(*)}_\mu=\mathscr{H}^{[k]}_0+\lambda\mu_{\kappa}\widehat{\cF}^{(*)}_0$.

We desire $g^{[k]}\in\mathfrak{R}$. 
By assumption, $\varepsilon f^{[k]}\in\mathfrak{R}$, therefore we need to show $W(1-\lambda)-\lambda v^j\ell_j\geq 0$.  
Similar to the moment conversion problem considered in Lemma~\ref{lem:momentconversion_realizable}, this condition is satisfied when $\lambda\le(1+v)^{-1}$.  
Then, since the moment pair $\widetilde{\mathscr{M}}^{[k]}=(\mathscr{J}^{[k]},\widetilde{\mathscr{H}}^{[k]}_\mu)^\intercal$ arise from $g^{[k]}\in\mathfrak{R}$, Proposition~\ref{prop:momentBoundsLagrangian} implies that $\widetilde{\mathscr{M}}^{[k]}\in\cR$.
Furthermore, $\mathscr{M}^{[k]}=(\mathscr{J}^{[k]}, \mathscr{H}^{[k]}_\mu)^\intercal\in\cR$, since
\begin{equation*}
    \mathscr{H}^{[k]}=\frac{\mu_\kappa}{\mu_\chi}\widetilde{\mathscr{H}}^{[k]}\leq\widetilde{\mathscr{H}}^{[k]}\leq \mathscr{J}^{[k]}.
\end{equation*}
Here, $\mathscr{H}^{[k]}=\sqrt{\eta^{\mu\nu}\mathscr{H}^{[k]}_\mu\mathscr{H}^{[k]}_\nu}$, $\widetilde{\mathscr{H}}^{[k]}=\sqrt{\eta^{\mu\nu}\widetilde{\mathscr{H}}^{[k]}_\mu\widetilde{\mathscr{H}}^{[k]}_\nu}$, and $0\leq\mu_\kappa\leq{\mu_\chi}$ because $0\leq\chi\leq\kappa$.
Note that since $\bU^{(*)}\in\cR$, we have $\widehat{\bU}^{(*)}=(\widehat{\cE}^{(*)},\widehat{\cF}_\mu^{(*)})^\intercal\in\cR$.
Then, by a similar argument, $\Lambda\widehat{\bU}^{(*)}\in\cR$.
It follows from Lemma~\ref{lem:convexcone} that
\[
    \bM^{[k+1]}
    =\mathscr{M}^{[k]}
    +\lambda\Lambda\widehat{\bU}^{(*)}
    +\lambda\mu_\chi\Delta\tau\chi(\cJ_{\Equilibrium},0)^\intercal\in\cR.
\]
\end{proof}

Similar to the analysis of the moment conversion solver in Section~\ref{sec:MomentConversionAnalysis}, the result of Lemma~\ref{lem:realizable_collision_solver} is needed for the convergence analysis of the implicit collision solver in Eq.~\eqref{eq:collision_richardson}. In Eq.~\eqref{eq:collision_richardson}, the collision term introduces damping factors $\mu_\chi$ and $\mu_\kappa$ that are stronger than the $W^{-1}$ factor in the moment conversion solver given in Eq.~\eqref{eq:Richardson_update}. Therefore, with Lemma~\ref{lem:realizable_collision_solver}, the implicit solver is expected to converge given the convergence of the moment conversion solver analyzed in Appendix~\ref{appendix:convergence}.

To prove Proposition \ref{prop:realizable_stage_values}, we also need to show that realizable primitive moments lead to realizable conserved moments, which is given in Proposition~\ref{prop:momentBoundsEulerian} under the exact closure assumption (see, e.g., \cite[Assumption~1]{laiu_etal_2025}).
In the following lemma, we prove the desired result when the algebraic Eddington factor $\mathsf{k}$ (see Eq.~\eqref{eq:eddingtonFactorAlgebraic}) is used, i.e., when the closure is not exact.

\begin{lemma}\label{lemma12}
    Assume that $v<1$.  
    Let $\bU=(\cE,\cF_{\mu})$, with $\cE$ and $\cF_{\mu}$ given as in Eqs.~\eqref{eq:eulerianEnergy}-\eqref{eq:eulerianMomentum} with $\cK^{\mu\nu}(\bM)$ and $\mathsf{k}$ given by Eqs.~\eqref{eq:pressureTensor} and \eqref{eq:eddingtonFactorAlgebraic}, respectively, and $\bM=(\cJ,\cH_{\mu})^{\intercal}\in\cR$.  
    Then, $\bU\in\cR$.
\end{lemma}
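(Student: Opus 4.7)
The plan is to reduce the statement to Proposition~\ref{prop:momentBoundsEulerian} by exhibiting, in the orthonormal comoving frame, a nonnegative angular distribution whose Lagrangian moments $\{\cJ,\cH^{\mu},\cK^{\mu\nu}\}$ are exactly those entering the formulas for $(\cE,\cF_\mu)^{\intercal}$ in Eqs.~\eqref{eq:eulerianEnergy}-\eqref{eq:eulerianMomentum}. Because those formulas are linear in $\{\cJ,\cH^{\mu},\cK^{\mu\nu}\}$ at fixed four-velocity, the pair $(\cE,\cF_\mu)^{\intercal}$ appearing in the lemma then coincides with the Eulerian moments of a genuine nonnegative distribution, and Proposition~\ref{prop:momentBoundsEulerian} closes the argument.

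The first step verifies the two ``realizability bounds'' $\mathsf{h}^{2}\le\mathsf{k}(\mathsf{h})\le 1$ on $\mathsf{h}=\cH/\cJ\in[0,1]$, both of which follow from elementary polynomial factorizations; in particular,
\[
\mathsf{k}(\mathsf{h})-\mathsf{h}^{2}=\tfrac{1}{15}(1-\mathsf{h})^{2}(6\mathsf{h}^{2}+10\mathsf{h}+5)\ge 0,
\]
and an analogous factorization of $1-\mathsf{k}(\mathsf{h})$ gives the upper bound. The second step writes $\mathsf{k}(\mathsf{h})$ as the convex combination $\mathsf{k}(\mathsf{h})=(1-\alpha)\mathsf{h}^{2}+\alpha\cdot 1$ with $\alpha=[\mathsf{k}(\mathsf{h})-\mathsf{h}^{2}]/(1-\mathsf{h}^{2})\in[0,1]$ (the case $\mathsf{h}=1$ being handled as a limit). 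Aligning $\hat{\mathsf{h}}$ with the tetrad direction $\hat{1}$ and setting $\mu=\hat{\mathsf{h}}_{\hat\mu}\ell^{\hat\mu}$, I then take the axially symmetric angular profile
\[
\mathfrak{f}(\mu)=2(1-\alpha)\cJ\,\delta(\mu-\mathsf{h})+\alpha\bigl[(\cJ+\cH)\,\delta(\mu-1)+(\cJ-\cH)\,\delta(\mu+1)\bigr],
\]
whose coefficients are all nonnegative thanks to $\cH\le\cJ$. Direct integration yields $\tfrac{1}{2}\int_{-1}^{1}\mathfrak{f}\,d\mu=\cJ$, $\tfrac{1}{2}\int_{-1}^{1}\mathfrak{f}\,\mu\,d\mu=\cH$, and $\tfrac{1}{2}\int_{-1}^{1}\mathfrak{f}\,\mu^{2}\,d\mu=\mathsf{k}(\mathsf{h})\cJ$. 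Axial symmetry about $\hat{\mathsf{h}}$ then forces the rank-$2$ pressure tensor generated by $\mathfrak{f}$ to take exactly the form of Eq.~\eqref{eq:pressureTensor} with the algebraic Eddington factor, so the full tensorial moments agree with those of the lemma.

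Lorentz-transporting this distribution to the Eulerian frame preserves nonnegativity, and Proposition~\ref{prop:momentBoundsEulerian} then delivers $(\cE,\cF_\mu)^{\intercal}\in\cR$. The main technical wrinkle is that $\mathfrak{f}$ as written above is a sum of delta masses and thus not literally an element of $\mathfrak{R}$; this is circumvented by mollifying each delta into a narrow nonnegative bump with the same first two $\mu$-moments, applying Proposition~\ref{prop:momentBoundsEulerian} to the mollified distributions, and passing to the limit, using continuity of the Eulerian moments in the primitive data together with the closedness of the realizable set in Eq.~\eqref{eq:realizableSet} on $\{\cJ>0\}$.
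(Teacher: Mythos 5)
Your proof is correct but takes a genuinely different route from the paper. The paper proves the lemma by direct algebra: it first shows $\widehat{\bU}=(\widehat{\cE},\widehat{\cF}_\mu)^\intercal\in\cR$ by expanding $\widehat{\cE}^2-\widehat{\cF}^2$ in $s=v^\mu\widehat{\mathsf{h}}_\mu/W\in[-1,1]$ and reducing the claim to a quadratic inequality in $s$ — exactly Eq.~(122) of Lemma~11 in \cite{laiu_etal_2025}, which it imports without re-derivation — and then pushes $\widehat{\bU}\in\cR$ forward to $\bU\in\cR$ via Eq.~\eqref{eq:Hat_to_Conserved}. You instead \emph{reconstruct} a nonnegative angular profile whose first three $\mu$-moments are $\cJ$, $\cH$, $\mathsf{k}(\mathsf{h})\cJ$, so that the Eulerian pair $(\cE,\cF_\mu)$ in the lemma is literally the Eulerian moment pair of a distribution $f\in\mathfrak{R}$; Proposition~\ref{prop:momentBoundsEulerian} then applies verbatim. (Your factorization checks out: $\mathsf{k}-\mathsf{h}^2=\tfrac{1}{15}(1-\mathsf{h})^2(6\mathsf{h}^2+10\mathsf{h}+5)$ and $1-\mathsf{k}=\tfrac{2}{15}(1-\mathsf{h})(3\mathsf{h}^3+2\mathsf{h}^2+5\mathsf{h}+5)$, so $\alpha\in[0,1]$, and the axial-symmetry argument does pin down the full $\cK^{\hat\imath\hat\jmath}$ given the trace condition and the $\widehat{\mathsf{h}}$-projection.) What your route buys is self-containedness and generality: it replaces an external, closure-specific algebraic inequality with the transparent sufficient condition $\mathsf{h}^2\le\mathsf{k}\le1$, and it actually proves the stronger statement that the algebraic Minerbo closure is itself realizable (not merely that the resulting Eulerian moments are). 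What it costs is the mollification detour, and there is one point you should tighten: after passing to the limit you only get $\cE\ge0$ for free, not $\cE>0$; you need a uniform lower bound such as $\cE_\epsilon\ge W^2(1-v)^2\,\cJ$, which follows from $E/\varepsilon\ge W(1-v)$ and the fact that your mollifiers preserve the zeroth $\mu$-moment, to land strictly inside $\cR$. With that line added, the argument is complete and valid.
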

\begin{proof}
    We break the proof into two steps.  
    In the first step, where we use some key results from \cite[Lemma~11]{laiu_etal_2025}, we show that $\widehat{\bU}=(\widehat{\cE},\widehat{F}_{\mu})^{\intercal}\in\cR$.  
    Using the bound
    \begin{equation}
        -vW\cH \le v^{\mu}\cH_{\mu} \le vW\cH,
        \label{eq:vDotHBound}
    \end{equation}
    we have
    \begin{equation}
        \widehat{\cE}
        =W\cJ+v^{\mu}\cH_{\mu}
        \ge W\cJ-vW\cH
        > W(\cJ-\cH)
        \ge0.  
    \end{equation}
    Next, we prove ${\widehat{\cE}}^{2}-\widehat{\cF}^{2}\ge0$, which implies $\widehat{\cE}-\widehat{\cF}\ge0$ and thus $\widehat{\bU}\in\cR$.  
    Direct calculations give
    \begin{align}
        \widehat{\cE}^{2}
        &=(W\cJ)^{2}\,\big(\,1+2s\mathsf{h}+s^{2}\mathsf{h}^{2}\,\big), \\
        \widehat{\cF}^{2}
        &=\cJ^{2}\,
        \big(\,
            W^{2}\mathsf{h}^{2}
            +2Wv^{\nu}\mathsf{k}_{\nu\mu}\widehat{\mathsf{h}}^{\mu}\mathsf{h}
            +v^{\nu}\mathsf{k}_{\nu}^{\hspace{4pt}\mu}\mathsf{k}_{\mu\rho}v^{\rho}
        \,\big), \label{eq:fHatSquared}
    \end{align}
    where $\mathsf{k}_{\mu\nu}=\cK_{\mu\nu}/\cJ$, $\mathsf{h}\in[0,1]$ is the flux factor, and we have defined $s=v^{\mu}\widehat{\mathsf{h}}_{\mu}/W\in[-1,1]$.  
    Using Eq.~\eqref{eq:pressureTensor}, we obtain
    \begin{align}
        v^{\nu}\mathsf{k}_{\nu\mu}\widehat{\mathsf{h}}^{\mu}
        &=W\,s\,\mathsf{k}, \\
        v^{\nu}\mathsf{k}_{\nu}^{\hspace{4pt}\mu}\mathsf{k}_{\mu\rho}v^{\rho}
        &=W^{2}\,\frac{1}{4}\,
        \big(\,(1-\mathsf{k})^{2}\,v^{2}+(1+\mathsf{k})\,(3\mathsf{k}-1)\,s^{2}\,\big).
    \end{align}
    Inserting these into Eq.~\eqref{eq:fHatSquared} gives the following sufficient condition for $\widehat{\cE}^{2}-\widehat{\cF}^{2}\ge0$: $\forall s\in[-1,1]$ and $\forall \mathsf{h}\in[0,1]$, 
    \begin{equation}
        \big(\,1-\mathsf{h}^{2}-\frac{1}{4}(1-\mathsf{k})^{2}\,\big)
        +2(1-\mathsf{k})s\mathsf{h}
        +\big(\,\mathsf{h}^{2}-\frac{1}{4}(1+\mathsf{k})(3\mathsf{k}-1)\,\big)\,s^{2}
        \ge0,
        \label{eq:sufficientCondition}
    \end{equation}
    where we have used the assumption $v<1$.  
    Eq.~\eqref{eq:sufficientCondition} is identical to Eq.~(122) of Lemma~11 in \cite{laiu_etal_2025} and holds for the algebraic Eddington factor in Eq.~\eqref{eq:eddingtonFactorAlgebraic}.  
    Thus, relying on results in \cite{laiu_etal_2025}, we conclude that $\widehat{\bU}\in\cR$.  

    In the second step, we use the result of the first step to complete the proof.  
    From the first expression of Eq.~\eqref{eq:Hat_to_Conserved}, using a bound similar to Eq.~\eqref{eq:vDotHBound} for $v^{\mu}\widehat{\cF}_{\mu}$, we have
    \begin{equation}
        \cE = W\widehat{\cE}+v^{\mu}\widehat{\cF}_{\mu}
        \ge W\widehat{\cE}-vW\widehat{\cF}
        >W(\widehat{\cE}-\widehat{\cF})\ge0.
    \end{equation}
    Direct calculations with the expressions in Eq.~\eqref{eq:Hat_to_Conserved} gives
    \begin{equation}
        \cE^{2}-\cF^{2}
        =\widehat{\cE}^{2}-\widehat{\cF}^{2}+(v^{\mu}\widehat{\cF}_{\mu})^{2}
        \ge\widehat{\cE}^{2}-\widehat{\cF}^{2}\ge0.
    \end{equation}
    This completes the proof.  
\end{proof}

We can now prove Proposition~\ref{prop:realizable_stage_values}.
\begin{proof}[Proof of Proposition \ref{prop:realizable_stage_values}]
    Since the conditions of Proposition \ref{prop:explicit_step_realizable} hold, $\bU^{(i)}_{\mathrm{Ex}}\in\cR$.
    The realizability-enforcing limiter is applied to $\bU^{(i)}_{\mathrm{Ex}}$ to ensure pointwise realizability.
    Then, Lemma \ref{lem:realizable_collision_solver} can be repeatedly applied to Eq.~\eqref{eq:ImplicitUpdate}, starting with $\bM^{[0]}=\bU^{(i)}_{\mathrm{Ex}}$, until the iterative solver converges to the Lagrangian moments which define $\bU_{\bK}^{(i)}$.
    Thus it follows from Lemma~\ref{lemma12} that $\bU_{\bK}^{(i)}\in\cR$.
\end{proof}

\subsection{Realizability-Preserving DG-IMEX Scheme}

With the previously established lemmas and propositions in this section, we can finally prove the realizability-preserving property of our DG-IMEX scheme as stated in Theorem \ref{thm:realizable_scheme}.  
\begin{proof}[Proof of Theorem~\ref{thm:realizable_scheme}]
    Using Proposition~\ref{prop:explicit_step_realizable} with $i=1$ and Proposition~\ref{prop:realizable_stage_values} we get $\bU_{\bK}^{(1)}\in\cR$.  
    Applying the realizability-enforcing limiter gives $\bU_h^{(1)}\in\cR$ for all $\vect{x}\in\widetilde{S}_{\otimes}(\bK)$.  
    Repeated application of these steps for $i=2,\ldots,s$ gives $\bU_{\bK}^{(s)}\in\cR$.  
    Then since $\bU_{\bK}^{(s)} = \bU_{\bK}^{(n+1)}$, the proof is complete.
\end{proof}

\section{Numerical Results}
\label{sec:numericalResults}

In this section, we evaluate the performance of our proposed numerical scheme for the special relativistic two-moment model.
We use tests with and without collisions.
For tests with collisions, the polynomial degree of the DG method is quadratic ($k=2$) and the IMEX PD-ARS \cite{chu_etal_2019} method, with coefficients given by Eq.~\eqref{eq:IMEXPDARS_table}, is used.
For collisionless tests, the second order method uses linear polynomials ($k=1$) and SSPRK$2$ time stepping, with coefficients given by Eq.~\eqref{eq:IMEXPDARS_ShuOsher}, and the third order method uses quadratic polynomials ($k=2$) and SSPRK$3$ time stepping, with coefficients given by Eq.~\eqref{eq:SSPRK3_ShuOsher}.
Unless specified otherwise, spatial and energy elements are uniformly spaced, and collisionless results are presented using the third order method.
In each test, the time step is determined by the realizable time-step restriction as outlined in Eq.~\eqref{eq:realizable_timestep}.

\subsection{Moment Conversion Solver}
\label{sec:MomentConverstionTest}

The conserved to primitive moment conversion problem incurs the most cost in our realizability-preserving scheme.  
Because of this, it is desirable to have an iterative scheme that converges quickly to the desired accuracy, and where the evaluation of each iterate is inexpensive.  
Generally, Picard iteration is inexpensive per iterate but may converge slowly, while Newton iteration is more costly per iterate but with faster convergence.  
Here, we compare the proposed Picard iteration solver with Newton's method to solve Eq.~\eqref{eq:inverse_problem}.

To this end, similar to \cite{laiu_etal_2025}, we fix a pair $(v,\mathsf{h}=\cH/\cJ)$, with $v\in[0,1)$ and $\mathsf{h}\in[0,1]$, and randomly sample $100$ realizable values of $\bM$.  
To generate the samples for a fixed $(v,\mathsf{h})$, spherical polar coordinates are used to randomly sample directions for the three-velocity and comoving-frame flux.  
First, for randomly sampled $\vartheta\in[0,\pi]$ and $\varphi\in[0,2\pi]$, the fluid three-velocity is given by
\begin{subequations}
\begin{align}
    v^1 &= v\sin\vartheta\cos\varphi,\\
    v^2 &= v\sin\vartheta\sin\varphi,\\
    v^3 &= v\cos\vartheta.
\end{align}
\end{subequations}
To generate $\cJ\in[0,\infty)$ without setting a hard upper bound on $\cJ$, a parameter $\alpha$ is sampled uniformly in the interval $[-\pi/2,\pi/2].$
We then set $\cJ = -1/m_\alpha$, $m_\alpha=(1-\sin\alpha)/(0-\cos\alpha)$.
This value of $\cJ$ is determined by finding where the line connecting $(0,1)$ and $(\cos\alpha,\sin\alpha)$, given by $y=m_\alpha x+1$, intersects the $x$-axis. 
Next, using this value of $\cJ$, $\cH^{\hat{\imath}}\vcentcolon=\cJ q^{\hat{\imath}}$, where for randomly sampled $\vartheta'\in[0,\pi]$ and $\varphi'\in[0,2\pi]$
\begin{subequations}
\begin{align}
    q^{\hat{1}} &= \mathsf{h}\sin\vartheta'\cos\varphi',\\
    q^{\hat{2}} &= \mathsf{h}\sin\vartheta'\sin\varphi',\\
    q^{\hat{3}} &= \mathsf{h}\cos\vartheta'.
\end{align}
\end{subequations}
Finally, the Lorentz transformation, $\cL^\mu_{\hspace{4pt}\hat{\imath}}$, given by Eq.~\eqref{eq:lorentzTransformation}, is used to generate $\cH^\mu=\cL^\mu_{\hspace{4pt}\hat{\imath}}\cH^{\hat{\imath}}$.
Note that $\cH_\mu\cH^\mu = \cH_{\hat{\imath}}\cH^{\hat{\imath}} = \mathsf{h}^2\cJ^2$.
We then have $\cJ-\cH=(1-\mathsf{h})\,\cJ\ge0$.  
Thus, the randomly generated samples of $\bM$ are realizable.
Given realizable $\bM$, we compute $\widehat{\bU}=(\widehat{\cE},\widehat{\cF}_{i})^{\intercal}\in\cR$, using Eq.~\eqref{eq:inverse_problem}, and aim to recover $\bM$ using the two methods.  

\begin{figure*}
    \centering
    \begin{subfigure}[b]{0.45\textwidth}
        \centering
        \includegraphics[width = 8.6cm]{./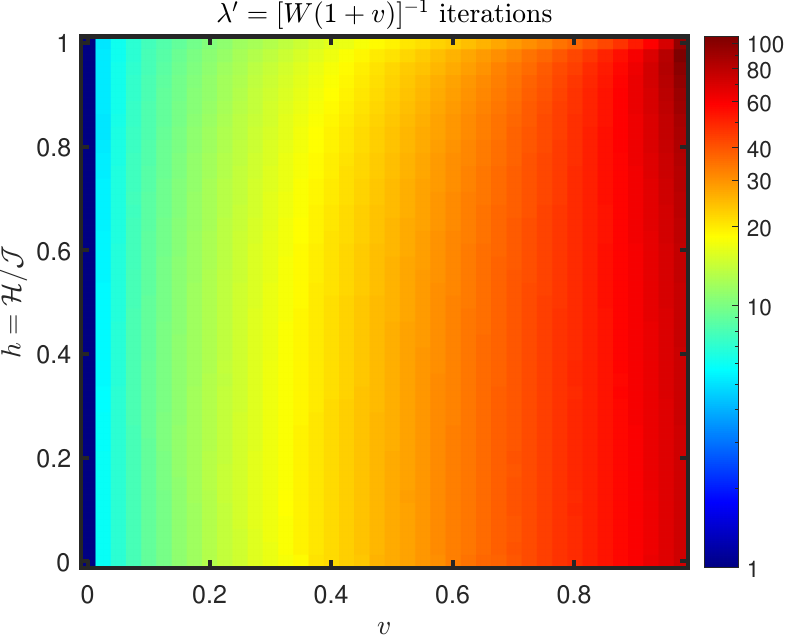}
    \end{subfigure}
    \hfill
    \begin{subfigure}[b]{0.45\textwidth}
        \centering
        \includegraphics[width = 8.6cm]{./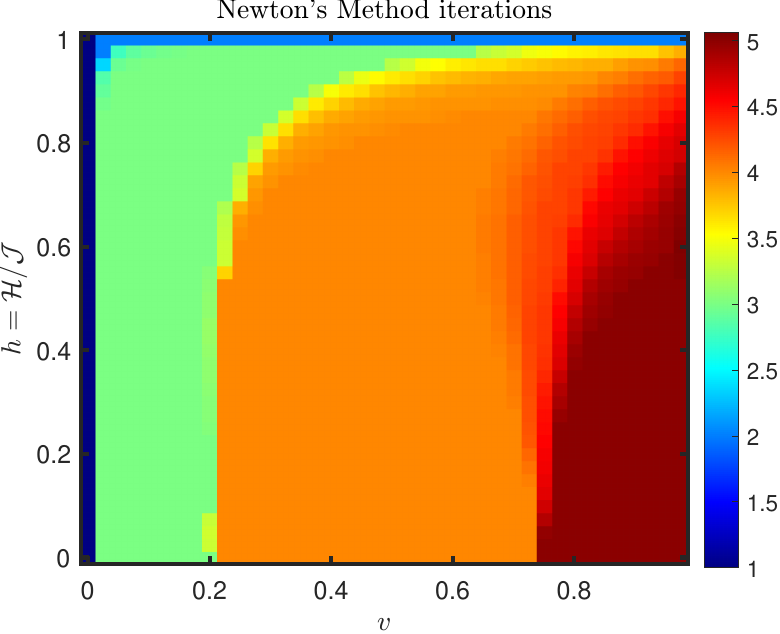}
    \end{subfigure}
    \begin{subfigure}[b]{0.45\textwidth}
        \centering
        \includegraphics[width = 8.6cm]{./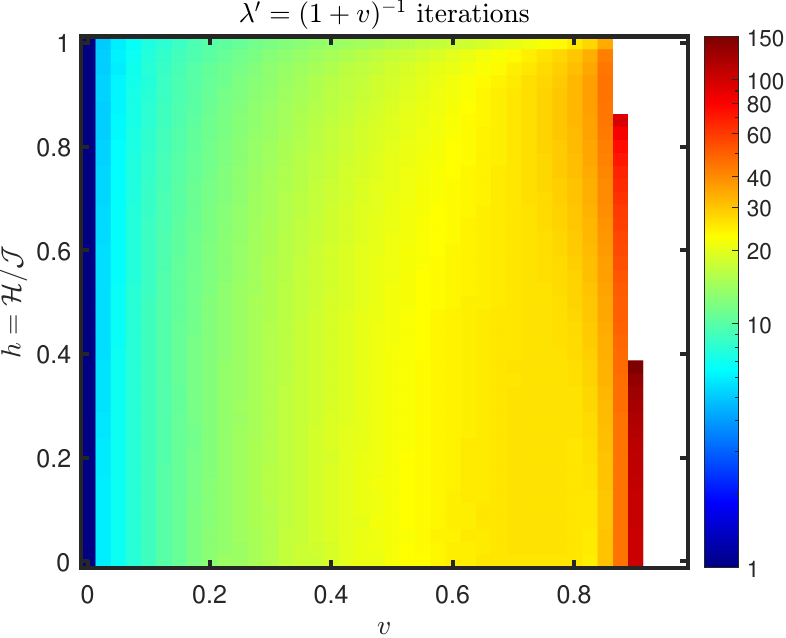}
    \end{subfigure}
    \hfill
    \begin{subfigure}[b]{0.45\textwidth}
        \centering
        \includegraphics[width = 8.6cm]{./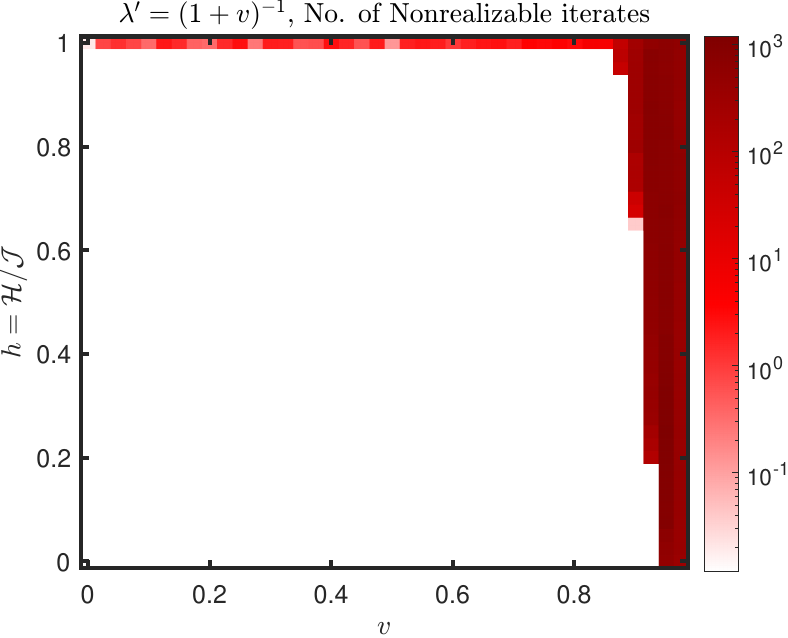}
    \end{subfigure}
    \caption{Top left: Iteration counts for the proposed Picard iteration method for the moment conversion problem using $\lambda' = \lambda/W = [W(1+v)]^{-1}$.
    Top right: Iteration counts Newton's method when solving the moment conversion problem.
    Bottom left: Picard iteration using $\lambda' = (1+v)^{-1}$.
    Each square represents the average iteration count of $100$ randomly generated samples of $\bM$ at each $(v,\mathsf{h})$.
    White squares indicate Picard iteration failed to converge within $10000$ iterations for at least one of the $100$ values of $\bM$ tested for that specific pairing of $(v,\mathsf{h})$.
    Bottom right: Number of nonrealizable iterates produced by Picard iteration when using a nonrealizable $\lambda' = (1+v)^{-1}$.
    Each square represents the average number of nonrealizable iterates before the convergence criteria are satisfied for the random samples of $\bM$.
    White squares have a value of zero, indicating realizability is maintained.}
    \label{fig:iteration_count_comparison}
\end{figure*}

In Figure \ref{fig:iteration_count_comparison}, for each $(v,\mathsf{h})$, we plot the average iteration count needed for convergence of the $100$ randomly generated samples of $\bM$ for both Picard iteration, using the ``step size'' $\lambda'=\lambda/W = [W(1+v)]^{-1}$, and Newton's method. 
From Eq.~\eqref{eq:convergence_criteria}, we set $\texttt{tol}_{\rm C2P}=10^{-8}$.
We can see that our proposed method based on Picard iteration takes significantly more iterations to reach convergence for the specified tolerance.  
At $(v,\mathsf{h}) = (0.975,1), (0.975,0), (0.3,1)$, the averaged Picard iteration count is $97, 74$, and $13$, respectively, while for Newton's method it is $2, 5$, and $2$, respectively.
The maximum observed averaged iteration count for Picard iteration was $106$, while for Newton's method it was $5$.
It should also be noted that the minimum error and maximum error (excluding the case where $v = 0$) for Picard iteration are, respectively, on the order of $10^{-11}$ and $10^{-8}$. While for Newton's method those values are $10^{-16}$ and $10^{-13}$. 
While performing our numerical experiments we have observed that, despite requiring more iterations, the Picard iteration scheme is somewhat faster than the Newton-based scheme when $v\lesssim 0.6$.  
We have not attempted to optimize the implementations to better assess which is a better choice in production simulations, but this could be considered in a future study since the conserved to primitive solver contributes significantly to the overall computational cost of our proposed method.  

In the bottom half of Figure \ref{fig:iteration_count_comparison} we break the realizability-preserving constraint on $\lambda'$ by setting it to the larger value of $\lambda' = (1+v)^{-1}$.
This causes the Picard iteration to fail to converge in $10000$ iterations for any $\mathsf{h}$ when $v>0.925$.
For $v = 0.9$ and $v = 0.875$, it fails to converge for $\mathsf{h}\geq 0.45$ and $\mathsf{h}\geq 0.925$, respectively.
Otherwise for any $\mathsf{h}$ and $v<0.875$, the Picard iteration converges.
Despite breaking the realizability preserving constraint, using the Picard iteration with $\lambda' = (1+v)^{-1}$ does not result in nonrealizable iterates for the majority of $(v,\mathsf{h})$ pairings.
Nonrealizable iterates are encountered for all $v$ whenever $\mathsf{h} = 1$, which should be expected since these $\bM$ are on the boundary of $\cR$.
Nonrealizable iterates for $\mathsf{h}<1$ occur in the region where the Picard iteration failed to converge.
For any $\mathsf{h}$ and $v>0.95$, nonrealizable iterates are encountered.
For $v = 0.875, 0.9, 0.925$, nonrealizable iterates are encountered for $\mathsf{h}>0.95$, $\mathsf{h}>0.65$, and $\mathsf{h}>0.2$, respectively.

In light of these observations, it would be desirable to prove that Newton's method $(i)$ preserves realizability of the iterates, $\bM^{[k]}$, and $(ii)$ converges for all $v\in[0,1)$.
The reason we use Picard iteration as opposed to Newton's method is because proving $(i)$ and $(ii)$ is easier since we work with the moments directly. 
Whereas in Newton's method, one would be working with derivatives of the moments.
In regards to $(i)$ and $(ii)$ we note:
\begin{enumerate}
    \item Though we have not proven $(ii)$ for Picard iteration, in all our tests using the realizability-preserving step size we have not encountered a $v$ for which the method fails to converge.
    \item We have not encountered an instance of a non-realizable iterate when using Newton's method. 
\end{enumerate}

\subsection{Streaming Sine Wave}

In this test, we model the propagation of free-streaming radiation through a background with a constant velocity field.
This test, originally from Section~8.2 of \cite{laiu_etal_2025}, is adapted here to the special relativistic case.
We set $\chi=\sigma=0$ and consider a fluid three-velocity $\vect{v}=[v,0,0]^{\intercal}$ with $v=0.1$.
The purpose of the test is to verify the scheme's order of accuracy.
The test is performed on a periodic one-dimensional unit domain, $\Omega_{x^1}=[0,1]$.
The Lagrangian energy is initialized to 
\[
    \cJ(x^1,0)=0.5+0.49\times\sin(2\pi x^1).
\]
We require the flux factor $\cH/\cJ=1$.
This implies the Lagrangian momentum for any $t$ is 
\[
\cH^1(x^1,t)=W\,\cJ(x^1,t).
\]
Under these conditions, $\cE(x^1,t)=\cF^1(x^1,t)=\cS^1_{\hspace{4pt}1}(x^1,t)=W^2(1+v)^2\cJ(x^1,t)$, so the analytical solution is given by $\cJ(x^1,t)=\cJ(x^1-t,0)$.  
We run the test until $t=1$, at which point the initial profile has returned to its initial position.

In Figure \ref{fig:SineWaveStreaming_OrderPlot} we plot the $L^2$ error of the numerical solution versus the number of spatial elements.
For the second order and third order scheme we observe the expected convergence rates to the exact solution.

\begin{figure}
    \centering
    \includegraphics[width=8.6cm]{./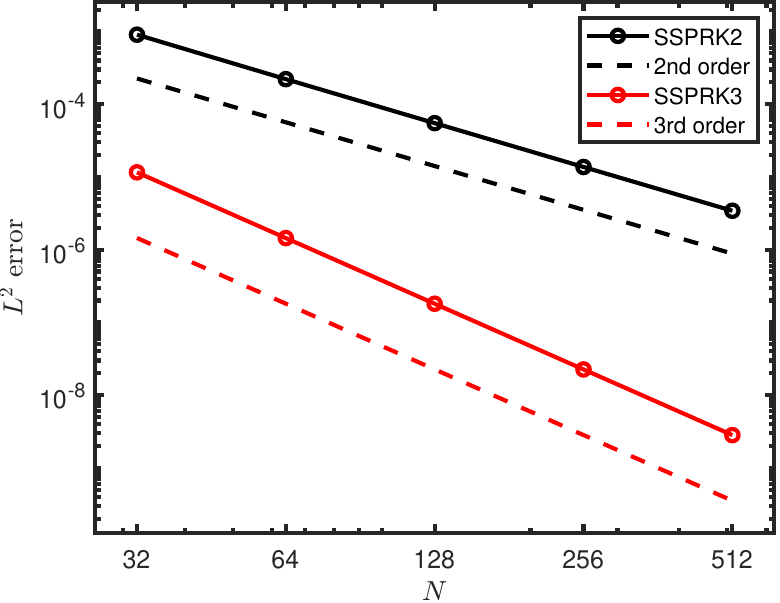}
    \caption{Convergence of the method on the streaming sine wave problem for linear (black) and quadratic (red) elements. The dashed lines are order reference lines.}
    \label{fig:SineWaveStreaming_OrderPlot}
\end{figure}

\subsection{Gaussian Diffusion 1D}

Here we present two 1D tests modeling the diffusion of radiation through a background moving with a constant speed.  
The first diffusion test follows \cite{laiu_etal_2025}, which was adopted from \cite{just_etal_2015}, while the second diffusion test follows \cite{radice_etal_2022}.  
In both tests we consider a purely scattering medium ($\chi=0$), and the two tests differ in the initial profile and velocity magnitude.  

\subsubsection{Test I}
In this test we set $\sigma=3.2\times 10^3$, and we let $\vect{v}=[v,0,0]^{\intercal}$ with $v=0.1$.  
The spatial domain is periodic, $\Omega_{x^1}=[0,3]$, and is discretized using $96$ elements.
The initial conditions are
\begin{align*}
    \cJ(x^1,0)
    &=\mathrm{exp}[-(x^1-x^1_0)^2/(4t_0\kappa_{\mathrm{D}})], \\
    \cH^1(x^1,0)
    &=-\kappa_{\mathrm{D}}\p_{x^1}\cJ(x^1,0),
\end{align*}
where $\kappa_{\mathrm{D}}=(3\sigma)^{-1}$, and we set $x^1_0=1$ and $t_0=5$.  
For the background velocity considered here, the evolution equation for the Lagrangian energy is approximately governed by the advection-diffusion equation
\begin{equation}
    \p_t\cJ+\p_{x^1}(\,\cJ v-\kappa_{\mathrm{D}}\p_{x^1}\cJ\,)=0,
\end{equation}
whose analytical solution is given by
\begin{equation}\label{eq:GD_approximate_solution}
    \cJ(x^1,t)=\sqrt{\frac{t_0}{t_0+t}}\,\mathrm{exp}\left(-\frac{((x^1-vt)-x^1_0)^2}{4(t_0+t)\kappa_{\mathrm{D}}}\right).
\end{equation}
We evolve the initial condition until $t=30$, at which point the peak in the solution has returned to its initial position.
The purpose of this test is to qualitatively verify that our scheme captures diffusion in a medium moving at a moderate speed.

In the left panel of Figure \ref{fig:GaussianDiffusion} the Lagrangian energy, $\cJ$, is plotted against $x^1-v\,t$ so that the peaks of the Gaussian pulses are all centered on $x^1=1$.  
Since $v=0.1$, we expect good agreement with the $\cO(v/c)$ results from \cite{laiu_etal_2025} and the approximate solution. 
Our numerical solution (open circles) agrees well with Eq.~\eqref{eq:GD_approximate_solution} (solid lines), demonstrating that our scheme captures the advection-diffusion behavior.

\subsubsection{Test II}
We also perform a second test with a purely scattering medium to simulate the advection of trapped radiation in a medium moving at a more relativistic speed.
We set $\sigma=1\times 10^3$, and let $\vect{v}=[v,0,0]^{\intercal}$, with $v=0.5$.  
The spatial domain is periodic, $\Omega_{x^1}=[-3,3]$, and is discretized using $100$ elements.
We follow the initialization in \cite{radice_etal_2022} by setting \begin{equation}
    \cJ(x^1,0)=\frac{3\cE(x^1,0)}{4W^2-1},
\end{equation}
where $\cE(x^1,0)=\exp[-9(x^1)^2]$.
Under the assumption of trapped radiation, we set $\cH^\mu(x^1,0)=0$.  
The solution is a slowly diffusing and translating pulse.  

In the right panel of Figure~\ref{fig:GaussianDiffusion} we plot the initial condition and the results at $t=1$ and $t=2$ for the Eulerian energy (dashed lines) along with a reference solution (solid red lines).  
The reference solution is generated by translating the solution to the diffusion equation,
\begin{equation}
    \p_t\cE = \kappa_{\mathrm{D}}\p^2_{x^1}\cE,
\end{equation}
along the fluid velocity.  
That is, for $t>0$,
\begin{equation}
    \cE(x^1,t)
    =
    \sqrt{\frac{\kappa_{\mathrm{D}}}{4\pi t}}
    \int_{-\infty}^\infty e^{-\frac{\kappa_{\mathrm{D}}((x^1-v\,t)-y)^2}{4t}}\cE(y,0)\,dy.
\end{equation}
Similar to \cite{radice_etal_2022}, we see good agreement between the reference solution and our numerical solutions.  

\begin{figure*}
     \centering
     \begin{subfigure}[b]{0.45\textwidth}
         \centering
         \includegraphics[width=8.6cm]{./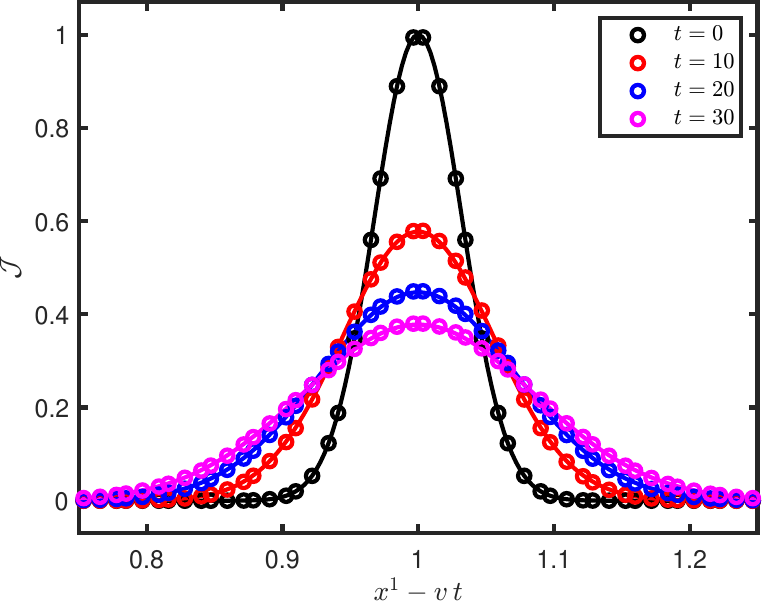}
     \end{subfigure}
     \hfill
     \begin{subfigure}[b]{0.45\textwidth}
         \centering
         \includegraphics[width=8.6cm]{./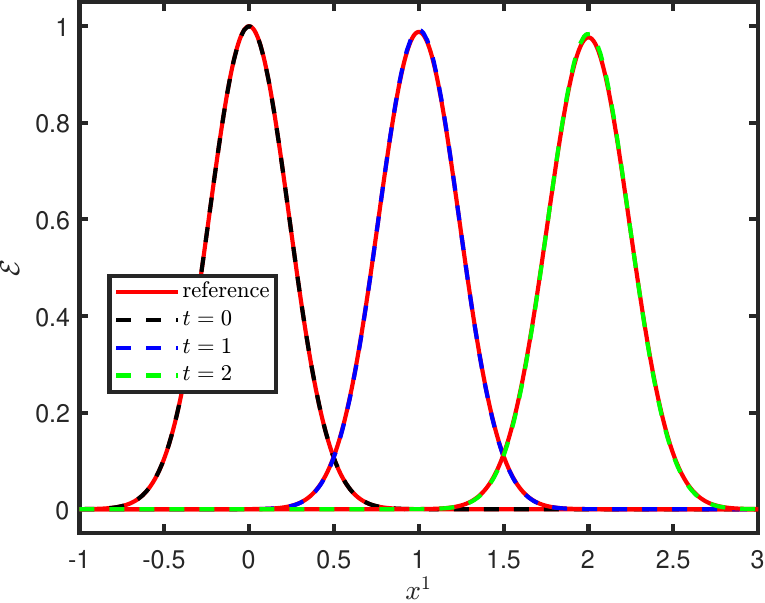}
     \end{subfigure}
    \caption{The left panel shows the results of the first diffusion test, comparing the numerical (open circles) and approximate analytic (solid lines) solutions at times $t=0,10,20,30$.  The solutions are shifted so the peaks are aligned at $x^1=1$.  The right panel shows the results of the second diffusion test, comparing numerical and reference solutions at times $t=0,1,2$.}
    \label{fig:GaussianDiffusion}
\end{figure*}

\subsection{Streaming Doppler Shift}
\label{sec:streamingDopplerShift}

In this test, we model the propagation of free-streaming radiation through a background with spatially varying velocity.
The test is adopted from \cite{vaytet_2011} (see also \cite{just_etal_2015,skinner_2019,laiu_etal_2025}), who considered methods valid to $\cO(v/c)$.  
As far as we know, this is the first time the test is performed in a special relativistic setting.  
Due to using comoving-frame momentum space coordinates for the two-moment model, the radiation energy spectra will be Doppler shifted in accordance with the background velocity.  
We consider a one-dimensional spatial domain, $\Omega_{x^1}=[0,10]$, and set the energy domain to $\Omega_\varepsilon=[0,50]$.
We set $\chi=\sigma=0$, and define the velocity as
$\vect{v}=(v,0,0)^{\intercal}$, where
\begin{equation}
    v(x^1)=
    \begin{cases}
        0, &x^1\in[0,2),\\
        v_{\mathrm{max}}\, \sin^2(\pi(x^1-2)/3), &x^1\in[2,\frac{7}{2}),\\
        v_{\mathrm{max}}, &x^1\in[\frac{7}{2},\frac{13}{2}),\\
        v_{\mathrm{max}}\,\sin^2(\pi(x^1-2)/3), &x^1\in[\frac{13}{2},8),\\
        0, &x^1\in[8,10].
    \end{cases}
\end{equation}
We let $v_{\mathrm{max}}\in\{0.0,0.1,0.3,0.6,0.9\}$ to test the model's ability to capture special relativistic effects as $v_{\mathrm{max}}$ increases.
The spatial and energy domains are discretized using $128$ and $32$ elements respectively.
The elements in the energy domain have geometrically progressing spacing with $\Delta\varepsilon_{i+1}/\Delta\varepsilon_i=1.1$.  The Lagrangian moments are initialized for all $(x^1,\varepsilon)\in \Omega_{x^1}\times \Omega_{\varepsilon}$ as
\[
\cJ=1\times10^{-40}
\quad
\text{and}
\quad
\cH^1=0.
\]
At the inner spatial boundary we impose an incoming forward-peaked radiation field with a Fermi--Dirac spectrum, setting
\begin{align*}
    \cJ(\varepsilon,x^1=0)
    &= \varepsilon/[\mathrm{exp}(\varepsilon/3-3)+1], \\
    \cH^1(\varepsilon,x^1=0)
    &= 0.999\times\cJ(\varepsilon,x^1=0).
\end{align*}
At the outer spatial boundary we impose an outflow condition.
For $t\gtrsim 10$ the solution reaches a steady state given by \cite{just_etal_2015}
\begin{equation}\label{eq:DopplerShift_steadystate}
    \cJ=
    \frac{s^2\varepsilon}{\mathrm{exp}(s\varepsilon/3-3)+1},
\end{equation}
where $s=\sqrt{(1+v)/(1-v)}$.  
The purpose of this test is to compare the numerical steady state solution with the special relativistic prediction given by Eq.~\eqref{eq:DopplerShift_steadystate} across a range of $v_{\max}$ values.  
Given the initial and boundary conditions, this is also a very challenging test with respect to maintaining moment realizability, as the solution evolves very close to the boundary of the realizable domain.  
To reach a steady state numerically, the test is run until $t=20$.  

In the left panel of Figure \ref{fig:DopplerShift_spectra} we plot spectra from the steady state solution at $x^1=5$, where the velocity is $v_{\mathrm{max}}$.  
(The spectrum for $v_{\mathrm{max}}=0$ is identical to the incoming spectrum at $x^{1}=0$.)  
As $v_{\mathrm{max}}$ increases, the energy spectra become increasingly Doppler shifted to lower energies relative to the incoming energy spectra for which $v(x^1)=0$.
The numerical spectra (dotted lines with open circles) match very well with the analytical steady state, even for large values of $v_{\mathrm{max}}$, indicating our scheme is able to capture special relativistic effects.  

In the right panel of Figure \ref{fig:DopplerShift_spectra} we plot the ${\mathrm{RMS}}$ energy, 
\begin{equation}
    \varepsilon_{\mathrm{RMS}}
    =
    \sqrt{
    \int_{\Omega_\varepsilon}\cJ\varepsilon^4\,d\varepsilon\, \bigg/ \int_{\Omega_\varepsilon}\cJ\varepsilon^2\,d\varepsilon
    },
    \label{eq:rmsEnergy}
\end{equation}
versus position for the steady state solutions.  
We expect the $\mathrm{RMS}$ energy to decrease with $x^{1}$ as the velocity increases (and vice versa).  
We observe this behavior in the numerical solutions (dotted lines with open circles), which match well with the analytical solutions (solid lines).

We have also run this test using Newton's method as the iterative solver in the moment conversion process.
We obtained similar results as in Figure \ref{fig:DopplerShift_spectra}, which uses Richardson iteration, and notably we did not observe any non-realizable iterates during the time-stepping process.  
However, we observed that the code running with Newton's method tended to be slower due to its increased computational costs.  

\begin{figure*}
     \centering
     \begin{subfigure}[b]{0.45\textwidth}
         \centering
         \includegraphics[width=8.6cm]{./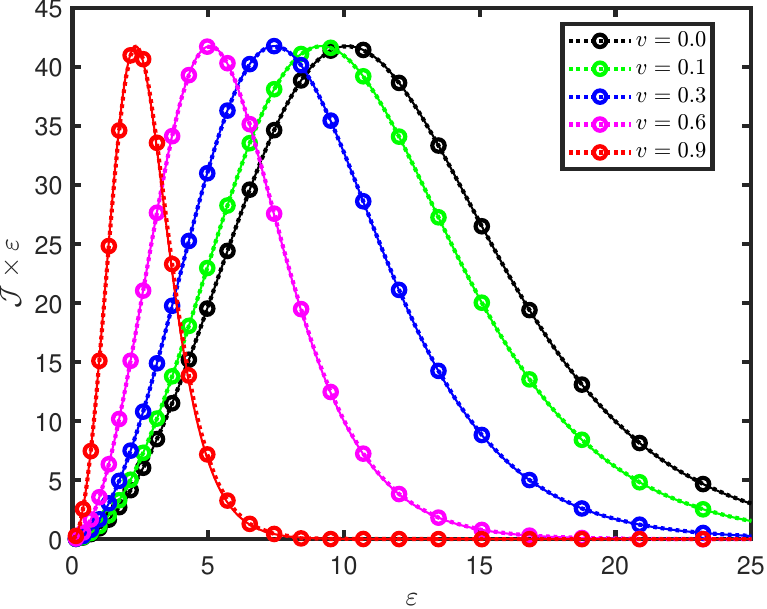}
     \end{subfigure}
     \hfill
     \begin{subfigure}[b]{0.45\textwidth}
         \centering
         \includegraphics[width=8.6cm]{./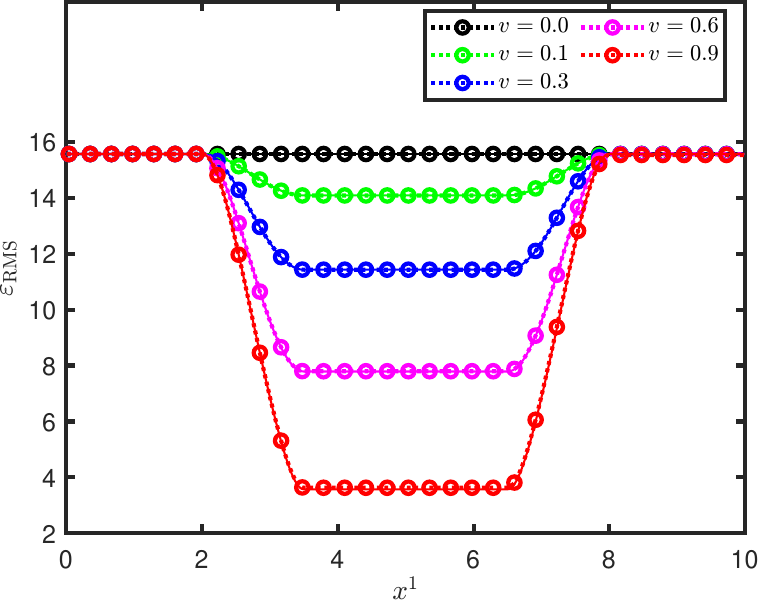}
     \end{subfigure}
    \caption{Steady state solutions ($t=20$) for the streaming Doppler shift problem with various $v_{\mathrm{max}}\in\{0.0,0.1,0.3,0.6,0.9\}$. 
    In the left panel, the spectra are plotted at $x^1=5$, with open circles representing cell centers in the $\varepsilon$-direction.
    In the right panel, the $\mathrm{RMS}$ energy defined in Eq.~\eqref{eq:rmsEnergy} is plotted versus $x^{1}$, with open circles representing every fourth cell center in the $x$-direction.
    In both panels, solid lines represent the analytical solution, while dotted lines with open circles represent the numerical solution.}
    \label{fig:DopplerShift_spectra}
\end{figure*}

\subsection{Transparent Shock}
\label{sec:transparentShock}

\response{
In this test, adapted from \cite{laiu_etal_2025}, we consider the propagation of radiation through a velocity jump for which the gradient varies.  
We consider both smooth and discontinuous transitions by letting $\vect{v}=[v,0,0]^{\intercal}$, where
\begin{equation}
\label{eq:TS_velocity}
    v(x^1)
    =\frac{1}{2}\,v_{\mathrm{max}}\times
    \big[\,1+\tanh\big(\,(x^1-1)/H\,\big)\,\big].
\end{equation}
We will vary the velocity magnitude with the parameter $v_{\mathrm{max}}$ and the gradient with the length scale $H$.
We set the opacities as $\chi=\sigma=0$, the spatial domain as $\Omega_{x^1}=[0,2]$, and the energy domain as $\Omega_{\varepsilon}=[0,300]$.
Unless stated otherwise, these domains are discretized using $80$ and $32$ elements, respectively.  
With this resolution in the energy dimension, elements have geometrically progressing spacing with $\Delta \varepsilon_{i+1} / \Delta \varepsilon_i = 1.119237083677839$.  
The moments are initially set for all $(\varepsilon,x^1)\in \Omega_{\varepsilon}\times \Omega_{x^1}$ as
\[
    \cJ=1\times 10^{-8}
    \quad
    \text{and}
    \quad
    \cH^1=(1-\delta)W\cJ,
\]
where $\delta = 10^{-8}$.  
That is, the initial moments are very close to the boundary of the realizable domain.  
We use the same boundary conditions as in the streaming Doppler shift test, 
except at the inner spatial boundary
\[
    \cH^1(\varepsilon,x^1 = 0) = (1-\delta)W(x^1 = 0)\cJ(\varepsilon,x^1 = 0).
\]
}
\response{
The moments are evolved until $t=3$ when an approximate steady state is reached, again given by Eq.~\eqref{eq:DopplerShift_steadystate}.
This test is challenging for two main reasons: (1) the solution evolves very close to the boundary of the realizable domain and (2) the velocity profile is essentially discontinuous for small values of $H$.
We will first present results under the setting outlined above with $v_{\mathrm{max}}\in\{-0.1,-0.5\}$ and $H\in\{3\times10^{-2}, 10^{-2}, 10^{-3}\}$. 
We will follow up these results with a discussion on the effect of spatial mesh refinement, varying the polynomial degree, and the use of a slope limiter to control nonphysical oscillations.  
}

\subsubsection{$v_{\mathrm{max}} = -0.1$}

\begin{figure*}
     \centering
     \begin{subfigure}[b]{0.45\textwidth}
         \centering
         \includegraphics[width=8.6cm]{./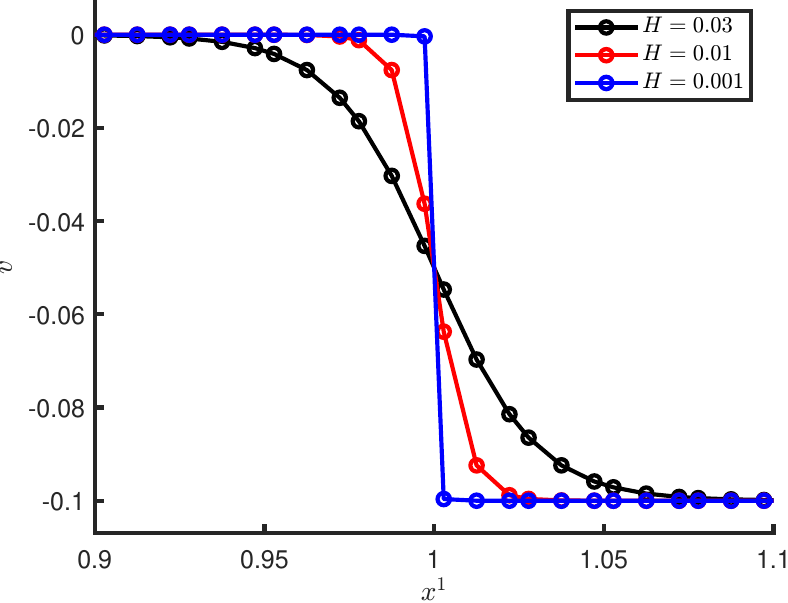}
     \end{subfigure}
     \hfill
     \begin{subfigure}[b]{0.45\textwidth}
         \centering
         \includegraphics[width=8.6cm]{./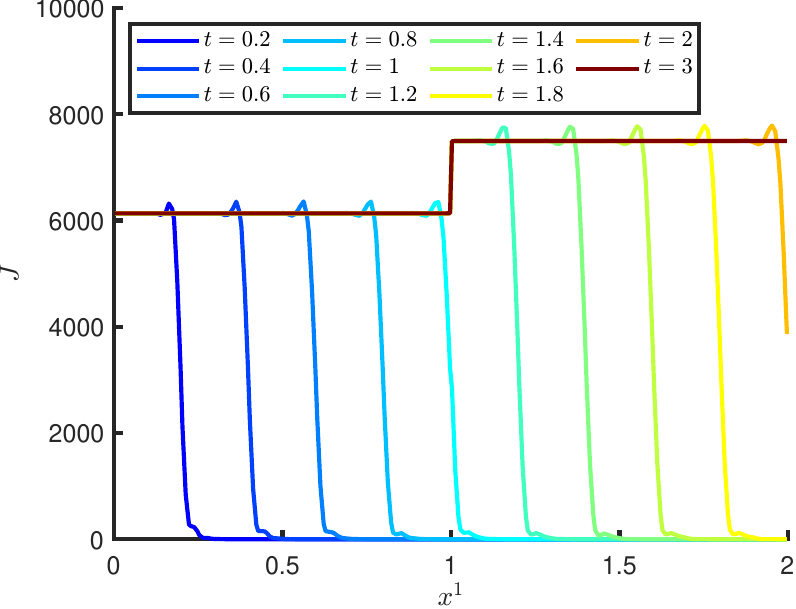}
     \end{subfigure}
        \caption{
        \response{
        Transparent Shock test for $v_{\mathrm{max}}=-0.1$.
        The left panel plots velocity profiles given by Eq.~\eqref{eq:TS_velocity} for various values of $H$.
        The right panel plots the time evolution of $J$, the Lagrangian energy density, for the case with $H = 10^{-3}$.
        }
        }
        \label{fig:TS_v01_velocity+TE}
\end{figure*}

\begin{figure*}
    \centering
    \begin{subfigure}[b]{0.45\textwidth}
        \centering
        \includegraphics[width=8.6cm]{./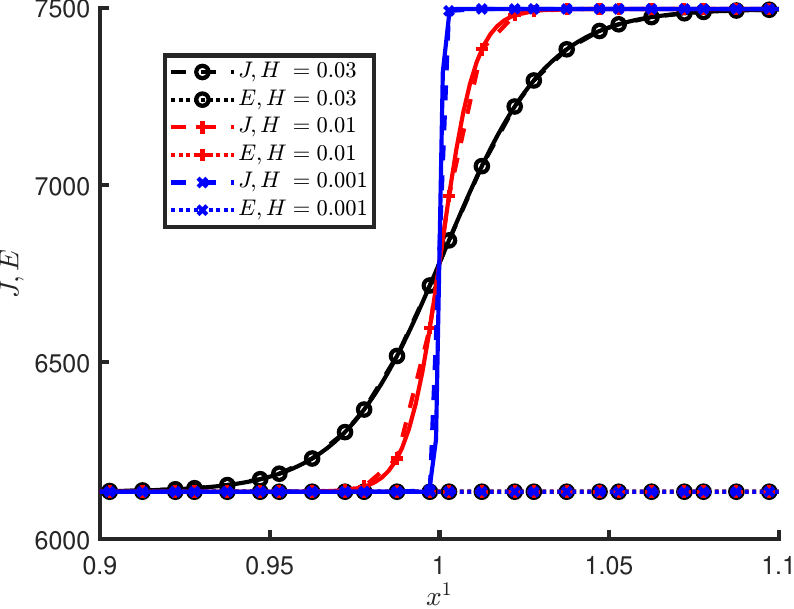}
    \end{subfigure}
    \hfill
    \begin{subfigure}[b]{0.45\textwidth}
        \centering
        \includegraphics[width=8.6cm]{./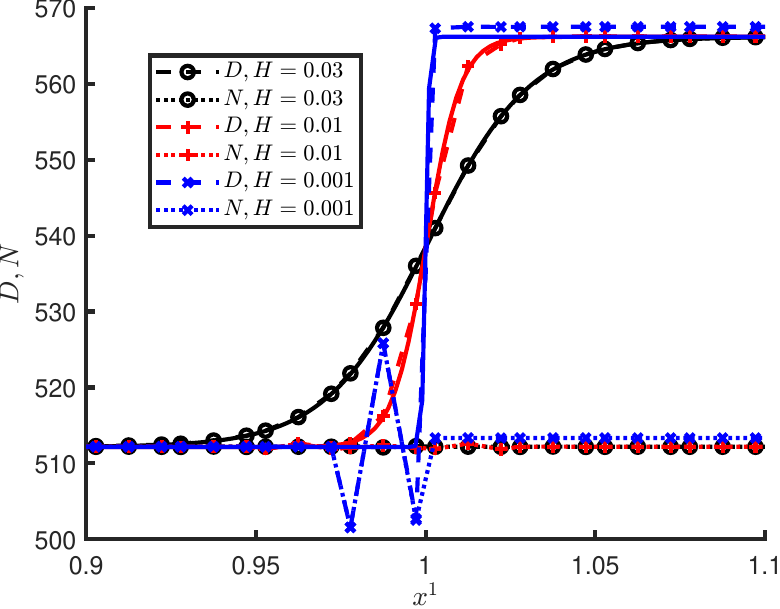}
    \end{subfigure}
    \caption{
    \response{
        Transparent Shock test for $v_{\mathrm{max}}=-0.1$.
        The left panel plots the Lagrangian and Eulerian energy densities, $J$ (dashed) and $E$ (dotted), respectively, for all $H$.
        The right panel plots the Lagrangian and Eulerian number densities, $D$ (dashed) and $N$ (dotted), respectively, for all $H$.  
        The analytic solutions for $J$ and $D$ are plotted with solid lines, with line colors matching the corresponding numerical solution.
        }
        }
        \label{fig:TS_v01}
\end{figure*}

\response{
In the left panel of Figure \ref{fig:TS_v01_velocity+TE}, we plot the velocity profiles for each length scale $H$.  
With the spatial grid $\Delta x^1 = 0.025$, we have $\Delta x / H \in \{5/6, 5/2, 25\}$ for the length scales we consider, which implies that the ``shock'' is resolved for $H=3\times 10^{-2}$, under-resolved for $H=10^{-2}$, and discontinuous for $H=10^{-3}$.
The right panel illustrates the time evolution of $J$ when $H = 10^{-3}$.
As time evolves, a front, represented by a discontinuity in $J$, propagates from left to right through the domain, and the solution settles to a steady state in its wake.  
The steady state solution at $t=3$ is constant before and after the shock, and the size of the jump across the shock increases with the magnitude of $v_{\max}$.  
As stated before, one difficulty in this test is capturing the change in $J$ when the velocity jump around $x^1 = 1$ is sharp.
}

\response{
Figure \ref{fig:TS_v01} shows the results of the test for $v_{\max}=-0.1$ with the length scales $H=\{3\times 10^{-2}, 10^{-2}, 10^{-3}\}$.
We plot Lagrangian and Eulerian energy and number densities versus position, 
\begin{equation}
    \{J,E,D,N\}=\int_{\Omega_\varepsilon}\{\cJ,\cE,\cJ/\varepsilon,\cN\}\varepsilon^2\,d\varepsilon.
    \label{eq:grayMoments}
\end{equation}
The Eulerian spectral number density, $\cN$, is given by Eq.~\eqref{eq:eulerianNumberDensity}.  
The left panel plots $J$ (dashed lines), the Lagrangian energy density, versus position for each length scale.  
For $v_{\max}=-0.1$ we observe good agreement between the numerical (dashed) and analytic (solid) solutions for $J$ for all length scales $H$.  
To quantify errors, we define the error in a quantity $X\in\{J,D\}$ relative to the analytic solution $X_{\rm A}$ as
\begin{equation}
    ||\delta X||_{2}^{\pm}
    =\sqrt{\int_{\Omega_{x}^{\pm}}(X-X_{\rm A})^{2}dx/\int_{\Omega_{x}^{\pm}} X_{\rm A}^{2}dx},
\end{equation}
where the spatial domains to the left and right of the shock are $\Omega_{x}^{-}=[0,1]$ and $\Omega_{x}^{+}=[1,2]$, respectively.  
For the steady state Lagrangian energy density, we find $||\delta J||_{2}^{-}=\{\,2\times10^{-7},\,3\times10^{-8},\,7\times10^{-8}\,\}$ and $||\delta J||_{2}^{+}=\{\,6\times10^{-9},\,3\times10^{-9},\,4\times10^{-8}\,\}$, for $H=\{3\times10^{-2},\,10^{-2},\,10^{-3}\,\}$, respectively.  }

\response{
In the same panel we also plot the Eulerian energy density $E$ (dotted lines).
In a steady state, the Eulerian energy density should remain constant across the spatial domain --- including across the velocity jump --- in this test.  
For $v_{\max}=-0.1$, $E$ is constant to machine precision before the shock, where $\vect{v}=0$.  
We observe a relative jump in $E$ on the order of $10^{-8}$ across the shock, and $E$ remains relatively constant from the shock to the outer boundary.  
For $H = 10^{-3}$, $E$ is noticeably oscillatory after the shock, whereas for the other two values of $H$, $E$ is relatively smooth.  
We note that no slope limiter is applied in the numerical methods to generate these results.
These oscillations can be removed or reduced by increasing the number of spatial elements to resolve the shock or using a slope limiter (see Sections~\ref{sec:TS.SpatialMeshRefinement} and \ref{sec:TS.SlopeLimiter}, respectively).  
This demonstrates that our method captures Doppler shifts correctly --- even when velocity gradients are large.  
However, our results illustrate challenges with using comoving-frame momentum coordinates, which requires approximating four-velocity gradients---see Eq.~\eqref{eq:fourvelocitygradient}---that can become inaccurate with finite spatial resolution, as we will investigate further in this section.  
We mention that we tried replacing approximate derivatives with exact derivatives, but this did not result in any noticeable improvements.}  

\response{
We solve conservation equations for the Eulerian-frame energy and momentum densities, and our proposed scheme conserves these quantities to machine precision, as illustrated in Figure~\ref{fig:TS_v01_EnergyChangeVsTime}, which shows the relative change in the Eulerian-frame energy versus time (normalized to the energy in the computational domain at $t=3$).  
Similar to conservation plots in \cite{laiu_etal_2025}, energy fluxes through the boundaries of the computational domain are included in the energy balance accounting.  
This is a nontrivial result for this challenging test, where the solution evolves very close to the boundary of the realizable domain.  
(Flux factors, both $\cF/\cE$ and $\cH/\cJ$, are found to be $1-\cO(10^{-8})$ across the spatial domain.)  
By construction, cell-averaged moments remain realizable at all times, but the realizability-enforcing limiter is frequently triggered to recover pointwise realizability of moments in a way that ensures our scheme is simultaneously conservative \emph{and} realizability-preserving.}  

\response{
The right panel in Figure~\ref{fig:TS_v01} is similar to the left panel, but plots the Lagrangian and Eulerian number densities, $D$ (dashed) and $N$ (dotted), respectively. 
As discussed at the end of Section~\ref{sec:model.EvolutionEquations}, our method does not evolve the number density directly, and is not designed to conserve the Eulerian-frame particle number.  
Yet, it is interesting to see that, for $H=3\times10^{-2}$ and $H=10^{-2}$, $D$ matches the analytic steady state solution very well, and that $N$ is relatively constant across the domain.  
In these two cases, $||\delta D||_{2}^{\pm}=7\times10^{-5}$ and $10^{-4}$, respectively, and the relative change in $N$ across the domain is on the order of $10^{-4}$ away from the shock, but spikes to $\sim10^{-3}$ in the shock.  
The method does not handle the case with $H=10^{-3}$ as well as the other two, as $D$ is slightly larger than the analytic solution after the shock ($||\delta D||_{2}^{-}=3\times10^{-3}$ and $||\delta D||_{2}^{+}=2\times10^{-3}$), there are visible oscillations in $D$ and $N$ just before the shock with relative amplitude of $\sim2\times10^{-2}$, and there is a slight relative increase in $N$ of order $10^{-3}$ after the shock.  
To improve these results, we suspect that additional care in the discretization of the two-moment model in Eq.~\eqref{eq:momentEquations} will be necessary to ensure better consistency with the number equation in Eq~\eqref{eq:eulerianNumberEquation} --- e.g., as discussed in Appendix~D in \cite{cardall_etal_2013a}.  
}

\begin{figure}
    \centering
    \includegraphics[width=8.6cm]{./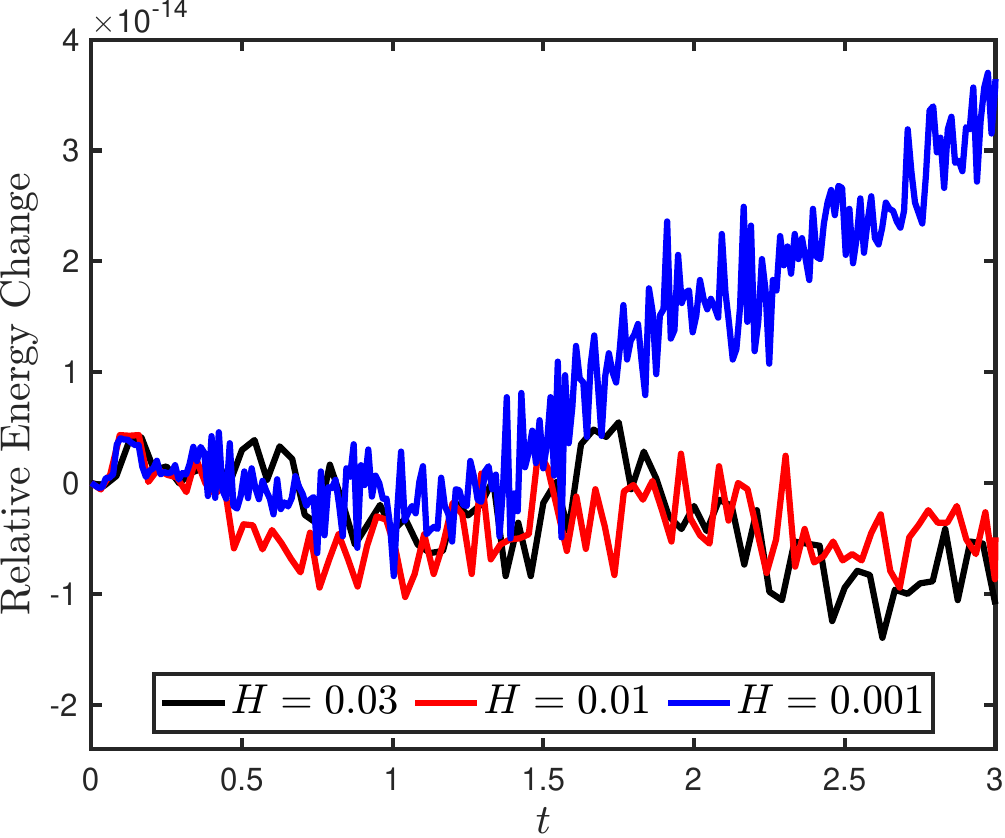}
    \caption{
    \response{
    Relative change in the total Eulerian frame energy in the computational domain versus time for the case with $v_{\max}=-0.1$ and various values of $H$.  
    }
    }
    \label{fig:TS_v01_EnergyChangeVsTime}
\end{figure}

\subsubsection{$v_{\mathrm{max}} = -0.5$}

\begin{figure*}
    \centering
    \begin{subfigure}[b]{0.45\textwidth}
        \centering
        \includegraphics[width=8.6cm]{./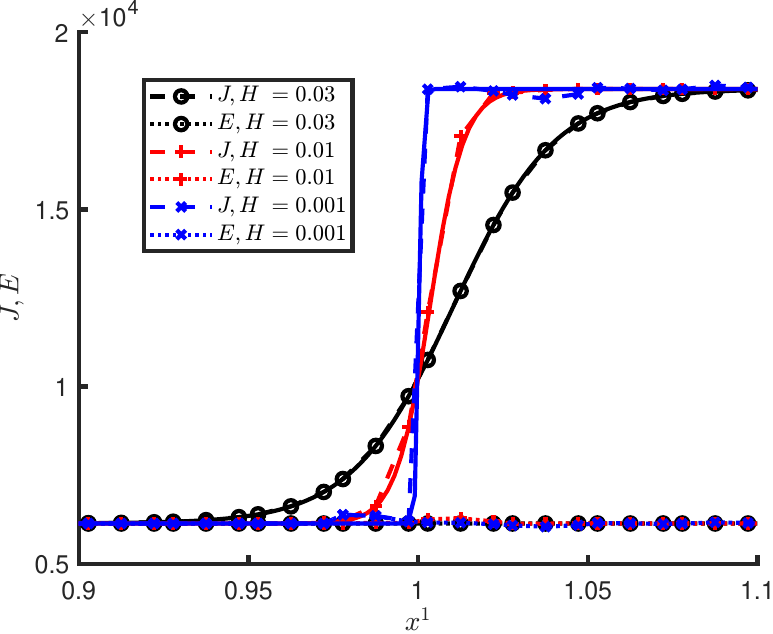}
    \end{subfigure}
    \hfill
    \begin{subfigure}[b]{0.45\textwidth}
        \centering
        \includegraphics[width=8.6cm]{./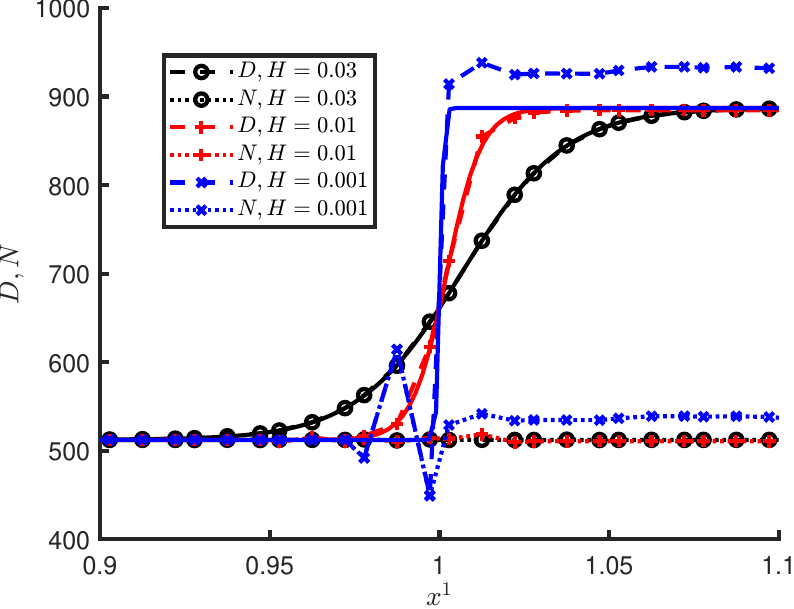}
    \end{subfigure}
    \caption{
    \response{
    Transparent Shock test for $v_{\mathrm{max}}=-0.5$.  The plotted quantities correspond to the ones in Figure~\ref{fig:TS_v01}.
    }
    }
    \label{fig:TS_v05}
\end{figure*}

\response{
Figure~\ref{fig:TS_v05} shows the same quantities as Figure~\ref{fig:TS_v01}, but for the more relativistic case with $v_{\mathrm{max}}=-0.5$.
The larger velocity magnitude makes the test even more challenging.  
Despite this challenge, our proposed method is able to capture the Lagrangian energy density across all length scales, however $J$ becomes visibly oscillatory after the shock for $H=10^{-3}$.  
(We also observe smaller oscillations for $H=10^{-2}$.)
Moreover, $||\delta J||_{2}^{-}=\{\,5\times 10^{-7},\,3\times10^{-4},\,5\times10^{-3}\,\}$ and $||\delta J||_{2}^{+}=\{\,2\times10^{-5},\,2\times10^{-3},\,5\times10^{-3}\,\}$, for $H=\{\,3\times10^{-2},\,10^{-2},\,10^{-3}\,\}$.  
The larger velocity jump also impacts the evolution of $E$ after the shock.  
The case with $H=3\times10^{-2}$ is the only one to maintain relative changes in $E$ below $10^{-6}$ away from the shock, spiking to $\sim10^{-4}$ in the shock.  
The relative jump in $E$ across the shock increases as $H$ decreases, and $E$ exhibits oscillatory behavior after the shock.  
A trend of increasing error with decreasing $H$ is also reflected in the results for $D$ and $N$.  
For $H=10^{-3}$, $D$ is visibly different from the analytic solution after the shock, there are oscillations in $D$ and $N$ just before the shock, and there is a noticeable relative jump in $N$ of about $5\%$ across the shock.  
Specifically, we find $||\delta D||_{2}^{-}=\{\,2\times10^{-4},\,8\times10^{-4},\,2\times10^{-2}\,\}$ and $||\delta D||_{2}^{+}=\{\,8\times10^{-5},\,3\times10^{-3},\,5\times10^{-2}\,\}$, for $H=\{\,3\times10^{-2},10^{-2},\,10^{-3}\,\}$.  
}

\response{
The sustained oscillations in the numerical solution, seen after the initial transient (illustrated in Figure~\ref{fig:TS_v01_velocity+TE}) has propagated through the domain and the solution has settled into a quasi-steady state, are in part caused by intermittent triggering of the realizability-enforcing limiter \cite{laiu_etal_2025}.  
The inner boundary condition is set to ensure that the steady state solution is in the free-streaming limit (flux factor is $1-10^{-8}$), which facilitates comparison with the analytic solution in Eq.~\eqref{eq:DopplerShift_steadystate}.  
For $t=3$, we find that the energy-averaged flux factor, computed with either Eulerian or Lagrangian moments, remains very close to unity across the computational domain; specifically, $\cF/\cE,\cH/\cJ=1-\cO(10^{-8})$.  
Despite evolution close to the boundary of the realizable domain and frequent triggering of the realizability-enforcing limiter, our proposed method maintains its conservation properties in this more relativistic case, as can be seen in the plot of the relative change in the Eulerian-frame energy versus time in Figure~\ref{fig:TS_v05_EnergyChangeVsTime}.  
At the end of the simulation, the relative change in total energy for the case with $v_{\max}=-0.5$ and $H=10^{-3}$ (blue line in Figure~\ref{fig:TS_v05_EnergyChangeVsTime}) is almost a factor of 40 larger than the model with $v_{\max}=-0.1$ and $H=10^{-3}$ (blue line in Figure~\ref{fig:TS_v01_EnergyChangeVsTime}).  
The reason for this is the accumulation of errors as a result of the smaller time step, due to the larger velocity magnitude and the time step's dependence on the velocity gradient through $a^{\varepsilon}$ (recall Eq.~\eqref{eq:realizable_timestep}), and the corresponding increased number of time steps needed for the more relativistic models to complete.  
For the case with $v_{\max}= - 0.1$, we find $\dt=\{\,10^{-3},\,8\times10^{-4},\,3\times10^{-4}\,\}$ for $H=\{\,3\times10^{-2},\,10^{-2},\,10^{-3}\,\}$.  
For the more relativistic model with $v_{\max}=-0.5$, we find $\dt=\{\,1.3\times10^{-4},\,4.5\times10^{-5},\,1.1\times10^{-5}\,\}$ for $H=\{\,3\times10^{-2},\,10^{-2},\,10^{-3}\,\}$.  
We note that the time step restriction is a sufficient---not necessary---condition to maintain realizability.  
In practical applications, a larger time step may be taken, but this will be problem dependent.  
However, due to the possibly severe time step restriction for maintaining realizability with explicit integration of energy derivative terms in the moment equations, implicit integration of these terms, e.g., as explored in \cite{Kuroda_2016}, may be a fruitful avenue for future investigation.}

\begin{figure}
    \centering
    \includegraphics[width=8.6cm]{./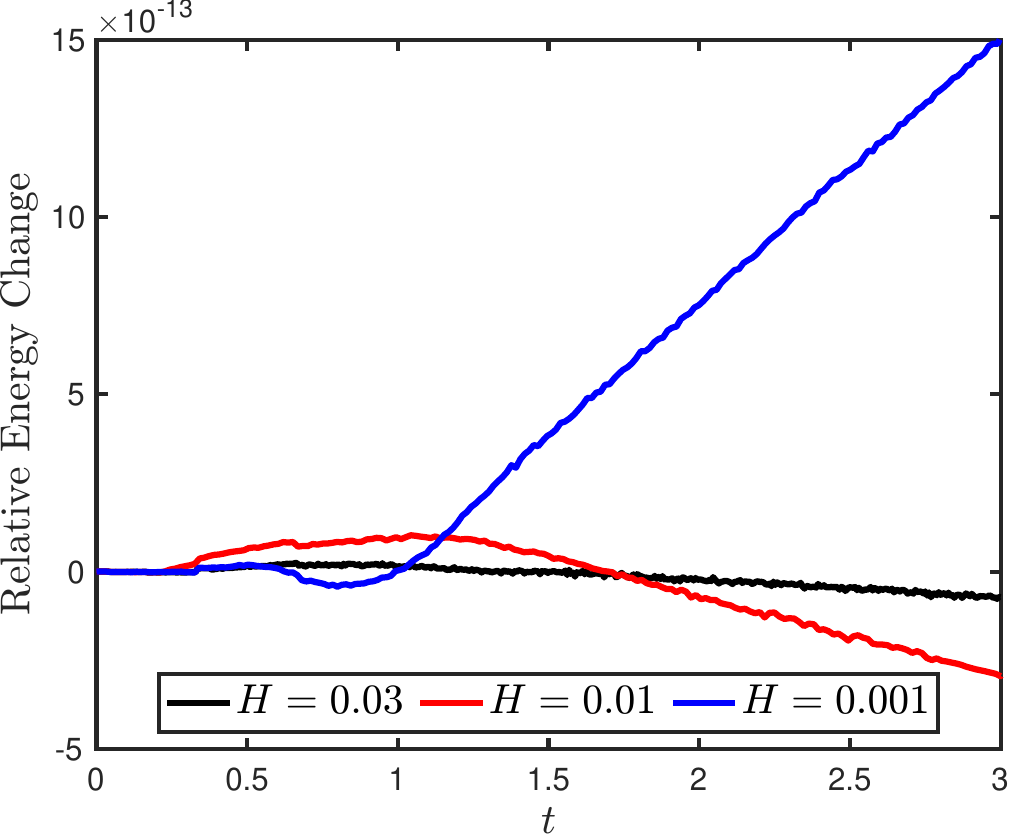}
    \caption{
    \response{
    Relative change in the total Eulerian frame energy versus time for the case with $v_{\max}=-0.5$ and various values of $H$.  
    }
    }
    \label{fig:TS_v05_EnergyChangeVsTime}
\end{figure}

\subsubsection{Spatial Mesh Refinement}
\label{sec:TS.SpatialMeshRefinement}
\response{
The results obtained for the under-resolved ($H = 10^{-2}$) and discontinuous ($H = 10^{-3}$) shock cases with $v_{\max} = -0.5$ exhibit features associated with the sharp velocity gradient around $x^{1}=1$.
Notably, the steady state solutions for the Lagrangian energy density $J$ are visibly oscillatory after the shock.  
The same can be said for the Lagrangian number density, which is also visibly offset from the analytic solution when $H=10^{-3}$.  
}

\begin{figure*}
    \centering
    \begin{subfigure}[b]{0.45\textwidth}
        \centering
        \includegraphics[width=8.6cm]{./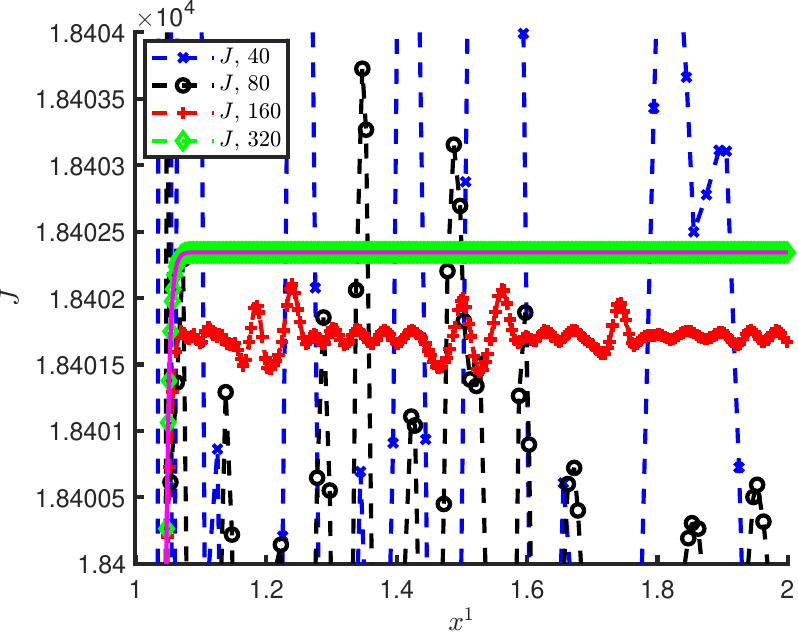}
    \end{subfigure}
    \hfill
    \begin{subfigure}[b]{0.45\textwidth}
        \centering
        \includegraphics[width=8.6cm]{./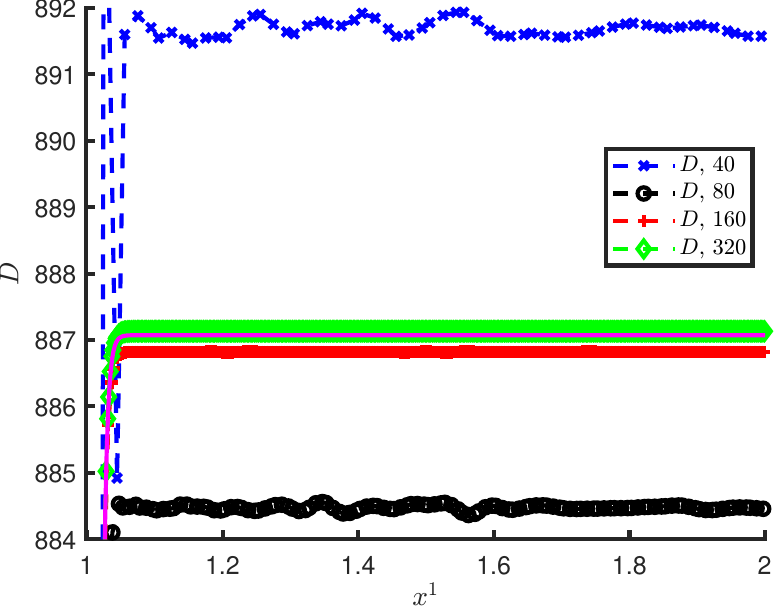}
    \end{subfigure}
    \caption{
    \response{
        Spatial mesh refinement results for $v_{\mathrm{max}} = -0.5$ and $H = 10^{-2}$ using quadratic polynomials.
        The left and right panels illustrate the effect of spatial mesh refinement on $J$ and $D$, respectively.
        The number of spatial elements are $40$ (blue), $80$ (black), $160$ (red), and $320$ (green).
        The number of energy elements is $32$ for each spatial mesh.  
        The analytic solution is plotted with solid magenta line in both panels.  
        }
        }
        \label{fig:TS_v05_meshrefinement}
\end{figure*}

\response{
Here we study the effects of spatial mesh refinement for the case with $v_{\mathrm{max}} = -0.5$ and $H = 10^{-2}$.   
Beginning with 40 spatial elements, we refine the mesh until $\Delta x^1/H < 1$ to resolve the shock.  
We double the number of spatial elements, going up to $320$, so that $\Delta x^1/H \in \{5, 5/2, 5/4, 5/8\}$.  
Figure \ref{fig:TS_v05_meshrefinement}, which plots $J$ (left panel) and $D$ (right panel) versus position, clearly illustrates the effect spatial mesh refinement has on improving the quality of both $J$ and $D$ after the shock.
As the shock becomes better resolved, the numerical solution for both $J$ and $D$ (dashed lines with markers) approaches the analytic solution (solid magenta lines) and oscillations are significantly reduced.  
With $\Delta x^1/H = 5/8$ (green curves), the shock is resolved and there are no visible oscillations in $J$ or $D$.
Specifically, we find $||\delta J||_{2}^{+}=\{\,10^{-3},\,2\times10^{-3},\,3\times10^{-4},\,10^{-8}\,\}$ and $||\delta D||_{2}^{+}=\{\,6\times10^{-3},\,3\times10^{-3},\,3\times10^{-4},\,7\times10^{-5}\,\}$ for $\{40,\,80,\,160,\,320\,\}$ spatial elements, respectively.
}

\response{
Based on these spatial mesh refinement results, we believe that sharp gradients in the fluid four-velocity are a dominant source of errors in numerical methods for the conservative spectral two-moment model based on comoving-frame momentum coordinates.  
Refining the spatial mesh to resolve the shock when $H=10^{-3}$ is computationally expensive as it would require $2560$ spatial elements.  
In practice, it may be necessary to apply smoothing to the four-velocity supplied to the phase-space advection solver to avoid numerical artifacts.  
}

\subsubsection{Polynomial Degree}
\label{sec:TS.PolynomialDegree}
\response{
In this section, we investigate the effect of the polynomial degree, $k$, on the solution in the most challenging case of $v_{\max} = -0.5$ and $H = 10^{-3}$.  
We keep the total number of degrees of freedom the same for each polynomial degree.
Thus, for quadratic polynomials ($k=2$) we use $32$ energy elements and $60$ spatial elements; for linear polynomials ($k=1$), $48$ energy elements and $90$ spatial elements; and for constant polynomials ($k=0$), $96$ energy elements and $180$ spatial elements.  
(The case with $k=0$ is identical to a first-order finite-volume method.)  
When changing the number of energy elements, we adjust the geometric progression of the element widths to maintain a similar distribution of points.  
For $k=0$, we use $\Delta\varepsilon_{i+1}/\Delta\varepsilon_{i}=1.038647428867211$, while for $k=1$, we use $\Delta\varepsilon_{i+1}/\Delta\varepsilon_{i}=1.019368113873667$.  
We use SSPRK3 time stepping for all values of $k$.  
}

\response{
Figure \ref{fig:TS_v05_PolynomialComparison} presents the results of this investigation.  
The left panel plots $J$ versus position around the shock.
Numerical solutions obtained with constant, linear, and quadratic polynomials capture the features of the analytic solution well (modulo oscillations for $k>0$).  
The constant polynomial solution is the only case that remains nonoscillatory.  
The right panel shows $D$ versus position around the shock.  
In addition to displaying oscillatory behavior for $k>0$, results for all values of $k$ indicate larger errors in the jump in $D$ across the shock.  
Specifically, $||\delta J||_{2}^{+}=\{\,6\times10^{-3},\,2\times 10^{-3},\,5\times 10^{-3}\,\}$ and $||\delta D||_{2}^{+}=\{\,4\times10^{-2},\,6\times10^{-2},\,5\times10^{-2}\}$, for polynomial degree $k=\{\,0,\,1,\,2\,\}$.  
In future work, we believe it would be worthwhile to investigate structure-preserving methods for the two-moment model that maintain consistency with the number conservation equation --- with coarse spatial meshes --- to see if results for this test improve.
}

\begin{figure*}
    \centering
    \begin{subfigure}[b]{0.45\textwidth}
        \centering
        \includegraphics[width=8.6cm]{./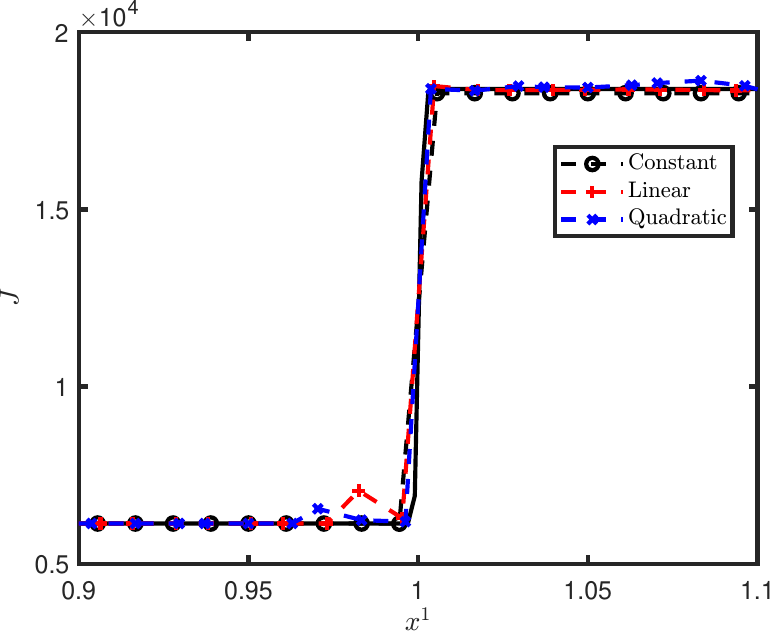}
    \end{subfigure}
    \hfill
    \begin{subfigure}[b]{0.45\textwidth}
        \centering
        \includegraphics[width=8.6cm]{./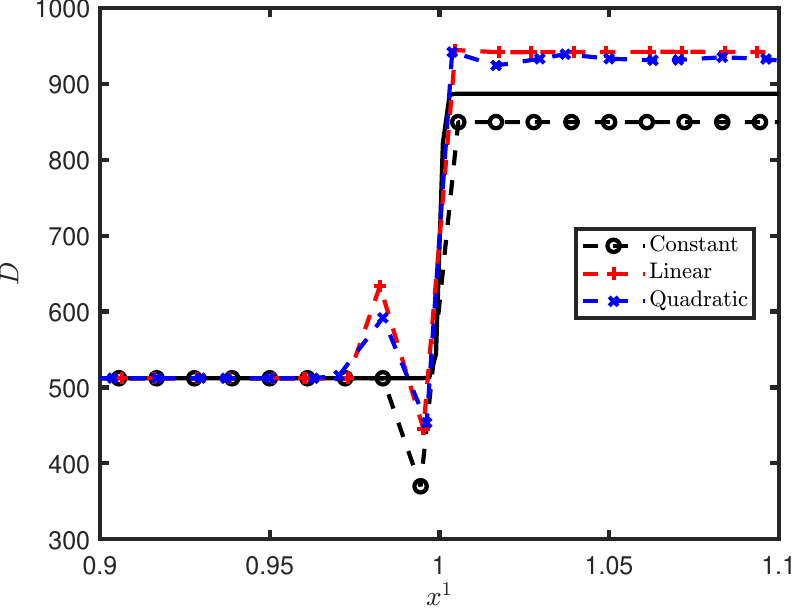}
    \end{subfigure}
    \caption{
        \response{The effect of varying the polynomial degree is illustrated for $J$ (left panel) and $D$ (right panel) in the most challenging case considered for the Transparent Shock test: $v_{\max} = -0.5$ and $H = 10^{-3}$.
        The solid black line is the analytic solution, while the dashed lines are the numerical solutions for constant (black), linear (red), and quadratic (blue) polynomials.
        }
        }
        \label{fig:TS_v05_PolynomialComparison}
\end{figure*}

\subsubsection{Slope Limiter}
\label{sec:TS.SlopeLimiter}
\response{
The oscillations present in the numerical solutions after the shock for $k>0$ can be damped using a slope limiter.  
In Figure \ref{fig:TS_v05_TVD} we show the effects of using a TVD slope limiter in the challenging case with $v_{\max} = -0.5$ and $H = 10^{-3}$, using quadratic polynomials on a mesh with $32$ energy elements and $60$ spatial elements.  
The minmod-type TVD limiter we use is applied to the polynomial representation in position space, and is described in Section~3.3 of \cite{Pochik_2021}.  
Specifically, for $\beta_{\mathrm{TVD}}\in[1,2]$, we set $\beta_{\mathrm{TVD}} = 1.25$, and the limiter is applied component-wise to the conserved variables $\bU$, as opposed to characteristic variables.
As can be seen in the plot of $J$ versus position, the limiter damps the nonphysical oscillations present in the numerical solution.
The damping of oscillations carries over to $E$, $D$, and $N$.  
For these two models, we find $||\delta J||_{2}^{+}=6\times10^{-3}$ (without limiter) and $||\delta J||_{2}^{+}=8\times10^{-4}$ (with limiter), and $||\delta D||_{2}^{+}=5\times10^{-2}$ (with and without limiter).  
}

\begin{figure}
    \centering
    \includegraphics[width=8.6cm]{./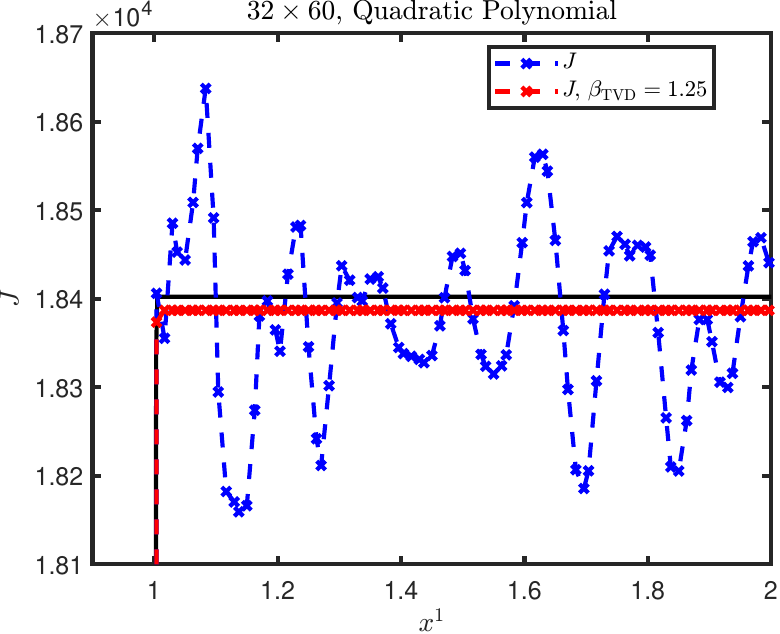}
    \caption{
    \response{
    Plot of $J$ versus position, illustrating the effect of a slope limiter for the case with $v_{\max} = -0.5$ and $H = 10^{-3}$, using quadratic polynomials.
    The blue line is without slope limiter, while the red line is obtained with the minmod-type TVD slope limiter described in \cite{Pochik_2021}, using $\beta_{\mathrm{TVD}} = 1.25$.  
    The solid black line is the analytical solution.  
    }
    }
    \label{fig:TS_v05_TVD}
\end{figure}

\subsection{Shadow Casting}

In this 2D test, we consider a material at rest and set up a source region of radiation and an absorbing region.  
The goal of the test is to demonstrate that our code can capture the radiation shadow cast by the absorbing region as the radiation propagates from the source region.  
Similar setups are frequently used to demonstrate the advantage of two-moment models over flux-limited diffusion models in capturing shadows (e.g., \cite{just_etal_2015,Kuroda_2016,Anninos_2020,radice_etal_2022}).  
Following \cite{just_etal_2015}, we set up a Cartesian domain of $\Omega_{x^1}\times \Omega_{x^2}=[0,15]\times[-5,5]$, discretized into $300\times200$ elements.
We use outflow boundary conditions in each direction.
The fluid velocity is set to $\vect{v}=0$.
The source region, $\cS$, is defined as the circle of radius $r_{\cS}=1.5$ centered at the point $(x^1,x^2)=(3,0)$.
The absorbing region, $\mathcal{A}$, is defined as the circle of radius $r_{\mathcal{A}}=2$ centered at the point $(x^1,x^2)=(11,0)$.
We set $\sigma=0$, while $\chi$ and $\mathcal{J}_0$ are determined as follows
\begin{align}
    \chi(\vect{x})
    =&
    \begin{cases}
        10\exp[-(4|\vect{x}-\vect{x}_{\cS}|/r_{\cS})^2], &\vect{x}\in\cS \\
        10, &\vect{x}\in\cA \\
        0, &\text{elsewhere},
    \end{cases}\\
    \nonumber\\
    \cJ_{\Equilibrium}(\vect{x})
    =&
    \begin{cases}
        10^{-1}, &\vect{x}\in\cS \\
        0, &\text{elsewhere}.
    \end{cases}
\end{align}
Throughout this section, $\vect{x}=(x^1,x^2)$.
We initialize the Lagrangian moments as
\[
    \cJ(\vect{x},0)=10^{-10},
    \quad
    \cH^i(\vect{x},0)=0,
    \quad
    \text{for all}
    \,\,\,
    \vect{x}\in \Omega_{x^1}\times \Omega_{x^2}.
\]
We run the test until $t=15$, when an approximate steady state has been established.

In Figure \ref{fig:Shadow_Casting} we plot a contour map of the isotropic luminosity emitted by the source region,
\begin{equation}
    L=2\pi|\vect{x}-\vect{x}_{\cS}|\sqrt{\cF_\mu\cF^\mu}.
\end{equation}
Similar to \cite{just_etal_2015}, the plots depict the solution at three different time values to illustrate the emission of radiation from the source and the formation of the shadow, demonstrating our code is able to capture radiation shadows.  

\begin{figure}
    \centering
    \begin{subfigure}[b]{0.45\textwidth}
         \centering
         \includegraphics[width=8.6cm]{./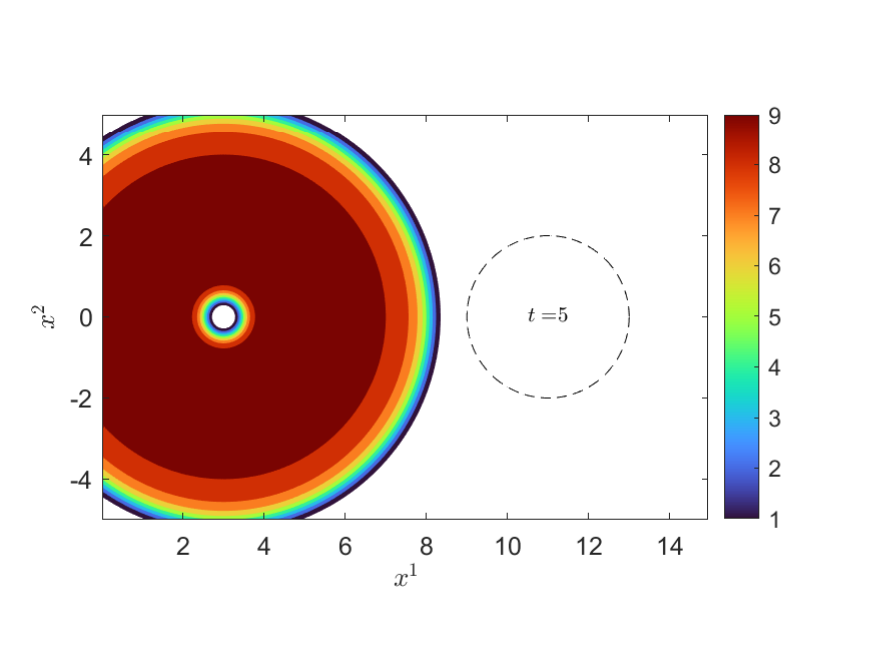}
     \end{subfigure}
     \hfill
     \begin{subfigure}[b]{0.45\textwidth}
         \centering
         \includegraphics[width=8.6cm]{./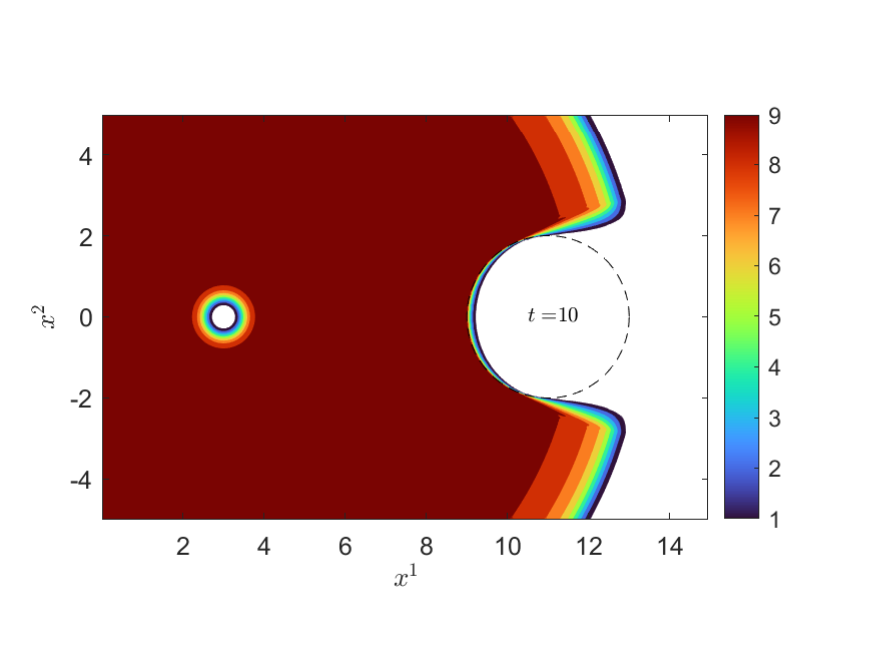}
     \end{subfigure}
     \hfill
     \begin{subfigure}[b]{0.45\textwidth}
         \centering
         \includegraphics[width=8.6cm]{./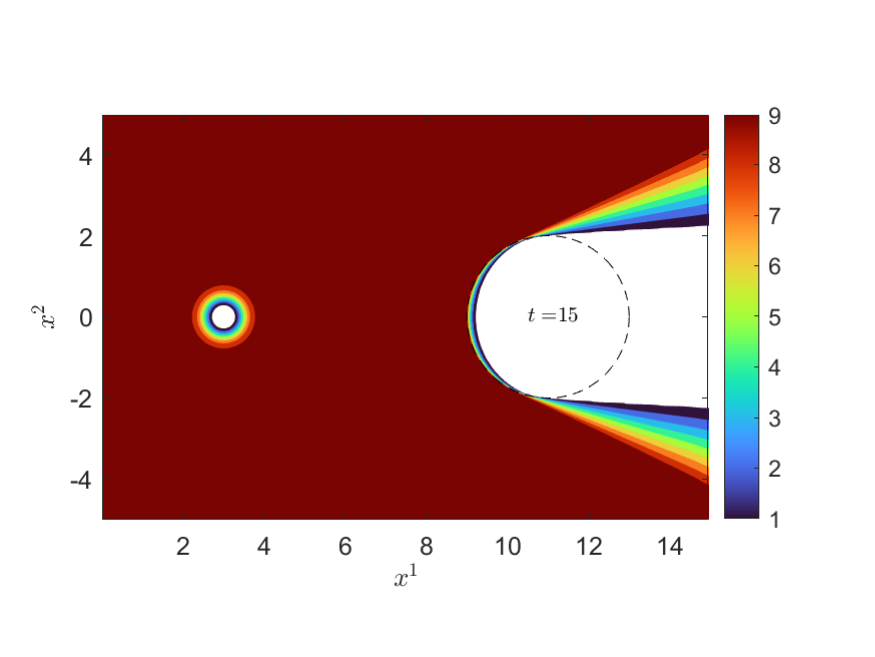}
     \end{subfigure}
    \caption{Contour plots of $10^2\times L$, where $L=2\pi|\vect{x}-\vect{x}_S||\cF|$, for the Shadow Casting problem at various times.  
    The dashed circle centered at $(x^{1},x^{2})=(11,0)$ indicates the boundary of the absorbing region.}
    \label{fig:Shadow_Casting}
\end{figure}

\subsection{Transparent Vortex}

In this test, we consider a two-dimensional spatial domain, $\Omega_{x^1}\times \Omega_{x^2}=[-5,5]\times[-5,5]$, and model the propagation of free-streaming radiation through a spatially varying velocity field mimicking a vortex.  
For the setup we follow \cite{laiu_etal_2025}, which is an adaptation of the spherical-polar test in \cite{just_etal_2015} to Cartesian coordinates.  
We set the opacities as $\chi=\sigma=0$, and let the velocity field $\vect{v}=[v^1,v^2,0]^{\intercal}$ be given by
\begin{align}
    v^1(x^1,x^2)
    &=-v_{\mathrm{max}}\,x^2\,\exp\big[\,(1-|\vect{x}|^2)/2\,\big], \\
    v^2(x^1,x^2)
    &=\hspace{8pt}v_{\mathrm{max}}\,x^1\,\exp\big[\,(1-|\vect{x}|^2)/2\,\big],
\end{align}
where $|\vect{x}|=\sqrt{(x^1)^2+(x^2)^2}$.  
The energy domain is given by $\Omega_{\varepsilon}=[0,300]$.
In the energy dimension, elements have geometrically progressing spacing with $\Delta \varepsilon_{i+1} / \Delta \varepsilon_i = 1.119237083677839$.
We discretize the spatial and energy domains with $48\times 48$ and $32$ elements, respectively.  
The moments are initialized as 
\response{
$\cJ=1\times 10^{-8}$, $\cH^1=(1-\delta)W\cJ$, and $\cH^2=0$ for all $(x^1,x^2)\in \Omega_{x^1}\times \Omega_{x^2}$, with $\delta = 10^{-8}$.
}
For the inner spatial boundary in the $x^1$-direction we set 
\response{
$\cJ(\varepsilon,x^1=-5,x^2)=\varepsilon/\exp[(\varepsilon/3-3)+1]$, $\cH^1(\varepsilon,x^1=-5,x^2)=(1-\delta)W(x^1=-5,x^2) \cJ(\varepsilon,x^1=-5,x^2)$, and $\cH^2(\varepsilon,x^1=-5,x^2)=0$.
}
For the outer spatial boundary in the $x^1$-direction we use outflow conditions.  
In the $x^2$-direction we impose reflecting boundary conditions.
In this test, the moments are evolved until a steady state is reached at $t=20$, and we run simulations with $v_{\mathrm{max}}\in\{0.03,0.1,0.3\}$.

In the steady state, as radiation propagates through the vortex, we expect the spectra to be Doppler shifted towards lower (red shift) or higher (blue shift) energies, depending on the local direction and magnitude of the velocity.  
When considering energy integrated quantities, we expect the Lagrangian number and energy densities to be affected by the velocity field due to Doppler shifts, while the Eulerian number and energy densities should be unaffected.  
The purpose of this test is to investigate to what extent these aspects are captured by our proposed scheme.  
Similar to the tests in \ref{sec:streamingDopplerShift} and \ref{sec:transparentShock}, this test is also challenging because the solution evolves close to the boundary of the realizable domain.  

\begin{figure*}
    \centering
    \begin{subfigure}[b]{0.45\textwidth}
        \centering
        \includegraphics[width=1.0\textwidth]{./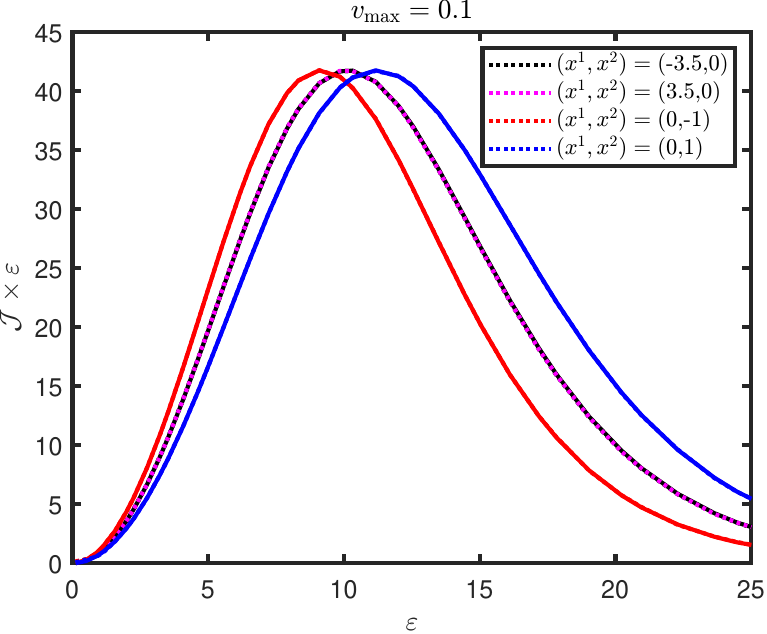}
    \end{subfigure}
    \hfill
    \begin{subfigure}[b]{0.45\textwidth}
        \centering
        \includegraphics[width=1.0\textwidth]{./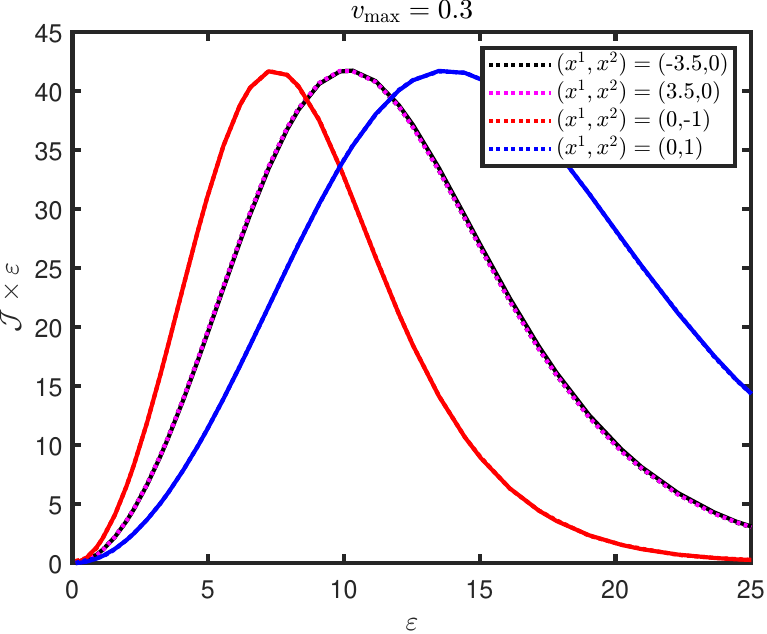}
    \end{subfigure}
    \caption{
    \response{
    Numerical energy spectra at select locations (indicated by the legends) for the transparent vortex problem.  
    The left panel shows results for $v_{\mathrm{max}}=0.1$, while the right panel shows results for $v_{\mathrm{max}}=0.3$.  
    In each panel, the solid lines are the analytic spectra, while dotted lines are the numerical results.
    }
    }
    \label{fig:TV_spectra}
\end{figure*}

Figure \ref{fig:TV_spectra} plots the energy spectra at different locations in the spatial domain for $v_{\mathrm{max}}\in\{0.1,0.3\}$.
For all plots, the black and magenta curves are located at points where the velocity is approximately zero, while the red and blue curves are sampled at locations where $\vect{v}\approx[v_{\mathrm{max}},0,0]^{\intercal}$ and $\vect{v}\approx[-v_{\mathrm{max}},0,0]^{\intercal}$, respectively.  
We expect the black and magenta curves to match the spectra imposed at the inner $x^{1}$ boundary because the velocity is approximately zero at these points.  
\response{
We observe very good agreement between the numerical and analytic spectra for all values of $v_{\mathrm{max}}$.
Even the magenta curve, which is located after the vortex, matches up almost identically with the black curve.
This is a major improvement over the $\cO(v/c)$ results in \cite{laiu_etal_2025}, where they reported trouble recovering the expected spectra $(x^1, x^2) = (3.5,0)$.
As for the red and blue curves, they are red- and blue-shifted relative to the incoming spectra, respectively.  
We again see good agreement for all $v_{\mathrm{max}}$.  
}
When comparing with the results obtained with the $\cO(v/c)$ implementation reported in \cite{laiu_etal_2025}, our results are in better agreement with the analytic solution, even for $v_{\mathrm{max}}=0.3$, and especially in the wake of the vortex.  
We elaborate further on this point below.  

\begin{figure}
    \centering
    \includegraphics[width=0.45\textwidth]{./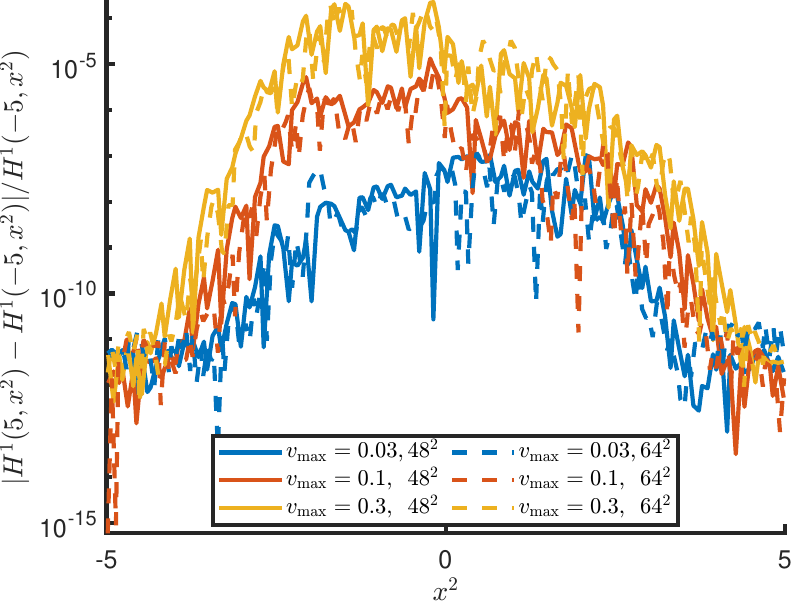}
    \caption{
    \response{
    Relative difference between the incoming and outgoing flux in the $x^1$-direction for $v_{\mathrm{max}}\in\{0.03,0.1,0.3\}$.
    The solid lines are from solutions on a $48^2$ spatial mesh.
    The dashed lines are on a $64^2$ spatial mesh.
    Blue lines are for $v_{\mathrm{max}} = 0.03$, red lines are for $v_{\mathrm{max}} = 0.1$, and yellow lines are for $v_{\mathrm{max}} = 0.3$.
    }
    }
    \label{fig:TV_inout_flux}
\end{figure}

Figure \ref{fig:TV_inout_flux} plots the relative difference between the energy integrated $x^1$-component of the Lagrangian flux densities evaluated at the inner and outer boundary, defined as $|H^1(5,x^2)-H^1(-5,x^2)|/H(-5,x^2)$.  
This quantity should vanish for exact calculations.  
We find that the error increases with increasing $v_{\mathrm{max}}$. 
\response{
However, with the relativistic model, we are able to improve over the $\cO(v/c)$ results reported in \cite{just_etal_2015} and \cite{laiu_etal_2025}, since for $v_{\mathrm{max}}=0.1$ our error is no larger than about $10^{-5}$ --- compared to about $3\times10^{-2}$ in \cite{just_etal_2015} and about $5\times10^{-2}$ in \cite{laiu_etal_2025}.
Even with a more relativistic velocity, $v_{\mathrm{max}} = 0.3$, this error is no larger than $10^{-4}$.
}
This indicates that for this test the inclusion of higher-order velocity observer correction terms is important even for non-relativistic ($v\lesssim0.1$) velocities. 
\response{
Furthermore, Figure \ref{fig:TV_inout_flux} compares this quantity when using $48$ (solid lines) and $64$ (dashed lines) spatial elements in each spatial dimension.  
For the different spatial meshes this quantity is not significantly different.
}

\begin{figure*}
    \centering
    \begin{subfigure}[b]{0.45\textwidth}
        \centering
        \includegraphics[width=1.1\textwidth]{./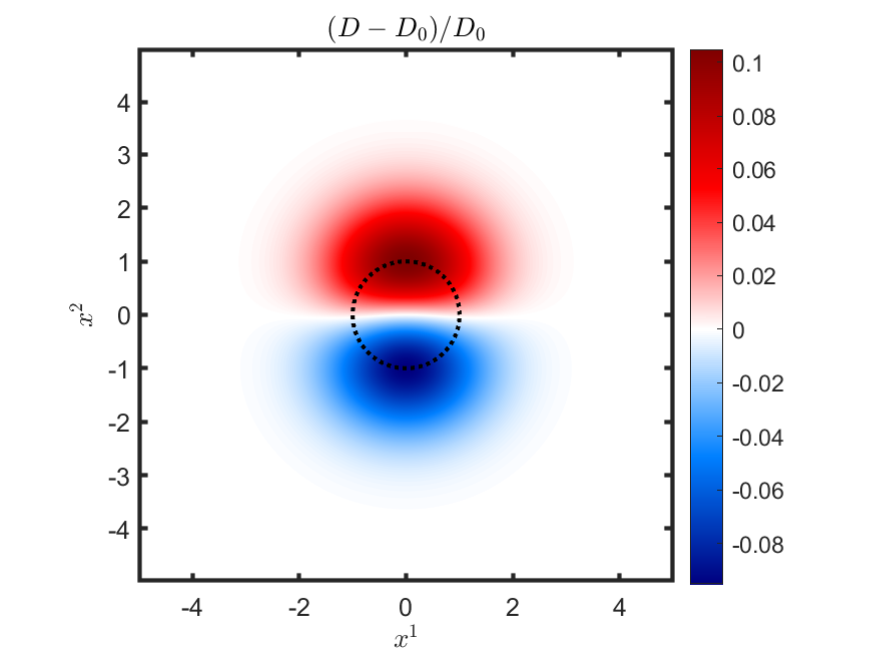}
    \end{subfigure}
    \hfill
    \begin{subfigure}[b]{0.45\textwidth}
        \centering
        \includegraphics[width=1.1\textwidth]{./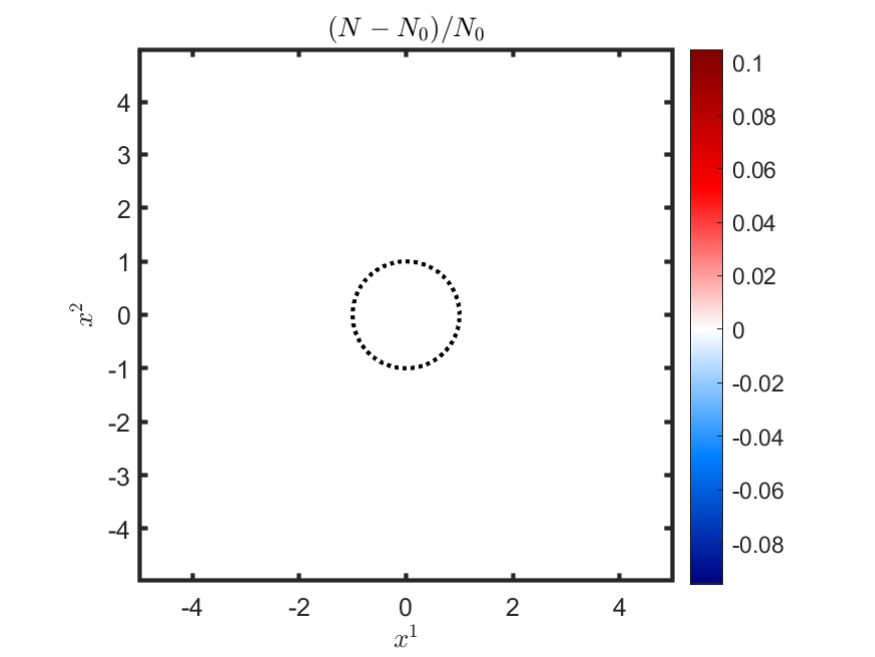}
    \end{subfigure}
    \hfill
    \begin{subfigure}[b]{0.45\textwidth}
        \centering
        \includegraphics[width=1.1\textwidth]{./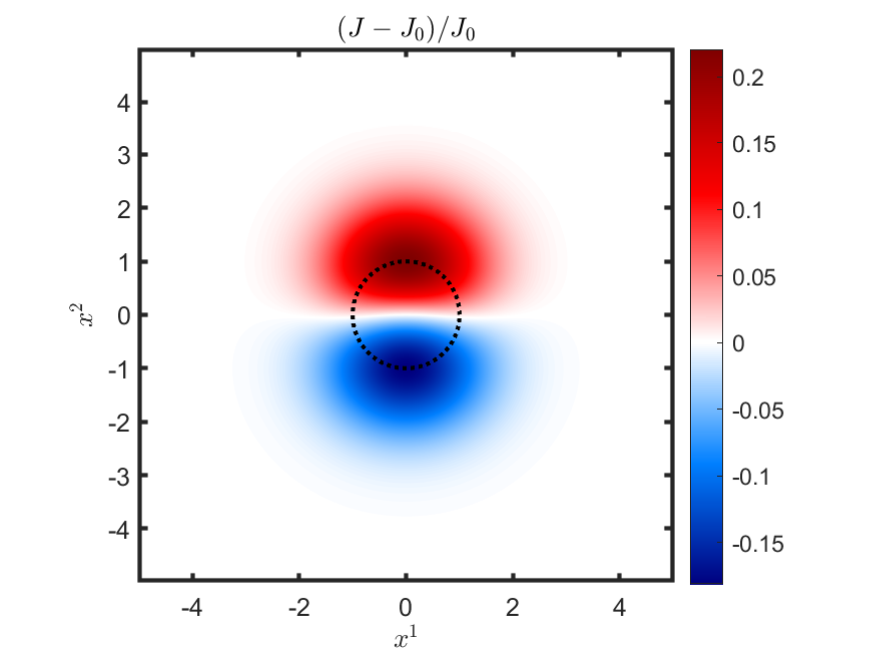}
    \end{subfigure}
    \hfill
    \begin{subfigure}[b]{0.45\textwidth}
        \centering
        \includegraphics[width=1.1\textwidth]{./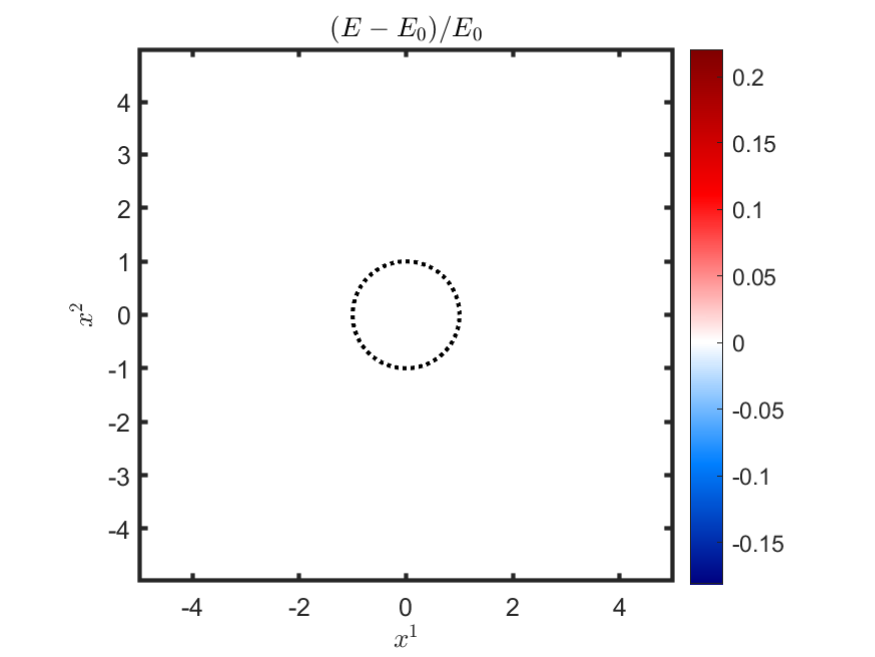}
    \end{subfigure}
    \caption{
    \response{
    Result for the transparent vortex problem with $v_{\mathrm{max}}=0.1$.
    In each panel, the dotted circle represents the contour on which the magnitude of the velocity is $v_{\mathrm{max}}$.
    The upper left panel displays the relative deviation between $D$ and $D_0=D(x^1=-5,x^2)$; i.e., Lagrangian number density.  
    The upper right panel displays the relative deviation between $N$ and $N_0=N(x^1=-5,x^2)$; i.e., Eulerian number density.  
    The lower left panel displays the relative deviation between $J$ and $J_0=J(x^1=-5,x^2)$; i.e., Lagrangian energy density.  
    The lower right panel displays the relative deviation between $E$ and $E_0=E(x^1=-5,x^2)$; i.e., Eulerian energy density.  
    See Eq.~\eqref{eq:grayMoments} for the definition of $D$, $N$, $J$, and $E$.
    }
    }
    \label{fig:TV_contour_v01}
\end{figure*}

\begin{figure*}
    \centering
    \begin{subfigure}[b]{0.45\textwidth}
        \centering
        \includegraphics[width=1.1\textwidth]{./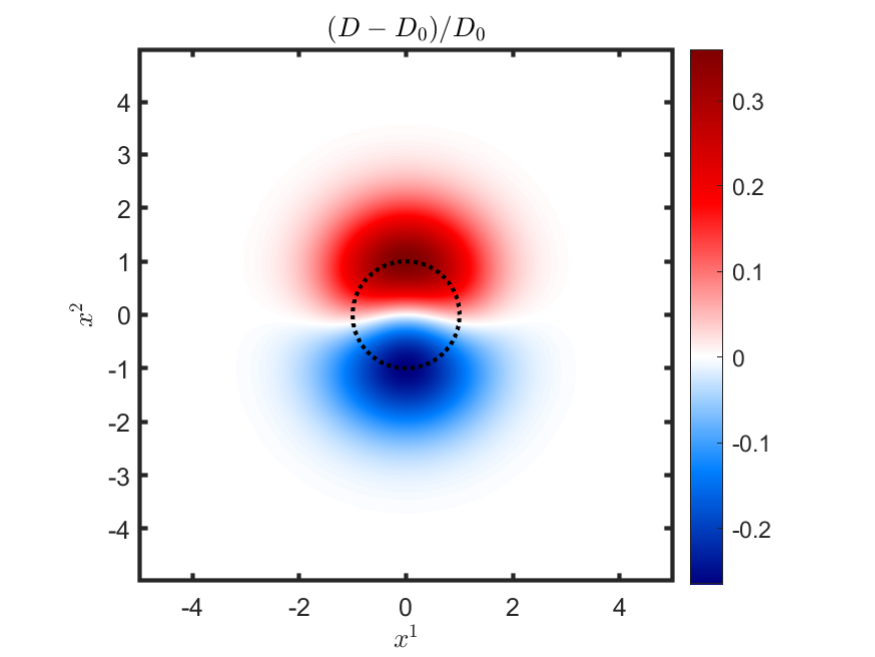}
    \end{subfigure}
    \hfill
    \begin{subfigure}[b]{0.45\textwidth}
        \centering
        \includegraphics[width=1.1\textwidth]{./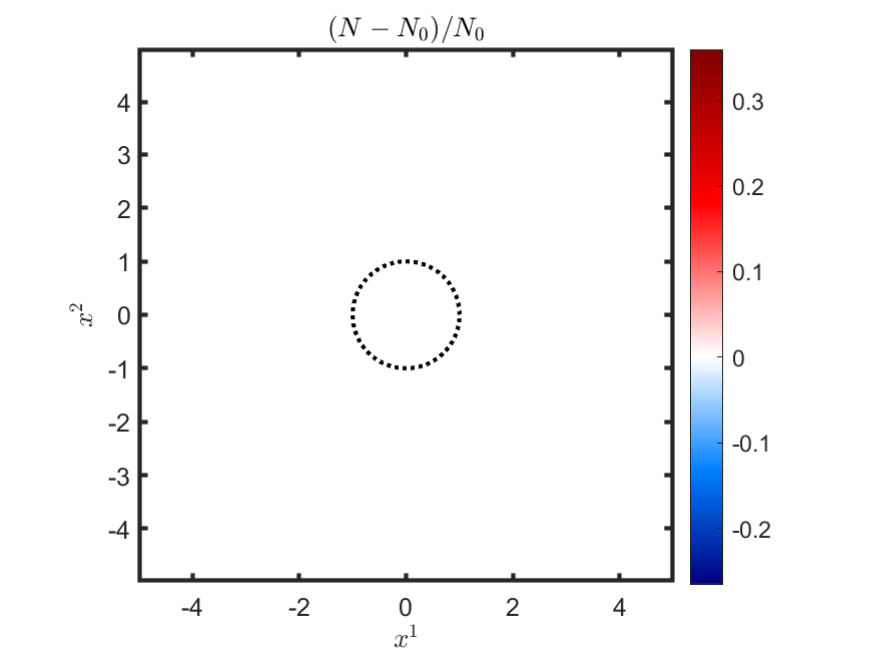}
    \end{subfigure}
    \hfill
    \begin{subfigure}[b]{0.45\textwidth}
        \centering
        \includegraphics[width=1.1\textwidth]{./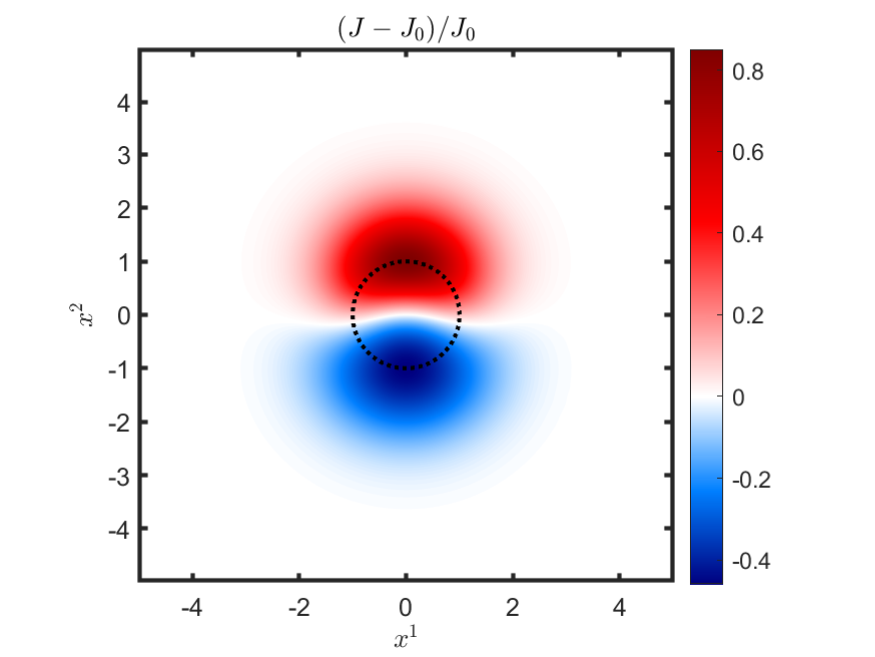}
    \end{subfigure}
    \hfill
    \begin{subfigure}[b]{0.45\textwidth}
        \centering
        \includegraphics[width=1.1\textwidth]{./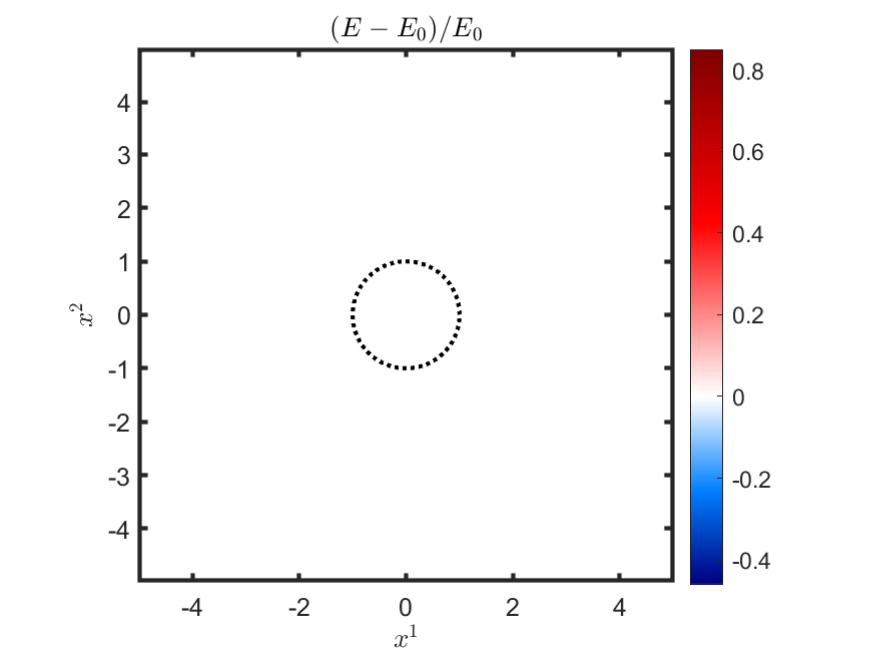}
    \end{subfigure}
    \caption{
    \response{
    Result for the transparent vortex problem.  
    The same quantities as in Figure~\ref{fig:TV_contour_v01} are displayed for $v_{\mathrm{max}}=0.3$, and again the dotted circle is the contour where the magnitude of the velocity is $v_{\mathrm{max}}$.
    }
    }
    \label{fig:TV_contour_v03}
\end{figure*}

In Figures \ref{fig:TV_contour_v01} and \ref{fig:TV_contour_v03} we plot the energy integrated Lagrangian and Eulerian number and energy densities for $v_{\mathrm{max}}=0.1$ and $v_{\mathrm{max}}=0.3$, respectively.  
To emphasize the impact of the Doppler shift, in each panel we plot the relative difference with respect to the value at the inner $x^{1}$ boundary.  
We can see that for all values of $v_{\mathrm{max}}$ the relative difference of the Lagrangian number density $D$ matches with the behavior we see in Figure~\ref{fig:TV_spectra}.  
For $x^2>0$, inside the vortex, we observe an increase corresponding to blue-shifted spectra, and for $x^2<0$, inside the vortex, we observe a decrease corresponding to red-shifted spectra.
In the wake of the vortex we observe that the relative differences do not return exactly to zero, and that the magnitudes of these errors scale with $v_{\mathrm{max}}$.
The plots for $J$ are similar to the plots for $D$, the only difference being the scale on the plots for $J$ is roughly doubled.
\response{
For comparison, the relative changes in the Eulerian number and energy densities, $N$ and $E$, using the same color scales, are presented in the right panels in Figures~\ref{fig:TV_contour_v01} and \ref{fig:TV_contour_v03}.
Both the Eulerian number and energy densities are constant to within $10^{-5}$ and $10^{-4}$ for $v_{\mathrm{max}} = 0.1$ and $0.3$, respectively.
For both $N$ and $E$, these maximal values occur in the wake of the vortex.}
While we observe small variations in the Eulerian number and energy densities, which ideally should be unaffected by the vortex, we emphasize that the variations are considerably smaller than was reported for the $\cO(v/c)$ model considered in \cite{laiu_etal_2025}.
\response{
Furthermore, as in the Transparent Shock test, we observe at $t=20$ that the energy-averaged flux factor, computed either as $\cF/\cE$ or $\cH/\cJ$ are found to be  $1 - \cO(10^{-8})$ across the computational domain.
}

\section{Summary and Conclusions}
\label{sec:summaryConclusions}

We have proposed and analyzed a realizability-preserving numerical method for solving a spectral two-moment model to simulate the transport of massless, neutral particles interacting with a steady background material moving with relativistic velocities.
The model is obtained as the special relativistic limit of the four-momentum-conservative general relativistic model from \cite{cardall_etal_2013a}, which is formulated in terms of comoving-frame momentum-space coordinates to facilitate computational modeling of the material coupling.  
The two-moment model solves for the Eulerian-frame energy and momentum, and is closed by expressing higher-order moments in terms of the evolved moments using the maximum-entropy closure proposed by Minerbo \cite{minerbo_1978}.
The realizability-preserving method maintains algebraic bounds on the Eulerian-frame energy and momentum.

The proposed numerical method combines DG phase-space discretization with IMEX time-stepping.
The phase-space advection terms are treated explicitly, while the collision term is treated implicitly.  
To preserve realizability of the evolved moments, the DG phase-space discretization is equipped with numerical fluxes, which allow us to derive a sufficient time-step restriction to preserve realizability of the cell-averaged moments.
For the IMEX scheme, the time-step restriction is no worse than the restriction imposed by the explicit Forward Euler scheme.  
A realizability-enforcing limiter is used to recover pointwise realizability from the cell-averaged moments.  

Because the closure procedure and the implicit collision solve are formulated in terms of primitive moments, the proposed method requires solving two respective nonlinear systems.  
For both of these nonlinear systems, solution methods have been proposed in the form of fixed-point problems.  
The fixed-point problems are formulated such that they preserve moment realizability of each iterate, subject to a mild step-size constraint.
For the primitive moment recovery problem we have proven convergence to a unique realizable moment pair for the algebraic approximation to the Minerbo closure for $v< 0.221075$.  
Although this (sufficient) convergence condition appears restrictive, it is likely not sharp since, numerically, we have not observed any convergence failures during primitive moment recovery for any $0\leq v < 1$.  
Additionally, in practice we have not observed any issues regarding convergence or maintaining realizability when using Newton's method for primitive moment recovery.  

The proposed algorithm has been implemented and tested against six benchmark problems.
We run two tests --- Streaming Sine Wave and Gaussian Diffusion $1$D --- with constant background velocities.
The results of these tests demonstrate the proposed method's order of accuracy for smooth solutions and ability to perform well in the dynamic diffusion regime (e.g., \cite{mihalasMihalas_1999}), respectively.  
Three tests --- Streaming Doppler Shift, Transparent Shock, and Transparent Vortex --- are run with spatially varying (smooth and discontinuous) background velocities.
The Streaming Doppler Shift and Transparent Vortex tests demonstrate the proposed method's ability to capture special relativistic effects, such as Doppler shifted energy spectra, in one and two spatial dimensions, respectively.  
The results of these tests also demonstrate improvements over the same tests for the $\cO(v/c)$ model in \cite{laiu_etal_2025}.  
The Transparent Shock test demonstrates the proposed method's ability to capture solutions which develop discontinuities due to steep velocity gradients, as we see good agreement with reference solutions, even for large, $v\leq 0.5$, velocities.
The Shadow Casting test, which has zero velocity, demonstrates the proposed method's ability to accurately capture radiation shadows.

Further work is needed to address challenges associated with extensions of the model considered here.  
First, the collision operator considered in this paper is simplified and the background is fixed.
Extending the method to collision operators that include more complex particle interactions, such as inelastic scattering, and considering the dynamical response of the material should be done.  
\response{To this end, we would consider extending the nested fixed-point algorithm proposed in \cite{laiu_etal_2025} to the relativistic case.  
In this algorithm, the matter states are updated in an outer iteration layer, while the radiation moments are updated in an inner iteration layer with the matter states held fixed.  
In this context, the iterative collision solver proposed in Section~\ref{sec:collision_solver} can be used straightforwardly in the inner layer.}
Second, this paper addresses neutral particles obeying Maxwell--Boltzmann statistics, but we aim to apply the proposed method to model neutrino transport in nuclear astrophysics applications.
Neutrinos are fermions, which due to Pauli's exclusion principle have an upper bound on the phase-space density and associated bounds on the moments.  
Therefore, our analysis should be extended to moment closures based on Fermi--Dirac statistics.
Third, the model is special relativistic and formulated in Cartesian coordinates, so it is desirable to extend to general relativity and more general curvilinear spatial coordinates.  
Finally, our method is not designed to conserve particles even though our model possesses this conservation law at the continuum level.  
To model neutrinos, it is desirable to simultaneously conserve both number and energy (e.g., \cite{mezzacappa_etal_2020}).  
We believe the analysis and method proposed here can be helpful as a starting point to address these challenges in future work.

\begin{acknowledgments}
    This research was supported in part by the Exascale Computing Project (17-SC-20-SC), a collaborative effort of the U.S. Department of Energy (DOE) Office of Science and the National Nuclear Security Administration.
    E.E. acknowledges support from the National Science Foundation (NSF) Computational Mathematics program under grant 2309591 and the Gravitational Physics Program under grant 2110177.
    J.H. and Y.X. acknowledge support from the NSF grants DMS-1753581 and DMS-2309590.
    This research was supported in part by an appointment with the NSF Mathematical Sciences Graduate Internship (MSGI) Program.
    This program is administered by the Oak Ridge Institute for Science and Education (ORISE) through an interagency agreement between the U.S. DOE and NSF.
    ORISE is managed for DOE by ORAU.
    All opinions expressed in this paper are the author's and do not necessarily reflect the policies and views of NSF,  ORAU/ORISE, or DOE.
\end{acknowledgments}

\appendix

\section{Sharpness of $a^\varepsilon$ with respect to $|q|$}
\label{appendix:q_bound}

\begin{figure}
    \centering
    \includegraphics[width=8.6cm]{./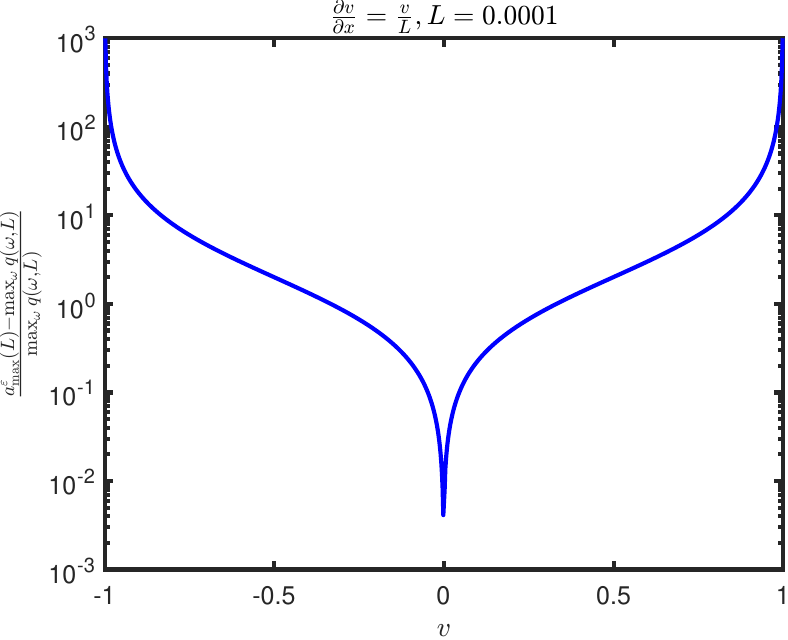}
    \caption{The relative difference in Eq.~\eqref{eq:boundsharpness} is plotted against $-1< v <1$, where we set $\p_x v^i = v^i / L$, with $L = 0.0001$.}
    \label{fig:q_bound}
\end{figure}
The upper bound for $|q|$ given by Eq.~\eqref{eq:q_upperbound} is not a sharp bound overall.
For small values of $v$, the bound is sharp, but it becomes looser as $v$ increases (see Figure \ref{fig:q_bound}).
To test these bounds we consider a one-dimensional three-velocity which has derivatives only in the $x$-direction.  We let $v^i = (v, 0, 0)^{\intercal}$, where $-1 < v < 1,$ and set ${\p_x v^i = v^i/L}$.
Here $L$ is a length scale parameter, and in the rest of this appendix we will denote $q$ by $q(\omega;L)$ to emphasize its dependence on the angle $\omega$ and parameter $L$.
We then generate $100$ different values of $q(\omega;L)$, where the parameter $\omega = (\vartheta,\varphi)$ is randomly sampled from $\vartheta\in(0,2\pi)$ and $\varphi\in(0,\pi)$ to produce $\ell^\mu$ and $L^\mu$ as given by Eqs.~\eqref{eq:tetradUandL} and \eqref{eq:EulerianUnitDirection}, respectively.
In Figure \ref{fig:q_bound} we plot the relative difference
\begin{equation}
\label{eq:boundsharpness}
    \frac{a^\varepsilon_{\mathrm{max}}(L)-\max_{\omega}q(\omega;L)}{\max_{\omega}q(\omega;L)}
\end{equation}
where $a^\varepsilon_{\mathrm{max}}$ is defined in Eq.~\eqref{eq:q_upperbound}.
If this relative difference is close to zero, then the bound is sharp.
We ran the test with $L\in \{10^{-4}, 10^{-3},...,10^3, 10^4\}$.
The results of the test in Figure \ref{fig:q_bound} are the same for any $L$.  Thus, the bound is sharp only for small $v$, and becomes looser as $v$ increases.
For $-0.004 < v < 0.004$ the relative difference is less than $10^{-2}$;
for $-0.048 < v < 0.048$, less than $10^{-1}$;
and for $-0.334 < v < 0.334$, less than $10^{0}$.

Since our bound is not sharp, it could be worthwhile to improve this upper bound in the hopes of improving test results, especially for larger velocities.
Improving the bound would require either a more careful bounding of the quantities $L^i B_i$, $L^i L^j C_{ij}$, and $E^2/\varepsilon^2$, or attempting an upper bound through a \emph{Lagrangian} decomposition of the tensor $A_{\mu\nu}$. A Lagrangian decomposition proves more difficult to bound since ${\ell^0 \neq 0}$, whereas with the Eulerian decomposition, $L^0 = 0$.

\section{Moment Conversion Convergence Proof}
\label{appendix:convergence}

We derive a restriction on $v=\sqrt{v_\mu v^\mu}$ to guarantee the moment conversion scheme, Eq.~\eqref{eq:Richardson_update}, converges to a unique solution.
We do this by showing for $\bM\in\cR$ the mapping in Eq.~\eqref{eq:Richardson_update},
\begin{equation}
    \bH_{\widehat{\bU}}(\bM)
    =
    \begin{bmatrix}
        (1-\lambda W)\cJ-\lambda v^i\cH_i+\lambda\widehat{\cE}\\
        (1-\lambda W)\cH_i -\lambda v^j\cK_{ij} +\lambda\widehat{\cF}_i
    \end{bmatrix},
\end{equation}
is a contraction mapping on $\cR$ equipped with the $2$-norm.
That is, for all $\bM^{(a)},\bM^{(b)}\in\cR$, we have
\begin{equation}
\label{eq:contraction_mapping}
    \|\bH_{\widehat{\bU}}(\bM^{(a)})-\bH_{\widehat{\bU}}(\bM^{(b)})\|_2\leq L\|\bM^{(a)}-\bM^{(b)}\|_2
\end{equation}
for some $0\leq L<1$.
We will follow the framework used in the appendix of \cite{laiu_etal_2025} where the authors proved a convergence condition for a similar moment conversion problem.

The following inequalities will be necessary for proving our restriction on $v$
\begin{equation}
    \cH_i\cH^i\leq W^2\cH^2,
    \quad
    (\cH^0)^2\leq W^2\cH^2v^2.
    \label{eq:cH_bounds}
\end{equation}
They can be derived from definition of $\cH^2$ by using the identities ${\cH^0 = v^i\cH_i}$, ${(1-v^2) = W^{-2}}$, and ${W^2 - 1 = (Wv)^2}$.
We then define
\begin{align}
    \Delta\cJ &= \cJ^{(a)}-\cJ^{(b)}, \\
    \Delta\cH_i &= \cH^{(a)}_i - \cH^{(b)}_i, \\
    \Delta\cK_{ij} &= \cK^{(a)}_{ij} - \cK^{(b)}_{ij}.
\end{align}
Then the norm on the right-hand side of Eq.~\eqref{eq:contraction_mapping} is given by
 \begin{equation}
     \|\bM^{(a)}-\bM^{(b)}\|_2
     =
     \begin{Vmatrix}
         \Delta\cJ\\
         \Delta\cH_i
     \end{Vmatrix}_2
     =
     \sqrt{\|\Delta\cJ\|^2_2+\|\Delta\cH_i\|^2_2},
 \end{equation}
 where $\|\Delta\cJ\|^2_2=|\Delta\cJ|^2$, and $\|\Delta\cH_i\|^2_2=|\Delta\cH_1|^2+|\Delta\cH_2|^2+|\Delta\cH_3|^2$.
Applying the triangle inequality to the left-hand side of Eq.~\eqref{eq:contraction_mapping}, we have the following upper bound
\begin{align}
     &\|\bH_{\widehat{\bU}}(\bM^{(a)})-\bH_{\widehat{\bU}}(\bM^{(b)})\|_2
     =
     \begin{Vmatrix}
         (1-\lambda W)\Delta \cJ-\lambda v^i\Delta\cH_i\\
         (1-\lambda W)\Delta\cH_i-\lambda v^j\Delta\cK_{ij}
     \end{Vmatrix}_2 \notag \\
     &\qquad 
     \leq
     (1-\lambda W)
     \begin{Vmatrix}
         \Delta \cJ\\
         \Delta\cH_i
     \end{Vmatrix}_2
     +\lambda
     \begin{Vmatrix}
         v^i\Delta\cH_i\\
         v^j\Delta\cK_{ij}
     \end{Vmatrix}_2.
\end{align}
Thus it suffices to show for some $0\leq \widetilde{L} <W$
\begin{equation}
    \begin{Vmatrix}
         v^i\Delta\cH_i\\
         v^j\Delta\cK_{ij}
    \end{Vmatrix}_2
    \leq
    \widetilde{L}
    \begin{Vmatrix}
         \Delta\cJ\\
         \Delta\cH_i
    \end{Vmatrix}_2.
    \label{eq:appendix_bound_target}
\end{equation}
We will prove the above inequality holds for ${\widetilde{L} = v\sqrt{14W^6+\sqrt{14}W^5+1}}$ with the restriction that $v<0.221075$.

We first bound the left-hand side of Eq.~\eqref{eq:appendix_bound_target} using Cauchy-Schwarz,
    \begin{equation}
        \begin{Vmatrix}
         v^i\Delta\cH_i\\
         v^j\Delta\cK_{ij}
         \end{Vmatrix}_2^2
         \leq
         v^2\|\Delta \cH_j\|_2^2+\|v^j \Delta \cK_{ij}\|_2^2,
    \end{equation}
    and by the mean-value theorem for vector-valued functions
    \begin{equation}
        \|v^j \Delta \cK_{ij}\|_2
        \leq
        \|v^j\frac{\p\cK_{ij}}{\p \cJ}\|_2\,\|\Delta \cJ\|_2
        +
        \|v^j\frac{\p\cK_{ij}}{\p \cH_k}\|_2\,\|\Delta \cH_j\|_2.
        \label{eq:norm_vjDeltaKij}
    \end{equation}
Thus to prove a convergence criterion, we seek bounds on $\|v^j\frac{\p\cK_{ij}}{\p \cJ}\|_2$ and $\|v^j\frac{\p\cK_{ij}}{\p \cH_k}\|_2$.

\begin{lemma}
    For any $\bM\in\cR$, $\|v^j\frac{\p\cK_{ij}}{\p \cJ}\|_2\leq W^2v$.
    \label{lem:appendix_lem1}
\end{lemma}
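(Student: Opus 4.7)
My plan is to compute $v^{j}\,\p\cK_{ij}/\p\cJ$ explicitly from the closure formula and then bound its Euclidean norm directly. First, I will combine Eq.~\eqref{eq:pressureTensor} with the chain rule: holding $\cH_{k}$ fixed (so that $\cH_{0}=v^{j}\cH_{j}$ is fixed, hence both $\cH=\sqrt{\cH_{\mu}\cH^{\mu}}$ and $\widehat{\mathsf{h}}_{\mu}=\cH_{\mu}/\cH$ are independent of $\cJ$), while $\mathsf{h}=\cH/\cJ$ so that $\p_{\cJ}\mathsf{h}=-\mathsf{h}/\cJ$, this yields
\begin{equation*}
\frac{\p\cK_{ij}}{\p\cJ}
=\frac{1}{2}\Big[(1-\mathsf{k}+\mathsf{h}\mathsf{k}')\,h_{ij}+(3\mathsf{k}-1-3\mathsf{h}\mathsf{k}')\,\widehat{\mathsf{h}}_{i}\widehat{\mathsf{h}}_{j}\Big],
\end{equation*}
where $\mathsf{k}'\vcentcolon= d\mathsf{k}/d\mathsf{h}$.

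Next, I will contract with $v^{j}$. Because $h_{ij}=\delta_{ij}+W^{2}v_{i}v_{j}$ (from $h_{\mu\nu}=\eta_{\mu\nu}+u_{\mu}u_{\nu}$ with $u_{i}=Wv_{i}$) and $1+W^{2}v^{2}=W^{2}$, one has $v^{j}h_{ij}=W^{2}v_{i}$. Letting $\beta\vcentcolon= v^{j}\widehat{\mathsf{h}}_{j}$, which satisfies $|\beta|\le Wv$ by Eq.~\eqref{eq:cH_bounds} applied to $\widehat{\mathsf{h}}^{\mu}=\cH^{\mu}/\cH$, the contraction becomes
\begin{equation*}
v^{j}\,\frac{\p\cK_{ij}}{\p\cJ}=a\,v_{i}+b\,\widehat{\mathsf{h}}_{i},\quad
a\vcentcolon=\tfrac{1}{2}(1-\mathsf{k}+\mathsf{h}\mathsf{k}')W^{2},\quad
b\vcentcolon=\tfrac{1}{2}(3\mathsf{k}-1-3\mathsf{h}\mathsf{k}')\beta.
\end{equation*}

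Using $v_{i}v^{i}=v^{2}$, $v_{i}\widehat{\mathsf{h}}^{i}=\beta$, and $\widehat{\mathsf{h}}_{i}\widehat{\mathsf{h}}^{i}=1+\beta^{2}$ (the last from $u_{\mu}\widehat{\mathsf{h}}^{\mu}=0\Rightarrow\widehat{\mathsf{h}}^{0}=\beta$ combined with $\widehat{\mathsf{h}}_{\mu}\widehat{\mathsf{h}}^{\mu}=1$), the squared norm expands into a quadratic polynomial in $\beta^{2}$ with nonnegative leading coefficient $\tfrac{1}{4}(3\mathsf{k}-1-3\mathsf{h}\mathsf{k}')^{2}$; hence its maximum over $\beta^{2}\in[0,\,v^{2}W^{2}]$ is attained at an endpoint. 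Direct evaluation gives $\tfrac{1}{4}(1-\mathsf{k}+\mathsf{h}\mathsf{k}')^{2}W^{4}v^{2}$ at $\beta^{2}=0$ and, after using $v^{2}W^{2}=W^{2}-1$ to collapse several terms, exactly $(\mathsf{k}-\mathsf{h}\mathsf{k}')^{2}W^{4}v^{2}$ at $\beta^{2}=v^{2}W^{2}$.

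The hard part will be the closure-specific inequalities $|1-\mathsf{k}+\mathsf{h}\mathsf{k}'|\le 2$ and $|\mathsf{k}-\mathsf{h}\mathsf{k}'|\le 1$ for all $\mathsf{h}\in[0,1]$; once these hold, $\|v^{j}\,\p\cK_{ij}/\p\cJ\|_{2}^{2}\le W^{4}v^{2}$ and the Lemma follows. For the algebraic Minerbo expression in Eq.~\eqref{eq:eddingtonFactorAlgebraic}, each reduces to a short monotonicity argument: $(d/d\mathsf{h})(1-\mathsf{k}+\mathsf{h}\mathsf{k}')=\mathsf{h}\mathsf{k}''\ge 0$ and $(d/d\mathsf{h})(\mathsf{k}-\mathsf{h}\mathsf{k}')=-\mathsf{h}\mathsf{k}''\le 0$ (since $\mathsf{k}''\ge 0$ on $[0,1]$), so only the endpoints $\mathsf{h}\in\{0,1\}$ require checking, and they yield the sharp maxima $2$ and $1$ respectively.
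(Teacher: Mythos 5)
Your proof is correct, and while it begins from the same contraction as the paper's — reducing to $v^{j}\,\p\cK_{ij}/\p\cJ = \tfrac12(W^{2}v_{i}\psi_{0}+\widehat{\mathsf{h}}^{0}\widehat{\mathsf{h}}_{i}\psi_{1})$ with $\psi_{0}=1-\mathsf{k}+\mathsf{h}\mathsf{k}'$ and $\psi_{1}=3\mathsf{k}-1-3\mathsf{h}\mathsf{k}'$ — it then diverges in a useful way. The paper's route upper-bounds $\widehat{\mathsf{h}}_{i}\widehat{\mathsf{h}}^{i}\le W^{2}$ (an inequality) and then shows the closure-specific sign condition $\psi_{1}^{2}+2\psi_{0}\psi_{1}\le0$ to discard the $\widehat{\mathsf{h}}^{0}$-dependent piece entirely, leaving only $\tfrac14 W^{4}v^{2}\psi_{0}^{2}\le W^{4}v^{2}$. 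You instead keep the exact identity $\widehat{\mathsf{h}}_{i}\widehat{\mathsf{h}}^{i}=1+\beta^{2}$ with $\beta=\widehat{\mathsf{h}}^{0}$, observe that the squared norm is a convex quadratic in $\beta^{2}$ (leading coefficient $\tfrac14\psi_{1}^{2}\ge0$), so its maximum over $\beta^{2}\in[0,v^{2}W^{2}]$ sits at an endpoint, and then compute the two endpoint values explicitly; the $\beta^{2}=v^{2}W^{2}$ endpoint collapses, via $v^{2}W^{2}+1=W^{2}$, to $\tfrac14 W^{4}v^{2}(\psi_{0}+\psi_{1})^{2}=W^{4}v^{2}(\mathsf{k}-\mathsf{h}\mathsf{k}')^{2}$. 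The required closure facts are essentially equivalent — the paper's $\psi_{1}+2\psi_{0}\ge0$ is precisely $\mathsf{k}-\mathsf{h}\mathsf{k}'\ge-1$, which is half of your $|\mathsf{k}-\mathsf{h}\mathsf{k}'|\le1$ — but your convexity framing makes it transparent that only endpoint checks are needed, and your monotonicity argument (noting $(d/d\mathsf{h})(\mathsf{k}-\mathsf{h}\mathsf{k}')=-\mathsf{h}\mathsf{k}''\le0$ since $\mathsf{k}''>0$ for the algebraic closure) gives a genuine proof where the paper only asserts "it can be shown." Both arguments are sound; yours is arguably tighter in its accounting and more self-contained in its verification of the closure inequalities.
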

\begin{proof}
    \begin{align}
        v^j\frac{\p\cK_{ij}}{\p \cJ}
        &=
        v^j\bigg(\frac{1}{2}\big[(1-\mathsf{k})(\eta_{ij}+u_iu_j)+(3\mathsf{k}-1)\hat{\mathsf{h}}_i\hat{\mathsf{h}}_j\big] \nonumber \\
        &\hspace{12pt}
        +\frac{\cJ}{2}\big[(3\hat{\mathsf{h}}_i\hat{\mathsf{h}}_j-\eta_{ij}-u_iu_j)\frac{\p\mathsf{k}}{\p\mathsf{h}}\frac{\p\mathsf{h}}{\p\cJ}\big]\bigg) \nonumber \\
        &=
        \frac{1}{2}[W^2v_i(1-\mathsf{k}+\mathsf{k}'\mathsf{h})+\hat{\mathsf{h}}^0\hat{\mathsf{h}}_i(3\mathsf{k}-1-3\mathsf{k}'\mathsf{h})] \nonumber \\
        &=
        \frac{1}{2}(W^2v_i\psi_0+\hat{\mathsf{h}}^0\hat{\mathsf{h}}_i\psi_1),
        \label{eq:appendix_vjKij_pJ}
    \end{align}
    where $\mathsf{k}'=\p\mathsf{k}/\p\mathsf{h}$, and we have defined
    \begin{align}
    \psi_0 &= 1-\mathsf{k}+\mathsf{k}'\mathsf{h}, \\
    \psi_1 &= 3\mathsf{k}-1-3\mathsf{k}'\mathsf{h}.
    \end{align}
    Recall $\mathsf{k}$ is the Eddington factor given by Eq.~\eqref{eq:eddingtonFactorAlgebraic}, and $\mathsf{h} = \cH / \cJ \in [0,1]$ is the flux factor.
    The functions $\psi_0$ and $\psi_1$ satisfy the following inequalities for $\mathsf{h}\in[0,1]$
    \begin{equation*}
        \frac{2}{3}\leq \psi_0 \leq 2, \quad -4 \leq \psi_1 \leq 0.
    \end{equation*}
    These bounds can be verified by plotting $\psi_0$ and $\psi_1$ as functions of $\mathsf{h}$.
    Taking the norm squared of Eq.~\eqref{eq:appendix_vjKij_pJ} and bounding $\hat{\mathsf{h}}_i\hat{\mathsf{h}}^i$ from above yields
    \begin{align}
        \|v^j\frac{\p\cK_{ij}}{\p \cJ}\|_2^2
        &=
        \frac{1}{4}(W^4v^2\psi_0^2+(\hat{\mathsf{h}}^0)^2\hat{\mathsf{h}}_i\hat{\mathsf{h}}^i\psi_1^2+2W^2(\hat{\mathsf{h}}^0)^2\psi_0\psi_1) \nonumber \\
        &\leq
        \frac{1}{4}(W^4v^2\psi_0^2+W^2(\hat{\mathsf{h}}^0)^2(\psi_1^2+2\psi_0\psi_1)).
    \end{align}
    It can be shown that $\psi_1^2+2\psi_0\psi_1\leq 0$ for all $\mathsf{h}\in[0,1]$.
    Then
    \begin{equation*}
        \|v^j\frac{\p\cK_{ij}}{\p \cJ}\|_2^2
        \leq
        \frac{1}{4}W^4v^2\psi_0^2
        \leq W^4v^2.
    \end{equation*}
\end{proof}
Note Lemma \ref{lem:appendix_lem1} is comparable to Lemma 8 in \cite{laiu_etal_2025}.

\begin{lemma}
    For any $\bM\in\cR$, $\|v^j\frac{\p\cK_{ij}}{\p \cH_k}\|_2 \leq \sqrt{14}W^3v$.
    \label{lem:appendix_lem2}
\end{lemma}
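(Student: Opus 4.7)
The plan is to compute $\partial \cK_{ij}/\partial \cH_k$ by applying the chain rule to the maximum-entropy closure in Eq.~\eqref{eq:pressureTensor}, contract with $v^j$, and then bound the Frobenius norm of the resulting rank-two tensor (over $(i,k)$) term-by-term using the triangle and Cauchy--Schwarz inequalities. This mirrors the strategy used in the proof of Lemma~\ref{lem:appendix_lem1} and is the natural relativistic analogue of Lemma~8 in \cite{laiu_etal_2025}. The relativistic twist is that $\cH^\mu$ is a spacelike four-vector constrained by $u_\mu\cH^\mu = 0$, so $\cH_0 = -v^i\cH_i$, meaning that differentiation with respect to the free variables $\cH_k$ ($k=1,2,3$) picks up observer-correction terms proportional to $v^k$.

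First I would carry out the chain-rule computation. Using $\cH^2 = -\cH_0^2 + \cH_i\cH^i$ together with $\cH_0 = -v^i\cH_i$, a direct differentiation yields
\begin{equation*}
  \frac{\partial \mathsf{h}}{\partial \cH_k}
  = \frac{1}{\cJ}\big(\,\hat{\mathsf{h}}^k - \hat{\mathsf{h}}^0 v^k\,\big),
  \qquad
  \frac{\partial \hat{\mathsf{h}}_i}{\partial \cH_k}
  = \frac{1}{\cH}\big[\,\delta_i^k - \hat{\mathsf{h}}_i\big(\hat{\mathsf{h}}^k - \hat{\mathsf{h}}^0 v^k\big)\,\big].
\end{equation*}
Substituting into the derivative of Eq.~\eqref{eq:pressureTensor} expresses $\partial \cK_{ij}/\partial \cH_k$ as a sum of three pieces: an isotropic piece proportional to $(\eta_{ij}+u_iu_j)(\hat{\mathsf{h}}^k-\hat{\mathsf{h}}^0 v^k)$, a rank-one piece proportional to $\hat{\mathsf{h}}_i\hat{\mathsf{h}}_j(\hat{\mathsf{h}}^k-\hat{\mathsf{h}}^0 v^k)$ weighted by the auxiliary factors $\psi_0,\psi_1$ from Lemma~\ref{lem:appendix_lem1}, and a cross piece proportional to $(3\mathsf{k}-1)\big(\delta_i^k\hat{\mathsf{h}}_j + \hat{\mathsf{h}}_i\delta_j^k - 2\hat{\mathsf{h}}_i\hat{\mathsf{h}}_j\hat{\mathsf{h}}^k\big)/\mathsf{h}$, after cancelling $\cJ/\cH = 1/\mathsf{h}$.

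Next I would contract with $v^j$ and bound $\|v^j \partial \cK_{ij}/\partial \cH_k\|_2$ by estimating each piece separately. The key scalar ingredients are $v^i u_i = Wv^2$ (from $u_i = Wv_i$), $v^i\hat{\mathsf{h}}_i = \hat{\mathsf{h}}^0$, the algebraic inequalities $(\hat{\mathsf{h}}^0)^2 \le W^2 v^2$ and $\hat{\mathsf{h}}_i\hat{\mathsf{h}}^i \le W^2$ (which follow from Eq.~\eqref{eq:cH_bounds} after dividing by $\cH^2$), and the pointwise bounds $2/3\le\psi_0\le 2$, $-4\le\psi_1\le 0$ established in the proof of Lemma~\ref{lem:appendix_lem1}. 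The $W^3$ scaling of the final constant will arise from combining one factor of $W$ coming from $v^i u_i = Wv^2$ (acting on the isotropic piece through $u_iu_j$) with two factors from $\hat{\mathsf{h}}_i\hat{\mathsf{h}}^i \le W^2$ (acting on the rank-one and cross pieces).

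The main obstacle is bookkeeping rather than a conceptual difficulty: after contraction with $v^j$ each of the three pieces produces several subterms whose relative signs must be tracked, and the specific constant $\sqrt{14}$ will only crystallize once like terms are combined and a final Cauchy--Schwarz is applied on the free index $k$. I expect the resulting bound to hold for all $v\in[0,1)$; the stricter restriction $v < 0.221075$ quoted in Remark~\ref{rem:convergence} will then emerge only after substituting this Lemma and Lemma~\ref{lem:appendix_lem1} into Eqs.~\eqref{eq:norm_vjDeltaKij} and~\eqref{eq:appendix_bound_target} and enforcing $v\sqrt{14 W^6 + \sqrt{14}\, W^5 + 1} < W$.
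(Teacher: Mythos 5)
Your chain-rule computation is right and agrees with the paper's: $\partial\mathsf{h}/\partial\cH_k=(\hat{\mathsf{h}}^k-\hat{\mathsf{h}}^0v^k)/\cJ$ and $\partial\hat{\mathsf{h}}_i/\partial\cH_k=[\delta_i^{\ k}-\hat{\mathsf{h}}_i(\hat{\mathsf{h}}^k-\hat{\mathsf{h}}^0v^k)]/\cH$, and the final substitution into Eq.~\eqref{eq:appendix_bound_target} to get $v<0.221075$ is also correct. The gap is in the bounding strategy. You propose to bound the \emph{Frobenius} norm of the matrix $M_{ik}=v^j\,\partial\cK_{ij}/\partial\cH_k$ term-by-term via triangle inequality. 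The paper, by contrast, bounds the \emph{spectral} (operator) norm by contracting with an arbitrary probe vector $y^k$ and estimating $\|v^jy^k\,\partial\cK_{ij}/\partial\cH_k\|_2^2\leq 14W^6v^2y^2$. These are not the same target: the Frobenius norm of a $3\times3$ matrix can exceed the operator norm by up to $\sqrt{3}$, so a Frobenius bound of $\sqrt{14}W^3v$ might simply be false, and even if true it would be strictly harder to prove.

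More importantly, term-by-term triangle estimates discard the sign cancellations the paper needs. If you take each of the three pieces you identify, apply Cauchy--Schwarz and $|\mathsf{k}'|\leq 2$, $|\psi_1|\leq 4$, $\psi_2\leq 2$, $\hat{\mathsf{h}}_i\hat{\mathsf{h}}^i\leq W^2$, $(\hat{\mathsf{h}}^0)^2\leq W^2v^2$, and add the results, you land somewhere around $(5+\sqrt{3})W^3v\approx 6.7\,W^3v$ at best, not $\sqrt{14}\,W^3v\approx 3.74\,W^3v$. The paper achieves $\sqrt{14}$ only by expanding the full squared norm, defining $A=\tfrac{9}{4}(\mathsf{k}')^2-3\mathsf{k}'\psi_2+\psi_2^2\geq0$, $B=-\tfrac{3}{2}(\mathsf{k}')^2+\mathsf{k}'\psi_2\leq0$, $C=\tfrac{3}{2}\mathsf{k}'\psi_2-\psi_2^2\geq0$ (the sign conditions on $B,C$ holding only for $\mathsf{h}\geq1/6$, a case you never mention), and exploiting that combinations such as $\hat{\mathsf{h}}_i\hat{\mathsf{h}}^iA+W^2B+\tfrac{1}{2}C$ are nonpositive there. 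The $-2ab\leq a^2+b^2$ step is applied with a sign-dependent choice of $a,b$ that only makes sense once those sign conditions are established. Calling this ``bookkeeping'' misrepresents what's doing the work: without the probe-vector formulation, the squared-norm expansion, the case split at $\mathsf{h}=1/6$, and the $B\leq0$ cancellation, the constant $\sqrt{14}$ (and hence the range $v<0.221075$ in Remark~\ref{rem:convergence}) does not come out.
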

\begin{proof}
    \begin{align*}
        v^j\frac{\p\cK_{ij}}{\p\cH_k}
        =&
        v^j[(3\hat{\mathsf{h}}_i\hat{\mathsf{h}}_j-h_{ij})\frac{\p\mathsf{k}}{\p \mathsf{h}}\frac{\p \mathsf{h}}{\p\cH_k} \\
        &\hspace{12pt}
        +(3\mathsf{k}-1)(\hat{\mathsf{h}}_i\frac{\p\hat{\mathsf{h}}_j}{\cH_k}+\hat{\mathsf{h}}_j\frac{\p\hat{\mathsf{h}}_i}{\cH_k})]\frac{\cJ}{2} \\
        =&
        \frac{1}{2}\mathsf{k}'(3\hat{\mathsf{h}}_i\hat{\mathsf{h}}^0-W^2v_i)(\hat{\mathsf{h}}_k+\hat{\mathsf{h}}_0v_k) \\
        &\hspace{12pt}
        +
        \frac{3\mathsf{k}-1}{2\mathsf{h}}(\hat{\mathsf{h}}_iv_k + \hat{\mathsf{h}}^0\delta_{ik} -2\hat{\mathsf{h}}_i\hat{\mathsf{h}}^0(\hat{\mathsf{h}}_k+\hat{\mathsf{h}}_0v_k))
    \end{align*}
    To prove $\|v^j\frac{\p\cK_{ij}}{\p \cH_k}\|_2 \leq \sqrt{14}W^3v$, we will show that
    \begin{equation*}
        \|v^jy^k\frac{\p\cK_{ij}}{\p\cH_k}\|_2 \leq \sqrt{14}W^3vy,
        \quad
        \forall\,y^i=(y^1,y^2,y^3)^{\intercal},
    \end{equation*}
    where $y=\sqrt{y_iy^i}$.
    We define the following quantities
    \begin{align}
        \psi_2 &= \frac{3\mathsf{k}-1}{h},
        \quad
        \beta = v_ky^k,
        \quad
        \gamma = \hat{\mathsf{h}}_ky^k, \\
        \alpha_k &= \hat{\mathsf{h}}_k+\hat{\mathsf{h}}_0v_k=\hat{\mathsf{h}}_k-\hat{\mathsf{h}}^0v_k.
    \end{align}
    Then,
    \begin{align}
        v^jy^k\frac{\p\cK_{ij}}{\p\cH_k}
        &=
        \frac{1}{2}\mathsf{k}'(3\hat{\mathsf{h}}_i\hat{\mathsf{h}}^0-W^2v_i)\alpha_ky^k \nonumber \\
        &\hspace{12pt}
        +
        \frac{1}{2}\psi_2(\hat{\mathsf{h}}_i\beta + \hat{\mathsf{h}}^0y_i -2\hat{\mathsf{h}}_i\hat{\mathsf{h}}^0\alpha_ky^k).
        \label{eq:appendix_vjykKij_pH}
    \end{align}
    Taking the squared norm of Eq.~\eqref{eq:appendix_vjykKij_pH}, it then follows that
    \begin{widetext}
    \begin{align*}
        \|v^jy^k\frac{\p\cK_{ij}}{\p\cH_k}\|^2_2
        =&
        \frac{1}{4}(\mathsf{k}')^2(\alpha_ky^k)^2(9\hat{\mathsf{h}}_i\hat{\mathsf{h}}^i(\hat{\mathsf{h}}^0)^2-6W^2(\hat{\mathsf{h}}^0)^2+W^4v^2)\\
        &+
        \frac{1}{2}\mathsf{k}'\psi_2(\alpha_ky^k)
        (3\hat{\mathsf{h}}_i\hat{\mathsf{h}}^i\hat{\mathsf{h}}^0\beta
        +3(\hat{\mathsf{h}}^0)^2\gamma
        -6\hat{\mathsf{h}}_i\hat{\mathsf{h}}^i(\hat{\mathsf{h}}^0)^2\alpha_ky^k -2W^2\hat{\mathsf{h}}^0\beta
        +2W^2(\hat{\mathsf{h}}^0)^2(\alpha_ky^k))\\
        &+
        \frac{1}{4}\psi_2^2
        (\hat{\mathsf{h}}_i\hat{\mathsf{h}}^i\beta^2
        +2\hat{\mathsf{h}}^0\beta\gamma
        -4\hat{\mathsf{h}}_i\hat{\mathsf{h}}^i\hat{\mathsf{h}}^0(\alpha_ky^k)\beta +(\hat{\mathsf{h}}^0)^2y^2-4(\hat{\mathsf{h}}^0)^2(\alpha_ky^k)\gamma
        +4\hat{\mathsf{h}}_i\hat{\mathsf{h}}^i(\hat{\mathsf{h}}^0)^2(\alpha_ky^k)^2) \\
        =&
        \gamma^2(\hat{\mathsf{h}}^0)^2[\hat{\mathsf{h}}_i\hat{\mathsf{h}}^i(\frac{9}{4}(\mathsf{k}')^2-3\mathsf{k}'\psi_2+\psi_2^2)+W^2(-\frac{3}{2}(\mathsf{k}')^2+\mathsf{k}'\psi_2)+\frac{3}{2}\mathsf{k}'\psi_2-\psi_2^2]
        +
        \frac{1}{4}\beta^2\hat{\mathsf{h}}_i\hat{\mathsf{h}}^i\psi_2^2\\
        &-2
        \hat{\mathsf{h}}^0\beta\gamma[-\frac{1}{2}\hat{\mathsf{h}}_i\hat{\mathsf{h}}^i(\frac{3}{2}\mathsf{k}'\psi_2-\psi_2^2)+\frac{1}{2}W^2\mathsf{k}'\psi_2-\frac{1}{4}\psi_2^2]
        +
        \frac{1}{4}W^4v^2\gamma^2(\mathsf{k}')^2
        +
        \frac{1}{4}(\hat{\mathsf{h}}^0)^2y^2\psi_2^2\\
        &+
        (\hat{\mathsf{h}}^0)^2\beta^2[\hat{\mathsf{h}}_i\hat{\mathsf{h}}^i(\hat{\mathsf{h}}^0)^2(\frac{9}{4}(\mathsf{k}')^2-3\mathsf{k}'\psi_2+\psi_2^2)+(\hat{\mathsf{h}}^0)^2W^2(-\frac{3}{2}(\mathsf{k}')^2+\mathsf{k}'\psi_2)\\
        &\hspace{5em}+\frac{1}{4}W^4v^2(\mathsf{k}')^2-\hat{\mathsf{h}}_i\hat{\mathsf{h}}^i(\frac{3}{2}\mathsf{k}'\psi_2-\psi_2^2)+W^2\mathsf{k}'\psi_2]\\
        &-2
        \hat{\mathsf{h}}^0\beta\gamma[(\hat{\mathsf{h}}^0)^2\hat{\mathsf{h}}_i\hat{\mathsf{h}}^i(\frac{9}{4}(\mathsf{k}')^2-3\mathsf{k}'\psi_2+\psi_2^2)+(\hat{\mathsf{h}}^0)^2W^2(-\frac{3}{2}(\mathsf{k}')^2+\mathsf{k}'\psi_2)+\frac{1}{2}(\hat{\mathsf{h}}^0)^2(\frac{3}{2}\mathsf{k}'\psi_2-\psi_2^2)]\\
        &-2\hat{\mathsf{h}}^0\beta\gamma[\frac{1}{4}W^4v^2(\mathsf{k}')^2].
    \end{align*}
    \end{widetext}
    To save space, we introduce the following quantities
    \begin{align*}
        A&=\frac{9}{4}(\mathsf{k}')^2-3\mathsf{k}'\psi_2+\psi_2^2 \geq 0,
        \\
        B&=-\frac{3}{2}(\mathsf{k}')^2+\mathsf{k}'\psi_2 \leq 0,
        \\
        C&=\frac{3}{2}\mathsf{k}'\psi_2-\psi_2^2 \geq 0.
    \end{align*}
    The inequality for $A$ holds for all $\mathsf{h}\in[0,1]$, while the inequalities for $B$ and $C$ hold for $\mathsf{h} \geq 1/6$.
    The following bounds can be shown by expressing the quantities as polynomials of the flux factor $\mathsf{h}$ and using the inequalities in Eq.~\eqref{eq:cH_bounds},
    \begin{alignat}{2}
        -\frac{1}{2}\hat{\mathsf{h}}_i\hat{\mathsf{h}}^i C+\frac{1}{2}W^2\mathsf{k}'\psi_2-\frac{1}{4}\psi_2^2 &\geq 0, \,\, \text{for} \,\, &&\mathsf{h}\in[0,1],
        \label{eq:app_bound1}\\
        \hat{\mathsf{h}}_i\hat{\mathsf{h}}^i A+W^2 B+\frac{1}{2}C &\geq 0, \,\, \text{if} \,\, &&\mathsf{h} \leq 1/6, 
        \label{eq:app_bound2}\\
        \hat{\mathsf{h}}_i\hat{\mathsf{h}}^i A+W^2 B+\frac{1}{2}C &\leq 0, \,\, \text{if} \,\, &&\mathsf{h} \geq 1/6, 
        \label{eq:app_bound3}
    \end{alignat}
    We will ignore the case of $\mathsf{h}\leq 1/6$ as it results in the smaller upper bound. 
    Thus, for the remainder of the proof we assume $\mathsf{h} \geq 1/6$.
    We apply the inequality $-2ab\leq a^2 + b^2$ to the three $-2\hat{\mathsf{h}}^0\beta\gamma$ terms, where for the first term we take $a=\gamma\hat{\mathsf{h}}^0$ and $b=\beta$, while for the remaining terms we take $a=\hat{\mathsf{h}}^0\beta$ and $b=\gamma$.
    Note that we can apply this inequality since Eqs.~\eqref{eq:app_bound1},~\eqref{eq:app_bound3} show the coefficients for each of the $-2\hat{\mathsf{h}}^0\beta\gamma$ terms do not change sign for $\mathsf{h}\geq 1/6$.
    Reexpressing the norm in terms of $A,B,$ and $C$, and applying the $-2ab$ inequality, we have
    \begin{widetext}
    \begin{align*}
        \|v^jy^k\frac{\p\cK_{ij}}{\p\cH_k}\|^2_2
        \leq&
        \gamma^2(\hat{\mathsf{h}}^0)^2[
        \hat{\mathsf{h}}_i\hat{\mathsf{h}}^i A -\frac{1}{2}\hat{\mathsf{h}}_i\hat{\mathsf{h}}^i C + W^2 B +\frac{1}{2}W^2\mathsf{k}'\psi_2 + C -\frac{1}{4}\psi_2^2
        ]
        +
        \beta^2[\hat{\mathsf{h}}_i\hat{\mathsf{h}}^i\frac{1}{4}\psi_2^2 -\frac{1}{2}\hat{\mathsf{h}}_i\hat{\mathsf{h}}^i C +\frac{1}{2}W^2\mathsf{k}'\psi_2 -\frac{1}{4}\psi_2^2]\\
        &+
        (\hat{\mathsf{h}}^0\beta)^2[
        -\frac{1}{2}(\hat{\mathsf{h}}^0)^2 C + \frac{1}{2}W^4v^2(\mathsf{k}')^2-\hat{\mathsf{h}}_i\hat{\mathsf{h}}^i C + W^2\mathsf{k}'\psi_2 
        ]
        +\frac{1}{4}W^4v^2\gamma^2(\mathsf{k}')^2
        +\frac{1}{4}(\hat{\mathsf{h}}^0)^2y^2\psi_2^2\\
        &+
        \gamma^2[
        -\hat{\mathsf{h}}_i\hat{\mathsf{h}}^i(\hat{\mathsf{h}}^0)^2 A - W^2(\hat{\mathsf{h}}^0)^2 B -\frac{1}{2}(\hat{\mathsf{h}}^0)^2 C + \frac{1}{4}W^4v^2(\mathsf{k}')^2
        ].
    \end{align*}
    \end{widetext}
    We now focus on upper bounding the terms in brackets.  The remaining terms are easily upper bounded by Eq.~\eqref{eq:cH_bounds} and the inequality ${0 \leq \mathsf{k}' \leq 2}$.
    
    \begin{align*}
        &\hat{\mathsf{h}}_i\hat{\mathsf{h}}^i A -\frac{1}{2}\hat{\mathsf{h}}_i\hat{\mathsf{h}}^i C + W^2 B +\frac{1}{2}W^2\mathsf{k}'\psi_2 + C -\frac{1}{4}\psi_2^2  \\
        &\qquad \leq
        W^2(A+B+\frac{1}{2}\mathsf{k}'\psi_2)
        + (\frac{1}{2}C - \frac{1}{4}\psi_2^2)
        \leq W^2, \\
        &\hat{\mathsf{h}}_i\hat{\mathsf{h}}^i\frac{1}{4}\psi_2^2 -\frac{1}{2}\hat{\mathsf{h}}_i\hat{\mathsf{h}}^i C +\frac{1}{2}W^2\mathsf{k}'\psi_2 -\frac{1}{4}\psi_2^2 \\
        &\qquad \leq
        \hat{\mathsf{h}}_i\hat{\mathsf{h}}^i(\frac{1}{4}\psi_2^2-\frac{1}{2}C)+\frac{1}{2}W^2\mathsf{k}'\psi_2 -\frac{1}{4}\psi_2^2 \\
        &\qquad \leq
        W^2(\frac{1}{4}\psi_2^2-\frac{1}{2}C+\frac{1}{2}\mathsf{k}'\psi_2)-\frac{1}{4}\psi_2^2 \leq 2W^2, \\
        &-\frac{1}{2}(\hat{\mathsf{h}}^0)^2 C + \frac{1}{2}W^4v^2(\mathsf{k}')^2-\hat{\mathsf{h}}_i\hat{\mathsf{h}}^i C + W^2\mathsf{k}'\psi_2 \\
        &\qquad \leq
        \frac{1}{2}W^4v^2(\mathsf{k}')^2 + W^2\mathsf{k}'\psi_2
        \leq
        6W^4v^2, \\
        &-\hat{\mathsf{h}}_i\hat{\mathsf{h}}^i(\hat{\mathsf{h}}^0)^2 A - W^2(\hat{\mathsf{h}}^0)^2 B -\frac{1}{2}(\hat{\mathsf{h}}^0)^2 C + \frac{1}{4}W^4v^2(\mathsf{k}')^2 \\
        &\qquad \leq
        W^4v^2(\frac{1}{4}(\mathsf{k}')^2-B)
        \leq
        3W^4v^2.
    \end{align*}
    These bounds can be shown by using our inequalities on $A$,$B$, and $C$, the inequalities for $\cH_i\cH^i$ and $(\cH^0)^2$, and by plotting the quantities as functions of the flux factor $\mathsf{h}$.
    Thus we have
    \begin{align*}
        \|v^jy^k\frac{\p\cK_{ij}}{\p\cH_k}\|^2_2
        \leq&
        W^2\gamma^2(\hat{\mathsf{h}}^0)^2
        +2W^2\beta^2
        +6W^4v^2(\hat{\mathsf{h}}^0\beta)^2 \\
        &
        +W^4v^2\gamma^2
        +(\hat{\mathsf{h}}^0)^2y^2
        +3W^4v^2\gamma^2\\
        \leq&
        W^6(v^2y^2)
        +2W^2(v^2y^2)
        +6W^6v^4(v^2y^2) \\
        &
        +W^6(v^2y^2)
        +W^2(v^2y^2)
        +3W^6(v^2y^2)\\
        \leq&
        14W^6(v^2y^2).
    \end{align*}
    Hence, $\|v^j\frac{\p\cK_{ij}}{\p \cH_k}\|_2 \leq \sqrt{14}W^3v$
\end{proof}

Applying Lemmas \ref{lem:appendix_lem1} and \ref{lem:appendix_lem2} to Eq.~\eqref{eq:norm_vjDeltaKij}, we have
\begin{align}
        \|v^j \Delta \cK_{ij}\|_2^2
        \leq&
        (W^2v\|\Delta \cJ\|_2
        +
        \sqrt{14}W^3v\|\Delta \cH_j\|_2)^2 \nonumber \\
        =&
        W^4v^2\|\Delta\cJ\|_2^2
        +14W^6v^2\|\Delta\cH_j\|_2^2 \nonumber \\
        &+2\sqrt{14}W^5v^2\|\Delta\cJ\|_2\|\Delta\cH_j\|_2.
\end{align}
Then,
\begin{align}
    \begin{Vmatrix}
     v^i\Delta\cH_i\\
     v^j\Delta\cK_{ij}
     \end{Vmatrix}_2^2
     \leq&
     W^4v^2\|\Delta\cJ\|_2^2
        +(14W^6v^2+v^2)\|\Delta\cH_j\|_2^2 \nonumber \\
        &+2\sqrt{14}W^5v^2\|\Delta\cJ\|_2\|\Delta\cH_j\|_2 \nonumber \\
    \leq&
    (W^4+\sqrt{14}W^5)v^2\|\Delta\cJ\|_2^2 \nonumber \\
    &+
    (14W^6+\sqrt{14}W^5+1)v^2\|\Delta\cH_j\|_2^2 \nonumber \\
    \leq&
    (14W^6+\sqrt{14}W^5+1)v^2
    \begin{Vmatrix}
        \Delta\cJ\\
        \Delta\cH_j
    \end{Vmatrix}_2^2.
\end{align}
We let $\widetilde{L}\vcentcolon=v\sqrt{14W^6+\sqrt{14}W^5+1}$. Then, using WolframAlpha, we find that $\widetilde{L}<W$ when ${v<0.221075}$.
Hence the scheme is guaranteed to converge for ${v<0.221075}$.


\bibliography{references}

\end{document}